\newtheorem{Theorem}{Theorem}[section]
\newtheorem{Proposition}[Theorem]{Proposition} 
\newtheorem{Lemma}[Theorem]{Lemma}
\newtheorem{Corollary}[Theorem]{Corollary}
\newtheorem{Conjecture}[Theorem]{Conjecture}
\newtheorem{Specialthm}{Theorem}
\newtheorem{Specialconj}[Specialthm]{Conjecture}
\newtheorem{Question}{Question}
\newcommand{\puteps}[2][0.5]
{\includegraphics[scale=#1]{#2.eps}}
\theoremstyle{definition}
\newtheorem{Remark}[Theorem]{Remark}
\newcommand{\p}{{\mathbb{P}^{m-1}}}
\newcommand{\Z}{\mathbb{Z}}
\newcommand{\C}{\mathbb{C}}
\def\sX{{\mathcal{X}}}
\def\F{{\sf{F}}}
\def\sF{{\mathcal{F}}}
\def\G{{\sf{G}}}
\def\sG{{\mathcal{G}}}
\def\A{{\mathcal{A}}}
\def\T{{\sf{T}}}
\def\sT{{\mathcal{T}}}
\def\sV{{\mathcal{V}}}
\def\sW{{\mathcal{W}}}
\def\sL{{\mathcal{L}}}
\def\sP{{\mathcal{P}}}
\def\sR{{\mathcal{R}}}
\def\sU{{\mathcal{U}}}
\def\P{{\sf{P}}}
\def\Q{{\sf{Q}}}
\def\U{{\sf{U}}}
\def\sQ{{\mathcal{Q}}}
\newcommand{\Kh}{\mathrm{H}_{\mathrm{alg}}}
\newcommand{\Cone}{\mathrm{Cone}}
\def\O{{\mathcal O}}
\def\E{{\mathcal{E}}}
\def\H{{\mathcal{H}}}
\def\dim{\mbox{dim}}
\newcommand{\bbeta}{\overline{\beta}}
\newcommand{\bb}{\overline{b}}
\newcommand{\bE}{\overline{\mathcal{E}}}
\newcommand{\uq}{U_q(\mathfrak{sl}_m)}
\newcommand{\uqg}{U_q(\mathfrak{g})}
\renewcommand{\sl}{{\mathfrak{sl}_m}}
\newcommand{\cl}{\text{cl}}
\newcommand{\base}{\mathbb{C}[q,q^{-1}]}
\newcommand{\vect}{\C^m \otimes z^{-1} \C[z^{-1}]}
\newcommand{\Gr}{Gr}
\newcommand{\edge}{\longrightarrow}
\newcommand{\ul}{{\underline{\lambda}}}
\newcommand{\um}{{\underline{\mu}}}
\DeclareMathOperator{\Hom}{Hom}
\DeclareMathOperator{\Ext}{Ext}
\DeclareMathOperator{\Sym}{Sym}
\begin{document}

\title[Knot homology via derived categories of coherent sheaves II, $sl(m)$ case]{Knot homology via derived categories of coherent sheaves II, $sl(m)$ case}

\author{Sabin Cautis}
\email{scautis@math.harvard.edu}
\address{Department of Mathematics\\ Rice University \\ Houston, TX}

\author{Joel Kamnitzer}
\email{jkamnitz@math.berkeley.edu}
\address{Department of Mathematics\\ UC Berkeley \\ Berkeley, CA}

\begin{abstract}
Using derived categories of equivariant coherent sheaves we construct a knot homology theory which categorifies the quantum sl(m) knot polynomial. Our knot homology naturally satisfies the categorified MOY relations and is conjecturally isomorphic to Khovanov-Rozansky homology. Our construction is motivated by the geometric Satake correspondence and is related to Manolescu's by homological mirror symmetry. 
\end{abstract}

\date{\today}
\maketitle
\tableofcontents
\newpage

\section{Introduction}
The main purpose of this paper is to use algebraic geometry to categorify the quantum knot and tangles associated to $\uq $ and its standard representation, generalizing the results of \cite{kh1}.  Before giving more details of what is done in this paper, we would like to explain the motivation for this work in terms of Khovanov's categorification program and the geometric Satake correspondence.

\subsection{Categorification}
Let $ \mathfrak{g} $ denote a complex semisimple Lie algebra.  There is a diagrammatic calculus involving tangles and representations of the quantum group $ \uqg $.  Consider a tangle $ T $ where all the strands are labelled by dominant weights of $ \mathfrak{g} $.  Let $ \ul = (\lambda_1, \dots, \lambda_n) $ be the labels on the strands at the top and $ \um = (\mu_1, \dots, \mu_{n'}) $ be the labels on the strands at the bottom.  Let $ V_\lambda $ denote the irreducible $ \uqg $ representation with highest weight $ \lambda $.

Following Reshetikhin-Turaev \cite{RT}, one can associate to this tangle a map of $ \uqg $ representations 
\begin{equation*}
\psi(T) :  V_\ul := V_{\lambda_1} \otimes \dots \otimes V_{\lambda_n} \rightarrow V_\um := V_{\mu_1} \otimes \dots \otimes V_{\mu_{n'}}.
\end{equation*}
This is done by analyzing a tangle projection from top to bottom and considering each cap, cup, and crossing in turn and assigning natural maps to these elementary tangles.  For example to each right handed crossing we associate the braiding $ V_\lambda \otimes V_\mu \rightarrow V_\mu \otimes V_\lambda $, which is defined using the $R$-matrix of $ \uqg $.  Reshetikhin-Turaev \cite{RT} showed that $ \psi(T) $ does not depend on the planar projection of the tangle.  Thus we have a map
\begin{equation*}
\psi : \biggl\{ (\ul, \um) \text{ tangles } \biggr\} \rightarrow \Hom_{\uqg}(V_\ul, V_\um)
\end{equation*}
If $ T = K $ is link, then $ \psi(K) $ is a map $ \base \rightarrow \base $ and $ \psi(K)(1) $ is a polynomial.  When $ \mathfrak{g} = \mathfrak{sl}_m $ and all $ \lambda_i $ are the fundamental weight, this polynomial is a one variable specialization of the two variable HOMFLY polynomial.

Khovanov has proposed the idea of categorifying this calculus.  A \textbf{weak categorification} is a choice of graded triangulated category $ D_\ul $ for each collection $ \ul = (\lambda_1, \dots, \lambda_n) $ and a map
\begin{equation*}
\Psi : \biggl\{ (\ul, \um) \text{ tangles } \biggr\} \rightarrow \biggl\{\text{ isomorphism classes of exact functors } D_\ul \rightarrow D_\um \biggr\}
\end{equation*}
such that we recover the original calculus on the Grothendieck group, i.e. we have $ K(D_\ul) \cong V_\ul $ as $ \base $ modules and $ [\Psi(T)] = \psi(T) $.  We will also insist that $ D_\emptyset $ be the derived category of graded vector spaces.

One source of interest in categorification is bigraded knot homology theories.  Suppose that $ K $ is a link whose components are labelled by dominant weights.  Then $ \Psi(K) $ is a functor from the derived category of graded vector spaces to itself.  The cohomology of the complex of graded vector spaces $ \Psi(K)(\mathbb{C}) $ is a link invariant, whose graded Euler characteristic is the Reshetikhin-Turaev invariant $ \psi(K)(1) $.

\begin{Question} \label{qu}
Does a categorification exist for all $ \mathfrak{g} $ and all $ \lambda_i$?
Is there a natural way to construct such categorifications?
\end{Question}

The pioneering work in this area was done by Khovanov \cite{Kknot}, \cite{Ktangle} in the case where $ \mathfrak{g} = \mathfrak{sl}_2 $ and all $ \lambda_i $ the fundamental weight.  The knot homology produced is the celebrated Khovanov homology.

The case $ \mathfrak{g} = \mathfrak{sl}_m $ and all $ \lambda_i $ minuscule, has been studied by Khovanov-Rozansky and Sussan.  Khovanov-Rozansky \cite{KR} do not explicitly define categories $ D_\ul $ categorifying $V_\ul $. But using matrix factorizations they define functors $ \Psi(T) $ for tangles and produce a knot homology theory $ H^{i,j}_{\text{KR}} $, when all the strands are labelled by $\omega_1, \omega_{m-1} $.

On the other hand, Sussan \cite{Su} has constructed categories $ D_\ul $, using certain blocks of parabolic categories $ \O $ for various Lie algebras $ \mathfrak{sl}_N $.  He has also constructed functors $ \Psi(T) $ for tangles $ T $ where all strands are labelled by $ \omega_1, \omega_{m-1}$.  It is not known (but widely believed) that the knot homology constructed by Sussan agrees with that of Khovanov-Rozansky.

\subsection{Categorification using geometric Satake correspondence}
We return now to general $ \mathfrak{g} $, but we continue to assume that all $ \lambda_i $ are minuscule.  In this setting we will now present a proposal for answering Question \ref{qu} which is based on the geometric Satake correspondence. 

Consider the semisimple group $ G^\vee $ which is the Langlands dual to $ G $, the simply-connected group integrating $ \mathfrak{g} $ (note that $G^\vee $ has trivial centre).  Let $ \Gr := G^\vee(\C((z))) / G^\vee(\C[[z]]) $ denote the \textbf{affine Grassmannian} for $ G^\vee$, an ind-scheme over $ \C $.  

The affine Grassmannian is stratified by the $ G^\vee(\C[[z]]) $ orbits $ \Gr_\lambda $ which are labelled by $ \lambda \in \Lambda_+ $, the set of dominant weights of $\mathfrak{g}$.  Similarly, the orbits of $ G^\vee(\C((z))) $ acting on $ \Gr \times \Gr $ are also labelled by $ \Lambda_+ $ and we write $ L_1 \overset{\lambda}{\edge} L_2 $ if $ (L_1,L_2) $ is in the orbit labelled by $ \lambda $.  Let $ L_0 $ denote the identity coset in $ \Gr $.

Given dominant weights $(\lambda_1, \dots, \lambda_n) $, we can form the \textbf{convolution product} 
\begin{equation*}
\Gr_\ul := \Gr_{\lambda_1} \tilde{\times} \dots \tilde{\times} \Gr_{\lambda_n} = \{ (L_1, \dots, L_n) \in \Gr^n : L_0 \overset{\lambda_1}{\edge} L_1 \overset{\lambda_2}{\edge} \cdots \overset{\lambda_{n-1}}{\edge} L_{n-1} \overset{\lambda_n}{\edge} L_n  \}
\end{equation*}
We have an obvious map $ m: \Gr_\ul \rightarrow \Gr $ taking $ (L_1, \dots, L_n) $ to $ L_n $.  

We are now in a position to give the statement of the geometric Satake correspondence of Lusztig \cite{L}, Ginzburg \cite{G}, and Mir\-kovi\'c-Vilonen \cite{MV}.

\begin{Theorem}
There is an equivalence between the category of perverse sheaves on $ \Gr $ (constructible with respect to the above stratification) and the category of representations of $\mathfrak{g}$.   This equivalence commutes with the functors to vector spaces on each side (hypercohomology and the forgetful functor, respectively).  

Under this equivalence, the IC sheaf of $ \overline{\Gr}_\lambda $ corresponds to the irreducible representation $ V_\lambda $. More generally, the push forward under $ m $ of the IC sheaf of $ \overline{\Gr}_\ul $ corresponds to the tensor product $ V_{\ul} = V_{\lambda_1} \otimes \cdots \otimes V_{\lambda_n}$, so that 
$ IH(\overline{\Gr}_\ul) \cong V_\ul$.
\end{Theorem}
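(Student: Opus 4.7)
The plan is to use Tannakian reconstruction. Let $\sP$ denote the category of $G^\vee(\C[[z]])$-equivariant perverse sheaves on $\Gr$, constructible with respect to the orbit stratification. Since each stratum $\Gr_\lambda$ is simply connected and carries no nontrivial equivariant local systems, the simple objects of $\sP$ are precisely the intersection cohomology sheaves $IC_\lambda$ of the closures $\overline{\Gr}_\lambda$. I would upgrade $\sP$ to a symmetric monoidal abelian category equipped with an exact faithful tensor functor $F$ to graded vector spaces, invoke the formalism of Deligne--Milne to get an equivalence $\sP \simeq \mathrm{Rep}(\tilde G)$ for a pro-reductive group $\tilde G$, and finally identify $\tilde G$ with $G$.

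First I would construct the convolution product: given $\sF, \sG \in \sP$, form the twisted external product $\sF \tilde\boxtimes \sG$ on the twisted product $\Gr \tilde\times \Gr$ and push forward along the multiplication map $m$. The key input is that $m$ is stratified semismall, which uses the even-dimensionality of $\Gr_\lambda$; the decomposition theorem then ensures $m_*(\sF \tilde\boxtimes \sG)$ is a semisimple perverse sheaf, so $\sP$ becomes monoidal. For the fiber functor $F := H^*(\Gr, -)$, exactness hinges on a parity vanishing statement showing $H^*(\Gr, IC_\lambda)$ is concentrated in a single cohomological degree, which I would establish by decomposing along the semi-infinite orbits $S_\mu := N^\vee(\C((z))) \cdot z^\mu \cdot L_0$ and using that each $\overline{\Gr}_\lambda \cap S_\mu$ is equidimensional with only even-dimensional cells.

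The main obstacle is producing a genuine symmetric (rather than merely braided) monoidal structure; the naive swap isomorphism carries sign issues coming from shifts, and would only reconstruct a supergroup. The cleanest resolution, due to Mirkovi\'c--Vilonen, is to pass to the Beilinson--Drinfeld Grassmannian $\Gr_{X^n}$ over a smooth curve $X$: its fibers over generic points of $X^n$ are $\Gr^n$ while the fiber over the main diagonal is $\Gr$, and nearby cycles as points collide realize the convolution. The symmetric group $S_n$ acting on generic configurations induces a canonical symmetric braiding on the convolution, which descends to a genuine commutativity constraint on $\sP$.

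Once $\sP \simeq \mathrm{Rep}(\tilde G)$ is established, I would identify $\tilde G$ with $G$ via the Mirkovi\'c--Vilonen weight functors $F_\mu(\sF) := H^*_c(S_\mu, \sF)$; these refine $F$ into a direct sum of weight spaces indexed by the coweight lattice of $T$, and a count of top-dimensional components of $\overline{\Gr}_\lambda \cap S_\mu$ (the MV cycles) gives $\dim F_\mu(IC_\lambda)$ equal to the $\mu$-weight multiplicity of $V_\lambda$. A rank-one analysis matching minuscule orbits to simple roots, plus a central character argument involving $\pi_1(G^\vee)$, pins down $\tilde G \cong G$. Under this identification $IC_\lambda$ corresponds to $V_\lambda$, and since $m_*(IC_{\overline{\Gr}_\ul})$ is by construction the $n$-fold convolution $IC_{\lambda_1} * \cdots * IC_{\lambda_n}$, it corresponds to $V_{\lambda_1} \otimes \cdots \otimes V_{\lambda_n}$; applying the fiber functor $F$ then yields the final isomorphism $IH(\overline{\Gr}_\ul) \cong V_\ul$.
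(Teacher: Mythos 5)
The paper does not prove this theorem: it is quoted as background, with the proof delegated to the cited references of Lusztig, Ginzburg, and Mirkovi\'c--Vilonen. So there is no in-paper argument to compare against; what you have written is an outline of the Mirkovi\'c--Vilonen proof itself, i.e.\ precisely the route the paper implicitly relies on. As a sketch it identifies all the standard ingredients correctly: convolution made perverse via semismallness and the decomposition theorem, exactness of the fiber functor via the semi-infinite orbits $S_\mu$, the fusion construction on the Beilinson--Drinfeld Grassmannian to repair the commutativity constraint (including the sign issue that would otherwise produce a supergroup), and the identification of the Tannakian group via weight functors and MV cycles. Of course each of these steps is itself a substantial theorem, so this is a roadmap rather than a proof, but it is the right roadmap.

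One genuine imprecision is worth fixing: it is false that $H^*(\Gr, IC_\lambda)$ is concentrated in a single cohomological degree --- this is the full intersection cohomology $IH^*(\overline{\Gr}_\lambda)$, which is spread over many degrees (for $\lambda$ minuscule it is the ordinary cohomology of the smooth projective variety $\Gr_\lambda$). The correct statement underlying exactness of the fiber functor is that each weight piece $H^*_c(S_\mu, \sF)$ is concentrated in the single degree $\langle \mu, 2\rho\rangle$ for any perverse sheaf $\sF$, which follows from the dimension estimate $\dim(\overline{\Gr}_\lambda \cap S_\mu) \le \langle \lambda+\mu, \rho\rangle$; exactness of $F = \bigoplus_\mu H^{\langle\mu,2\rho\rangle}_c(S_\mu,-)$ then follows functor by functor, and the total cohomology acquires its weight decomposition as a consequence. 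Your subsequent sentence about decomposing along the $S_\mu$ shows you have the right mechanism in mind, but the statement you propose to prove with it should be corrected as above.
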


$ \overline{\Gr}_\ul $ is smooth if and only if $ \lambda_i $ is minuscule for each $ i $.  In this case we also have $ \overline{\Gr}_\ul = \Gr_\ul $.  Thus, when all $ \lambda_i $ are minuscule, we have a smooth projective variety, $\Gr_\ul$ whose cohomology is canonically isomorphic to $V_\ul$.  This variety can also be seen to carry a natural $ \C^\times $ action and hence we propose the choice $D_\ul = D(\text{Coh}^{\mathbb{C}^\times}(\Gr_\ul))$, the derived category of $ \mathbb{C}^\times $-equivariant coherent sheaves on $ \Gr_\ul $ for categorification (the $\C^\times $ action is used to make the category graded). 

To complete the above categorification program, it is neccesary to construct functors corresponding to tangles. There are natural correspondences between these $\Gr_\ul$ varieties which can be used for this purpose (see Remarks \ref{re:sat1}, \ref{re:sat2}).

\subsection{The current paper: the $\mathfrak{sl}_m$ case}
In general, derived categories of coherent sheaves are difficult to work with, so we are not yet able to complete this categorification program for general $ \mathfrak{g} $ and minuscule $ \lambda_i$.  In this paper, we will concentrate on the case $ \mathfrak{g} = \mathfrak{sl}_m $ and all $ \lambda_i $ corresponding to the standard or dual representatations (the same as studied by Khovanov-Rozansky, Sussan). In this case, each $ \Gr_{\lambda_i} $ is a $ \p $ and so the variety $ \Gr_\ul $ is an iterated fibre bundle of $ \p $s.  This makes the geometry much easier to work with. 

We will encode fundamental weights of $ \mathfrak{sl}_m $ by integers $ \beta_i \in \{1, \dots, m-1\}$ and $ \beta $ will denote a sequence of such integers.  We introduce the notation $ Y_\beta $ for $\Gr_\ul $ where $ \lambda_i = \omega_{\beta_i} $.  Section \ref{se:geometry} discusses the geometry of these varieties.  In particular, in section \ref{se:SS}, we explain how open subvarieties of certain $ Y_\beta $ are related by hyperK\"ahler rotation to the symplectic manifolds studied by Manolescu \cite{M}.

Consider the case where each $ \beta_i \in \{1, m-1\} $.  Given a planar projection of a $(\beta,\beta') $ tangle $ T $, we construct a functor $ \Psi(T) : D(Y_\beta) \rightarrow D(Y_{\beta'}) $ by associating to each cup, cap, and crossing certain basic functors 
$$ \G_\beta^i : D(Y_{d_i(\beta)}) \rightarrow D(Y_\beta),\quad \F_\beta^i : D(Y_\beta) \rightarrow D(Y_{d_i(\beta)}),\quad  \T_\beta^i(l) : D(Y_\beta) \rightarrow D(Y_{s_i(\beta)}). $$ 
The functors $ \G_\beta^i, \F_\beta^i $ are defined using a correspondence $ X_\beta^i $ between $ Y_{d_i(\beta)} $ and $ Y_\beta $ (see section \ref{se:capcup}).  The functors $ \T_\beta^i(l) $ are defined using a correspondence $ Z_\beta^i $ between $ Y_\beta $ and $Y_{s_i(\beta)}$.  When $ \beta_i = \beta_{i+1} $, $\T_\beta^i(l)$ can also be described as a spherical twist in a functor $ \P_\beta^i $ (see section \ref{se:sptwists}) while in the case where $ \beta_i \ne \beta_{i+1}$ the correspondence $ Z_\beta^i $ is a family of Mukai flops.

In section \ref{se:invariance} (which is the bulk of this paper), we check certain relations among these basic functors corresponding to Reidemeister and isotopy moves between different projections of the same tangle.  We obtain the following result.
\begin{Specialthm}[Theorem \ref{thm:main}]
The isomorphism class of $\Psi(T) $ is an invariant of the tangle $ T $.
\end{Specialthm}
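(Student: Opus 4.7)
The plan is to realise the tangle category by a finite set of generating local moves and then verify, one move at a time, the corresponding isomorphism of composed Fourier-Mukai functors.

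Any two planar projections of the same (framed) tangle $T$ are related by a finite sequence of local moves: planar isotopies of the elementary tangles, the straightening (zigzag) identities making cups and caps adjoint, the pitchfork moves sliding a cup or cap past a crossing, and the Reidemeister moves R2, R3 together with the framed R1. Since $\Psi$ is defined by reading a projection and composing the elementary functors $\G,\F,\T$, it is enough to prove one functor isomorphism for each move in this list; these are the verifications carried out in Section \ref{se:invariance}.

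I would begin with the simpler relations. The planar isotopies of caps and cups, and the straightening identities, reduce to projection formulas and base change for the correspondence $X_\beta^i$, which realises the relevant $Y_\beta$ as a transparent $\p$-bundle over $Y_{d_i(\beta)}$. Reidemeister~2 amounts to the invertibility of $\T_\beta^i(l)$: when $\beta_i = \beta_{i+1}$ this is Seidel-Thomas applied to the spherical functor $\P_\beta^i$; when $\beta_i \ne \beta_{i+1}$ the correspondence $Z_\beta^i$ is a family of Mukai flops and the inverse twist is produced by the dual flop. The shift $l$ in $\T_\beta^i(l)$ is chosen precisely so that framed R1 holds, which reduces to evaluating the twist kernel on a single fibre of the relevant projective bundle.

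The main obstacles are Reidemeister~3 and the pitchfork moves. R3 is the braid relation: one must compare two iterated convolutions of the $Z$-kernels on a triple iterated fibre bundle and construct an explicit isomorphism between them. When the three adjacent strands carry equal labels this specialises to the braid relation for spherical twists, but the mixed-label case requires an honest kernel computation on the correspondence obtained from the Mukai flops. The pitchfork moves demand an identification of the form $\T \circ \G \cong \G' \circ \T'$ (and the analogous relation for $\F$), realised by a kernel supported on a natural fibre product of $X$ and $Z$ correspondences; this compatibility between the cap/cup and braiding correspondences is the most delicate identity and, together with R3, forms the technical core of the argument.
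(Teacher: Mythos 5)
Your overall strategy---reduce to the generating moves of Lemma \ref{lem:relations} and verify one functor isomorphism per move---is exactly the paper's, and your treatment of R0, R2 and the height isotopies is sound (for the unlike-label R2 the paper does give a direct Bridgeland--King--Reid style proof rather than quoting the flop literature, but the result is indeed essentially Namikawa's). However, two steps of your plan would not go through as stated. For Reidemeister III with equal labels you propose to quote ``the braid relation for spherical twists.'' No such general theorem is available here: $\P_\beta^i$ and $\P_\beta^{i+1}$ are spherical functors out of two \emph{different} varieties $Y_{d_i(\beta)}$ and $Y_{d_{i+1}(\beta)}$, and the braid relation for twists along spherical functors is not formal --- it is precisely the content that must be proved. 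Section \ref{section:RIII} does this by an explicit kernel computation following Khovanov--Thomas, establishing $\sT^i * \sT^{i+1} * \sT^i \cong \O_{Z^{i i+1 i}}$ through a chain of cone manipulations (Propositions \ref{th:firststage}--\ref{prop:fourthstage}) that needs blow-up descriptions of the triple convolutions and uniqueness-of-extension arguments via Corollary \ref{cor:abstractnonsense3}. Your division of labour is also backwards: the equal-label case is the one requiring the honest computation, and the mixed-label R3 cases are then deduced formally from R0, R1 and R2.

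Second, Reidemeister I does not ``reduce to evaluating the twist kernel on a single fibre.'' One must show that the kernel $\sF_\beta^{i+1} * \sT_\beta^i(l) * \sG_\beta^{i+1}$ on $Y_{d_{i+1}(\beta)} \times Y_{d_{i+1}(\beta)}$ is $\O_\Delta$; after computing the cohomology sheaves of both terms of the defining triangle (each a sum of shifted copies of $\O_\Delta$) one must still rule out that the connecting map vanishes in some degree, which the paper does by an explicit Koszul-resolution computation culminating in (\ref{eq:PhiF}) of Theorem \ref{th:RI}. Relatedly, the parameter $l\in\{1,2\}$ in $\T_\beta^i(l)$ labels the crossing type, not a shift, and no framing correction is needed: both $\sF_\beta^{i+1} * \sT_\beta^i(1) * \sG_\beta^{i+1}$ and $\sF_\beta^{i+1} * \sT_\beta^i(2) * \sG_\beta^{i+1}$ are isomorphic to $\O_\Delta$ on the nose, so $\Psi$ is an unframed tangle invariant; the shifts and line bundles in the crossing kernels are pinned down by R2 and the pitchfork identities.
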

Among the relations that we check, the most basic is that $ \T_\beta^i $ is an equivalence.  In the case $ \beta_i = \beta_{i+1}$ this follows from the theory of spherical twists (see section \ref{se:twists}).  In the case $ \beta_i \ne \beta_{i+1}$, we give a direct proof (in section \ref{se:RIIb}) following ideas of Bridgeland-King-Reid \cite{BKR}, though the result is essentially already known by the work of Namikawa \cite{Na}.  

The other important relation is that $ \T_\beta^i $ and $ \T_\beta^{i+1} $ braid.  Here our proof proceed by direct computation following a similar argument from Khovanov-Thomas \cite{KT}.

In section \ref{se:graphs}, we extend our invariant $\Psi$ to trivalent tangle graphs and prove the following result by checking categorified versions of the Murakami-Ohtsuki-Yamada \cite{MOY} relations.
\begin{Specialthm}[Theorem \ref{th:diag}]
Let $ D $ be a crossingless closed trivalent graph.  Then $ H^{i,j}(\Psi(D)) $ is concentrated on the anti-diagonal and its graded Poincar\'e polynomial is determined by $ \psi(D) $ (the RT invariant of such a graph).
\end{Specialthm}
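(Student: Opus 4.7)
The strategy is to reduce the theorem to the classical Murakami--Ohtsuki--Yamada calculus by categorifying their local relations, and then to compute a single labelled circle by hand.

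\emph{Step 1: Categorified MOY relations.} For each local MOY identity I would construct an isomorphism of functors between the two sides. Since $D$ is crossingless, the functor $\Psi(D)$ is built entirely from the cap/cup functors $\G_\beta^i, \F_\beta^i$ and the trivalent-vertex functors assembled from them; the twist functors $\T_\beta^i$ play no role. Each relation therefore becomes an identity of composed correspondences on the iterated $\p$-bundles $Y_\beta$. I would prove them by explicit analysis of the fibre products of the correspondences $X_\beta^i$, together with base change and the projection formula, in a manner parallel to the Reidemeister-II computations of section \ref{se:RIIb}.

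\emph{Step 2: The base case.} For a single closed circle labelled by $\omega_\beta$, the functor $\Psi(C_\beta)$ applied to $\C$ is the composition $\F \circ \G$ applied to $\C$, which computes the $\C^\times$-equivariant cohomology of the Grassmannian $\Gr_{\omega_\beta}\cong \mathrm{Gr}(\beta,m)$. This cohomology is torsion-free and its classes sit in bidegrees forcing concentration on the anti-diagonal; its graded Poincar\'e polynomial is the quantum binomial coefficient, which equals $\psi(C_\beta)$.

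\emph{Step 3: Reduction and induction.} Any crossingless closed trivalent graph reduces under the classical MOY moves, in finitely many steps, to a disjoint union of circles, and this reduction expresses $\psi(D)$ as a $\Z_{\geq 0}[q,q^{-1}]$-linear combination of the $\psi(C_{\beta_k})$. Iterating the categorified relations from Step~1 produces, at the level of complexes, an isomorphism
\begin{equation*}
\Psi(D) \;\cong\; \bigoplus_{k,s} \Psi(C_{\beta_k})[-s]\langle s \rangle
\end{equation*}
with the shift multiplicities matching those appearing in the MOY reduction (here $[\cdot]$ denotes cohomological and $\langle \cdot \rangle$ equivariant shift, used purely schematically). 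Because the shifts occur only along the anti-diagonal and Step~2 gives anti-diagonal concentration for each $\Psi(C_{\beta_k})$, the resulting direct sum is concentrated on the anti-diagonal, and the graded Poincar\'e polynomial of its cohomology agrees with $\psi(D)$.

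\emph{Main obstacle.} The substantive work is Step~1: producing enough categorified MOY relations in the coherent-sheaf setting, and in particular the square-removal identity, which requires not merely matching classes in the Grothendieck group but identifying actual direct summands (with their internal gradings) inside a nontrivial fibre product of correspondences. Once these local isomorphisms are in hand, Steps~2 and~3 are essentially bookkeeping.
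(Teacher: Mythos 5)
Your Step 1 is exactly the paper's Proposition \ref{prop:MOYrels}, and your Step 2 is Corollary \ref{cor:circle} (relation (ia); note that since cups and caps only involve edges labelled $1$ or $m-1$, the only circles that arise contribute $H^\star(\p)$, not a general Grassmannian). You correctly identify that the categorified relations are where the work lies. The gap is in Step 3. The categorified MOY relations are not all of the form ``graph $\cong$ direct sum of strictly simpler graphs'': relation (iii) is an isomorphism $\U_\beta^{i+1}\circ\U_\beta^i\circ\U_\beta^{i+1}\oplus\U_\beta^i \cong \U_\beta^i\circ\U_\beta^{i+1}\circ\U_\beta^i\oplus\U_\beta^{i+1}$ between direct sums attached to two \emph{different} graphs. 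So ``iterating the categorified relations'' does not produce the decomposition $\Psi(D)\cong\bigoplus_{k,s}\Psi(C_{\beta_k})[-s]\{s\}$ you assert: from $A\oplus B\cong C\oplus D$ one cannot split off summands of $A$ alone without a Krull--Schmidt or cancellation argument, and it is not clear that every closed crossingless graph reduces to circles using only the genuinely decomposing relations (ia)--(iib). Indeed, even the decategorified claim that the MOY relations determine $\psi$ on closed graphs is nontrivial folklore (Lemma \ref{lem:MOY}), which the paper proves by a detour through link invariants and skein relations rather than by exhibiting an explicit reduction algorithm of the kind your induction presupposes.

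The paper circumvents all of this with a purely numerical observation. Every shift occurring in the categorified relations is anti-diagonal (each $H^\star(\mathbb{P}^n)$ enters as $\oplus_j H^j(\mathbb{P}^n)[-j+n]\{j-n\}$). Hence the two-variable polynomial $R(\Gamma)=\sum_{i,j} y^i z^{i+j}\dim H^{i,j}_{\text{alg}}(\Gamma)$ satisfies, for every complex specialization of $z$, the decategorified MOY relations in the variable $q=y$; by Lemma \ref{lem:MOY} it therefore equals $\psi(\Gamma)|_{q=y}$ independently of $z$, and this $z$-independence is precisely the statement that $H^{i,j}_{\text{alg}}(\Gamma)$ vanishes off the anti-diagonal, with the Poincar\'e polynomial identity coming along for free. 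To salvage your Step 3 you would need either a reduction to circles that avoids relation (iii) or a cancellation principle for the summands; the paper's argument shows neither is necessary.
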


When $T=K$ is a link let $ H^{i,j}_{\text{alg}}(K) := H^{i,j}(\Psi(K))$ denote our knot homology theory.  By considering the distinguished triangles associated to the crossings, we obtain that the graded Euler characteristic of $ H^{i,j}_{\text{alg}}(K) $ equals the Reshetikhin-Turaev invariant $ \psi(K)(1) $ for any link $K$.  We also conjecture (Conjecture \ref{th:KRconj}) that our knot homology theory coincides with that of Khovanov-Rozansky (up to a ``shear'' in the indices). 

In section \ref{se:Kgroups}, we consider $ \Psi(T) $ on the level of Grothendieck group. We show that the Grothendieck group $K(Y_\beta) = K(D(Y_\beta))$ is isomorphic to $V_\beta = V_{\omega_{\beta_1}} \otimes \dots \otimes V_{\omega_{\beta_n}}$ and conjecture the following.
\begin{Specialconj}[Conjecture \ref{th:Kconj}]
For each $ \beta $, there exists an isomorphism $ K(Y_\beta) \rightarrow V_\beta$ such that for each tangle $ T $, the following diagram commutes
\begin{equation}
\begin{CD}
K(Y_\beta) @>>> V_\beta \\
@V[\Psi(T)]VV @VV\psi(T)V \\
K(Y_{\beta'}) @>>> V_{\beta'}
\end{CD}
\end{equation}
\end{Specialconj}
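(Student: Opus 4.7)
The plan is to reduce the conjecture to checking compatibility on the elementary generators of the tangle category, and then verify each generator by a direct K-theoretic computation. The paper has already produced the isomorphism $\phi_\beta : K(Y_\beta) \to V_\beta$; the remaining task is to show this choice intertwines $[\Psi(T)]$ and $\psi(T)$ for every tangle $T$. Since the category of $(\beta,\beta')$-tangles is generated by cups, caps, and crossings modulo Reidemeister and isotopy relations, and Theorem~A asserts that $[\Psi(T)]$ is well-defined modulo these relations, it suffices to prove commutativity of the square when $T$ is one of these elementary tangles (acting on a single pair of adjacent strands, identity on the rest). Equivalently, one must match $[\G_\beta^i]$, $[\F_\beta^i]$, and $[\T_\beta^i(l)]$ with the quantum $\mathfrak{sl}_m$ coevaluation, evaluation, and $R$-matrix respectively.

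\textbf{Cups and caps.} The functors $\G_\beta^i$ and $\F_\beta^i$ are Fourier--Mukai transforms with kernel $\mathcal{O}_{X_\beta^i}$ (up to stated shifts); their K-theoretic action is computed by pull/push through the correspondence via the projection formula. Using a basis of $K(Y_\beta)$ adapted to the iterated projective-bundle structure (e.g.\ tensor products of relative tautological line bundles), one tracks the tautological classes through $X_\beta^i$ and verifies that the resulting matrix coefficients agree with the standard $\mathrm{ev}$ and $\mathrm{coev}$ maps on $V_{\omega_{\beta_i}} \otimes V_{\omega_{\beta_i}}^*$, including the $q$-weights fixed by the shift conventions.

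\textbf{Crossings.} For $\beta_i = \beta_{i+1}$, the defining spherical-twist triangle gives $[\T_\beta^i(l)] = [\mathrm{id}] - [\P_\beta^i \circ (\P_\beta^i)^*]$ on K-theory (up to the shift $l$). One identifies $[\P_\beta^i \circ (\P_\beta^i)^*]$ with a projector onto an irreducible summand of $V_{\omega_{\beta_i}}^{\otimes 2}$; the resulting expression matches the quantum $R$-matrix once $l$ is chosen to track the correct $q$-eigenvalue on that summand. For $\beta_i \neq \beta_{i+1}$, $Z_\beta^i$ realizes a family of Mukai flops whose derived equivalence is explicitly understood (Namikawa, Bondal--Orlov); the two strands carry dual minuscule representations and the $R$-matrix acts via the natural duality pairing, which corresponds on the geometric side to the flop kernel $[\mathcal{O}_{Z_\beta^i}]$.

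\textbf{Main obstacle.} The principal difficulty is pinning down the normalization of the $q$-shifts in the definitions of $\G_\beta^i$, $\F_\beta^i$, and $\T_\beta^i(l)$ so that the K-theoretic classes match the $R$-matrix on the nose (and not merely up to a scalar). A natural route is to first fix conventions by hand in the smallest cases ($n=2$ strands on $\mathbb{P}^{m-1}$), where all matrix coefficients can be computed directly and the ambiguities are eliminated; then extend to arbitrary tangles by functoriality and the categorified MOY relations of Theorem~B, which provide enough constraints that, once the generators are matched in the two-strand case, the diagram commutes for every $T$.
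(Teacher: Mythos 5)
The statement you are proving is stated in the paper only as a conjecture (Conjecture \ref{th:Kconj}), and the paper does not prove it: it proves the $q=1$ specialization (Theorem \ref{th:Kconjq1}), the case where $T$ is a link (via Corollary \ref{th:Euchar}), and that the crossing eigenvalues are correct (Remark \ref{re:Kconj}). Your reduction to elementary generators is the right first step and is exactly what the authors do for their partial results, but the ``main obstacle'' you identify --- pinning down normalizations of $q$-shifts --- is not the actual obstruction, and your claim that matching the two-strand case plus the MOY relations forces commutativity for all $T$ is unsubstantiated. The authors carry out in full the K-theoretic computation you sketch: Propositions \ref{th:GFmap} and \ref{th:Tmap} give the matrices of $[\G_\beta^i]$, $[\F_\beta^i]$ and $[\T_\beta^i(2)]$ in an explicit basis $\{\prod_i W_{i,\delta(i)}\}$ of $K(Y_\beta)$. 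The point is that the resulting crossing matrix is \emph{not} the standard $R$-matrix in the standard tensor basis for any rescaling: when $\delta(i)=a<b=\delta(i+1)$ the image involves a sum $(q^{-1}-q)\sum_{j=1}^{b-a-1}q^{b-a-j}W_{i,b-j}W_{i+1,a+j}$ spread over all intermediate weights, whereas the quantum $R$-matrix on $v_a\otimes v_b$ only produces $v_a\otimes v_b$ and $v_b\otimes v_a$. No adjustment of shifts in the definitions of the kernels removes these extra entries; what is needed is a genuinely different basis of $V_\beta$ on which the Reshetikhin--Turaev maps for cups, caps and crossings simultaneously take the computed form, and producing such a basis is exactly the open content of the conjecture.

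Your appeal to semisimplicity in the crossing step also only goes partway: for $\beta=(1,\dots,1)$ the eigenvalue computation shows the braid group action factors through the Hecke algebra, which for generic $q$ is semisimple and hence determined by its $q=1$ specialization; this yields an intertwiner for generic $q$ for the crossings alone, but it is not clear it is compatible with the cup and cap maps, it excludes non-generic $q$, and the authors state explicitly that they do not know how to extend it to general $\beta$. Finally, your assertion that the categorified MOY relations ``provide enough constraints'' to propagate a two-strand match to arbitrary tangles is precisely what fails: the MOY relations determine the invariant of \emph{closed} graphs (and hence of links, which is how Corollary \ref{th:Euchar} is proved), but they do not determine the maps assigned to open tangles, which is where the conjecture lives.
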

We give three pieces of evidence for this conjecture.  We prove that it holds when $ q=1$ (Theorem \ref{th:Kconjq1}), that it holds if $ T $ is a link, and that the eigenvalues for the crossings are correct (see Remark \ref{re:Kconj}).

\subsection{Acknowledgements}
We would like to thank Roman Bezrukavnikov, Sasha Braverman, Andrei Caldararu, Timothy Logvinenko, Mikhail Khovanov, Scott Morrison, Yoshinori Namikawa, Lev Rozansky, Noah Snyder, and Ben Webster for useful conversations, as well as a referee for some helpful suggestions. The first author would like to thank the Mittag-Leffler Institute for their hospitality during his visit in the spring of 2007 and the mathematics department at Rice University. The second author thanks the American Institute of Mathematics for support and the mathematics department of UC Berkeley for hospitality. 

\section{Geometric background} \label{se:geometry}
We begin by describing the geometric setup upon which our work is based.

\subsection{The Varieties $ Y_\beta $ for $\mathfrak{sl}_m$} \label{se:varieties} 

When we specialize to the case $ \mathfrak{g} = \mathfrak{sl}_m $, we find that $ G^\vee = PGL_m$.  The minuscule weights of $ \mathfrak{sl}_m $ are the same as its fundamental weight $ \omega_1, \dots, \omega_{m-1} $.  The corresponding variety $ \Gr_{\omega_k} $ is isomorphic to the ordinary Grassmannian $ \Gr(k,m) $ of $k$-dimensional subspaces of $ \mathbb{C}^m $.

We will consider $ \beta = (\beta_1, \dots, \beta_n) $ where each $ \beta_i $ is one of the fundamental weights.  Since the fundamental weights are labelled by $ 1, \dots, m-1 $, we may consider each $ \beta_i \in \{1, \dots, m-1\}$ and thus $\beta $ is a sequence of length $ n $ with entries drawn from $ 1, \dots, m-1 $.  The corresponding string of weights is $ \ul $ with $ \lambda_i = \omega_{\beta_i} $. 

The variety $ Y_\beta = \Gr_\ul $ may be described in more elementary terms.
Fix a vector space $ \C^m $ and consider the vector space $ \C^m \otimes (\C((z))/\C[[z]]) = \C^m \otimes z^{-1}\C[z^{-1}]  $.  This is an infinite dimensional vector space, but we will only consider finite dimensional subspaces.  Note that we have a locally nilpotent operator $ z : \vect \rightarrow \vect $. For practical purposes, we can take the following as the definition of $Y_\beta$:
\begin{equation*}
\begin{aligned}
Y_\beta := \{ (L_1, \dots, L_n) : L_i \subset \vect, \ &0 = L_0 \subset L_1 \subset L_2 \subset \dots \subset L_{n}, \\ &\dim(L_i / L_{i-1}) = \beta_i, \text{ and } z L_i \subset L_{i-1} \}.
\end{aligned}
\end{equation*}
where $ \beta = (\beta_1, \dots, \beta_n) $ with $ \beta_i \in \{1, \dots, m-1\}$. 

\begin{Proposition}
When $ \beta $ and $ \lambda $ correspond as above, we have $ Y_\beta \cong \Gr_\ul $.
\end{Proposition}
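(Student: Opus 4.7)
The plan is to unpack the adelic definition of $\Gr_\ul$ in terms of lattices in $\C^m \otimes \C((z))$, and then quotient by the standard lattice $\Lambda := \C^m \otimes \C[[z]]$ (the lattice representing the identity coset $L_0 \in \Gr$). For $PGL_m$ a $\C$-point of the minuscule orbit $\Gr_{\omega_k}$ is represented by a lattice $L$ with $\Lambda \subset L \subset z^{-1}\Lambda$ and $\dim(L/\Lambda) = k$; the map $L \mapsto L/\Lambda$ recovers the isomorphism $\Gr_{\omega_k} \cong \Gr(k,m)$ mentioned just before the proposition. Correspondingly, the $G^\vee(\C((z)))$-orbit relation $L \overset{\omega_k}{\edge} L'$ unwinds to $L \subset L' \subset z^{-1}L$ of codimension $k$, equivalently $zL' \subset L \subset L'$ with $\dim(L'/L) = k$.

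Iterating this, I would rewrite the convolution variety as
\[
\Gr_\ul = \{ (L_1, \dots, L_n) : \Lambda \subset L_1 \subset \dots \subset L_n,\ zL_i \subset L_{i-1},\ \dim(L_i/L_{i-1}) = \beta_i \},
\]
and define $\phi : \Gr_\ul \to Y_\beta$ by $\phi(L_1,\dots,L_n) := (L_1/\Lambda, \dots, L_n/\Lambda)$. Each quotient lands in $(\C^m \otimes \C((z)))/\Lambda = \vect$, and the chain, codimension, and $z$-action conditions defining $Y_\beta$ are direct translations of the corresponding conditions on $\Gr_\ul$. An inverse $\psi : Y_\beta \to \Gr_\ul$ is given by $(M_1,\dots,M_n) \mapsto (\pi^{-1}(M_1), \dots, \pi^{-1}(M_n))$, where $\pi : \C^m \otimes \C((z)) \twoheadrightarrow \vect$ is the projection with kernel $\Lambda$ coming from the splitting $\C((z)) = \C[[z]] \oplus z^{-1}\C[z^{-1}]$; the preimages are lattices containing $\Lambda$, and again all conditions translate directly.

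What remains is to verify that $\phi$ and $\psi$ are mutually inverse morphisms of schemes, not merely bijections on $\C$-points. Both sides are naturally iterated $\Gr(\beta_i,m)$-bundles: on $\Gr_\ul$ the $i$th step sends $L_i$ to $L_i/L_{i-1} \subset z^{-1}L_{i-1}/L_{i-1} \cong \C^m$, and on $Y_\beta$ the $i$th step sends $M_i$ to $M_i/M_{i-1}$ inside the analogous $\C^m$ determined by $M_{i-1}$. By induction on $n$, the isomorphism reduces to the base case $n=1$, which is the classical identification $\Gr_{\omega_k} \cong \Gr(k,m)$ invoked above. The one piece of bookkeeping that I expect to be the trickiest point is checking at each inductive step that quotienting by $\Lambda$ intertwines the two Grassmannian-bundle structures; this becomes transparent once the splitting $\C^m \otimes \C((z)) = \Lambda \oplus \vect$ is fixed, since then the successor $z^{-1}L_{i-1}/L_{i-1}$ is identified with $z^{-1}M_{i-1}/M_{i-1}$ functorially in the families.
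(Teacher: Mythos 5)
Your argument is correct and is essentially the paper's proof: the paper likewise describes $\Gr$ for $PGL_m$ as lattices in $\C^m \otimes \C((z))$ modulo $L \sim zL$, unwinds relative position $\omega_k$ to $L_1 \subset L_2$, $zL_2 \subset L_1$, $\dim(L_2/L_1)=k$, and iterates. You have simply made explicit the quotient-by-$\Lambda$ map onto $\vect$ and the compatibility of the iterated Grassmannian-bundle structures, which the paper leaves implicit in the phrase ``applying this repeatedly gives the isomorphism.''
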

\begin{proof}
The affine Grassmannian for $PGL_m $ is the ind-scheme of $ \C[[z]] $ lattices $ L $ in $ \C^m \otimes \C((z))$ modulo the equivalence relation $ L \sim z L $.  With this decription, two points in $\Gr $ are in relative position $ \omega_k $ if and only if there exist lattice representatives $ L_1, L_2 $ for them such that $ L_1 \subset L_2$, $z L_2 \subset L_1$ and $ \dim(L_2/L_1) = k $.  Applying this repeatedly gives the isomorphism.
\end{proof} 

Note that $ Y_\beta $ is a smooth projective variety of dimension $ |\beta| := \sum_i \beta_i(m - \beta_i) $.  This follows from the general theory outlined above, but it can also be seen explicitly as follows. There is a natural  map from $ Y_{(\beta_1, \dots, \beta_n)}$ to $ Y_{(\beta_1, \dots, \beta_{n-1})} $ with fibres $ \Gr(\beta_n, m) $.  To see this, suppose that we have $ (L_1, \dots, L_{n-1}) \in Y_{(\beta_1, \dots, \beta_{n-1})} $ and are considering possible choices of $ L_n $.  It is easy to see that we must have $ L_{n-1} \subset L_n \subset z^{-1}(L_{n-1}) $.  Since $ z^{-1}(L_{n-1}) / L_{n-1} $ is always $m $ dimensional, this fibre is a $ \Gr(\beta_n, m) $ and the map is a $\Gr(\beta_n, m) $ bundle.  

There is an action of $\C^\times$ on $ z^{-1}\C[z^{-1}] $ given by $ t \cdot z^k = t^{2k} z^k$.  This induces an action of $ \C^\times $ on  $\vect$. Notice that for any $v \in \vect $ we have 
$$t \cdot (zv) = t^{2}z(t \cdot v).$$
Thus, as vector spaces, $t \cdot (zL_i) = z(t \cdot L_i)$ so if $zL_i \subset L_{i-1}$ then $t \cdot zL_i \subset t \cdot L_{i-1}$ which means $z(t \cdot L_i) \subset t \cdot L_{i-1}$. Consequently, the $\C^\times$ action on $\vect$ induces a $\C^\times$ action on $Y_\beta$ by
$$t \cdot (L_1, \dots, L_{n}) = (t \cdot L_1, \dots, t \cdot L_{n}).$$

Viewed from the perspective of the affine Grassmannian, this action is the ``rotating the loops'' action on the variable $ z $.  In particular, it does not come from any action of $ \C^\times $ on $ G^\vee $. In this paper, all maps between varieties will be $\C^\times$-equivariant with respect to this (or some other) $\C^\times$ action. 


\subsection{Notational Conventions}
There are two natural ways to modify a sequence $ \beta $ at an index $ i $.  We introduce the following notation.
\begin{gather*}
s_i(\beta) := (\beta_1, \dots, \beta_{i-1}, \beta_{i+1}, \beta_i, \beta_{i+2}, \dots, \beta_n) \\
d_i(\beta) := (\beta_1, \dots, \beta_{i-1}, \beta_i + \beta_{i+1}, \beta_{i+2}, \dots, \beta_n) \\
\end{gather*}
These letters stand for ``switch'' and ``drop'' respectively.  Soon we will see how they arise geometrically. Denote $b_\beta^i := \sum_{j=1}^i \beta_j$ while $\bbeta_i := m - \beta_i$ and $\bb_\beta^i := \sum_{j=1}^i \bbeta_j$. 

We will also need the following notational convenience.  Up to now, we have allowed jumps between the $ L_i $ of dimension $1, \dots, m-1 $.  So we had $ \beta_i \in \{1, \dots, m-1 \} $.  Now, it will be more convenient to have $ \beta_i \in \mathbb{Z}/m $ where we identify an element of $ \mathbb{Z}/m $ with its unique representative in the range $ \{0, \dots, m-1 \} $.  Note that this allows the possibility of $ \beta_i = 0 $.  This poses no problem --- we just note that if $ \beta_i = 0 $, there is a natural isomorphism between $ Y_\beta $ and $ Y_{d_i(\beta)} $. Whenever an operation yields some $ \beta_i = 0 $ we automatically drop it so that we always have $ \beta_i \in \{1, \dots, m-1\}$. 

Another useful notation is the following. Often in this paper we will deal with a product of these spaces, such as $ Y_a \times Y_b \times Y_c $.  In such cases, we use the notation $ \E $ for $ \pi_1^*(\E)$, $\E' $ for $ \pi_2^*(\E) $ and $\E'' $ for $\pi_3^*(\E)$.  This way, the fibre of the bundle $ L'_i$ at a point $ (L_\cdot, L'_\cdot, L''_\cdot) \in Y_a \times Y_b \times Y_c $ is the vector space $ L'_i $.

\subsection{The subvarieties $X_\beta^i$}
For each $ 1 \le i \le n-1 $, define the following $\C^\times$-equivariant subvariety of $ Y_\beta $, 
\begin{equation*}
X_\beta^i := \begin{cases} 
\{ (L_1, \dots, L_{n}) \in Y_\beta : zL_{i+1} \subset L_{i-1} \} \text{ if } \beta_i + \beta_{i+1} \le m \\
\{ (L_1, \dots, L_n) \in Y_\beta : L_{i-1} \subset zL_{i+1} \text{ and } \mbox{ker}(z) \subset L_{i+1} \} \text{ if } \beta_i + \beta_{i+1} \ge m 
\end{cases}
\end{equation*}

We have a $\C^\times$-equivariant map $q: X_\beta^i \rightarrow Y_{d_i(\beta)} $ defined by 
\begin{equation*}
(L_1, \dots, L_n) \mapsto \begin{cases} (L_1, \dots, L_{i-1}, L_{i+1}, \dots, L_n) \text{ if } \beta_{i-1} + \beta_i < m \\
(L_1, \dots, L_{i-1}, zL_{i+1}, \dots, zL_n) \text{ if } \beta_{i-1} + \beta_i \ge m
\end{cases}
\end{equation*}
It is elementary to check that if $\beta_i + \beta_{i+1} \le m$ then $ X_\beta^i $ is a subvariety of codimension $ \beta_i \beta_{i+1} $ and the map $q: X_\beta^i \rightarrow Y_{d_i(\beta)} $ is a $ \Gr(\beta_i, \beta_i + \beta_{i+1}) $ bundle while if $\beta_i + \beta_{i+1} \ge m$ then $X_\beta^i$ is a subvariety of codimension $\bbeta_i \bbeta_{i+1}$ and the map $q: X_\beta^i \rightarrow Y_{d_i(\beta)}$ is a $ \Gr(\bbeta_i, \bbeta_i+\bbeta_{i+1})$ bundle. 

There are two special cases of particular relevance.  First, if we have $ \beta_i + \beta_{i+1} = m $, then $ d_i(\beta) = (\beta_1, \dots, \beta_{i-1}, \beta_{i+2}, \dots, \beta_m) $ because of our convention about dropping "0".  So we have a diagram 
\begin{equation*}
\begin{CD}
X_\beta^i @>i>> Y_\beta \\
@VqVV \\
Y_{(\beta_1, \dots, \beta_{i-1}, \beta_{i+2}, \dots, \beta_m)} 
\end{CD}
\end{equation*}
where $ i $ is the inclusion of a codimension $ \beta_i \beta_{i+1} $ subvariety and $ q $ is a $ \Gr(\beta_i, m) $ bundle. 

The other special case is when $ \beta_i = \beta_{i+1} = 1 $ or when $ \beta_i = \beta_{i+1} = m-1 $.  In this case, $ X_\beta^i $ is a divisor in $ Y_\beta $ and the map $ X_\beta^i \rightarrow Y_{d_i(\beta)} $ is a $ \mathbb{P}^1 $ bundle.

\begin{Remark} \label{re:sat1}
Let us examine this construction from the point of view of the affine Grassmannian.  For simplicity, assume that $ n =2 $.  We will use $ \lambda, \mu $ for $ \beta_1, \beta_2 $ and we will give an analysis which holds for any $ G^\vee $ under the assumption that $ \lambda $ and $ \mu $ are minuscule.

Recall the map $ m_{\lambda \mu} : \Gr_\lambda \tilde{\times} \Gr_\mu \rightarrow \Gr $ which is given by $ (L_1, L_2) \mapsto L_2 $.  The image of this map is contained in $ \overline{\Gr_{\lambda + \mu}} $.  For any $ \nu $ such that $ \lambda + \mu - \nu $ is a sum of positive roots, $\Gr_\nu \subset \overline{\Gr_{\lambda + \mu}}$ and we may consider the preimage $ X_{\lambda \mu}^\nu := m_{\lambda \mu}^{-1}(\Gr_\nu)$.  For general (i.e. not necessarily minuscule) $ \lambda, \mu $, the geometric Satake correspondence tells us that the number of components of $X_{\lambda \mu}^{\nu}$ of dimension $\langle \lambda + \mu + \nu, \rho \rangle $ equals the multiplicity of $V_\nu$ in $V_\lambda \otimes V_\mu$. In our minuscule setting, the tensor product multiplicities are always one and hence there is one component of the expected dimension.  The following recent result of Haines \cite{Haines2} shows that this is the only component.
\begin{Proposition} \label{th:Haines}
If $\lambda$ and $\mu$ are minuscule then $X_{\lambda \mu}^\nu $ is an irreducible variety of dimension $ \langle \lambda + \mu + \nu, \rho \rangle $.  Moreover the map $ X_{\lambda \mu}^\nu \rightarrow \Gr_\nu $ is a fibration.
\end{Proposition}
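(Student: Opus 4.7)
The plan is to use $G^\vee(\C[[z]])$-equivariance to reduce everything to a single fiber. Since $m_{\lambda\mu}$ is $G^\vee(\C[[z]])$-equivariant and $\Gr_\nu$ is a single $G^\vee(\C[[z]])$-orbit, $X_{\lambda\mu}^\nu$ is stable under this action, and choosing any base point $L_\nu \in \Gr_\nu$ with stabilizer $H_\nu \subset G^\vee(\C[[z]])$ one obtains an isomorphism $X_{\lambda\mu}^\nu \cong G^\vee(\C[[z]]) \times^{H_\nu} F$, where $F := m_{\lambda\mu}^{-1}(L_\nu)$. This presentation is automatically a locally trivial fibration over $\Gr_\nu$, so both claims reduce to showing that $F$ is irreducible of dimension $\langle \rho, \lambda+\mu-\nu \rangle$; one then adds $\dim \Gr_\nu = 2\langle \rho, \nu\rangle$ to obtain $\dim X_{\lambda\mu}^\nu = \langle \rho, \lambda+\mu+\nu\rangle$.

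Next I would analyze $F$ directly. A point of $F$ is a lattice $L_1$ with $L_0 \overset{\lambda}{\edge} L_1 \overset{\mu}{\edge} L_\nu$, so $F$ is cut out inside $\Gr_\lambda$ by the condition of having relative position $\mu$ with respect to the fixed point $L_\nu$ --- an intersection of two Schubert-type loci. This is exactly the Mirkovi\'c--Vilonen setup: the top-dimensional irreducible components of $F$ have dimension $\langle \rho, \lambda+\mu-\nu\rangle$ and biject with an MV basis of the $\nu$-weight space of $V_\lambda \otimes V_\mu$. The minuscule hypothesis now plays a crucial role: the tensor product $V_\lambda \otimes V_\mu$ is multiplicity-free, so $[V_\lambda \otimes V_\mu : V_\nu]$ is either $0$ (in which case $X_{\lambda\mu}^\nu$ is empty and there is nothing to prove) or $1$, yielding at most one top-dimensional component. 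I would then identify this component concretely as a Grassmannian-bundle-type variety using the lattice description, directly mirroring the explicit $\mathfrak{sl}_m$ description of $q : X_\beta^i \to Y_{d_i(\beta)}$ recalled just above.

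The main obstacle is excluding lower-dimensional components of $F$. Semismallness of $m_{\lambda\mu}$, which holds because $\Gr_\lambda \tilde\times \Gr_\mu$ is smooth of dimension $2\langle \rho, \lambda+\mu\rangle$ and the MV estimate bounds fiber dimensions by $\langle \rho, \lambda+\mu-\nu\rangle$, only provides an upper bound on $\dim F$ and does not by itself force equidimensionality. My approach to closing this gap would be to exhibit $F$ as a single Grassmannian bundle using the lattice description directly, mimicking the $\mathfrak{sl}_m$ computation stratum by stratum: fix $L_\nu$, parametrize candidate $L_1$ by a choice of subspace inside a fixed finite-dimensional quotient cut out by both the $\lambda$- and $\mu$-conditions, and check that the minuscule assumption forces the resulting parameter space to be a smooth Grassmannian. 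Combining this explicit big component with multiplicity-freeness and a connectedness argument for $F$ (using irreducibility of both minuscule Schubert factors in the intersection) should then upgrade ``at most one top-dimensional component'' to ``irreducible of that exact dimension.'' This final connectedness step, in the generality of arbitrary $G^\vee$ rather than only $\mathfrak{sl}_m$, is where I would expect most of the technical effort to concentrate.
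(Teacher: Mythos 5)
The paper does not actually prove this Proposition: it is quoted directly from Haines \cite{Haines2}, and the surrounding Remark only supplies the geometric Satake count showing there is exactly one component of the expected dimension $\langle \lambda+\mu+\nu,\rho\rangle$. Your first two steps reproduce that part correctly: the equivariant reduction $X_{\lambda\mu}^\nu \cong G^\vee(\C[[z]])\times^{H_\nu}F$ with $F = m_{\lambda\mu}^{-1}(L_\nu)$ gives the fibration claim, and semismallness together with multiplicity-freeness of $V_\lambda\otimes V_\mu$ for minuscule $\lambda,\mu$ gives \emph{at most one} component of top dimension. (A minor imprecision: the top-dimensional components of $F$ index a basis of the multiplicity space $\Hom(V_\nu, V_\lambda\otimes V_\mu)$, not of the $\nu$-weight space; the multiplicity-one conclusion you draw is the right one.)

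The genuine gap is exactly the one you flag and do not close: ruling out irreducible components of $F$ (equivalently, of $X_{\lambda\mu}^\nu$) of dimension strictly less than $\langle\rho,\lambda+\mu-\nu\rangle$. Neither of your proposed fixes works. The ``lattice description'' exists only for $G^\vee = PGL_m$; for a general group there is no ambient vector space in which to parametrize $L_1$ by subspaces, so the Grassmannian-bundle computation modelled on $q: X_\beta^i \to Y_{d_i(\beta)}$ cannot be mimicked. And connectedness cannot substitute for equidimensionality: a connected variety can have a top-dimensional component with strictly smaller components attached, and irreducibility of the two Schubert-type loci being intersected says nothing about irreducibility or equidimensionality of their intersection (such intersections are in general neither). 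What is missing is precisely Haines's equidimensionality theorem for fibers of convolution morphisms, a substantive result with its own proof, which is why the paper cites \cite{Haines2} rather than arguing directly.
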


In particular, suppose that $ \beta = (k,l) $.  Under the identification $ Y_\beta \cong \Gr_{\omega_k} \tilde{\times} \Gr_{\omega_l} $, then the subvariety $ X_\beta^1 $ is identified with $ X_{\omega_k \omega_l}^\nu $ where $ \nu = \omega_{k+l} $ if $ k + l \ne m $ and $ \nu = 0 $ if $ k + l = m $.

\end{Remark}

\subsection{Vector bundles}

For $1 \le k \le n$ we have a $\C^\times$-equivariant vector bundle on $Y_\beta$ whose fibre over the point $(L_1, \dots, L_n) \in Y_\beta$ is $L_k/L_0$. Abusing notation a little, we will denote this vector bundle by $L_k$. Since $L_{k-1} \subset L_k$ we can consider quotient $L_k/L_{k-1}$ as well as the determinant of the quotient $\E_k = \det(L_k/L_{k-1})$ which is a $\C^\times$-equivariant line bundle on $Y_\beta$. It will be convenient to also consider $z^{-1}L_{k-1}/L_k$ and $L_{k-1}/zL_k$ and the associated determinant $\bE_k := \det(z^{-1}L_{k-1}/L_k) \{-\bbeta_k\} \cong \det(L_{k-1}/zL_k) \{\bbeta_k\}$ (the isomorphism follows from lemma \ref{lem:facts0} below).  

Similarly, we get equivariant vector bundles $L_k$ on $X_\beta^i$ as well as the corresponding line bundles $\E_k$ and $\bE_k$ for any $1 \le k \le n$. Since $L_k$ on $Y_\beta$ restricted to $X_\beta^i$ is isomorphic to $L_k$ on $X_\beta^i$ we can omit subscripts and superscripts telling us where each $L_k, \E_k$ and $\bE_k$ lives. 
The map $z:L_i \rightarrow L_{i-1}$ has weight $2$ since 
$$(t \cdot z)(v) = t \cdot (z(t^{-1} \cdot v)) = t^{2}z(t \cdot t^{-1} \cdot v) = t^{2}zv.$$
If $\sF$ is a $\C^\times$-equivariant sheaf on $Y_n$ then we denote by $\sF\{k\}$ the same sheaf but shifted with respect to the $\C^\times$ action so that if $f \in \sF(U)$ is a local section of $\sF$ then viewed as a section $f' \in \sF\{k\}(U)$ we have $t \cdot f' = t^{-k}(t \cdot f)$. Using this notation we obtain the $\C^\times$-equivariant map $z: L_{i+1} \rightarrow L_i\{2\}$.

\begin{Lemma}\label{lem:facts0} On $Y_\beta$ we have
\begin{enumerate}
\item $\det(z^{-1}L_i/L_i) \cong \O_{Y_\beta} \{2b_\beta^i + 2m\}$ 
\item $\det(z^{-1}L_{i-1}/L_i) \cong \E_{i}^\vee \{2b_\beta^{i-1} + 2m\}$ and $\det(L_{i-1}/zL_{i}) \cong \E_{i}^\vee \{2b_\beta^{i}\}$ 
\end{enumerate}
where $b_\beta^i = \sum_{j=1}^i \beta_j$.
\end{Lemma}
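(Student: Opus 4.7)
The plan is to prove (i) first by playing two complementary equivariant short exact sequences against each other, and then derive both formulas in (ii) from (i) via additional exact sequences.

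For (i), I would first note that $L_i \subset z^{-1}L_i$ (since $zL_i \subset L_{i-1} \subset L_i$), giving the standard equivariant short exact sequence
\[
0 \to L_i \to z^{-1}L_i \to z^{-1}L_i/L_i \to 0.
\]
Separately, the operator $z \colon V \to V$ has $\C^\times$-weight $2$ and is a surjection with kernel $\ker z = \C^m \otimes z^{-1}$, so restricting to $z^{-1}L_i \to L_i$ produces a second equivariant sequence
\[
0 \to \ker z \to z^{-1}L_i \to L_i\{2\} \to 0.
\]
Here $\ker z$ is the trivial equivariant bundle $\C^m \otimes \O\{2\}$, so $\det(\ker z) = \O\{2m\}$ and $\det(L_i\{2\}) = \det(L_i)\{2 b_\beta^i\}$. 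Equating the two computations of $\det(z^{-1}L_i)$ immediately yields $\det(z^{-1}L_i/L_i) \cong \O\{2 b_\beta^i + 2m\}$.

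For the first formula in (ii), the chain $L_{i-1} \subset L_i \subset z^{-1}L_{i-1}$ (the second inclusion from $zL_i \subset L_{i-1}$) gives the equivariant sequence $0 \to L_i/L_{i-1} \to z^{-1}L_{i-1}/L_{i-1} \to z^{-1}L_{i-1}/L_i \to 0$. Taking determinants, using $\det(L_i/L_{i-1}) = \E_i$, and applying (i) with $i$ replaced by $i-1$ produces the claimed identity.

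For the second formula in (ii), I would realize $L_{i-1}/zL_i$ as the cokernel of the equivariant perfect two-term complex $[L_i\{-2\} \xrightarrow{z} L_{i-1}]$ (the shift $\{-2\}$ is what makes the weight-$2$ map $z$ equivariant), and compute its determinant directly as
\[
\det(L_{i-1}) \otimes \det(L_i\{-2\})^{-1} = \det(L_{i-1}) \otimes \det(L_i)^{-1}\{2 b_\beta^i\} = \E_i^\vee\{2 b_\beta^i\}.
\]
The main obstacle is that $z \colon L_i \to L_{i-1}$ has a nontrivial kernel (it contains $L_1$ and its rank jumps on the locus where $zL_i \subsetneq L_{i-1}$), so $L_{i-1}/zL_i$ is generically a torsion sheaf rather than locally free, and the identification of its determinant with that of the complex requires care. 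I would justify this either (a) by verifying agreement of the two equivariant line bundles at each $\C^\times$-fixed point of $Y_\beta$, using that $Y_\beta$ has enough fixed points to separate equivariant line bundles on the smooth projective variety, or (b) by invoking the formal additivity of the determinant functor on perfect complexes on a smooth variety, so that the contributions from $\ker(z|_{L_i})$ and the rank-jump loci cancel in the virtual determinant.
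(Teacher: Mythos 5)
Your argument for (i) and for the first isomorphism in (ii) is the paper's own, just presented in two steps instead of one: the paper uses the single sequence $0 \to z^{-1}L_0/L_0 \to z^{-1}L_i/L_0 \xrightarrow{z} L_i/L_0\{2\} \to 0$ (your two sequences combined) together with $z^{-1}L_0/L_0 \cong \O_{Y_\beta}^{\oplus m}\{2\}$, and then the filtration $L_{i-1}\subset L_i\subset z^{-1}L_{i-1}$. The only divergence is the last isomorphism: the paper deduces $\det(L_{i-1}/zL_i)$ from the already-proved $\det(z^{-1}L_{i-1}/L_i)$ by treating $z\colon z^{-1}L_{i-1}/L_i \to (L_{i-1}/zL_i)\{2\}$ as an isomorphism of rank-$\bbeta_i$ bundles, while you take determinants of the two-term complex $[L_i\{-2\}\xrightarrow{z}L_{i-1}]$ directly; both land on $\E_i^\vee\{2b_\beta^i\}$. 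The obstacle you flag is genuine, but it afflicts the paper's step just as much (the paper's map has kernel $\ker z/(\ker z\cap L_i)$, and $L_{i-1}/zL_i$ has jumping rank on $Y_\beta$), and its resolution is a matter of convention rather than of further argument: either one reads $\det(L_{i-1}/zL_i)$ as the determinant of the perfect complex $[L_i\{-2\}\to L_{i-1}]$, equivalently of the class $[L_{i-1}]-[L_i\{-2\}]$ in equivariant $K$-theory, in which case your one-line computation is already the complete proof and your justifications (a) and (b) are not needed; or one restricts to the locus $\ker z\subset L_k$, where the quotient is honestly locally free of rank $\bbeta_k$ and the paper's map is a genuine isomorphism --- which is exactly where $\bE_k$ is used later, e.g.\ on $X_\beta^i$ with $\beta_i+\beta_{i+1}\ge m$. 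Note that your option (a) would not help in any case, since the difficulty is not telling two line bundles apart but making sense of $\det$ of a non-locally-free sheaf in the first place.
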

\begin{proof}
(i). From the exact sequence $0 \rightarrow z^{-1}L_0/L_0 \rightarrow z^{-1}L_i/L_0 \xrightarrow{z} L_i/L_0 \{2\} \rightarrow 0$ we have
\begin{align*}
\det(z^{-1}L_i/L_i) &\cong \det(z^{-1}L_i/L_0) \otimes \det(L_i/L_0)^\vee \\
&\cong \det(z^{-1}L_0/L_0) \otimes \O_{Y_\beta} \{2 b_\beta^i\}
\cong \O_{Y_\beta} \{2 b_\beta^i + 2m\}
\end{align*}
where, to get the last equality, we use that $z^{-1}L_0/L_0 \cong \O_{Y_\beta}^{\oplus m} \{2\}$. 

(ii). Similarly, using the exact sequence $0 \rightarrow L_{i+1}/L_i \rightarrow z^{-1}L_i/L_i \rightarrow z^{-1}L_i/L_{i+1} \rightarrow 0$ we have 
\begin{equation*}
\det(z^{-1}L_i/L_{i+1}) \cong \det(z^{-1}L_i/L_i) \otimes \det(L_{i+1}/L_i)^\vee 
\cong \E_i^\vee \{2b_\beta^i+2m\}.
\end{equation*}
Also, from $z^{-1}L_i/L_{i+1} \xrightarrow{z} L_i/zL_{i+1} \{2\}$ we get
\begin{equation*}
\det(L_i/zL_{i+1}) \cong \E_{i+1}^\vee \{2b_\beta^i+2m\} \otimes \O_{Y_\beta} \{-2(m-\beta_{i+1})\} 
\cong \E_{i+1}^\vee \{2b_\beta^{i+1}\}.
\end{equation*}
\end{proof}

\subsection{The Subvarieties $Z_\beta^i$}
Consider the $\C^\times$-equivariant subvariety
\begin{equation*}
 Z_\beta^i := \{ (L_\cdot, L'_\cdot) : L_j = L_j' \text{ for $ j \ne i $ } \} \subset Y_\beta \times Y_{s_i(\beta)}.
\end{equation*}

Now, we will analyze the four cases which lead to $ Z_\beta^i $ having two components.  These cases are 
\begin{equation*}
\beta_i = \beta_{i+1} = 1, \quad \beta_i = \beta_{i+1} = m-1, \quad \beta_i = 1, \beta_{i+1} = m-1, \quad \beta_i = m-1, \beta_{i+1} = 1
\end{equation*}
In all these $Z_\beta^i$ has two smooth irreducible components each of the same dimension as $ Y_\beta $.  The cases where $ Z_\beta^i $ has two components are much easier to handle, so we will limit ourselves to those in this paper.

Suppose that we are in the first case and assume that $ \beta_i = \beta_{i+1} = 1 $.  Then the first component corresponds to the locus of points where $L_i=L'_i$, and so is the diagonal $ \Delta \subset Y_\beta \times Y_\beta $.   The second component is the closure of the locus of points where $L_i \ne L'_i$.  Note that if $ L_i \ne L_i' $, then $ zL_{i+1} \subset L_i \cap L_i' =  L_{i-1}$. Thus we see that on this closure, $ zL_{i+1} \subset L_{i-1} $.  Hence this second component is the subvariety $V_\beta^i := X_\beta^i \times_{Y_{d_i(\beta)}} X_\beta^i$ where the fibre product is with respect to the map $q: X_\beta^i \rightarrow Y_{d_i(\beta)}$. Thus $Z_\beta^i = \Delta \cup V_\beta^i \subset Y_\beta \times Y_\beta$.  

If $ \beta_i = \beta_{i+1} = m - 1$, then a similar analysis holds with the same conclusion.  On the non-diagonal component, we have $ L_{i-1} \subset L_i \cap L_i' = zL_{i+1} $.  

On the other hand, suppose that $ \beta_i = 1 $ and $ \beta_{i+1} = m - 1 $.  Then the first component $W_\beta^i$ is the locus of points where $ L_i \subset L'_i $. Notice that if we let $ \beta' = (\beta_1, \dots, \beta_{i-1}, 1, m-2, 1, \beta_{i+2}, \dots, \beta_n) $ then $W_\beta^i$ is a natural subvariety of $\subset Y_{\beta'}$ corresponding to the locus where $zL_{i+1} \subset L_{i-1}$ and $zL_{i+2} \subset L_i$. The maps $ \pi_1 : W_\beta^i \rightarrow Y_\beta, \pi_2 : W_\beta^i \rightarrow Y_{s_i(\beta)} $ are both birational, since they are isomorphisms over the open sets where $ zL_{i+1} \nsubseteq L_{i-1} $. For this reason we sometimes refer to $W_\beta^i$ as the diagonal-like component.  

The second component is the closure of the locus of points where $ L_i \nsubseteq L'_i $.  On this closure, we see that $ zL_{i+1} \subset L_{i-1} $.  So this component is $ V_\beta^i := X_{\beta}^i \times_{Y_{d_i(\beta)}} X_{s_i(\beta)}^i$. Thus as above, we conclude that $ Z_\beta^i = W_\beta^i \cup V_\beta^i $.

A similar analysis holds in the $ \beta_i = m-1, \beta_{i+1} = 1 $ case.

\begin{Remark} \label{re:sat2}
Let us now consider the variety $Z_\beta^i $ from the point of view of the geometric Satake correspondence.  We retain the notation and assumptions of Remark \ref{re:sat1}.  Define
\begin{equation*}
Z_{\lambda \mu} := \Gr_\lambda \tilde{\times} \Gr_\mu \times_{\overline{\Gr_{\lambda + \mu}}} \Gr_\mu \tilde{\times} \Gr_\lambda.
\end{equation*}
Using the stratification of $ \overline{\Gr_{\lambda + \mu}} $, we see that
\begin{equation*}
Z_{\lambda \mu} = \bigsqcup_\nu m_{\lambda \mu}^{-1}(\Gr_\nu) \times_{\Gr_\nu} m_{\mu \lambda}^{-1}(\Gr_\nu).
\end{equation*}
By Proposition \ref{th:Haines}, each piece in this decomposition has dimension $2 \langle \lambda + \mu - \nu, \rho \rangle + 2 \langle \nu, \rho \rangle = \langle \lambda + \mu, 2 \rho \rangle $. Since they are all disjoint, their closures are the components of $ Z_{\lambda \mu} $.  Thus, we conclude that the components of $ Z_{\lambda \mu} $ correspond to exactly those $ \nu $ occuring in the tensor product decomposition of $ V_\lambda \otimes V_\mu $.

If we are in the case of $ \mathfrak{g} = \mathfrak{sl}_m $, $ \lambda = \omega_k, \mu = \omega_l $, we see that $ V_\lambda \otimes V_\mu $ has two direct summands if and only if $ k, l \in \{ 1, m- 1 \} $.  This matches with the conclusions drawn above.  More specifically, we have the following components if $ k =l \in \{1, m-1 \}$
\begin{equation*}
\overline{m_{\omega_k \omega_k}^{-1} (\Gr_{2\omega_k}) \times_{\Gr_{2 \omega_k}} m_{\omega_k \omega_k}^{-1} (\Gr_{2\omega_k})}\quad 
m_{\omega_k \omega_k}^{-1} (\Gr_{2\omega_k - \alpha_k}) \times_{\Gr_{2\omega_k - \alpha_k}} m_{\omega_k \omega_k}^{-1} (\Gr_{2\omega_k - \alpha_k})
\end{equation*}
which correspond to $ \Delta_{Y_\beta} $, $ V_\beta^1$ respectively (where $ \beta = (k,l)$).  Here $ \alpha_k $ denotes the simple root corresponding to $ \omega_k $.

If $ k \ne l \in \{1, m-1\} $, we have components
\begin{equation*}
\overline{ m_{\omega_k \omega_l}^{-1} (\Gr_{\omega_k + \omega_l}) \times_{\Gr_{\omega_k + \omega_l}} m_{\omega_l \omega_k}^{-1} (\Gr_{\omega_k + \omega_l}) } \quad  
m_{\omega_k \omega_l}^{-1} (\Gr_0) \times_{\Gr_0} m_{\omega_l \omega_k}^{-1} (\Gr_0)
\end{equation*}
which correspond to $ W_\beta^1, V_\beta^1 $ respectively (where $ \beta = (k,l)$).
\end{Remark}

\subsection{Comparison with resolution of slices} \label{se:SS}
The main purpose of this subsection is to make contact with the work of Manolescu.  For this section, we will work with $ \beta(k) := (1, \dots, 1, m-1, \dots, m-1) $ where there are $ k $ ones and $ k $ $m-1$s.  Let $ Y_k = Y_{\beta(k)}$.  

\subsubsection{The Spaltenstein variety}
We consider the subspace $ \C^{mk} =  \C^{m} \otimes \mathrm{span}(z^{-1}, \dots, z^{-k}) \subset \C^{m} \otimes z^{-1} \C[z^{-1}] $. Let 
\begin{equation*}
F_k := \{ (L_1, \dots, L_{2k}) \in Y_k : L_{2k} = \C^{mk} \}.
\end{equation*}
Note that $ z $ restricted to $ \C^{mk} $ is a nilpotent operator of Jordan type $ \rho = (k,\dots, k) $ (where there are $m$ $k$s).  Hence $ F_k $ is the variety of all partial flags of type $ \beta(k) $ which are respected by a fixed nilpotent of type $ \rho $ (it is called a Spaltenstein variety).  

Now, we define a certain open neighbourhood of $ F_k $, inside $ Y_k $.  Define a linear operator $ P : \C^{m} \otimes z^{-1} \C[z^{-1}]  \rightarrow \C^{mk} $ by $ P (v \otimes z^{-i}) = v \otimes z^{-i} $ if $ i \le k  $ and $ P (v \otimes z^{-i}) = 0 $ if $ i > k $.  So $ P $ is a projection onto the subspace $ \C^{mk} $.  Define
\begin{equation*}
U_k := \{ (L_1, \dots, L_{2k}) \in Y_k : P(L_{2k}) = \C^{mk} \}.
\end{equation*}

\begin{Remark}
Let us comment on the meaning of these varieties from the point of view of the affine Grassmannian.  Recall that we had the convolution variety $ \Gr_{\ul} $ which comes with a map $ m : \Gr_{\ul} \rightarrow \Gr $.  Under the isomorphism $ Y_k \cong \Gr_{\ul} $, the subvariety $ F_k $ is identified with the variety $ m^{-1}(z^0)$ where $z^0$ is the $G^\vee(\C[[z]])$-orbit of the indentity matrix in $\Gr$ (this orbit is a point). Under the geometric Satake correspondence the push forward of the constant sheaf of $F_k$ by $ m $ corresponds to the tensor product $ V_{\beta(k)}$.  It can be easily seen (for example Prop 3.1 of \cite{Haines1}) that this implies that the top homology of $ F_k $ is isomorphic to the space of invariants $ (\C^m \otimes \cdots \otimes \C^m \otimes \Lambda^{m-1}(\C^m) \otimes \cdots \Lambda^{m-1}(\C^m))^{\mathfrak{sl}_m} $. 

The point $ z^0 $ in $\Gr $ has a natural open neigbourhood $ \phantom{}_0 Gr := G^\vee([z^{-1}]) \cdot z^0 $ in $ \Gr $.  Under the isomorphism $ Y_k \cong \Gr_{\beta(k)} $, the subvariety $ U_k $ is identified with the open subset $ m^{-1}(\phantom{}_0 Gr) \subset \Gr_{\beta(k)} $.
\end{Remark}
 
\subsubsection{Nilpotent slices}
Let $ e_k $ denote the nilpotent matrix of Jordan type $ \rho(k) = (k,\dots, k) $ and let $ f_k $ denote the matrix completing it to an $ \mathfrak{sl}_2 $ triple.  Let $ S_{\rho(k)} $ denote the set $ e_k + ker(f_k \cdot) $, which is a slice to the adjoint orbit of $ e_k $.  We consider the following variety
\begin{equation*}
\widetilde{S}_{\rho(k), \beta(k)} := \{ (X, V_\cdot) : X \in S_{\rho(k)}, \ V_\cdot \text{ is a partial flag in $ \C^{mk} $ of type } \beta(k), \ X V_i \subset V_{i-1} \}.
\end{equation*}
Note that $ \widetilde{S}_{\rho(k), \beta(k)} $ contains $ F_k $.

In \cite{kh1}, we considered the case when $ m = 2$.  Via similar reasoning to that paper, it is easy to establish the following result which is a special case of Theorem 3.2 in \cite{MVy}.
\begin{Proposition} \label{prop:Unslice}
There is an isomorphism $ U_k \rightarrow \widetilde{S}_{\rho(k), \beta(k)}$ which is the identity on $ F_k $.
\end{Proposition}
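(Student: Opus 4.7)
The plan is to exhibit an explicit isomorphism and its inverse, both restricting to the identity on $F_k$.

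For the forward map, let $(L_1,\dots,L_{2k}) \in U_k$. Since $\dim L_{2k} = k \cdot 1 + k \cdot (m-1) = mk = \dim \C^{mk}$ and $P(L_{2k}) = \C^{mk}$ by definition of $U_k$, the restriction $\phi := P|_{L_{2k}}$ is a linear isomorphism $L_{2k} \xrightarrow{\sim} \C^{mk}$. Define $V_i := \phi(L_i)$, which is a partial flag in $\C^{mk}$ of type $\beta(k)$. Because $zL_{2k} \subset L_{2k-1} \subset L_{2k}$, the operator $z$ restricts to a nilpotent endomorphism of $L_{2k}$, and we set $X := \phi \circ z|_{L_{2k}} \circ \phi^{-1} : \C^{mk} \to \C^{mk}$. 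The relation $zL_i \subset L_{i-1}$ immediately gives $XV_i \subset V_{i-1}$.

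The main verification is that $X \in S_{\rho(k)}$. Choose a basis $\varepsilon_1,\dots,\varepsilon_m$ of $\C^m$, so that $\{\varepsilon_i z^{-j}\}_{1 \le i \le m,\, 1 \le j \le k}$ is a basis of $\C^{mk}$. For each such pair $(i,j)$, let $v_{i,j} \in L_{2k}$ be the unique lift with $P(v_{i,j}) = \varepsilon_i z^{-j}$, so that $v_{i,j} = \varepsilon_i z^{-j} + \sum_{r,\, l>k} a^{i,j}_{r,l}\, \varepsilon_r z^{-l}$. A short computation shows $zv_{i,j} = v_{i,j-1}$ for $j \ge 2$ (the correction terms shift down and, being in $L_{2k}$ with the correct $\phi$-image, must equal the unique lifts), while $P(zv_{i,1}) = \sum_r a^{i,1}_{r,k+1}\, \varepsilon_r z^{-k}$ forces $zv_{i,1} = \sum_r a^{i,1}_{r,k+1}\, v_{r,k}$. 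Thus in the coordinates $\varepsilon_i z^{-j} \leftrightarrow v_{i,j}$, the operator $X - e_k$ vanishes on $\varepsilon_i z^{-j}$ for $j \ge 2$ and sends $\varepsilon_i z^{-1}$ into $\spn\{\varepsilon_r z^{-k}\}$. A direct bracket computation shows this is exactly the defining condition of $\ker(f_k\cdot)$ for the $\mathfrak{sl}_2$-triple $(e_k,h_k,f_k)$, so $X \in e_k + \ker(f_k \cdot) = S_{\rho(k)}$.

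For the inverse, given $(X,V_\cdot) \in \widetilde{S}_{\rho(k),\beta(k)}$, read off the coefficients $a^{i,1}_{r,k+1}$ from the matrix of $X - e_k$ (the slice condition says these are the only independent entries) and propagate them to all $a^{i,j}_{r,l}$ via the recursion forced by $zv_{i,j} = v_{i,j-1}$. This produces a $\C[z]$-module embedding $\iota_X: \C^{mk} \hookrightarrow \vect$ with $P \circ \iota_X = \mathrm{id}_{\C^{mk}}$, and we set $L_i := \iota_X(V_i)$. The condition $XV_i \subset V_{i-1}$ translates into $zL_i \subset L_{i-1}$, and $P(L_{2k}) = \C^{mk}$ by construction, placing the tuple in $U_k$. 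Checking that the two constructions are mutually inverse and that they restrict to the identity on $F_k$ (where $L_{2k} = \C^{mk}$ forces all $a^{i,j}_{r,l} = 0$, hence $\phi = \mathrm{id}$ and $X = e_k$) is routine.

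The main obstacle is the explicit construction of $\iota_X$, which requires showing that the slice condition records exactly the right amount of data to uniquely extend the standard basis of $\C^{mk}$ to a $\C[z]$-compatible basis of a lift $L_{2k} \subset \vect$; this is the content of the Mirkovi\'c-Vybornov isomorphism cited from \cite{MVy}. Once this is set up, $\C^\times$-equivariance follows because the weight-$2$ action of $z$ on $\vect$ matches Kostant's dilation action on $S_{\rho(k)}$ under the identification.
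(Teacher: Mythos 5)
Your overall strategy --- transport $z$ and the flag through the isomorphism $\phi = P|_{L_{2k}} : L_{2k}\to\C^{mk}$, then invert by reconstructing the lattice from $X$ --- is the right one (the paper itself offers no argument here beyond citing the $m=2$ case of \cite{kh1} and Theorem 3.2 of \cite{MVy}). But the central computation contains an error. Writing $v_{i,j}=\varepsilon_i z^{-j}+\sum_{r,\,l>k}a^{i,j}_{r,l}\,\varepsilon_r z^{-l}$, applying $z$ shifts the $l=k+1$ correction terms \emph{into} $\C^{mk}$, so that $P(zv_{i,j})=\varepsilon_i z^{-(j-1)}+\sum_r a^{i,j}_{r,k+1}\varepsilon_r z^{-k}$, and hence $zv_{i,j}=v_{i,j-1}+\sum_r a^{i,j}_{r,k+1}v_{r,k}$, not $v_{i,j-1}$. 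Consequently $(X-e_k)(\varepsilon_i z^{-j})=\sum_r a^{i,j}_{r,k+1}\varepsilon_r z^{-k}$ for \emph{every} $j$, not only for $j=1$. The correct conclusion is that $\mathrm{im}(X-e_k)\subset\C^m\otimes z^{-k}=\ker(f_k)$, i.e.\ $X-e_k\in\ker(f_k\,\cdot)$ in the sense of left multiplication by $f_k$, which is how the paper defines $S_{\rho(k)}$; this subspace has dimension $m^2k$, matching the $m^2k$ free parameters $a^{i,j}_{r,k+1}$, whereas your description of $X$ accounts for only $m^2$ of them. Relatedly, the ``direct bracket computation'' you invoke tests membership in $\ker(\mathrm{ad}\,f_k)$, which for $k\ge 2$ is a \emph{different} $m^2k$-dimensional subspace; your (too restrictive) candidate for $X-e_k$ happens to lie in the intersection of the two, which masks the discrepancy.

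The error propagates to your inverse map: reading off only the entries $a^{i,1}_{r,k+1}$ of $X-e_k$ recovers only those $X$ for which $X-e_k$ is supported on the column $\C^m\otimes z^{-1}$, an $m^2$-dimensional family, so the map as you have written it cannot be surjective onto $\widetilde{S}_{\rho(k),\beta(k)}$. The repair is mechanical but necessary: a point of the slice is given by the full collection $\{a^{i,j}_{r,k+1}\}$ (all $i,j,r$), and the remaining coefficients $a^{i,j}_{r,l}$ for $l>k+1$ are determined by comparing coefficients of $\varepsilon_r z^{-l}$ in the corrected relation $zv_{i,j}=v_{i,j-1}+\sum_r a^{i,j}_{r,k+1}v_{r,k}$ (using also that $z^{2k}L_{2k}=0$ forces $a^{i,j}_{r,l}=0$ for $l>2k$). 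With that correction the argument goes through and does give the asserted isomorphism, restricting to the identity on $F_k$ exactly as you say.
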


\subsubsection{Manolescu construction}
We can now explain one motivation for our work.

In his paper \cite{M}, Manolescu gives a construction of a link invariant using symplectic geometry, generalizing the construction of Seidel-Smith \cite{SS}.  Following \cite{M}, consider the subgroup $ \Sigma_k \times \Sigma_k \subset \Sigma_{2k} $, where $ \Sigma_n $ denotes the symmetric group, and consider the quotient 
$ BC_k := (\C^{2k} \smallsetminus \Delta) / \Sigma_k \times \Sigma_k $.  
For any $ \tau = (\lambda, \mu) \in BC_k $, consider the affine variety 
\begin{equation*}
M_{k, \tau} := \{ X \in S_k : X \text{ has eigenvalues } \lambda \text{ of multiplicity 1 and } \mu \text{ of multiplicity $m-1$} \}.
\end{equation*}
As $ \tau $ varies over $ BC_k $, the $M_{k, \tau} $  form a symplectic fibration.  Manolescu uses this fibration to construct an action of the braid group $ B_k \times B_k = \pi_1(BC_k) $ on the set of Lagrangian submanifolds of a given fibre $ M_{k, \tau} $.  Using this action he constructs a link invariant by associating to each link $ K $, $ HF(L, (\beta \times 1)(L)) $, where $ HF(,) $ denotes Floer cohomology, $ \sigma \in B_k $ is a braid whose closure is $ K $, $ L $ is certain chosen Lagrangian in $ M_{k, \tau} $ not depending on $ K $, and $ \sigma(L) $ denotes the above action of $ \sigma $ on $ L $.

The homological mirror symmetry principle suggests that there should exist some derived category of coherent sheaves equivalent to the Fukaya category of the affine K\"ahler manifold $ M_{k, \tau} $.   In particular, the braid group should act on this derived category and it should be possible to construct a link invariant in the analogous manner, namely as $ \Ext(L, (\sigma \times 1)(L))$ for an appropriate chosen object $ L $ in a certain derived category of coherent sheaves.  

By the general theory of nilpotent slices (see for example Proposition 2.11 of \cite{M}), $ M_{k, \tau} $ is isomorphic as a manifold to $ \widetilde{S}_{\rho(k), \beta(k)} $, but has a different complex structure.  In particular, they are related by a classic ``defomation vs. resolution'' picture for hyperK\"ahler singularities.  Hence hyperK\"ahler rotation suggests that the Fukaya category of $ M_{k,\tau} $ should be related to the derived category of coherent sheaves on $ \widetilde{S}_{\rho(k), \beta(k)}$ or perhaps the subcategory of complexes of coherent sheaves whose cohomologies are supported on the Spaltenstein variety $ F_k$.  

In our paper, we found it more convenient to work with the full derived category of the compactification $Y_k $ of $ U_k$ and to work with tangles, rather than just links presented as the closures of braids.  However, suppose that a link $ K $ is presented as the closure of braid $ \beta \in B_k $.  Tracing through our definitions from section \ref{se:funtangle}, we see that (ignoring the bigrading)
\begin{equation*}
\Kh(K) = H(\Psi(K)(\C)) \cong \Ext_{D(Y_k)}(L, \sigma(L)) 
\end{equation*}
where $ L $ is the structure sheaf of a certain component of the Spaltenstein variety $ F_k $ (tensored with a line bundle).  Thus, our link invariant has the same form as would be expected from this string theory reasoning.

\section{Properties of the Varieties Involved}

In this section we list some technical properties of our varieties $X_\beta^i$, $Y_\beta$ and $Z_\beta^i$ and in so doing describe more of the geometry. These results will be handy in later sections and we will often refer back to them. 

\subsection{A topological description of $Y_\beta$}

Earlier, we saw that $ Y_\beta $ is an iterated Grassmannian bundle.  In fact these bundles are topologically trivial.  The following is a nice way of seeing this and will be useful in what follows.

Fix a vector space $ \C^{m} $ with a Hermitian inner product.  

\begin{Theorem} \label{th:isoman}
There is a diffeomorphism $Y_\beta \rightarrow \prod_i \Gr(\beta_i, m) $.  Moreover under this isomorphism, $ X_\beta^i $ is taken to the submanifold $$ A_\beta^i := \{(l_1, \dots, l_i, l_{i+1}, \dots l_{n}) : l_i \perp l_{i+1} \} \subset \prod_i \Gr(\beta_i, m). $$
\end{Theorem}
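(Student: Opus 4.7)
I would proceed by induction on $n$, the length of $\beta$. The base case $n = 1$ is immediate, since $Y_{(\beta_1)} = \Gr(\beta_1, m)$ by definition and the condition on $X_\beta^i$ is vacuous.

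For the inductive step, set $\beta' = (\beta_1, \ldots, \beta_{n-1})$ and assume a diffeomorphism $\phi' : Y_{\beta'} \xrightarrow{\sim} \prod_{i<n} \Gr(\beta_i, m)$ carrying each $X_{\beta'}^i$ to $A_{\beta'}^i$. The forgetful map $p : Y_\beta \to Y_{\beta'}$ is the Grassmann bundle $\Gr(\beta_n, E)$ of $\beta_n$-planes in the rank-$m$ vector bundle $E \to Y_{\beta'}$ whose fibre at $(L_\cdot)$ is $z^{-1}L_{n-1}/L_{n-1}$. The heart of the argument is to produce a smooth trivialization $E \cong Y_{\beta'} \times \C^m$ compatible with the Hermitian structure; once this is done, we get $Y_\beta \cong Y_{\beta'} \times \Gr(\beta_n, m) \cong \prod_i \Gr(\beta_i, m)$.

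To trivialize $E$, first use the Hermitian metric on $\vect$ (with orthonormal basis $\{e_\alpha \otimes z^{-k}\}$) to replace $z^{-1}L_{n-1}/L_{n-1}$ by its orthogonal representative $H_{n-1} := L_{n-1}^\perp \cap z^{-1}L_{n-1}$, realizing $E$ as a rank-$m$ Hermitian subbundle of the trivial (infinite-rank) bundle $\vect \times Y_{\beta'}$. To trivialize $H_{n-1}$ globally, I would flow along the $\C^\times$-action $t \cdot (v \otimes z^{-k}) = t^{-2k}(v \otimes z^{-k})$: since $Y_{\beta'}$ is projective, the limit $t \to \infty$ lands in the fixed locus $Y_{\beta'}^{\C^\times}$, where each $L_{n-1}$ splits as a direct sum of subspaces of the $\C^m \otimes z^{-k}$'s and $H_{n-1}$ is canonically identified with $\C^m$. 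Transporting this trivialization back along the flow gives a smooth (not holomorphic) trivialization of $E$ over all of $Y_{\beta'}$.

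For the ``moreover'' statement, under the resulting diffeomorphism $\phi$, the $i$-th component $l_i \in \Gr(\beta_i, m)$ is obtained from $L_i \cap L_{i-1}^\perp \subset H_{i-1}$ via $H_{i-1} \cong \C^m$. When $\beta_i + \beta_{i+1} \le m$, the defining condition $zL_{i+1} \subset L_{i-1}$ of $X_\beta^i$ is equivalent to the vanishing of the induced map $\bar z : L_{i+1}/L_i \to L_i/L_{i-1}$. Tracking this through the Hermitian identifications, $z$ must send $L_{i+1} \cap L_i^\perp$ into the orthogonal complement of $L_i \cap L_{i-1}^\perp$ inside $L_{i-1}$, which translates precisely to $l_{i+1} \perp l_i$ in $\C^m$. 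An analogous analysis handles the case $\beta_i + \beta_{i+1} \ge m$, where the two conditions $L_{i-1} \subset zL_{i+1}$ and $\ker(z) \subset L_{i+1}$ together force the same orthogonality of the corresponding projections.

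The main obstacle is rigorously setting up the $\C^\times$-flow trivialization: one must check that the deformation retraction onto $Y_{\beta'}^{\C^\times}$ extends to a smoothly varying unitary frame of $H_{n-1}$ globally over $Y_{\beta'}$. The orthogonality translation for $X_\beta^i$, while conceptually clean, requires carefully matching the Hermitian adjoint of $z$ on $\vect$ with the inner product on $\C^m$ used in the definition of $A_\beta^i$.
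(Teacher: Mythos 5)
Your reduction to trivializing the rank-$m$ bundle with fibre $z^{-1}L_{n-1}/L_{n-1}$ (equivalently its harmonic representative $H_{n-1}=L_{n-1}^\perp\cap z^{-1}L_{n-1}$) is a reasonable reformulation, but the mechanism you propose for the trivialization does not work, and this is the heart of the theorem, not a technical check. The map $y\mapsto\lim_{t\to\infty}t\cdot y$ onto the $\C^\times$-fixed locus is not continuous: it is only continuous on each Bialynicki--Birula stratum (already on $\mathbb{P}^1$ with the standard action the limit map sends the complement of one fixed point to the other fixed point). So there is no global "transport back along the flow," and a unitary frame defined over the fixed locus cannot be extended this way. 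Moreover, at a fixed point $L_{n-1}=\bigoplus_k V_k\otimes z^{-k}$ the space $H_{n-1}$ is a direct sum of pieces sitting in several $z$-degrees, so even the claimed canonical identification with $\C^m$ at the fixed locus needs an argument.

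The paper avoids all of this with a direct global construction, no induction and no flow. One fixes the residue-type linear map $C:\vect\to\C^m$ and proves the key lemma that for any subspace $W$ with $zW\subset W$, $C$ restricts to a \emph{unitary} isomorphism $z^{-1}W\cap W^\perp\to\C^m$. Writing $L_k=M_1\oplus\cdots\oplus M_k$ with $M_k=L_k\cap L_{k-1}^\perp$, each $M_k$ lies in the $m$-dimensional space $z^{-1}L_{k-1}\cap L_{k-1}^\perp$, and the map $(L_\cdot)\mapsto(C(M_1),\dots,C(M_n))$ is the desired diffeomorphism. This also makes the "moreover" part immediate: on $X_\beta^i$ one has $L_{i+1}\subset z^{-1}L_{i-1}$, so $M_i$ and $M_{i+1}$ are mutually orthogonal subspaces of the \emph{single} space $z^{-1}L_{i-1}\cap L_{i-1}^\perp$ on which $C$ is unitary, whence $C(M_i)\perp C(M_{i+1})$. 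Your version of this step ("$z$ must send $L_{i+1}\cap L_i^\perp$ into the orthogonal complement of $L_i\cap L_{i-1}^\perp$ inside $L_{i-1}$") is not the right condition and in any case depends on a trivialization you have not actually constructed. If you want to salvage an inductive argument, you would need to replace the flow by an honest global trivialization such as the one furnished by $C$, at which point the induction is superfluous.
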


First, let $ C : \C^{m} \otimes z^{-1} \C[z^{-1}] \rightarrow \C^{m} $ denote the linear map corresponding to setting $ z^{-k} $ to 0 if $ k > 1 $ (it is a residue map).  Next introduce the Hermitian inner product of $ \C^{m} \otimes z^{-1}\C[z^{-1}] $ given by $ \langle v \otimes z^k, w \otimes z^l \rangle = \delta_{k,l} \langle v, w \rangle $.

\begin{Lemma}
Let $ W \subset \vect$  be a subspace such that $ zW \subset W $.  Then $ C $ restricts to an unitary isomorphism $ z^{-1} W \cap W^{\bot} \rightarrow \C^{m} $.
\end{Lemma}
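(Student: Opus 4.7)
The plan is to verify three properties of the restriction $C|_V$, where $V := z^{-1}W \cap W^{\perp}$: that $\dim V = m$, that $C|_V$ preserves the Hermitian inner product, and that together these force $C|_V$ to be a unitary isomorphism onto $\mathbb{C}^m$.

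First I would establish the dimension count using the exact sequence
\[ 0 \to \ker(z) \to z^{-1}W \xrightarrow{z} W \to 0, \]
which is valid because $z$ acts surjectively on the ambient space $\mathbb{C}^m \otimes z^{-1}\mathbb{C}[z^{-1}]$ (shifting $v \otimes z^{-k}$ to $v \otimes z^{-k+1}$ for $k \geq 2$) and $\ker(z) = \mathbb{C}^m \otimes z^{-1}$ sits inside $z^{-1}W$ automatically since it maps to $0 \in W$. This gives $\dim z^{-1}W = \dim W + m$, so $V$, being the orthogonal complement of $W$ inside $z^{-1}W$, has dimension exactly $m$.

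For the isometry property, which is the heart of the argument, I would expand any $v,v' \in V$ as finite sums $v = \sum_{k \geq 1} v_k \otimes z^{-k}$ and $v' = \sum_k v'_k \otimes z^{-k}$, so that orthonormality of the basis $\{e_i \otimes z^{-k}\}$ yields the diagonal expansion $\langle v, v'\rangle = \sum_k \langle v_k, v'_k\rangle$. The crucial input is that $zW \subset W$ iterates to $z^j W \subset W$ for every $j \geq 0$, so $z^j v, z^j v' \in W$ for $j \geq 1$; combined with $v, v' \in W^{\perp}$ this produces
\[ \langle z^j v, v'\rangle = 0 = \langle v, z^j v'\rangle \quad \text{for every } j \geq 1. \]
Written out in coordinates these become $\sum_k \langle v_{k+j}, v'_k\rangle = 0$ and $\sum_k \langle v_k, v'_{k+j}\rangle = 0$ for each $j \geq 1$, and I would combine them with the diagonal expansion to match $\langle v, v'\rangle$ with $\langle C(v), C(v')\rangle$. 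Once the isometry is established, injectivity is automatic and surjectivity follows from the dimension match in step one, so $C|_V$ is a unitary isomorphism onto $\mathbb{C}^m$.

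The main obstacle is the bookkeeping in the middle step --- tracking how the algebraic vanishings $\langle z^j v, v'\rangle = 0$ assemble into exactly the cross-term cancellation needed to identify $\langle v, v'\rangle$ with $\langle C(v), C(v')\rangle$; the dimension count and the concluding step are routine linear algebra.
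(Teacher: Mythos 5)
Your first step (the dimension count $\dim\bigl(z^{-1}W\cap W^{\perp}\bigr)=m$ via the sequence $0\to\ker(z)\to z^{-1}W\xrightarrow{z}W\to 0$) is correct, but the step you defer as ``bookkeeping'' is a genuine gap, and it cannot be closed from the relations you have written down. Let $P$ be the orthogonal projection onto $\ker(z)=\C^m\otimes z^{-1}$ and let $z^*$ be the adjoint shift $v\otimes z^{-k}\mapsto v\otimes z^{-k-1}$; then $z^*z=\mathrm{id}-P$, so for $v,v'\in z^{-1}W\cap W^{\perp}$ one has $\langle v,v'\rangle=\langle C(v),C(v')\rangle+\langle zv,zv'\rangle$. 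Thus the isometry claim is exactly the assertion that $\langle zv,zv'\rangle=\sum_{k\ge 2}\langle v_k,v'_k\rangle$ vanishes — a \emph{diagonal} sum of the matrix $\langle v_k,v'_l\rangle$. Your relations $\langle z^jv,v'\rangle=0$ and $\langle v,z^jv'\rangle=0$ control only the off-diagonal sums $\sum_l\langle v_{l+j},v'_l\rangle$ and $\sum_l\langle v_l,v'_{l+j}\rangle$ for $j\ge 1$; the quantity $\langle zv,zv'\rangle$ is an inner product of two elements of $W$, about which the hypotheses $v,v'\perp W$ and $z^jv,z^jv'\in W$ say nothing. No rearrangement of your identities produces the needed cancellation.

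This is not a fixable presentation issue: take $W=\C\,(e_1\otimes z^{-1})$ (so $zW=0\subset W$) and $v=v'=e_1\otimes z^{-2}$. Then $zv=e_1\otimes z^{-1}\in W$ and $v\perp W$, so $v\in z^{-1}W\cap W^{\perp}$ and every relation in your scheme is satisfied, yet $\langle v,v\rangle=1$ while $C(v)=0$; here $z^{-1}W\cap W^{\perp}=\mathrm{span}(e_2\otimes z^{-1},\,e_1\otimes z^{-2})$ and $C$ has rank one on it. So with the literal reading of $C$ (the residue map) and of the inner product (making the $e_i\otimes z^{-k}$ orthonormal), the conclusion itself fails for this $W$, which means any correct argument must use input beyond the formal consequences of $zW\subset W$ that you invoke. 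The paper proves the lemma only by reference to Lemma 2.2 of \cite{kh1}; you need to consult that proof to see which additional structure or convention is actually in play, and in any case your proposal's decisive middle step is both unproved and unprovable by the means you describe.
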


\begin{proof} The proof is identical to the proof of Lemma 2.2 of \cite{kh1}.
\end{proof}

\begin{proof}[Proof of Theorem \ref{th:isoman}]
Given $ (L_1, \dots, L_{n}) \in Y_\beta $, let $ M_1, \dots, M_{n} $ be the sequence of subspaces of $ \vect $ such that 
\begin{equation*}
L_k = M_1 \oplus \dots \oplus M_k \quad L_{k-1} \bot M_k.
\end{equation*}
Hence $ M_k $ is a $\beta_k$ dimensional subspace of the $m$ dimensional vector space $ z^{-1} L_{k-1} \cap L_{k-1}^{\bot} $.  By the lemma $ C(M_k) $ is a $\beta_k $ dimensional subspace of $ \C^{m} $.

Thus we have a map 
\begin{align*}
Y_\beta &\rightarrow \prod_k \Gr(\beta_k, m) \\
(L_1, \dots, L_{n}) &\mapsto (C(M_1), \dots, C(M_{n}))
\end{align*}

It is easy to see that this map is an isomorphism.

Now, let us consider $ (L_1, \dots, L_{n}) \in X_n^i $.  Since $ L_{i+1} \subset z^{-1} L_{i-1} $, we see that $ M_i, M_{i+1} $ are both subspaces of $ z^{-1} L_{i-1} \cap L_{i-1}^\bot$.  As they are perpendicular, they are sent by $ C $ to perpendicular subspaces of $ \C^{m} $.  Hence $ C(M_i) \perp C(M_{i+1}) $ as desired.
\end{proof}

\begin{Remark}
The fact that $ \Gr_\beta $ is topologically trivial is easy to see from the point of view of the affine Grassmannian.  Indeed it follows immediately from the fact that the projection $ G^\vee(\C((z))) \rightarrow \Gr $ admits a smooth lift.  This lift exists because we may identify $ \Gr $ (as a manifold) with the group of based polynomial maps from $ S^1 $ into the maximal compact subgroup of $ G^\vee $ (see \cite{G}).
\end{Remark}

One important corollary of Theorem \ref{th:isoman} is the following.
\begin{Corollary}\label{lem:transverse} For any $ \beta $ and any  $1 \le i < j \le n-1$, the subvarieties $X_\beta^i $ and $X_\beta^j $ intersect transversely in $ Y_\beta $.
\end{Corollary}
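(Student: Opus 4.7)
The plan is to use Theorem \ref{th:isoman} to transport the question to the product of ordinary Grassmannians $M := \prod_k \Gr(\beta_k, m)$. Since the diffeomorphism carries $X_\beta^i$ to $A_\beta^i$, it is enough to verify that $A_\beta^i$ and $A_\beta^j$ meet transversely in $M$. The case $j > i+1$ is immediate: the defining conditions $l_i \perp l_{i+1}$ and $l_j \perp l_{j+1}$ involve disjoint pairs of factors, so each $A_\beta^k$ is a global product of a condition in two factors with the remaining Grassmannians, and transversality follows tautologically from the product structure, with codimensions adding at every intersection point.

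The substantive case is $j = i+1$, where both conditions share the factor $\Gr(\beta_{i+1}, m)$. Fix $p = (l_1, \dots, l_n) \in A_\beta^i \cap A_\beta^{i+1}$ and use the standard identification $T_{l_k} \Gr(\beta_k, m) \cong \Hom(l_k, l_k^\perp)$. Differentiating the orthogonality condition at $p$ shows that the defining equation of $A_\beta^i$ has differential
\[ (\phi_1, \dots, \phi_n) \longmapsto \bigl((v,w) \mapsto \langle \phi_i(v), w\rangle + \langle v, \phi_{i+1}(w)\rangle\bigr) \in (l_i \otimes l_{i+1})^*, \]
so the conormal space to $A_\beta^i$ at $p$ is naturally $(l_i \otimes l_{i+1})^*$; similarly for $A_\beta^{i+1}$. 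Transversality at $p$ is therefore equivalent to surjectivity of the combined map
\[ T_p M \longrightarrow (l_i \otimes l_{i+1})^* \oplus (l_{i+1} \otimes l_{i+2})^*. \]

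The main obstacle is precisely verifying this surjectivity: the shared slot $l_{i+1}$ threatens to couple the two conditions. I plan to decouple them by setting $\phi_{i+1} = 0$. Given any target pair $(B_1, B_2)$, the orthogonality $l_i \perp l_{i+1}$ at $p$ implies $l_{i+1} \subset l_i^\perp$, so I can choose $\phi_i : l_i \to l_i^\perp$ taking values in the subspace $l_{i+1}$ and realizing $B_1$ under the inner product pairing; symmetrically, $l_{i+1} \subset l_{i+2}^\perp$ lets me pick $\phi_{i+2}: l_{i+2} \to l_{i+2}^\perp$ matching $B_2$. This yields the desired surjection and completes the argument. (In the regime $\beta_i + \beta_{i+1} \ge m$ the same argument applies with the dual formulation of $A_\beta^i$ furnished by Theorem \ref{th:isoman}.)
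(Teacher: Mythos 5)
Your proposal follows the paper's proof exactly: reduce via Theorem \ref{th:isoman} to showing $A_\beta^i$ and $A_\beta^j$ meet transversely in $\prod_k \Gr(\beta_k,m)$. The paper simply declares that step ``immediate,'' whereas you correctly spell out the only nontrivial case $j=i+1$ by decoupling the two orthogonality conditions with $\phi_{i+1}=0$; your argument is sound.
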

\begin{proof}
Since $X_\beta^i$ and $X_\beta^j$ are smooth submanifolds of $ Y_\beta $, it is enough to work with the topological $ Y_\beta $.  By Theorem \ref{th:isoman}, $ Y_\beta \cong \prod_i \Gr(\beta_i, m) $ with $ X_\beta^i $ taken to $ A_\beta^i $.  So it suffices to show that $ A_\beta^i $ and $ A_\beta^j $ intersect transversely.  Recall that 
\begin{equation*}
A_n^i := \{ (l_1, \dots, l_i, l_{i+1},\dots , l_{n}) : l_i \perp l_{i+1} \}.
\end{equation*}
Hence it is immediate $ A_\beta^i $ and $ A_\beta^j $ intersect transversely.
\end{proof}

From the fact that $ X_\beta^i $ and $ X_\beta^j $ meet transversely, it follows that all the interesections considered in this paper are transverse, by the same arguments as given in section 5.1 of \cite{kh1}.

\subsection{Normal Bundles and Relative Dualizing Sheaves}
Now we move in a different direction and compute normal bundles and relative dualizing sheaves.
\begin{Lemma}\label{lem:facts1} As usual, let $i$ be the equivariant inclusion $X_\beta^i \rightarrow Y_\beta$ and $q$ the equivariant projection $X_\beta^i \rightarrow Y_{d_i(\beta)}$ then
\begin{enumerate}
\item 
\begin{eqnarray*}
N_{X_\beta^i/Y_\beta} \cong \left\{
\begin{array}{ll}
(L_{i+1}/L_i)^\vee \otimes L_i/L_{i-1} \{2\} & \mbox{if $\beta_i + \beta_{i+1} \le m$}  \\
(z^{-1}L_{i-1}/L_i)^\vee \otimes L_i/zL_{i+1} \{2\} & \mbox{if $\beta_i + \beta_{i+1} \ge m$}
\end{array}
\right.
\end{eqnarray*}
so that 
\begin{eqnarray*}
\det N_{X_\beta^i/Y_\beta} \cong \left\{
\begin{array}{ll}
(\E_i)^{\beta_{i+1}} \otimes (\E_{i+1})^{-\beta_i} \{2 \beta_i \beta_{i+1}\} & \mbox{if $\beta_i + \beta_{i+1} \le m$} \\
(\bE_i)^{-\bbeta_{i+1}} \otimes (\bE_{i+1})^{\bbeta_i} & \mbox{if $\beta_i + \beta_{i+1} \ge m$} 
\end{array}
\right.
\end{eqnarray*}
\item 
\begin{eqnarray*}
\Omega_{X_\beta^i/Y_{d_i(\beta)}} \cong \left\{
\begin{array}{ll}
(L_{i+1}/L_i)^\vee \otimes L_i/L_{i-1} & \mbox{if $\beta_i + \beta_{i+1} \le m$}  \\
(z^{-1}L_{i-1}/L_i)^\vee \otimes L_i/zL_{i+1} & \mbox{if $\beta_i + \beta_{i+1} \ge m$}
\end{array}
\right.
\end{eqnarray*}
so that 
\begin{eqnarray*}
\omega_{X_\beta^i/Y_{d_i(\beta)}} \cong \left\{
\begin{array}{ll}
(\E_i)^{\beta_{i+1}} \otimes (\E_{i+1})^{-\beta_i} & \mbox{if $\beta_i + \beta_{i+1} \le m$} \\
(\bE_i)^{-\bbeta_{i+1}} \otimes (\bE_{i+1})^{\bbeta_i} \{-2 \bbeta_i \bbeta_{i+1}\} & \mbox{if $\beta_i + \beta_{i+1} \ge m$} 
\end{array}
\right.
\end{eqnarray*}
\item If $\beta_i = \beta_{i+1} \in \{1,m-1\}$ then $X_\beta^i \subset Y_\beta$ is a divisor with $Y_\beta(X_\beta^i) \cong \E_i \otimes \E_{i+1}^\vee \{2\beta_i\}$
\end{enumerate}
Notice that $N_{X_\beta^i/Y_\beta} \cong \Omega_{X_\beta^i/Y_{d_i(\beta)}} \{2\}$ so that
\begin{eqnarray*}
\det N_{X_\beta^i/Y_\beta} \cong 
\left\{
\begin{array}{ll}
\omega_{X_\beta^i/Y_{d_i(\beta)}} \{2 \beta_i \beta_{i+1}\} & \mbox{if $\beta_i+\beta_{i+1} \le m$} \\
\omega_{X_\beta^i/Y_{d_i(\beta)}} \{2 \bbeta_i \bbeta_{i+1}\} & \mbox{if $\beta_i+\beta_{i+1} \ge m$} 
\end{array}
\right.
\end{eqnarray*}
\end{Lemma}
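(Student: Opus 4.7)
All three parts flow from one principle: realize $X_\beta^i$ either as the zero locus of a regular section of a vector bundle on $Y_\beta$ (for the normal bundle in part (i)), or as a Grassmannian bundle over $Y_{d_i(\beta)}$ via $q$ (for the relative cotangent bundle in part (ii)); part (iii) is a specialization of (i), and the concluding comparison $N\cong\Omega\{2\}$ follows by matching formulas.

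For (i), Case 1: multiplication by $z$ descends (using $zL_{i+1}\subset L_i$ and $zL_i\subset L_{i-1}$) to a $\C^\times$-equivariant bundle morphism $\zeta:L_{i+1}/L_i\to L_i/L_{i-1}\{2\}$ on $Y_\beta$, equivalently a section of the rank-$\beta_i\beta_{i+1}$ bundle $(L_{i+1}/L_i)^\vee\otimes L_i/L_{i-1}\{2\}$. The vanishing of $\zeta$ is by construction the subvariety $X_\beta^i$. Since $X_\beta^i$ is already smooth of codimension $\beta_i\beta_{i+1}$ via the Grassmannian bundle structure $q$, the section is automatically regular, giving the first formula. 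In Case 2 an analogous construction realizes $X_\beta^i$ as the zero locus of a section of $(z^{-1}L_{i-1}/L_i)^\vee\otimes L_i/zL_{i+1}\{2\}$ (rank $\bbeta_i\bbeta_{i+1}$) on the locus where $L_i/zL_{i+1}$ attains its generic rank $\bbeta_{i+1}$; regularity follows by the same codimension/smoothness argument. The determinant formulas then drop out from $\det(A^\vee\otimes B)=\det(A)^{-\mathrm{rk}\,B}\otimes\det(B)^{\mathrm{rk}\,A}$ together with Lemma \ref{lem:facts0} to convert the $z^{-1}L_{i-1}/L_i$ and $L_i/zL_{i+1}$ determinants into $\bE_i$, $\bE_{i+1}$; the $\{2\}$ shift scales by the rank of the codomain, and the resulting cancellations yield exactly $\{2\beta_i\beta_{i+1}\}$ (Case 1) or a clean cancellation to no shift (Case 2), as stated.

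For (ii), the fibers of $q$ are Grassmannians: in Case 1 the fiber parametrizes $L_i$ with $L_{i-1}\subset L_i\subset L_{i+1}$, so it is $\mathrm{Gr}(\beta_i, L_{i+1}/L_{i-1})$ with tautological subbundle $L_i/L_{i-1}$ and quotient $L_{i+1}/L_i$; in Case 2 one first checks that on $X_\beta^i$ the inclusion $z^{-1}L_{i-1}\subset L_{i+1}$ holds (using $L_{i-1}\subset zL_{i+1}$ and $\ker z\subset L_{i+1}$), so the fiber parametrizes $L_i$ with $zL_{i+1}\subset L_i\subset z^{-1}L_{i-1}$, giving $\mathrm{Gr}(\bbeta_{i+1}, z^{-1}L_{i-1}/zL_{i+1})$ with tautological subbundle $L_i/zL_{i+1}$ and quotient $z^{-1}L_{i-1}/L_i$. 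The standard identification $\Omega_{\mathrm{Gr}}\cong S\otimes Q^\vee$ then produces the two formulas, and determinants follow by the same multilinear algebra. Part (iii) is the specialization of (i) to $\beta_i=\beta_{i+1}\in\{1,m-1\}$: the relevant vector bundle is a line bundle, hence equal to $\mathcal{O}_{Y_\beta}(X_\beta^i)$, and a short calculation using Lemma \ref{lem:facts0} shows that the Case-2 answer $\bE_i^{-1}\otimes\bE_{i+1}$ converts to $\E_i\otimes\E_{i+1}^\vee\{2(m-1)\}$, matching the Case-1 expression $\E_i\otimes\E_{i+1}^\vee\{2\}$ uniformly as $\E_i\otimes\E_{i+1}^\vee\{2\beta_i\}$. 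The concluding isomorphism $N_{X_\beta^i/Y_\beta}\cong\Omega_{X_\beta^i/Y_{d_i(\beta)}}\{2\}$ is immediate from matching (i) and (ii) term by term, and taking determinants multiplies the $\{2\}$ by the rank to give the stated $\{2\beta_i\beta_{i+1}\}$ or $\{2\bbeta_i\bbeta_{i+1}\}$.

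The main obstacle is Case 2 of part (i): unlike Case 1, the codomain $L_i/zL_{i+1}$ of the defining section is not a vector bundle on all of $Y_\beta$ since its rank jumps on the locus where $\ker z\not\subset L_{i+1}$. One must therefore either work on a suitable open neighborhood of $X_\beta^i$ on which $L_i/zL_{i+1}$ is locally free of rank $\bbeta_{i+1}$, or replace the naive section by an equivalent two-term complex built from globally defined bundles (e.g.\ using $z^{-1}L_{i-1}\subset L_{i+1}$ on $X_\beta^i$), and then invoke smoothness and the codimension count to conclude regularity.
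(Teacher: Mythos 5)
Your overall strategy coincides with the paper's: part (i) by exhibiting $X_\beta^i$ as the zero locus of a section of a Hom-bundle induced by $z$, part (ii) from the tautological description of the Grassmannian bundle $q$, part (iii) as the rank-one specialization of (i), and the determinant and equivariant-shift bookkeeping via Lemma \ref{lem:facts0}. For Case 1 of (i), for all of (ii) and (iii), and for the concluding comparison $N_{X_\beta^i/Y_\beta}\cong\Omega_{X_\beta^i/Y_{d_i(\beta)}}\{2\}$, your argument is correct and is essentially the one in the paper.

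The problem is Case 2 of (i), which you rightly single out as the delicate point but then misdiagnose. The fibre rank of the coherent sheaf $L_i/zL_{i+1}=\mathrm{coker}(z:L_{i+1}\to L_i\{2\})$ equals $-\beta_{i+1}+\dim(\ker z\cap L_{i+1})$; it is therefore at most $\bbeta_{i+1}$, with equality precisely on the \emph{closed} locus $\{\ker z\subset L_{i+1}\}$, and is strictly smaller on the open complement. So $\bbeta_{i+1}$ is the special (jumped-up) rank, not the generic one, and there is no open neighbourhood of $X_\beta^i$ on which $L_i/zL_{i+1}$ is locally free of rank $\bbeta_{i+1}$: your first proposed repair cannot be carried out. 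A related point that neither of your fixes addresses is that the zero locus of the natural map $z^{-1}L_{i-1}/L_i\to (L_i/zL_{i+1})\{2\}$ is $\{L_{i-1}\subset zL_{i+1}\}$, which in general strictly contains $X_\beta^i$, because the second defining condition $\ker z\subset L_{i+1}$ is invisible to this map (for $i=1$ the map is even identically zero, while $X_\beta^1=\{\ker z\subset L_2\}$ is a proper subvariety). To make Case 2 rigorous one has to bring the locus $\{\ker z\subset L_{i+1}\}$ in explicitly --- for instance, cut down to it first and run the section argument there, or deduce $N_{X_\beta^i/Y_\beta}\cong\Omega_{X_\beta^i/Y_{d_i(\beta)}}\{2\}$ by a direct tangent-space comparison against the fibration of part (ii). Your second suggestion (exploiting $z^{-1}L_{i-1}\subset L_{i+1}$ on $X_\beta^i$) points in a workable direction but is not carried out. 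In fairness, the paper's own proof of this case is equally terse and glosses over the same point.
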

\begin{proof}

(i). The map $z: L_{i+1} \rightarrow L_i \{2\}$ induces a morphism $L_{i+1}/L_i \rightarrow L_i/L_{i-1}\{2\}$ and hence a section of $\Hom_{Y_\beta}(L_{i+1}/L_i,L_i/L_{i-1}\{2\})$. If $\beta_i + \beta_{i+1} \le m$ this section is zero precisely over the locus where $z$ maps $L_{i+1}$ to $L_{i-1}$, namely $X_\beta^i$. Thus the normal bundle $N_{X_\beta^i/Y_\beta}$ is isomorphic to $(L_{i+1}/L_i)^\vee \otimes L_i/L_{i-1} \{2\}$. Subsequently, if $\beta_i + \beta_{i+1} \le m$ then 
\begin{eqnarray*}
\det(N_{X_\beta^i/Y_\beta}) &\cong& \det(L_{i+1}/L_i)^{-\dim(L_i/L_{i-1})} \otimes \det(L_i/L_{i-1} \{2\})^{\dim(L_{i+1}/L_i)} \\
&\cong& (\E_{i+1})^{-\beta_i} \otimes (\E_i)^{\beta_{i+1}} \{2 \beta_i \beta_{i+1}\}.
\end{eqnarray*}

One can also consider the map $z: z^{-1} L_{i-1} \rightarrow L_i \{2\}$ which induces a morphism $z^{-1} L_{i-1}/L_i \rightarrow L_i/zL_{i+1} \{2\}$. If $\beta_i + \beta_{i+1} \ge m$ the corresponding section is zero precisely over the locus where $z$ maps $z^{-1} L_{i-1}$ into $z L_{i+1}$ or equivalently $L_{i-1} \subset z L_{i+1}$, namely $X_\beta^i$ again. Thus the normal bundle $N_{X_\beta^i/Y_\beta}$ is isomorphic to $(z^{-1} L_{i-1}/L_i)^\vee \otimes L_i/zL_{i+1} \{2\}$.  Subsequently, if $\beta_i + \beta_{i+1} \ge m$ then 
\begin{eqnarray*}
\det(N_{X_\beta^i/Y_\beta}) &\cong& \det(z^{-1} L_{i-1}/L_i)^{-\dim(L_i/zL_{i+1})} \otimes \det(L_i/zL_{i+1} \{2\})^{\dim(z^{-1}L_{i-1}/L_i)} \\
&\cong& (\bE_i)^{-\bbeta_{i+1}}\{\bbeta_i(-\bbeta_{i+1})\} \otimes (\bE_{i+1})^{\bbeta_i} \{2 \bbeta_{i+1} \bbeta_i - \bbeta_{i+1} \bbeta_i\} \\
&\cong& (\bE_i)^{-\bbeta_{i+1}} \otimes (\bE_{i+1})^{\bbeta_i} 
\end{eqnarray*}

(ii). If $\beta_i + \beta_{i+1} \le m$ then $q: X_\beta^i \rightarrow Y_{d_i(\beta)}$ is the $\C^\times$-equivariant $\Gr(\beta_i,\beta_i+\beta_{i+1})$ bundle $\Gr(\beta_i, L_{i+1}/L_{i-1}) \rightarrow Y_{d_i(\beta)}$. So the relative cotangent bundle of $q$ is $\Hom(L_{i+1}/L_i, L_i/L_{i-1}) = (L_{i+1}/L_i)^\vee \otimes L_i/L_{i-1}$ and so $\omega_{X_\beta^i/Y_{d_i(\beta)}} \cong (\E_{i+1})^{-\beta_i} \otimes (\E_i)^{\beta_{i+1}}$. 

If $\beta_i + \beta_{i+1} \ge m$ then $q$ is the $\Gr(m-\beta_i,2m-\beta_i-\beta_{i+1})$ bundle $\Gr(m-\beta_i, z^{-1}L_{i-1}/zL_{i+1}) \rightarrow Y_{d_i(\beta)}$. So the relative cotangent bundle of $q$ is $\Hom(z^{-1}L_{i-1}/L_i,L_i/zL_{i+1}) = (z^{-1}L_{i-1}/L_i)^\vee \otimes L_i/zL_{i+1}$. As before, this means $\omega_{X_\beta^i/Y_{d_i(\beta)}} \cong (\bE_i)^{-\bbeta_{i+1}} \otimes (\bE_{i+1})^{\bbeta_i} \{-2\bbeta_i \bbeta_{i+1}\}$. 

(iii). If $\beta_i = \beta_{i+1} = 1$ then $X_\beta^i$ is the zero locus of $z: L_{i+1}/L_i \rightarrow L_i/L_{i-1} \{2\}$ so $\O_{Y_\beta}(X_\beta^i) \cong \E_{i+1}^\vee \otimes \E_i \{2\}$. If $\beta_i = \beta_{i+1} = m-1$ it is the zero locus of $z: z^{-1}L_{i-1}/L_i \rightarrow L_i/zL_{i+1} \{2\}$. Thus 
\begin{eqnarray*}
\O_{Y_\beta}(X_\beta^i) 
&\cong& (z^{-1}L_{i-1}/L_i)^\vee \otimes (L_i/zL_{i+1} \{2\}) \\
&\cong&  \E_i \{-2b_\beta^{i-1}-2m\} \otimes \E_{i+1}^\vee \{2b_\beta^{i+1}+2\} \\
&\cong& \E_{i+1}^\vee \otimes \E_i \{2\beta_i\}.
\end{eqnarray*}
\end{proof}

\begin{Lemma}\label{lem:facts2} We have
$\omega_{Y_\beta} \cong \det(L_n/L_0)^m \{- 2m b_\beta^n - 2 \sum_i b_\beta^{i-1} \beta_i \}.$
\end{Lemma}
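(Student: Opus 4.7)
The plan is to induct on $n$, using the iterated Grassmannian bundle structure on $Y_\beta$ and computing the relative canonical at each stage via the standard Grassmannian bundle formula.

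For the inductive step, consider the projection $\pi_n : Y_\beta \to Y_{(\beta_1, \dots, \beta_{n-1})}$ described in Section \ref{se:varieties}, which realizes $Y_\beta$ as the relative Grassmannian $\Gr(\beta_n, E)$ for the rank $m$ $\C^\times$-equivariant bundle $E = z^{-1} L_{n-1}/L_{n-1}$, with tautological subbundle $S = L_n/L_{n-1}$ of rank $\beta_n$ and quotient $Q = E/S$. The relative tangent bundle is $\Hom(S, Q) \cong S^\vee \otimes Q$, so taking determinants of the cotangent gives $\det(\Omega_{\pi_n}) = \det(S)^{m - \beta_n} \otimes \det(Q)^{-\beta_n}$, and combining with $\det E \cong \det S \otimes \det Q$ yields
$$\omega_{\pi_n} \cong \det(S)^{m} \otimes \det(E)^{-\beta_n}.$$
Substituting $\det(S) = \E_n$ and $\det(E) \cong \O\{2 b_\beta^{n-1} + 2m\}$ from Lemma \ref{lem:facts0}(i), I get
$$\omega_{\pi_n} \cong \E_n^{m}\{-2\beta_n b_\beta^{n-1} - 2m \beta_n\}.$$

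Using $\omega_{Y_\beta} \cong \omega_{\pi_n} \otimes \pi_n^* \omega_{Y_{(\beta_1, \dots, \beta_{n-1})}}$ together with the inductive hypothesis (and noting $\pi_n^* \E_i = \E_i$ for $i \le n - 1$), the telescoping gives
$$\omega_{Y_\beta} \cong \bigotimes_{i=1}^n \E_i^{m}\{-2\beta_i b_\beta^{i-1} - 2m \beta_i\} \cong \det(L_n/L_0)^{m}\bigl\{-2 \sum_i b_\beta^{i-1} \beta_i - 2 m b_\beta^n\bigr\},$$
using $\det(L_n/L_0) = \bigotimes_i \E_i$ and $\sum_i \beta_i = b_\beta^n$. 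The base case $n=1$ is the calculation for $\Gr(\beta_1, \C^m)$ with $\C^m$ having weight $2$, which produces $\E_1^m\{-2m\beta_1\}$, matching the formula since $b_\beta^0 = 0$.

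There is no serious geometric obstacle here, since the geometric input---that $Y_\beta$ is an iterated Grassmannian bundle and that a Grassmannian bundle has the stated relative canonical---is classical. The only thing to track carefully is the bookkeeping of $\C^\times$-weight shifts, all of which arise from the weight-$2$ action of $z$ on $\vect$ and are supplied cleanly at each stage by Lemma \ref{lem:facts0}(i).
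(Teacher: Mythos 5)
Your proof is correct and follows essentially the same route as the paper: induction on $n$ via the Grassmannian bundle $\pi_n: Y_\beta \to Y_{d_n(\beta)}$, computing $\omega_{\pi_n}$ from the relative tangent bundle $\Hom(S,Q)$ and the equivariant trivialization of $\det(z^{-1}L_{n-1}/L_{n-1})$ supplied by Lemma \ref{lem:facts0}. The only cosmetic difference is that the paper phrases the relative canonical via $\det(z^{-1}L_{n-1}/L_n)^{-\beta_n}$ (part (ii) of that lemma) rather than $\det(E)^{-\beta_n}$ (part (i)), which is the same computation.
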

\begin{proof}
Consider the projection map $\pi: Y_\beta \rightarrow Y_{\beta'}$ given by $(L_1, \dots, L_{n-1},  L_n) \rightarrow (L_1, \dots, L_{n-1})$ where $\beta' = d_n(\beta)$. Then $\pi$ is the Grassmannian bundle $Gr(\beta_n, z^{-1}L_{n-1}/L_{n-1})$. Hence 
\begin{eqnarray*}
\omega_{Y_\beta} 
&\cong& \omega_{\pi} \otimes \pi^* \omega_{Y_{\beta'}} \\
&\cong& \det(L_n/L_{n-1})^{m-\beta_n} \otimes \det(z^{-1}L_{n-1}/L_n)^{-\beta_n} \otimes \pi^* \omega_{Y_{\beta'}} \\
&\cong& \E_n^{m-\beta_n} \otimes (\E_n^\vee \{2 b_\beta^{n-1}+2m\})^{-\beta_n} \otimes \pi^* \omega_{Y_{\beta'}} \\
&\cong& \E_n^m \otimes \pi^* \omega_{Y_{\beta'}} \{-2m \beta_n - 2 b_{\beta}^{n-1} \beta_n\}
\end{eqnarray*}
The result follows by induction. 
\end{proof}

\begin{Lemma}\label{lem:facts3} As usual suppose that $\beta_i, \beta_{i+1} \in \{1,m-1\}$. Then $Z^i_\beta$, which is a local complete intersection, has dualizing sheaf
$$\omega_{Z^i_\beta} \cong \begin{cases}
\E_i' \otimes \E_{i+1}^\vee \otimes \pi_1^* \omega_{Y_\beta} \{2\beta_i\} \text{ if } \beta_i = \beta_{i+1} \\
(\E_i' \otimes \E_{i+1}^\vee)^{m-1} \otimes \pi_1^* \omega_{Y_\beta} \{2(m-1)\} \text{ if } \beta_i \ne \beta_{i+1}
\end{cases}$$
Also, if $\pi_1$ and $\pi_2$ denote the natural projections from $Z_\beta^i$ then $\pi_{1*} \O_{Z_\beta^i} \cong \O_{Y_\beta}$ and $\pi_{2*} \O_{Z_\beta^i} \cong \O_{Y_{s_i(\beta)}}$ while $\pi_{1*} \omega_{Z_\beta^i} \cong \omega_{Y_\beta}$ and $\pi_{2*} \omega_{Z_\beta^i} \cong \omega_{Y_{s_i(\beta)}}$. 
\end{Lemma}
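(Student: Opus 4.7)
The plan is to verify the three claims—LCI property, identification of $\omega_{Z_\beta^i}$, and the pushforward identities—in turn, with case analysis on whether $\beta_i = \beta_{i+1}$.

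\textbf{LCI.} $Z_\beta^i$ has codimension $|\beta|$ in the smooth ambient $Y_\beta \times Y_{s_i(\beta)}$, so we need $|\beta|$ local equations everywhere. When $\beta_i = \beta_{i+1}$, the two components $\Delta$ and $V_\beta^i = X_\beta^i \times_{Y_{d_i(\beta)}} X_\beta^i$ are smooth and meet along the diagonally embedded $X_\beta^i$, which is a Cartier divisor in each. By Lemma~\ref{lem:facts1}(iii), $X_\beta^i$ is a smooth divisor in $Y_\beta$, so near a point of the singular locus one can choose local coordinates $(\bar y, \ell, t)$ on $Y_\beta$ with $\{t = 0\} = X_\beta^i$ and $\ell$ parameterizing the $\mathbb{P}^1$-fiber of $X_\beta^i \to Y_{d_i(\beta)}$; a direct check then shows $Z_\beta^i$ is locally cut out in $Y_\beta \times Y_\beta$ by $\bar y - \bar y'$, $t - t'$, and the ``nodal'' equation $t(\ell - \ell')$, giving exactly $|\beta|$ equations. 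The case $\beta_i \ne \beta_{i+1}$ proceeds analogously: $W_\beta^i$ is smooth as the transverse intersection $X_{\beta'}^i \cap X_{\beta'}^{i+1}$ in $Y_{\beta'}$ (Corollary~\ref{lem:transverse}), $V_\beta^i$ is a fiber product of smooth bundles hence smooth, and the two components meet along a Cartier divisor, yielding LCI by the same local nodal model.

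\textbf{Dualizing sheaf.} For each irreducible component $C \subset Z_\beta^i$ with complementary component $C'$, adjunction gives $\omega_{Z_\beta^i}|_C \cong \omega_C \otimes \O_C(C \cap C')$; we verify the claimed formula by restricting to each component. For $\beta_i = \beta_{i+1}$, restricting to $\Delta \cong Y_\beta$ yields $\omega_{Y_\beta} \otimes \O_{Y_\beta}(X_\beta^i) \cong \omega_{Y_\beta} \otimes \E_i \otimes \E_{i+1}^\vee\{2\beta_i\}$ by Lemma~\ref{lem:facts1}(iii), and since $\E_i = \E_i'$ on $\Delta$ this matches $\E_i' \otimes \E_{i+1}^\vee \otimes \pi_1^* \omega_{Y_\beta}\{2\beta_i\}|_\Delta$. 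The restriction to $V_\beta^i$ is checked from the $\mathbb{P}^1 \times \mathbb{P}^1$-bundle structure of $V_\beta^i \to Y_{d_i(\beta)}$, using Lemmas~\ref{lem:facts0} and~\ref{lem:facts1}(ii) to identify $\omega_{V_\beta^i}$ and the class of the diagonal section. The case $\beta_i \ne \beta_{i+1}$ is similar: $\omega_{W_\beta^i}$ is computed via the transverse intersection in $Y_{\beta'}$, and $W_\beta^i \cap V_\beta^i$ is identified with the universal incidence divisor in the $\mathbb{P}^{m-1} \times \mathbb{P}^{m-1}$-fiber over $Y_{d_i(\beta)}$, whose class contributes the $(\E_i' \otimes \E_{i+1}^\vee)^{m-1}$ factor.

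\textbf{Pushforwards.} For $\pi_{1*} \O_{Z_\beta^i}$, use the Mayer--Vietoris short exact sequence
\begin{equation*}
0 \to \O_{Z_\beta^i} \to \O_{C_1} \oplus \O_{C_2} \to \O_{C_1 \cap C_2} \to 0
\end{equation*}
and apply $R\pi_{1*}$. One component maps birationally to $Y_\beta$ (isomorphically off $X_\beta^i$, with rational fibers over it), the other factors through a projective bundle over $X_\beta^i \subset Y_\beta$, and $C_1 \cap C_2$ lies over $X_\beta^i$ with rational fibers; all higher direct images vanish. Tracing through the long exact sequence identifies $\pi_{1*}\O_{Z_\beta^i}$ as the kernel of the restriction-minus-identity map $\O_{Y_\beta} \oplus \O_{X_\beta^i} \to \O_{X_\beta^i}$, which is $\O_{Y_\beta}$. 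For $\pi_{1*}\omega_{Z_\beta^i}$, apply Grothendieck duality to the proper LCI map $\pi_1$ of relative dimension zero: since $\pi_1^! \O_{Y_\beta} = \omega_{Z_\beta^i/Y_\beta}$, duality combined with $R\pi_{1*}\O_{Z_\beta^i} = \O_{Y_\beta}$ yields $R\pi_{1*}\omega_{Z_\beta^i/Y_\beta} = \O_{Y_\beta}$, and the projection formula then gives $\pi_{1*}\omega_{Z_\beta^i} = \omega_{Y_\beta}$. The $\pi_2$ identities follow by swapping the two factors of $Y_\beta \times Y_{s_i(\beta)}$. The most delicate step will be tracking the precise $\E$-factors and $\mathbb{C}^\times$-weights in the identification of $\omega_{Z_\beta^i}$ on $V_\beta^i$ (and, in the off-diagonal case, on $W_\beta^i$), since it requires expressing the gluing divisor globally in terms of tautological bundles; everything else is essentially formal from the local geometry and standard duality.
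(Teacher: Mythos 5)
Your overall strategy is sound and each of the three conclusions is reachable by your route, but it is genuinely different from the paper's, and the difference matters for how much work is left. The paper handles all three claims with a single global device: it embeds $Z_\beta^i$ into a Grassmannian bundle $\pi: W = Gr(\beta_{i+1}, L_{i+1}/L_{i-1}) \rightarrow Y_\beta$ (or $Gr(1, z^{-1}L_{i-1}/zL_{i+1})$ in the $(m-1,m-1)$ case) and exhibits $Z_\beta^i$ as the zero scheme of a regular section of a rank-$(\mbox{codim})$ vector bundle $\mathcal{V}$ on $W$ built from the operator $z$. This gives LCI for free, gives $\omega_{Z_\beta^i} \cong \omega_W|_{Z_\beta^i} \otimes \det \mathcal{V}$ in one line (no component-by-component analysis, no gluing), and supplies the Koszul resolution $\wedge^\bullet \mathcal{V}^\vee \rightarrow \O_W$ whose terms push forward to zero under $\pi_1$ (via $H^k(\p, \Omega^j_{\p}(j)) = 0$ for $j>0$), yielding $\pi_{1*}\O_{Z_\beta^i} \cong \O_{Y_\beta}$ directly. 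Your decomposition into components buys a more geometric picture of where each factor of the dualizing sheaf comes from, and your Mayer--Vietoris computation of the pushforward and your Grothendieck-duality derivation of $\pi_{1*}\omega_{Z_\beta^i} \cong \omega_{Y_\beta}$ are both correct (the latter is exactly the ``formal consequence'' the paper alludes to). But the paper's section-of-a-bundle construction is doing the real work in all three places at once, and is considerably more economical.

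Two places in your argument need more than you give them. First, the LCI claim: the union of two smooth subvarieties of equal codimension meeting along a divisor in each is not automatically LCI; your local nodal model $t(\ell - \ell')$ requires choosing $c-1$ functions cutting out both components simultaneously, which in turn requires that the conormal spaces intersect in the expected dimension and that those covectors lift to $I(C_1) \cap I(C_2)$ --- this is true here but is precisely what the paper's global section construction certifies without coordinates. Second, the identity $\omega_{Z_\beta^i}|_C \cong \omega_C \otimes \O_C(C \cap C')$ on each component determines $\omega_{Z_\beta^i} \otimes \sL^\vee$ (for $\sL$ the candidate line bundle) only up to a gluing datum over $C_1 \cap C_2$; you should add the remark that since everything is projective and connected this datum is a nonzero scalar, so triviality on each component implies triviality on $Z_\beta^i$. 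Finally, the computations you defer --- $\omega_{V_\beta^i}$ with the class of the diagonal, and the entire $\beta_i \ne \beta_{i+1}$ case on $W_\beta^i$ --- are exactly where the exponents $\{2\beta_i\}$ and $(\E_i' \otimes \E_{i+1}^\vee)^{m-1}\{2(m-1)\}$ are produced, so the proposal as written establishes the shape of the answer but not the answer itself.
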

\begin{proof}
If $\beta_{i+1}=1$ consider the Grassmannian bundle $\pi: W := Gr(\beta_{i+1}, L_{i+1}/L_{i-1}) \rightarrow Y_\beta$. Let $\rho: Z_\beta^i \rightarrow W$ be the natural embedding. The map $z: L_{i+1}/L_i \rightarrow L_{i+1}/L_i' \{2\}$ where $L_i'$ is the tautological bundle on $W$ vanishes along the locus where $zL_{i+1} \subset L_i'$. 

There are two cases to consider depending on whether or not $z$ maps $L_{i+1}$ down into $L_{i-1}$. If it does then $z$ takes $L_i'$ to $L_{i-1}$. If it does not then $\mbox{span}(zL_{i+1},L_{i-1}) = L_i'$ since $\dim(L_i'/L_{i-1})=\beta_{i+1}=1$. Since $z^2 L_{i+1} \subset L_{i-1}$ we find again that $z$ maps $L_i'$ into $L_{i-1}$. This means $z$ provides a section of $V = (L_{i+1}/L_i)^\vee \otimes L_{i+1}/L_i' \{2\}$ which vanishes precisely along $\rho(Z_\beta^i)$. 

Since $\dim(V) = \beta_i \beta_{i+1} = \mbox{codim}(Z_\beta^i)$ we find that $Z_\beta^i$ is a local complete intersection while $\omega_{Z_\beta^i} \cong \omega_{W}|_{Z_\beta^i} \otimes \det(V)$. On the other hand, 
$$\omega_{W} \cong \omega_\pi \otimes \pi^* \omega_{Y_\beta} \cong \det(L_i'/L_{i-1} \otimes (L_{i+1}/L_i')^\vee) \otimes \pi^* \omega_{Y_\beta} \cong (\E_i')^{\beta_i} \otimes (\E_{i+1}')^\vee \otimes \pi^* \omega_{Y_\beta}.$$
Thus 
$$\omega_{Z_\beta^i} \cong ((\E_i')^{\beta_i} \otimes (\E_{i+1}')^\vee \otimes \pi^* \omega_{Y_\beta}) \otimes ((\E_{i+1})^{-\beta_i} \otimes \E_{i+1}') \{2\beta_i\} \cong (\E_i' \otimes \E_{i+1}^\vee)^{\beta_i} \otimes \pi_1^* \omega_{Y_\beta} \{2\beta_i\}$$
where we use that $\pi \circ \rho = \pi_1$ to replace $\pi^*$ by $\pi_1^*$. 

If $\beta_{i+1} = m-1$ and $\beta_i = 1$ the same argument works if we use instead the map $z: L_i'/L_{i-1} \rightarrow L_i/L_{i-1} \{2\}$. In this case $\omega_{W} \cong \E_i' \otimes (\E_{i+1}')^{-m+1} \otimes \pi^* \omega_{Y_\beta}$ so that
$$\omega_{Z_\beta^i} \cong (\E_i' \otimes (\E_{i+1}')^{-m+1} \otimes \pi^* \omega_{Y_\beta}) \otimes ((\E_i')^\vee \otimes \E_i^{m-1}) \{2(m-1)\} \cong (\E_i' \otimes \E_{i+1}^\vee)^{m-1} \otimes \pi_1^* \omega_{Y_\beta} \{2(m-1)\}$$
where we use that $\E_i \otimes \E_{i+1} \cong \det(L_{i+1}/L_{i-1}) \cong \E_i' \otimes \E_{i+1}'$ on $Z_\beta^i$. 

If $\beta_{i+1} = m-1 = \beta_i$ we need to use $\pi: W := Gr(1, z^{-1}L_{i-1}/zL_{i+1}) \rightarrow Y_\beta$ and a similar argument applies.

Finally, we prove that $\pi_{1*} \O_{Z_\beta^i} \cong \O_{Y_\beta}$ when $\beta_i=m-1$ and $\beta_{i+1}=1$ (the other cases follow similarly). From above we know $\O_{Z_\beta^i}$ has a Koszul resolution 
$$\wedge^{m-1} \mathcal{V}^\vee \rightarrow \dots \rightarrow \wedge^2 \mathcal{V}^\vee \rightarrow \mathcal{V}^\vee \rightarrow \O_{W}$$
where $\mathcal{V} = (L_{i+1}/L_i)^\vee \otimes L_{i+1}/L_i' \{2\}$. On the other hand, $L_{i+1}/L_i$ restricted to a fibre of $\pi_1$ is the trivial line bundle while $L_{i+1}/L_i'$ restricts to the tautological quotient bundle $Q$. Since $Q \otimes \O_{\p}(1) \cong T_{\p}$ we get that $V^\vee$ restricts to $Q^\vee \cong \Omega_{\p}(1)$ and hence $\wedge^j V^\vee$ restricts to $\Omega^j(j)$. It is then an elementary exercise to check that $H^k(\p, \Omega_{\p}^j(j)) = 0$ for $j > 0$ and any $k$. This means $\pi_{1*} \wedge^j V^\vee = 0$ for $j > 0$ and so $\pi_{1*} \O_{Z_\beta^i} \cong \pi_{1*} \O_{W} \cong \O_{Y_\beta}$. 

The fact that $\pi_{1*} \omega_{Z_\beta^i} \cong \omega_{Y_\beta}$ is a formal consequence of $\pi_{1*} \O_{Z_\beta^i} \cong \O_{Y_\beta}$. 

\end{proof}

\section{Functors from tangles} \label{se:funtangle}
Now we specialize to only consider those $ \beta $ where each $ \beta_i $ is either $ 1 $ or $ m-1 $ (note however that some of the components of $ d_i(\beta) $ may be 2 or $ m-2 $).

An oriented $(\beta,\beta') $ \textbf{tangle} is a proper, smooth embedding of $ (n+n')/2 $ arcs and a finite number of circles into $ \mathbb{R}^2 \times [0,1] $ such that the boundary points of the arc maps bijectively on the $n+n' $ points $ (1,0,0), \dots, (n,0,0), (1,0,1), \dots, (n',0,1) $ and such that the arc is oriented down (resp up) at the point $(i, 0, 0) $ (resp $(i,0,1) $) if and only if $ \beta_i = 1$.  Thus, $ \beta, \beta' $ keep track of the orientations of the endpoints of the tangle. A $ (0,0) $ tangle is a \textbf{link}.

Given an $(\beta,\beta')$ tangle $ T $ and a $ (\beta',\beta'') $ tangle $ U$, there is a composition tangle $ T \circ U $, which is the $ (\beta,\beta'') $ tangle made by stacking $ U $ on top of $ T $ and the shrinking the $z$-direction.

\subsection{Generators and Relations}

By projecting to $\mathbb{R} \times [0,1]$ from a generic point we can represent any tangle as a planar diagram. By scanning the diagram of a $(\beta, \beta')$ tangle from top to bottom we can decompose it as the composition of cups, caps and crossings. The list of all such building blocks consists of those tangles from Figure \ref{f1} together with those obtained from them by switching all the orientations (arrows). 

\begin{figure}
\begin{center}
\psfrag{cup}{\footnotesize{$(1,m-1)$}}
\psfrag{e}{\footnotesize{cup}}
\psfrag{cap}{\footnotesize{$(1,m-1)$}} 
\psfrag{f}{\footnotesize{cap}}
\psfrag{crossing 1}{\footnotesize{$(1,1)$ crossing}} 
\psfrag{a}{\footnotesize{of type 1}}
\psfrag{crossing 2}{\footnotesize{$(1,1)$ crossing}} 
\psfrag{b}{\footnotesize{of type 2}}
\psfrag{crossing 3}{\footnotesize{$(1,m-1)$ crossing}}
\psfrag{c}{\footnotesize{of type 1}}
\psfrag{crossing 4}{\footnotesize{$(1,m-1)$ crossing}} 
\psfrag{d}{\footnotesize{of type 2}}
\puteps[0.35]{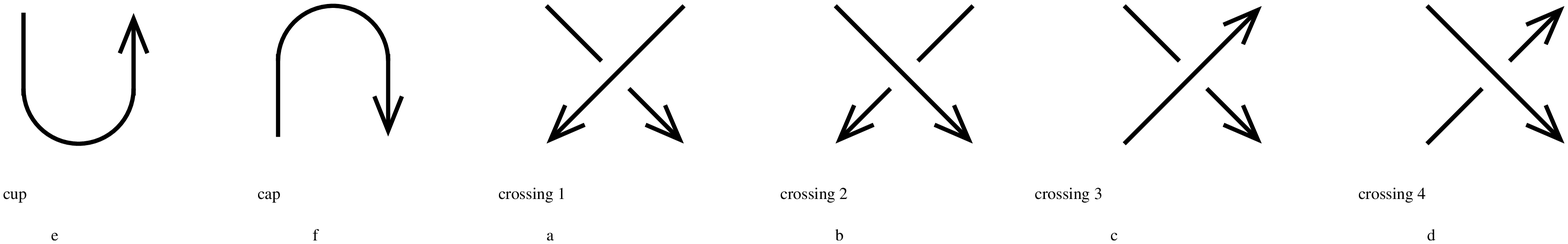}
\end{center}
\caption{the generators for tangle diagrams.}\label{f1}
\end{figure}

We will give these generators names depending on their position.  A cap creating the $ i $ and $ i+1 $ strands and so making a $ (d_i(\beta), \beta)  $ tangle will be denoted $ g_\beta^i $.  A cup connecting the $ i $ and $ i+1 $ strands and so making a $ (\beta, d_i(\beta)) $ tangle will be denoted by $ f_\beta^i $.  A crossing of the $ i $ and $ i+1 $ strands and so making a  $ (\beta, s_i(\beta)) $ strand tangle will be denoted $ t_\beta^i(l) $, where $ l $ varies from 1 to 2, depending on the type of crossing, as shown in Figure \ref{f1}. Any generator which is obtained from these generators by reversing the direction of all arrows involved will be denoted by the same symbol.

The following theorem tells us when two tangle diagrams represent isotopic tangles.

\begin{Lemma}[{\cite[Lemma X.3.5]{K}}]\label{lem:relations} Two tangle diagrams represent isotopic tangles if and only if one can be obtained from the other by applying a finite number of the following operations:
\begin{itemize}
\item a Reidemeister move of type (0),(I),(II) or (III).
\item an isotopy exchanging the order with respect to height of two caps, cups, or crossings (e.g. the left Figure in \ref{f3} shows such an isotopy involving a cup and a cap). 
\item the rightmost two isotopies in Figure \ref{f3}, which we call the pitchfork move.
\end{itemize}
More concisely, we have the following relations:
\begin{enumerate}
\item Reidemeister (0) : $ f_\beta^i \circ g_\beta^{i+1} = id = f_\beta^{i+1} \circ g_\beta^i $ 
\item Reidemeister (I) : $f_{\beta}^i \circ t_\beta^{i \pm 1}(2) \circ g_\beta^i = id = f_{\beta}^i \circ t_\beta^{i \pm 1}(1) \circ g_\beta^i$ 
\item Reidemeister (II) : $ t_{s_i(\beta)}^i(2) \circ t_\beta^i(1) = id = t_{s_i(\beta)}^i(1) \circ t_\beta^i(2) $ 
\item Reidemeister (III) : $ t_{s_{i+1}(s_i(\beta))}^i(l_1) \circ t_{s_i(\beta)}^{i+1}(l_2) \circ t_\beta^i(l_3) = t_{s_i(s_{i+1}(\beta))}^{i+1}(l_3) \circ t_{s_{i+1}(\beta)}^i(l_2) \circ t_\beta^{i+1}(l_1) $ 
\item changing height isotopies, such as : $ g_{\beta}^{i+k} \circ g_{d_k(\beta)}^i = g_{\beta}^i \circ g_{d_i(\beta)}^{i+k-2} $ 
\item pitchfork move : $ t_{s_i(\beta)}^i(1) \circ g_{s_i(\beta)}^{i+1} = t_{s_{i+1}(\beta)}^{i+1}(2) \circ g_{s_{i+1}(\beta)}^i, \quad  t_{s_i(\beta)}^i(2) \circ g_{s_i(\beta)}^{i+1} = t_{s_{i+1}(\beta)}^{i+1}(1) \circ g_{s_{i+1}(\beta)}^i $
\end{enumerate}
where in each case $ \beta $ is chosen such that all the expressions are well defined.
\end{Lemma}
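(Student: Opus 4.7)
The plan is to follow the standard Morse-theoretic approach to Reidemeister's theorem for tangles, as carried out in Kassel's book. The underlying principle is that after a generic isotopy, both the height function (projection to the $[0,1]$ factor) and the associated planar projection become as generic as possible, and then any further isotopy decomposes into a finite sequence of local moves.

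First, I would fix a generic plane projection $\pi: \mathbb{R}^2 \times [0,1] \to \mathbb{R} \times [0,1]$ and put the tangle $T$ in general position with respect to it: the projection has only transverse double points (crossings) decorated by over/under information, the height function $h : T \to [0,1]$ is Morse with distinct critical values, and no critical value of $h$ coincides with the height of a crossing. Slicing at levels between consecutive special heights then decomposes the diagram into horizontal strips each containing exactly one elementary piece, which must be a cup $f_\beta^i$, a cap $g_\beta^i$, or a crossing $t_\beta^i(l)$. This proves that every tangle is a composition of the listed generators.

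Next, I would enumerate the ways in which two such generic diagrams of isotopic tangles can differ. The space of embeddings of the tangle into $\mathbb{R}^2 \times [0,1]$ maps to the space of diagrams, and one analyzes the codimension-one strata of non-generic configurations that a generic one-parameter family of isotopies will cross. Following Cerf-theoretic bookkeeping, these codimension-one degenerations are exactly: (i) two crossings becoming tangent, or a triple point, or a cusp in the projection --- yielding Reidemeister II, III, and I moves; (ii) the projection of a cap meeting an arc tangentially, producing a circle that bounds a disk --- the Reidemeister 0 move; (iii) two critical values of $h$ coinciding, giving the height-exchange isotopies; (iv) a critical value of $h$ coinciding with the height of a crossing, giving the pitchfork move. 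Checking that each local picture translates precisely into one of the equations listed as (i)--(vi) is then a finite case analysis determined by which strand is over/under and by the orientations.

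The main obstacle is purely bookkeeping: one must verify that the orientation/type decorations in the generators match up so that the listed algebraic identities genuinely capture all local moves. In particular, the pitchfork move must be checked in each of the four orientation configurations to confirm that only the two relations stated in (vi) appear (the others being consequences via Reidemeister moves). Since this entire analysis is carried out in detail in \cite[Ch.~X]{K}, I would simply cite that reference rather than reproduce the case check; the lemma as stated is an immediate translation of the conclusion of \cite[Lemma X.3.5]{K} into the notation $f_\beta^i$, $g_\beta^i$, $t_\beta^i(l)$ set up above.
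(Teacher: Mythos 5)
Your proposal is correct and takes essentially the same route as the paper: the paper offers no proof of this lemma at all, simply attributing it to Kassel's Lemma X.3.5, and your sketch of the generic-position/Cerf-theoretic argument followed by deferral to that reference matches the intended justification. The outline of the codimension-one degenerations is a reasonable and accurate summary of how the cited result is established.
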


\begin{figure}
\begin{center}
\psfrag{move (0)}{R-move (0)} 
\psfrag{move (I)}{R-move (I)} 
\psfrag{move (II)}{R-move (II)} 
\psfrag{move (III)}{R-move (III)} 
\psfrag{sim}{$\sim$} 
\puteps[0.20]{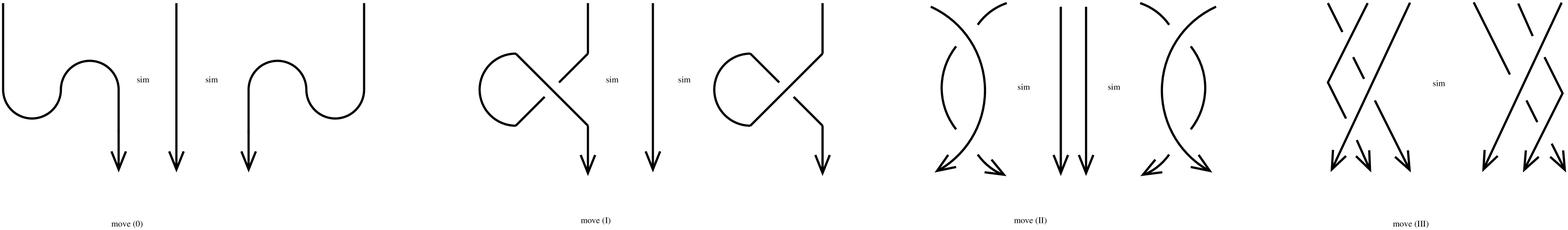}
\end{center}
\caption{Reidemeister relations for tangle diagrams.}\label{f2}
\end{figure}

\begin{figure}
\begin{center}
\psfrag{sim}{$\sim$} 
\puteps[0.25]{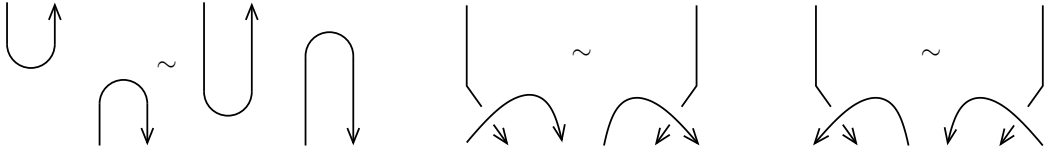}
\end{center}
\caption{Relations for tangle diagrams.}\label{f3}
\end{figure}

\subsection{The Functor $ \Psi(T): D(Y_{\beta}) \rightarrow D(Y_{\beta'}) $}\label{sse:functor}
To each $(\beta,\beta')$ tangle $T$, we will construct an isomorphism class of functor $ \Psi(T) : D(Y_\beta) \rightarrow D(Y_{\beta'}) $.  These functors will satisfy the property that $ \Psi(T) \circ \Psi(U) \cong \Psi(T \circ U) $. 

First we will define functors for each of the elementary tangles in Figure \ref{f1}. We define the functors corresponding to the cap $g_\beta^i$, cup $f_\beta^i$ and crossing $t_\beta^i(l)$ in equations (\ref{eq:Gdef}), (\ref{eq:Fdef}) and (\ref{eq:Cdef}) respectively.  

More precisely, we will assign a Fourier-Mukai kernel $ \sG_\beta^i, \dots $ to each basic tangle.  Recall that if $ X , Y $ are two smooth projective varieties with a $ \C^\times $ action, then a \textbf{Fourier-Mukai kernel} is any object $ \sP $ of the derived category of $\C^\times$-equivariant coherent sheaves on $ X \times Y $.  Given $\sP \in D(X \times Y)$, we may define the associated \textbf{Fourier-Mukai transform}, which is the functor
\begin{equation*}
\begin{aligned}
\Phi_\sP : D(X) &\rightarrow D(Y) \\
\sF &\mapsto {\pi_2}_* (\pi_1^* (\sF) \otimes \sP) 
\end{aligned}
\end{equation*}
Fourier-Mukai transforms have right and left adjoints which are themselves Fourier-Mukai transforms.  In particular, the right adjoint of $ \Phi_\sP $ is the FM transform with respect to $ \sP_R := \sP^\vee \otimes \pi_2^* \omega_X [\dim(X)] \in D(Y \times X)$.  Similarly, the left adjoint of $ \Phi_\sP $ is the FM transform with respect to $ \sP_L := \sP^\vee \otimes \pi_1^* \omega_Y [\dim(Y)] $, also viewed as a sheaf on $ Y \times X $.

We can express composition of Fourier-Mukai transforms in terms of their kernels.  If $ X, Y, Z $ are varieties and $\Phi_\sP : D(X) \rightarrow D(Y),  \Phi_\sQ : D(Y) \rightarrow D(Z) $ are Fourier-Mukai transforms, then $ \Phi_\sQ \circ \Phi_\sP $ is a FM transform with respect to the kernel
\begin{equation*}
\sQ \ast \sP := {\pi_{13}}_*(\pi^*_{12}(\sP) \otimes \pi^*_{23}(\sQ)).
\end{equation*}
The operation $*$ is associative and $ (\sQ * \sP)_R \cong \sP_R * \sQ_R $.

See section 3.1 of \cite{kh1} or section 5.1 of \cite{H} for more details regarding Fourier-Mukai kernels and transforms.
 
\subsubsection{Cups and caps} \label{se:capcup}
Suppose for now that $\beta_i + \beta_{i+1} = m$. Recall that we have the equivariant subvariety $ X_\beta^i \subset Y_\beta $ of codimension $ m-1 $ which projects to $ Y_{d_i(\beta)}$ with fibre $ Gr(\beta_i, m) $. Thus, we may regard $ X_\beta^i $ as a subvariety of the product $ Y_{d_i(\beta)} \times Y_\beta $.  Let $ \sG_\beta^i $ denote the $\C^\times$-equivariant sheaf on $ Y_{d_i(\beta)} \times Y_\beta $ defined by 
\begin{equation*} 
\sG_\beta^i := \O_{X_\beta^i} \otimes (\E'_i)^{\beta_{i+1}} \{-(i-1)(m-1)\}. 
\end{equation*}

Now, we define the functor $ \G_\beta^i : D(Y_{d_i(\beta)}) \rightarrow D(Y_\beta) $ to be the Fourier-Mukai transform with respect to the kernel $ \sG_\beta^i $. We will use this functor for the cap, so we define 
\begin{equation} \label{eq:Gdef}
\Psi(g_\beta^i) := \G_\beta^i.
\end{equation}

We can give a different description of the functor $ \G_\beta^i $.  Namely, $\G_\beta^i(\cdot) = i_\ast(q^\ast(\cdot) \otimes (\E_i)^{\beta_{i+1}} \{- (i-1) (m-1)\})$ where we make use of the diagram 
\begin{equation*}\begin{CD}X_\beta^i @>i>> Y_\beta \\@VqVV \\ Y_{d_i(\beta)}\end{CD} \end{equation*}

Now, we define $\F_\beta^i : Y_\beta \rightarrow Y_{d_i(\beta)} $ by $\F_\beta^i(\cdot) = q_\ast(i^\ast(\cdot) \otimes (\E_{i+1})^{-\beta_i})\{i (m-1) \} $. We define
\begin{equation} \label{eq:Fdef}
\Psi(f_\beta^i) := \F_\beta^i.
\end{equation} 
As with $ \G_\beta^i $, the functor $ \F_\beta^i $ can also be described as a Fourier-Mukai transform with respect to the kernel $\sF_\beta^i := \O_{X_\beta^i} \otimes \E_{i+1}^\vee \{i (m-1) \}$.  

\subsubsection{Crossings} \label{se:crossing}
To a crossing connecting boundary points $i$ and $i+1$ we assign a Fourier-Mukai kernel $ \mathcal{T}_\beta^i(l) \in  D(Y_\beta \times Y_{s_i(\beta)})$ according to the type of crossing:
\begin{itemize}
\item crossing \#1: 
$$ \sT_\beta^i(1) := \begin{cases}
\O_{Z_\beta^i}[m-1]\{-m+1\} \text{ if } \beta_i=\beta_{i+1} \in \{1, m-1\} \\
\O_{Z_\beta^i} \otimes (\E_i' \otimes \E_{i+1}^\vee)^{\beta_i-1} [-m+1]\{2(m-1)\} \text{ if } \beta_i \ne \beta_{i+1} \in \{1,m-1\} 
\end{cases} $$
\item crossing \#2: 
$$ \sT_\beta^i(2) := \begin{cases}
\O_{Z_\beta^i} \otimes \E_{i+1}^\vee \otimes \E'_i [-m+1] \{m-1 + 2\beta_i\} \text{ if } \beta_i= \beta_{i+1} \in \{1, m-1\} \\
\O_{Z_\beta^i} \otimes (\E'_i \otimes \E_{i+1}^\vee)^{\beta_i} [m-1] \text{ if } \beta_i \ne \beta_{i+1} \in \{1, m-1\} 
\end{cases} $$
\end{itemize}
Now that we have these kernels, we associate to each crossing a functor 
\begin{equation}\label{eq:Cdef}
\Psi(t_\beta^i(l)) := \T_\beta^i(l) := \Phi_{\sT_\beta^i(l)} : D(Y_\beta) \rightarrow D(Y_{s_i(\beta)})
\end{equation}
where as usual $ l \in \{1,2\}$. 

The reader may wonder what is the origin of mysterious looking line bundles and shifts occuring in the definitions of $\sT_\beta^i(l)$. The thing to keep in mind in that once the line bundle and shifts on one of the crossings are fixed (say for $\sT_{1,1}^1(1)$) then the choices of line bundles and shifts on the other crossings are uniquely determined via Reidemeister II and the pitchfork identities (section \ref{se:pitchfork}). 

\subsubsection{Functor for a tangle} 
Let $ T $ be a tangle.  Then, scanning a projection of $ T $ from top to bottom and composing along the way, gives us a functor $ \Psi(T) : D(Y_\beta) \rightarrow D(Y_{\beta'}) $.  However, this functor may depend on the choice of tangle projection.  

\begin{Theorem}\label{thm:main}
The isomorphism class of the functor $\Psi(T): D(Y_\beta) \rightarrow D(Y_{\beta'})$ associated to the planar diagram of an $(\beta,\beta')$ tangle $T$ is a tangle invariant. 
\end{Theorem}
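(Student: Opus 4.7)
The strategy is to invoke Lemma \ref{lem:relations}, which reduces the theorem to verifying that the assignment $\Psi$ respects each of the six families of tangle moves. Since $\Psi$ is defined by composing Fourier--Mukai kernels $\sG_\beta^i$, $\sF_\beta^i$, $\sT_\beta^i(l)$, and since composition of functors corresponds to convolution $\ast$ of kernels, each relation becomes an isomorphism between explicit equivariant sheaves on a product $Y_\beta \times Y_{\beta''}$. My plan is to check the six relations in roughly the order of increasing difficulty, exploiting the geometric results from Lemmas \ref{lem:facts0}--\ref{lem:facts3} and the transversality Corollary \ref{lem:transverse}.

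First I would dispatch the easy structural relations. The height-exchange isotopies follow from base change together with the fact (Corollary \ref{lem:transverse}) that distant $X_\beta^i$, $X_\beta^j$ intersect transversely; thus the two orderings of non-adjacent cups/caps/crossings produce FM-convolutions computed by the same fibre product, so the kernels agree on the nose (up to the equivariant shift bookkeeping fixed by Lemma \ref{lem:facts0}). Reidemeister (0) amounts to computing $\sF_\beta^{i} \ast \sG_\beta^{i+1}$: this is a convolution along a Grassmannian bundle $X \to Y_{d_i(\beta)}$ whose fibre is $\mathbb{P}^{m-1}$, and the tensor twist by powers of $\E_i, \E_{i+1}$ has been chosen precisely so that the pushforward computes $H^*(\mathbb{P}^{m-1}, \Omega^\bullet)$ concentrated in a single degree, giving the identity kernel $\O_{\Delta_{Y_\beta}}$. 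The pitchfork move is then the key consistency check linking caps/cups to crossings; it reduces (after unwinding the kernels on the triple product) to an identification of two subschemes of $Y_\beta \times Y_{s_i(\beta)}$, with the specific line-bundle twists in the definitions of $\sT_\beta^i(l)$ forced by the computation.

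Next I would handle Reidemeister (II), the statement that $\T_\beta^i(1)$ and $\T_\beta^i(2)$ are mutually inverse. In the case $\beta_i = \beta_{i+1}$ the kernel $\sT_\beta^i(l)$ is built from the two-component locus $Z_\beta^i = \Delta \cup V_\beta^i$, and $\T_\beta^i(l)$ can be recognized as a spherical twist in $\P_\beta^i := \G_\beta^i \circ \F_\beta^i$; invertibility then follows from the standard theory of spherical twists applied to the functor $\P_\beta^i$, whose spherical property comes from the cap/cup adjunction together with the dualizing-sheaf computation in Lemma \ref{lem:facts1}(iii). When $\beta_i \neq \beta_{i+1}$ the variety $Z_\beta^i$ is reducible as $W_\beta^i \cup V_\beta^i$ and corresponds to a Mukai-flop-type correspondence; here I would argue directly following Bridgeland--King--Reid, showing $\sT_\beta^i(1) \ast \sT_\beta^i(2) \cong \O_{\Delta_{Y_\beta}}$ by computing the convolution via the fibre product, using Lemma \ref{lem:facts3} to keep track of dualizing sheaves so that cancellations between the two components occur, leaving only the diagonal. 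Reidemeister (I) then follows almost formally by combining Reidemeister (II) with (0) and the pitchfork relation.

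The main obstacle, as anticipated by the authors, is Reidemeister (III), the braid relation $\T^i \T^{i+1} \T^i \cong \T^{i+1} \T^i \T^{i+1}$. Here the spherical twist machinery does not supply the relation automatically, and one must compute directly. The plan, following the Khovanov--Thomas approach, is to compute the six-fold convolutions of the kernels on $Y_\beta \times Y_{s_{i+1} s_i s_{i+1}(\beta)}$, identify both sides with coherent sheaves on an explicit correspondence built from triple fibre products of the $Z^i$-type subvarieties, and verify the isomorphism via a case analysis on the possible values of $(\beta_i, \beta_{i+1}, \beta_{i+2})$. Each case requires controlling the relevant line-bundle twists (from Lemma \ref{lem:facts1}) and the supports of the various sheaves on an iterated fibre product; transversality from Corollary \ref{lem:transverse} ensures that these fibre products are of the expected dimension, but the bookkeeping of twists and the identification of the resulting sheaves is the most delicate step in the argument.
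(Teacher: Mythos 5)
Your overall architecture --- reduce to the relations of Lemma \ref{lem:relations} and verify each as an isomorphism of Fourier--Mukai kernels --- is exactly the paper's, and your treatments of the height isotopies, the pitchfork move, the like-strands Reidemeister (II) via spherical twists, and the Khovanov--Thomas-style direct computation for Reidemeister (III) all match the actual proof in outline. The serious gap is your claim that Reidemeister (I) ``follows almost formally by combining Reidemeister (II) with (0) and the pitchfork relation.'' It does not: in the curl $f_\beta^i \circ t_\beta^{i\pm1}(2) \circ g_\beta^i$ the crossing shares both of its strands with the cup, which is precisely the configuration the pitchfork move cannot untangle, and R(I) is exactly the relation separating unframed from framed invariance --- it pins down the overall shifts $[-m+1]\{m-1\}$ in the crossing kernels in a way the other moves do not. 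The paper's Theorem \ref{th:RI} is a two-page computation: one writes $\sT_\beta^i(2)$ as a cone on the adjunction $\sP * \sP_R \rightarrow \O_\Delta$, computes the cohomology sheaves of $\sF_\beta^{i+1} * \sP * \sP_R * \sG_\beta^{i+1}$ and of $\sF_\beta^{i+1} * \sG_\beta^{i+1}$ via Lemma \ref{lem:updown}, and then must show the induced map is an isomorphism in every degree where both are nonzero; that last step is not formal and requires evaluating the kernels on $\O_{Y_{d_{i+1}(\beta)}}$ together with an explicit Koszul-resolution computation. Without some such argument your proof of invariance is incomplete.

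Two smaller inaccuracies would surface when you carry out the plan. First, Reidemeister (0) is not a $\p$-cohomology computation: for the mixed-index composition $\sF_\beta^i * \sG_\beta^{i+1}$ the intersection $\pi_{12}^{-1}(X_\beta^{i+1}) \cap \pi_{23}^{-1}(X_\beta^i)$ is transverse and $\pi_{13}$ maps it isomorphically onto the diagonal; the mechanism you describe (a projective-bundle pushforward with a single surviving cohomology group) is instead what occurs in the circle-removal relation $\sF_\beta^i * \sG_\beta^i \cong \O_\Delta \otimes H^\star(\p)$, which is emphatically not the identity. Second, for the unlike-strands case of Reidemeister (II), a direct computation of $\sT^i(1) * \sT^i(2)$ ``via the fibre product with cancellations between the two components'' is not feasible, since the components of $Z_\beta^i$ do not intersect transversely over the two factors and the derived tensor products are hard to control; the Bridgeland--King--Reid technique actually needed is a support bound (the Intersection Theorem applied to the torsion amplitude of $\sP_L * \sP$, combined with the codimension estimate of Lemma \ref{th:N_kineq} and the Ext-vanishing of Proposition \ref{prop:vanext}), after which $\sP_L * \sP \cong \O_\Delta$ follows from the computation $\pi_{2*}(\sP_L * \sP) \cong \O_Y$.
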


To prove this theorem, we must check that the functors assigned to the elementary tangles satisfy the relations from Lemma \ref{lem:relations}.  This will be done in section \ref{se:invariance}.

This construction associates to any link $L$ a functor $\Psi(L): D(Y_0) \rightarrow D(Y_0)$. Since $Y_0$ is a point with the trivial action of $\C^\times$, $\Psi(L)$ is determined by $\Psi(L)(\C) \in D(Y_0)$ which is a complex of graded vector spaces. We denote by $\Kh^{i,j}(L)$ the $j$-graded piece of the $i$th cohomology group of $\Psi(L)(\C)$ (so $i$ marks the cohomological degree and $j$ marks the graded degree). Since $\Psi(L)$ is a tangle invariant $\Kh^{i,j}(L)$ is an invariant of the link $L$. 

\subsection{Properties of the kernels}
Recall that if $ \sP $ is a Fourier-Mukai kernel, then the right and left adjoints of $\Phi_\sP$ are given as FM transforms with respect to the kernels $\sP_R := \sP^\vee \otimes \pi_2^* \omega_X [\dim(X)] $ and $ \sP_L := \sP^\vee \otimes \pi_1^* \omega_Y [\dim(Y)] $, viewed as sheaves on $ Y \times X $.

Using this terminology, we see that the functors $ \F_\beta^i $ and $ \G_\beta^i $ are related by the following Lemma.

\begin{Lemma} \label{th:Gadj} If $\beta_i+\beta_{i+1}=m$ then 
$$\sF_\beta^i \cong {\sG_\beta^i}_R[(m-1)]\{-(m-1)\} \cong {\sG_\beta^i}_L[-(m-1)]\{(m-1)\}.$$
In particular, $\F_\beta^i(\cdot) \cong {\G_\beta^i}_R(\cdot) [m-1] \{-(m-1)\} \cong {\G_\beta^i}_L(\cdot) [-(m-1)] \{m-1\}.$  
\end{Lemma}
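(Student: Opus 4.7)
The plan is to compute ${\G_\beta^i}_R$ and ${\G_\beta^i}_L$ at the functorial level using the factorization
\[
\G_\beta^i(\sF) \;=\; j_*\bigl(q^*\sF \otimes \sL\bigr),\qquad \sL := \E_i^{\beta_{i+1}}\{-(i-1)(m-1)\},
\]
where $j : X_\beta^i \hookrightarrow Y_\beta$ is the inclusion and $q : X_\beta^i \to Y_{d_i(\beta)}$ is the Grassmannian bundle projection. Observe that the hypothesis $\beta_i+\beta_{i+1}=m$ together with $\beta_i,\beta_{i+1}\in\{1,m-1\}$ forces $\beta_i\beta_{i+1}=m-1$; this common integer will be simultaneously the codimension of $j$ in $Y_\beta$ and the relative dimension of $q$, and it is precisely what produces the shifts $[\pm(m-1)]$ in the statement.

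For the right adjoint I would use the standard adjunctions $(j_*)^R=j^!$, $(q^*)^R=q_*$, and $(-\otimes\sL)^R=(-\otimes\sL^{-1})$ to get
\[
{\G_\beta^i}_R(\sF) \;=\; q_*\bigl(j^!\sF\otimes\sL^{-1}\bigr).
\]
Since $j$ is a regular embedding of codimension $m-1$, one has $j^!\sF=j^*\sF\otimes\det N_{X_\beta^i/Y_\beta}[-(m-1)]$, and Lemma~\ref{lem:facts1}(i) identifies $\det N_{X_\beta^i/Y_\beta}\cong\E_i^{\beta_{i+1}}\otimes\E_{i+1}^{-\beta_i}\{2(m-1)\}$. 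The product $\sL^{-1}\otimes\det N_{X_\beta^i/Y_\beta}$ then cancels the $\E_i^{\pm\beta_{i+1}}$ factors and collects the $\C^\times$-weights into $\E_{i+1}^{-\beta_i}\{(i+1)(m-1)\}$. Comparing with the defining formula $\F_\beta^i(\sF)=q_*(j^*\sF\otimes\E_{i+1}^{-\beta_i})\{i(m-1)\}$ yields ${\G_\beta^i}_R(\sF)\cong\F_\beta^i(\sF)\{m-1\}[-(m-1)]$, which rearranges to the first isomorphism of the lemma.

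For the left adjoint the analogous inputs are $(j_*)^L=j^*$ and, for the smooth proper morphism $q$ of relative dimension $m-1$, the identity $(q^*)^L(-)=q_*(-\otimes\omega_q)[m-1]$ (obtained by dualizing $q^!(-)=q^*(-)\otimes\omega_q[m-1]$). Lemma~\ref{lem:facts1}(ii) gives $\omega_q\cong\E_i^{\beta_{i+1}}\otimes\E_{i+1}^{-\beta_i}$, and the same cancellation now delivers
\[
{\G_\beta^i}_L(\sF) \;=\; q_*\bigl(j^*\sF\otimes\E_{i+1}^{-\beta_i}\bigr)\{(i-1)(m-1)\}[m-1] \;\cong\;\F_\beta^i(\sF)\{-(m-1)\}[m-1],
\]
which is the second isomorphism after rearranging shifts. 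The kernel-level identifications $\sF_\beta^i\cong{\sG_\beta^i}_R[m-1]\{-(m-1)\}$ and $\sF_\beta^i\cong{\sG_\beta^i}_L[-(m-1)]\{m-1\}$ then follow from uniqueness of Fourier--Mukai kernels representing a given transform, or alternatively by running the same computation directly on $\sG_\beta^i=(q,j)_*\sL$ using Grothendieck--Verdier duality for the closed embedding $(q,j):X_\beta^i\hookrightarrow Y_{d_i(\beta)}\times Y_\beta$ together with the projection formula to absorb the extra $\omega_{Y_{d_i(\beta)}}$ appearing in the paper's general recipes for $\sP_R$ and $\sP_L$. No substantive obstacle arises; the whole argument reduces to careful bookkeeping of cohomological shifts and $\C^\times$-weights, with the key algebraic cancellation supplied cleanly by Lemma~\ref{lem:facts1}.
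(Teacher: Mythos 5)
Your argument is correct, and it takes a genuinely different route from the paper's. The paper works entirely at the kernel level: it computes $\O_{X_\beta^i}^\vee$ inside $Y_{d_i(\beta)}\times Y_\beta$ via Grothendieck duality for the embedding of $X_\beta^i$ into the product, plugs this into the explicit formulas $\sP_R = \sP^\vee\otimes\pi_2^*\omega_X[\dim X]$ and $\sP_L = \sP^\vee\otimes\pi_1^*\omega_Y[\dim Y]$, and then uses Lemma \ref{lem:facts1} to convert $\omega_{X_\beta^i}\otimes\pi^*\omega^\vee$ into the line bundles appearing in $\sF_\beta^i$. You instead factor the functor as $j_*\circ(-\otimes\sL)\circ q^*$ and take adjoints factor by factor, using $j^!(-)=j^*(-)\otimes\det N_{X_\beta^i/Y_\beta}[-(m-1)]$ and $(q^*)^L(-)=q_*(-\otimes\omega_q)[m-1]$; the same two isomorphisms from Lemma \ref{lem:facts1} then do the cancellation. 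Your route is arguably more transparent about \emph{why} the shifts are $[\pm(m-1)]$ (codimension of $j$ versus relative dimension of $q$), and it avoids manipulating duals of structure sheaves on the product. The one thing it buys less cheaply is the kernel-level statement itself: an isomorphism of Fourier--Mukai transforms does not in general force an isomorphism of the representing kernels, so your appeal to "uniqueness of Fourier--Mukai kernels" is not available off the shelf. You correctly hedge this by offering the direct kernel computation via Grothendieck--Verdier duality for $X_\beta^i\hookrightarrow Y_{d_i(\beta)}\times Y_\beta$ as the fallback --- but note that this fallback \emph{is} essentially the paper's proof, so for the full strength of the lemma as stated (an isomorphism of kernels, which is what gets used in later compositions like Theorem \ref{th:RI}) the kernel computation cannot really be dispensed with.
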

\begin{proof}
The dual of $\O_{X_\beta^i}$ in $Y_{d_i(\beta)} \times Y_\beta$ is 
$$\O_{X_\beta^i}^\vee \cong \omega_{X_\beta^i} \otimes \pi_1^* \omega_{Y_{d_i(\beta)}}^\vee \otimes \pi_2^* \omega_{Y_\beta}^\vee [-\dim(X_\beta^i)]$$
and so
\begin{eqnarray*}
{\sG_\beta^i}_R &\cong& \O_{X_\beta^i}^\vee \otimes (\E_i)^{-\beta_{i+1}} \{(i-1) (m-1) \} \otimes \pi_1^* \omega_{Y_{d_i(\beta)}} [\dim(Y_{d_i(\beta)})] \\
&\cong& \omega_{X_\beta^i} \otimes \pi_2^* \omega_{Y_\beta}^\vee \otimes (\E_i)^{-\beta_{i+1}} \{(i-1) (m-1)\} [-(m-1)] \\
&\cong& \O_{X_\beta^i} \otimes (\E_i)^{\beta_{i+1}} \otimes (\E_{i+1})^{-\beta_i} \{2 (m-1) \} \otimes (\E_i)^{-\beta_{i+1}} \{(i-1) (m-1) \} [-(m-1)] \\
&\cong& \sF_\beta^i [-(m-1)] \{(m-1)\}
\end{eqnarray*}
where, to obtain the third line, we use Lemma \ref{lem:facts1}. 

Similarly,
\begin{eqnarray*}
{\sG_\beta^i}_L &\cong& \O_{X_\beta^i}^\vee \otimes (\E_i)^{-\beta_{i+1}} \{(i-1) (m-1) \} \otimes \pi_2^* \omega_{Y_\beta} [\dim(Y_\beta)] \\
&\cong& \omega_{X_\beta^i} \otimes \pi_1^* \omega_{Y_{d_i(\beta)}}^\vee \otimes (\E_i)^{-\beta_{i+1}} \{(i-1) (m-1) \} [m-1] \\
&\cong& \O_{X_\beta^i} \otimes (\E_i)^{\beta_{i+1}} \otimes (\E_{i+1})^{-\beta_i} \otimes (\E_i)^{-\beta_{i+1}} \{(i-1) (m-1) \} [m-1] \\
&\cong& \sF_\beta^i [(m-1)] \{-(m-1)\}.
\end{eqnarray*}
\end{proof}

The functors for under and over crossings are related by the following Lemma. 

\begin{Lemma} \label{th:twistadjoint} We have that  
$ {\sT_\beta^i(1)}_L \cong \sT_{s_i(\beta)}^i(2) $
\end{Lemma}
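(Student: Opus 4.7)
The plan is to compute $\sT_\beta^i(1)_L$ directly using the formula $\sP_L = \sP^\vee \otimes \pi_2^* \omega_Y[\dim Y]$ (quoted earlier in the excerpt for $\sP \in D(X \times Y)$) and compare with $\sT_{s_i(\beta)}^i(2)$. Note that $|\beta| = |s_i(\beta)|$, so $\dim Y_\beta = \dim Y_{s_i(\beta)}$, and $Z_\beta^i$ has codimension $\dim Y_\beta$ in the smooth ambient $V = Y_\beta \times Y_{s_i(\beta)}$. By Lemma \ref{lem:facts3}, $Z_\beta^i$ is a local complete intersection, so I may apply the standard identity $\O_{Z_\beta^i}^\vee \cong \omega_{Z_\beta^i} \otimes \omega_V|_{Z_\beta^i}^\vee[-\dim Y_\beta]$, together with $\omega_V = \pi_1^* \omega_{Y_\beta} \otimes \pi_2^* \omega_{Y_{s_i(\beta)}}$.

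Combining these ingredients, the factor $\pi_2^* \omega_{Y_{s_i(\beta)}}$ inside $\omega_V^\vee$ cancels the one in the adjoint formula, and the two cohomological shifts $[\dim Y_{s_i(\beta)}]$ and $[-\dim Y_\beta]$ cancel. Substituting the expression for $\omega_{Z_\beta^i}$ from Lemma \ref{lem:facts3}, the factor $\pi_1^* \omega_{Y_\beta}$ inside $\omega_{Z_\beta^i}$ cancels $\pi_1^* \omega_{Y_\beta}^\vee$, leaving a clean expression in a power of $\E_i' \otimes \E_{i+1}^\vee$, tensored with $\O_{Z_\beta^i}$, together with an overall cohomological and graded shift.

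I then split into two cases. In Case (a), $\beta_i = \beta_{i+1}$, one has $s_i(\beta) = \beta$, and the arithmetic of line bundle powers and shifts (the $2\beta_i$ from $\omega_{Z_\beta^i}$ combining with $-(m-1)$ from dualizing $\sT_\beta^i(1)$) gives precisely the formula for $\sT_\beta^i(2) = \sT_{s_i(\beta)}^i(2)$ on $Y_\beta \times Y_\beta$. In Case (b), $\beta_i \ne \beta_{i+1}$, the exponent $m-1$ in $\omega_{Z_\beta^i}$ combines with $1 - \beta_i$ from dualizing the line bundle in $\sT_\beta^i(1)$, yielding $\O_{Z_\beta^i} \otimes (\E_i' \otimes \E_{i+1}^\vee)^{m-\beta_i}[m-1] = \O_{Z_\beta^i} \otimes (\E_i' \otimes \E_{i+1}^\vee)^{\beta_{i+1}}[m-1]$ on $Y_\beta \times Y_{s_i(\beta)}$, using $\beta_i + \beta_{i+1} = m$.

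To compare with $\sT_{s_i(\beta)}^i(2)$, which naturally lives on $Y_{s_i(\beta)} \times Y_\beta$, I pull back via the coordinate-swap $\sigma: Y_\beta \times Y_{s_i(\beta)} \to Y_{s_i(\beta)} \times Y_\beta$. This restricts to an isomorphism $Z_\beta^i \cong Z_{s_i(\beta)}^i$ and exchanges primed and unprimed bundles ($\sigma^* \E_i' = \E_i$, $\sigma^* \E_{i+1} = \E_{i+1}'$), so $\sigma^* \sT_{s_i(\beta)}^i(2)$ acquires the bundle $(\E_i \otimes (\E_{i+1}')^\vee)^{\beta_{i+1}}$. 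The final identification is that on $Z_\beta^i$ the relations $L_{i-1} = L_{i-1}'$ and $L_{i+1} = L_{i+1}'$ give two factorizations of $\det(L_{i+1}/L_{i-1})$, whence $\E_i \otimes \E_{i+1} \cong \E_i' \otimes \E_{i+1}'$, equivalently $\E_i \otimes (\E_{i+1}')^\vee \cong \E_i' \otimes \E_{i+1}^\vee$. The only real obstacle is the careful bookkeeping of graded shifts $\{\cdot\}$, cohomological shifts, and the action of the swap on line bundles; once these are tracked correctly, the isomorphism is automatic.
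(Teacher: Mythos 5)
Your proposal is correct and follows essentially the same route as the paper: dualize $\O_{Z_\beta^i}$ via the l.c.i.\ formula, substitute $\omega_{Z_\beta^i}$ from Lemma \ref{lem:facts3}, cancel the canonical-bundle factors and shifts, and split into the cases $\beta_i = \beta_{i+1}$ and $\beta_i \ne \beta_{i+1}$, arriving in the latter case at $\O_{Z_\beta^i} \otimes (\E_i' \otimes \E_{i+1}^\vee)^{m-\beta_i}[m-1]$. Your explicit handling of the factor swap and the diamond identity $\E_i \otimes \E_{i+1} \cong \E_i' \otimes \E_{i+1}'$ on $Z_\beta^i$ is a bookkeeping detail the paper leaves implicit, but it is exactly the right way to identify the result with $\sT_{s_i(\beta)}^i(2)$.
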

\begin{proof}
In \cite{kh1} we computed ${\sT_\beta^i(2)}_L$ by breaking up the kernel into two pieces, computing their dual and reassambling. This time we compute the dual of $\O_{Z_\beta^i}$ directly since by Lemma \ref{lem:facts3} we know $\omega_{Z_\beta^i}$.

If $\beta_i = \beta_{i+1} $, then $\beta= s_i(\beta) $ and we have 
\begin{eqnarray*}
\sT_\beta^i(1)_L &\cong& (\O_{Z_\beta^i} [m-1] \{-m+1\})^\vee \otimes \pi_2^\ast \omega_{Y_{s_i(\beta)}}[\dim(Y_\beta)] \\
&\cong& (\omega_{Z_\beta^i} \otimes \omega_{Y_\beta \times Y_{s_i(\beta)}}^\vee [-\dim(Y_\beta)]) [-m+1]\{m-1\} \otimes  \pi_2^* \omega_{Y_{s_i(\beta)}} [\dim(Y_\beta)] \\
&\cong& \O_{Z_\beta^i} \otimes \E_i' \E_{i+1}^\vee \otimes \pi_1^* \omega_{Y_\beta} \{2\beta_i\}) \otimes \pi_1^* \omega_{Y_\beta}^\vee [-m+1] \{m-1\} \\
&\cong& \O_{Z_\beta^i} \otimes \E_i' \E_{i+1}^\vee [-m+1] \{m-1+2\beta_i)\}.
\end{eqnarray*}

If $\beta_i \ne \beta_{i+1}$ we have 
\begin{align*}
\sT_\beta^i(1)_L &\cong (\O_{Z_\beta^i} \otimes (\E_i {\E'_{i+1}}^\vee)^{\beta_i -1} [-m+1] \{2(m-1)\})^\vee \otimes \pi_2^\ast \omega_{Y_{s_i(\beta)}}[\dim(Y_\beta)] \\
&\cong (\omega_{Z_\beta^i} \otimes \omega_{Y_\beta \times Y_{s_i(\beta)}}^\vee [-\dim(Y_\beta)]) \otimes (\E_i {\E'_{i+1}}^\vee)^{1-\beta_i} [m-1]\{-2(m-1)\}) \otimes \pi_2^* \omega_{Y_{s_i(\beta)}} [\dim(Y_\beta)] \\
&\cong \O_{Z_\beta^i} \otimes (\E_i' \E_{i+1}^\vee)^{m-1} \otimes \pi_1^* \omega_{Y_\beta} \{2(m-1)\}) \otimes (\E_i' \E_{i+1}^\vee)^{1-\beta_i} \otimes \pi_1^* \omega_{Y_\beta}^\vee [m-1] \{-2(m-1)\} \\ 
&\cong \O_{Z_\beta^i} \otimes (\E_i' \E_{i+1}^\vee)^{m-\beta_i} [m-1] 
\end{align*}
Let $ \beta' = s_i(\beta) $.  Then $ \beta'_i = \beta_{i+1} = m - \beta_i $ while $ \beta'_{i+1} = \beta_i $. The result follows.
\end{proof}

\subsubsection{Twists}  \label{se:sptwists}
We now give an alternate description of the functors $ \T_\beta^i(l) $ when $ \beta_i = \beta_{i+1} $ in terms of twists. We briefly recall the concept of a twist. Let $ \Phi_\sP : D(X) \rightarrow D(Y) $ be a FM transform with respect to $ \sP \in D(X \times Y)$. The FM kernel $\sT_\P := \Cone(\sP * \sP_R \xrightarrow{\beta_{\sP}} \O_\Delta) \in D(Y \times Y)$ constructed using the natural adjunction map $\beta_\sP$ induces a functor $\Phi_{\sT_\sP}: D(Y) \rightarrow D(Y)$ which is called the \textbf{twist} with respect to $\Phi_{\sP}$. 

Let $\sP_\beta^i = \O_{X_\beta^i} \otimes \E'_i \in D(Y_{d_i(\beta)} \times Y_\beta)$. It turns out $\T_\beta^i(2)$ can be described as a twist with respect to $ \P_\beta^i := \Phi_{\sP_\beta^i} : D(Y_{d_i(\beta)}) \rightarrow D(Y_\beta) $. The essential reason for this is that $ Z_\beta^i $ has two equi-dimensional components, one of which is the diagonal $ \Delta $ and the other is the fibre product $ V_\beta^i = X_\beta^i \times_{Y_{d_i(\beta)}} X_\beta^i $ which intersect in a divisor.

\begin{Theorem} \label{th:kerneltwist}
Suppose $\beta_i = \beta_{i+1}$. The functor $ \T_\beta^i(2) $ is the twist in the functor $ \P_\beta^i $ shifted by $[-m+1]\{m-1\}$. In particular
\begin{equation*}
\sT_\beta^i(2) \cong \sT_{\sP_\beta^i}[-m+1]\{m-1\}.
\end{equation*}
\end{Theorem}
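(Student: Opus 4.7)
The plan is to compute both kernels explicitly and compare. The strategy is to identify the cone of the natural adjunction map $\sP_\beta^i * (\sP_\beta^i)_R \to \O_\Delta$ with $\sT_\beta^i(2)[m-1]\{-m+1\} = \O_{Z_\beta^i} \otimes (\E_i' \otimes \E_{i+1}^\vee) \{2\beta_i\}$, exploiting the fact that $Z_\beta^i = \Delta \cup V_\beta^i$ with $V_\beta^i = X_\beta^i \times_{Y_{d_i(\beta)}} X_\beta^i$ meeting $\Delta$ along (a copy of) $X_\beta^i$.

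First I would compute $(\sP_\beta^i)_R$ explicitly on $Y_\beta \times Y_{d_i(\beta)}$. Using the formula $\sP_R \cong \sP^\vee \otimes \pi_2^*\omega_{Y_{d_i(\beta)}}[\dim Y_{d_i(\beta)}]$, together with the dualizing identifications $\omega_{X_\beta^i/Y_{d_i(\beta)}} \cong \omega_{X_\beta^i} \otimes q^*\omega_{Y_{d_i(\beta)}}^\vee$ and the expression for $\omega_{X_\beta^i/Y_{d_i(\beta)}}$ from Lemma \ref{lem:facts1}(ii), one finds that $(\sP_\beta^i)_R$ has the form $\O_{X_\beta^i} \otimes (\text{line bundle}) [-\dim \text{fibre}]\{\text{shift}\}$ where $\dim \text{fibre} = m - 1$ (the relative dimension of $q$). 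Next, to compute the convolution $\sP_\beta^i * (\sP_\beta^i)_R$ on $Y_\beta \times Y_\beta$, I would work on $Y_\beta \times Y_{d_i(\beta)} \times Y_\beta$, noting that $\pi_{12}^*(\sP_\beta^i)_R \otimes \pi_{23}^*\sP_\beta^i$ is supported on the triple fibre product $X_\beta^i \times_{Y_{d_i(\beta)}} X_\beta^i$ (viewed in the ambient triple product) and the remaining projection $\pi_{13}$ is an isomorphism onto $V_\beta^i \subset Y_\beta \times Y_\beta$. After tracking the twisting line bundles carefully (a direct bookkeeping using the definition of $\sP_\beta^i$ and the computation of its right adjoint), the convolution simplifies to $\O_{V_\beta^i}$ tensored with an explicit line bundle and shift.

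Next I would identify the adjunction morphism $\sP_\beta^i * (\sP_\beta^i)_R \to \O_\Delta$ geometrically as the restriction map along the inclusion $\Delta \cap V_\beta^i = X_\beta^i \hookrightarrow V_\beta^i$. The cone then fits into a distinguished triangle of the form
\begin{equation*}
\sP_\beta^i * (\sP_\beta^i)_R \longrightarrow \O_\Delta \longrightarrow \Cone \xrightarrow{+1}
\end{equation*}
and, using the short exact sequence
\begin{equation*}
0 \longrightarrow \O_{V_\beta^i}(-X_\beta^i) \longrightarrow \O_{Z_\beta^i} \longrightarrow \O_\Delta \longrightarrow 0
\end{equation*}
that expresses $Z_\beta^i$ as the scheme-theoretic union of its two components (and noting that $V_\beta^i \cap \Delta = X_\beta^i$ as a Cartier divisor inside $V_\beta^i$), the cone is identified with $\O_{Z_\beta^i}$ tensored with the appropriate line bundle. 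Comparing with the definition of $\sT_\beta^i(2)$ when $\beta_i = \beta_{i+1}$, one verifies that the twisting line bundle is exactly $\E_i' \otimes \E_{i+1}^\vee$ and that the overall $\C^\times$ shift is $\{2\beta_i\}$, yielding the claimed formula after applying $[-m+1]\{m-1\}$.

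The main obstacle will be Step~1: correctly evaluating the convolution $\sP_\beta^i * (\sP_\beta^i)_R$ and, in particular, checking that the natural adjunction morphism to $\O_\Delta$ corresponds to the geometric restriction along $X_\beta^i \hookrightarrow V_\beta^i$ with the expected twist — as opposed to vanishing or picking up extra cohomology. A useful consistency check, which I would perform first before doing the full computation, is to verify that the ranks, cohomological degrees, and equivariant weights match on both sides, using Lemma \ref{lem:facts1}(iii) (which identifies the divisor class of $X_\beta^i$ in $Y_\beta$) to pin down the line bundle twists; the remaining content is then essentially the standard Mayer–Vietoris triangle for $Z_\beta^i = \Delta \cup V_\beta^i$.
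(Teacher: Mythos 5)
Your overall strategy is the paper's: compute $\sP_\beta^i * (\sP_\beta^i)_R$ on the triple product, identify it (via transversality and $\pi_{13}$) with $\O_{V_\beta^i}$ twisted and shifted, and then match the cone of the adjunction map with $\O_{Z_\beta^i}\otimes \E_i'\otimes\E_{i+1}^\vee\{2\beta_i\}$ using the decomposition $Z_\beta^i = \Delta\cup V_\beta^i$. However, two points need repair. First, a concrete numerical error: in the case $\beta_i=\beta_{i+1}\in\{1,m-1\}$ the map $q\colon X_\beta^i\to Y_{d_i(\beta)}$ is a $\mathbb{P}^1$-bundle and $X_\beta^i\subset Y_\beta$ is a divisor, so $(\sP_\beta^i)_R\cong \O_{X_\beta^i}\otimes\E_{i+1}^\vee[-1]\{2\beta_i\}$ (Lemma \ref{lem:Padj}); the shift is $[-1]$, not $[-(m-1)]$. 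The $(m-1)$-dimensional fibres belong to the cup/cap case $\beta_i+\beta_{i+1}=m$, which is not the case here. If you carried $[-(m-1)]$ through, the cone would have cohomology in degrees $0$ and $m-2$ and could not equal the sheaf $\O_{Z_\beta^i}\otimes L$, so your own consistency check would flag it — but as written the claim is false.

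Second, and more substantively, the final identification is under-argued. Exhibiting both $\Cone(\sP_\beta^i*(\sP_\beta^i)_R\to\O_\Delta)$ and $\O_{Z_\beta^i}\otimes\E_i'\otimes\E_{i+1}^\vee\{2\beta_i\}$ as extensions of $\O_{V_\beta^i}\otimes L$ by $\O_\Delta$ does not identify them: you must check (a) that both extensions are non-split (for the cone, because the adjunction map is non-zero), and (b) that $\Ext^1(\O_{V_\beta^i}\otimes L,\O_\Delta)$ is one-dimensional, so that the non-trivial extension is unique up to isomorphism. Step (b) is the real content of the paper's proof; it is supplied by Corollary \ref{cor:abstractnonsense3}, which reduces the computation to $\Hom_{X_\beta^i}(\O_{X_\beta^i},\O_{X_\beta^i})\cong\C$ using that $\Delta\cap V_\beta^i=X_\beta^i$ is a smooth divisor in $\Delta$. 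Your alternative phrasing — identifying the adjunction map with ``restriction along $X_\beta^i\hookrightarrow V_\beta^i$'' — secretly needs the same one-dimensionality to know there is only one non-zero map up to scalar. Relatedly, the useful form of the Mayer--Vietoris sequence is $0\to\O_\Delta(-X_\beta^i)\to\O_{Z_\beta^i}\to\O_{V_\beta^i}\to 0$ (with $X_\beta^i$ viewed as a divisor in $\Delta$, whose class is given by Lemma \ref{lem:facts1}(iii)), so that after twisting, $\O_\Delta$ appears as the sub-object, matching the rotated cone triangle $\O_\Delta\to\Cone\to\sP_\beta^i*(\sP_\beta^i)_R[1]$; the version you wrote puts $\O_\Delta$ as the quotient, which does not compare directly.
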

\begin{proof} 
Since $\beta_i=\beta_{i+1}$, $s_i(\beta)=\beta$ so we just write $Y_\beta$ instead of $Y_{s_i(\beta)}$. We need to show that 
$$\O_{Z_\beta^i} \otimes \E_{i+1}^\vee \otimes \E_i' \{2\beta_i\} \cong \Cone(\sP_\beta^i \ast (\sP_\beta^i)_R \rightarrow \O_\Delta) \in D(Y_\beta \times Y_\beta).$$

Working on $Y_\beta \times Y_{d_i(\beta)} \times Y_\beta$, by Lemma \ref{lem:Padj} below, we have
\begin{eqnarray*}
\sP_\beta^i \ast (\sP_\beta^i)_R &\cong& \pi_{13 \ast} ( \pi_{12}^\ast (\O_{X_\beta^i} \otimes \E_{i+1}^\vee [-1] \{2\beta_i\}) \otimes \pi_{23}^\ast (\O_{X_\beta^i} \otimes \E_i')) \\
&\cong& \pi_{13*}(\pi_{12}^* \O_{X_\beta^i} \otimes \pi_{23}^* \O_{X_\beta^i} \otimes \E_{i+1}^\vee \otimes \E_i'' [-1] \{2 \beta_i\})
\end{eqnarray*}
Since $\pi_{12}^{-1}(X_\beta^i)$ and $\pi_{23}^{-1}(X_\beta^i)$ intersect transversely $\pi_{12}^\ast \O_{X_\beta^i} \otimes \pi_{23}^\ast \O_{X_\beta^i} \cong \O_W$ where $W = \pi_{12}^{-1}(X_\beta^i) \cap \pi_{23}^{-1}(X_\beta^i)$ inside $Y_\beta \times Y_{d_i(\beta)} \times Y_\beta$. The projection $\pi_{13}$ maps $W$ isomorphically onto $V_\beta^i \subset Y_\beta \times Y_\beta$. Using the projection formula we find
$$\sP_\beta^i * (\sP_\beta^i)_R \cong \O_{V_\beta^i} \otimes \E_i \otimes (\E_{i+1}')^\vee [-1]\{2\beta_i\}.$$
Since the map $\sP_\beta^i * (\sP_\beta^i)_R \rightarrow \O_\Delta$ is the adjoint map it is not identically zero. From the distinguished triangle 
$$\O_\Delta \rightarrow \Cone(\sP_\beta^i \ast (\sP_\beta^i)_R \rightarrow \O_\Delta) \rightarrow \sP_\beta^i \ast (\sP_\beta^i)_R [1]$$
we find that $\Cone(\sP_\beta^i \ast (\sP_\beta^i)_R \rightarrow \O_\Delta)$ is a non-trivial extension of $\O_{V_\beta^i} \otimes \E_i' \otimes \E_{i+1}^\vee \{2\beta_i\}$ by $\O_\Delta$ which is supported in degree zero (here we used that $\E_i \otimes (\E_{i+1}')^\vee \cong \E_i' \otimes \E_{i+1}^\vee$ on $V_\beta^i$). 

On the other hand, $V_\beta^i \cap \Delta$ is the divisor $X_\beta^i \hookrightarrow \Delta$ so we have the standard exact sequence
$$0 \rightarrow \O_\Delta(-X_\beta^i) \rightarrow \O_{Z_\beta^i} \rightarrow \O_{V_\beta^i} \rightarrow 0.$$

If $\beta_i=\beta_{i+1}$ then by Lemma \ref{lem:facts1} we know $\O_{Y_\beta}(-X_\beta^i) \cong \E_{i+1} \otimes \E_i^\vee \{-2\beta_i\}$. Hence we have the triangle
$$\O_\Delta \rightarrow \O_{Z_\beta^i} \otimes \E_{i+1}^\vee \otimes \E_i \{2\beta_i\} \rightarrow \O_{V_\beta^i} \otimes \E_{i+1}^\vee \otimes \E_i \{2\beta_i\}$$
which means $\O_{Z_\beta^i} \otimes \E_{i+1}^\vee \otimes \E_i' \{2\beta_i\}$ is a non-trivial extension of $\O_{V_\beta^i} \otimes \E_{i+1}^\vee \otimes \E_i \{2\beta_i\}$ by $\O_{\Delta}$. Since the same is true of $\Cone(\sP_\beta^i \ast (\sP_\beta^i)_R \rightarrow \O_\Delta)$ it is enough to show that there exists a unique non-trivial such extension. 

We do this by showing that $\Ext^1(\O_{V_\beta^i} \otimes \E_{i+1}^\vee \otimes \E_i' \{2\beta_i\}, \O_\Delta) $ is one dimensional or equivalently that $\Ext^1(\O_{V_\beta^i}, \O_\Delta(-X_\beta^i))$ is one dimensional. Since $\Delta$ and $V_\beta^i$ are smooth with $\Delta \cap V_\beta^i = X_\beta^i$ a smooth divisor in $\Delta$, we know from corollary \ref{cor:abstractnonsense3} below that $\Hom^1_{Y_\beta \times Y_\beta}(\O_{V_\beta^i},\O_\Delta(-X_\beta^i)) \cong \Hom_{X_\beta^i}(\O_{X_\beta^i}, \O_{X_\beta^i})$ is one dimensional. 
\end{proof}

\begin{Lemma} \label{lem:Padj} If $\beta_i=\beta_{i+1} $ then 
$${\sP_\beta^i}_R[1] \cong \O_{X_\beta^i} \otimes \E_{i+1}^\vee \{2\beta_i\}$$ 
while ${\sP_\beta^i}_L \cong {\sP_\beta^i}_R [2]\{-2\}$. 
\end{Lemma}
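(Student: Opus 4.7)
The plan is a direct application of the general formulas for adjoint Fourier--Mukai kernels, combined with the geometric information about $X_\beta^i \subset Y_{d_i(\beta)} \times Y_\beta$ recorded in Lemmas~\ref{lem:facts0}--\ref{lem:facts1}. Throughout, $\pi_1,\pi_2$ denote the projections from $Y_{d_i(\beta)} \times Y_\beta$, and the embedding $X_\beta^i \hookrightarrow Y_{d_i(\beta)} \times Y_\beta$ factors through $Y_\beta$ as a divisor and through $Y_{d_i(\beta)}$ as a $\mathbb{P}^1$-bundle; in particular $\dim Y_\beta - \dim Y_{d_i(\beta)} = 2$ and $X_\beta^i$ has codimension $\dim Y_\beta - 1$ in the product.

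For the right adjoint I would start from ${\sP_\beta^i}_R = (\sP_\beta^i)^\vee \otimes \pi_1^* \omega_{Y_{d_i(\beta)}}[\dim Y_{d_i(\beta)}]$. Grothendieck duality for the closed embedding of $X_\beta^i$ gives
\[
\O_{X_\beta^i}^\vee \cong \omega_{X_\beta^i} \otimes \pi_1^*\omega_{Y_{d_i(\beta)}}^{-1} \otimes \pi_2^*\omega_{Y_\beta}^{-1}[-(\dim Y_\beta - 1)].
\]
Assembling, the two copies of $\omega_{Y_{d_i(\beta)}}$ cancel and the homological shift collapses to $[-1]$, leaving
\[
{\sP_\beta^i}_R \cong \omega_{X_\beta^i} \otimes \pi_2^*\omega_{Y_\beta}^{-1}|_{X_\beta^i} \otimes \pi_2^*\E_i^\vee[-1].
\]
I would then apply divisor adjunction on $X_\beta^i \subset Y_\beta$, so that $\omega_{X_\beta^i} \otimes \omega_{Y_\beta}^{-1}|_{X_\beta^i} \cong \O_{Y_\beta}(X_\beta^i)|_{X_\beta^i}$, and use Lemma~\ref{lem:facts1}(iii) to identify $\O_{Y_\beta}(X_\beta^i) \cong \E_i \otimes \E_{i+1}^\vee\{2\beta_i\}$. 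The resulting $\E_i$ cancels the $\E_i^\vee$ coming from $\sP^\vee$, yielding ${\sP_\beta^i}_R \cong \O_{X_\beta^i} \otimes \E_{i+1}^\vee\{2\beta_i\}[-1]$, which is the first claim.

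For the second claim, I would use the general relation
\[
{\sP_\beta^i}_L \cong {\sP_\beta^i}_R \otimes \pi_2^*\omega_{Y_\beta} \otimes \pi_1^*\omega_{Y_{d_i(\beta)}}^{-1}[\dim Y_\beta - \dim Y_{d_i(\beta)}].
\]
Restricted to $X_\beta^i$ (via $\pi_1|_{X_\beta^i}=q$ and $\pi_2|_{X_\beta^i}=i$), the line-bundle factor rearranges, using $\omega_{X_\beta^i/Y_{d_i(\beta)}} = \omega_{X_\beta^i} \otimes q^*\omega_{Y_{d_i(\beta)}}^{-1}$ together with divisor adjunction, into $\omega_{X_\beta^i/Y_{d_i(\beta)}} \otimes \O_{Y_\beta}(-X_\beta^i)|_{X_\beta^i}$. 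I would then invoke Lemma~\ref{lem:facts1}(ii) for $\omega_{X_\beta^i/Y_{d_i(\beta)}}$ and Lemma~\ref{lem:facts1}(iii) for $\O(X_\beta^i)$. When $\beta_i = 1$ the cancellation is immediate and produces $\O_{X_\beta^i}\{-2\}$. When $\beta_i = m-1$ the relative dualizing sheaf is given in terms of the $\bE$-bundles, so I would first convert via Lemma~\ref{lem:facts0}(ii) to the identity $\bE_j^{-1} \otimes \bE_{j+1} \cong \E_j \otimes \E_{j+1}^\vee\{\beta_j + \beta_{j+1}\}$ on $Y_\beta$; the grading shifts again telescope to $\{-2\}$. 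Since $\dim Y_\beta - \dim Y_{d_i(\beta)} = 2$ contributes the shift $[2]$, we obtain ${\sP_\beta^i}_L \cong {\sP_\beta^i}_R[2]\{-2\}$.

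The only nontrivial bookkeeping is managing the equivariant grading shifts in the $\beta_i = m-1$ case, where Lemma~\ref{lem:facts1}(ii) produces the answer in $\bE$-bundles and one must translate to $\E$-bundles via Lemma~\ref{lem:facts0}(ii); the rest of the argument is formal cancellation.
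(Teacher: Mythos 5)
Your proposal is correct and follows essentially the same route as the paper: both compute the adjoint kernels directly from the dual of $\O_{X_\beta^i}$ via Grothendieck duality and then identify the resulting line bundles using Lemma \ref{lem:facts1} (the paper splits the right-adjoint computation into the cases $\beta_i=1$ and $\beta_i=m-1$, whereas you absorb both at once through $\O_{Y_\beta}(X_\beta^i)\cong\E_i\otimes\E_{i+1}^\vee\{2\beta_i\}$, and for the left adjoint the paper uses the shortcut $\omega_{X_\beta^i/Y_\beta}\cong\omega_{X_\beta^i/Y_{d_i(\beta)}}\{2\}$ in place of your explicit telescoping). These are only bookkeeping differences; the argument is the same.
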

\begin{proof}
The dual of $\O_{X_\beta^i}$ in $Y_{d_i(\beta)} \times Y_\beta$ is 
$$\O_{X_\beta^i}^\vee \cong \omega_{X_\beta^i} \otimes \pi_2^* \omega_{Y_{d_i(\beta)}}^\vee \otimes \pi_1^* \omega_{Y_\beta}^\vee [-\dim(X_\beta^i)].$$
Hence 
\begin{eqnarray*}
{\sP_\beta^i}_R &\cong& \O_{X_\beta^i}^\vee \otimes \E_i^\vee \otimes \pi_2^* \omega_{Y_{d_i(\beta)}} [\dim(Y_{d_i(\beta)})] \\
&\cong& \omega_{X_\beta^i} \otimes \pi_1^* \omega_{Y_\beta}^\vee \otimes \E_i^\vee [-1]
\end{eqnarray*}
If $\beta_i=\beta_{i+1}=1$ then by Lemma \ref{lem:facts1} $\omega_{X_\beta^i} \otimes \pi_1^* \omega_{Y_\beta}^\vee \cong \O_{X_\beta^i}(\E_i \otimes \E_{i+1}^\vee) \{2\}$ so we get 
$${\sP_\beta^i}_R \cong \O_{X_\beta^i} \otimes \E_{i+1}^\vee [-1] \{2\}.$$
If $\beta_i=\beta_{i+1}=m-1$ then by Lemma \ref{lem:facts1} $\omega_{X_\beta^i} \otimes \pi_1^* \omega_{Y_\beta}^\vee \cong \O_{X_\beta^i}(\E_i \otimes \E_{i+1}^\vee) \{2(m-1)\}$ so we get
$${\sP_\beta^i}_R \cong \O_{X_\beta^i} \otimes \E_{i+1}^\vee [-1] \{2(m-1)\}.$$

Finally,
\begin{eqnarray*}
{\sP_\beta^i}_L 
&\cong& \O_{X_\beta^i}^\vee \otimes \E_i^\vee \otimes \pi_1^* \omega_{Y_\beta} [\dim(Y_\beta)] \\
&\cong& \omega_{X_\beta^i} \otimes \pi_2^* \omega_{Y_{d_i(\beta)}}^\vee \otimes \E_i^\vee [1] \\
&\cong& \omega_{X_\beta^i} \otimes \pi_1^* \omega_{Y_\beta}^\vee \otimes \E_i^\vee [1] \{-2\} \\
&\cong& {\sP_\beta^i}_R [2]\{-2\}
\end{eqnarray*}
where the third equality follows since $\omega_{X_\beta^i/Y_\beta} \cong \omega_{X_\beta^i/Y_{d_i(\beta)}} \{2\}$ if $\beta_i = \beta_{i+1} $ (Lemma \ref{lem:facts1}). 
\end{proof}

\begin{Lemma}\label{lem:abstractnonsense2} Let $S,S' \subset T$ be local complete intersection (l.c.i.) subvarieties of a l.c.i. quasi-projective variety $T$. Suppose $S \cap S' \subset T$ is a l.c.i. and $\sF$ a locally free sheaf on $S$. Then
\begin{eqnarray*}
\H^l(\O_{S'}^\vee \otimes i_\ast \sF) =
\left\{
\begin{array}{ll}
0 & \mbox{if $l<c$}  \\
\O_{S \cap S'}(j^\ast \sF \otimes \det N_{S \cap S'/S}) & \mbox{if $l=c$} 
\end{array}
\right.
\end{eqnarray*}
where $i: S \hookrightarrow T$, $c$ is the codimension of the embedding $j: S \cap S' \hookrightarrow S$ and $N_{S \cap S'/S}$ is the (locally free) normal bundle of $S \cap S' \subset S$. All this also holds  $\C^\times$-equivariantly.
\end{Lemma}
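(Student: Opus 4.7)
The plan is to rewrite $\O_{S'}^\vee \otimes i_\ast\sF$ as a derived Hom and reduce to computing Ext sheaves on $S$. Since $S'$ is a local complete intersection in the l.c.i.\ variety $T$, the sheaf $\O_{S'}$ is a perfect complex on $T$ (it has a Koszul resolution by locally free sheaves), so
$$\O_{S'}^\vee \otimes^L i_\ast \sF \;\cong\; R\mathcal{H}om_{\O_T}(\O_{S'}, i_\ast \sF).$$
For the closed embedding $i:S\hookrightarrow T$ the adjunction $(Li^\ast, i_\ast)$ extends to sheaf-level $R\mathcal{H}om$ (via the projection formula applied to a locally free resolution of $\O_{S'}$), yielding
$$R\mathcal{H}om_{\O_T}(\O_{S'}, i_\ast \sF) \;\cong\; i_\ast R\mathcal{H}om_{\O_S}(Li^\ast\O_{S'}, \sF).$$
It therefore suffices to compute the cohomology sheaves of $R\mathcal{H}om_{\O_S}(Li^\ast\O_{S'}, \sF)$ on $S$.

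The cohomology sheaves of $Li^\ast\O_{S'}$ are $\mathcal{H}^0 = \O_{S\cap S'}$ and $\mathcal{H}^{-k} = \mathcal{T}or^{\O_T}_k(\O_{S'}, \O_S)$ for $k\ge 1$; all of these are coherent and supported on $S \cap S'$. I would use the truncation triangle
$$\tau^{<0}Li^\ast\O_{S'} \longrightarrow Li^\ast\O_{S'} \longrightarrow \O_{S \cap S'} \longrightarrow (\tau^{<0}Li^\ast\O_{S'})[1]$$
and apply $R\mathcal{H}om_{\O_S}(-, \sF)$ to produce a distinguished triangle whose long exact sequence separates the contribution of $\O_{S \cap S'}$ from that of the higher Tors.

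For the $\O_{S\cap S'}$ term: since $j:S \cap S' \hookrightarrow S$ is l.c.i.\ of codimension $c$, a local Koszul computation combined with local freeness of $\sF$ shows that $R\mathcal{H}om_{\O_S}(\O_{S\cap S'}, \sF)$ is concentrated in degree $c$ with value $j_\ast(j^\ast\sF \otimes \det N_{S\cap S'/S})$. For the truncation $\tau^{<0}Li^\ast\O_{S'}$: each $\mathcal{T}or_q$ is coherent on $S$ and supported on $S \cap S'$, which has codimension $c$ in the Cohen--Macaulay variety $S$ (l.c.i.\ implies CM). A standard depth/local-duality argument then gives that for any such $\sG$ supported in codimension $\ge c$ and $\sF$ locally free, $\mathcal{E}xt^p_{\O_S}(\sG, \sF) = 0$ for $p < c$. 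The hyper-Ext spectral sequence
$$E_2^{p,q} = \mathcal{E}xt^p_{\O_S}\bigl(\mathcal{T}or_q^{\O_T}(\O_{S'}, \O_S), \sF\bigr) \;\Longrightarrow\; \mathcal{E}xt^{p+q}_{\O_S}(\tau^{<0}Li^\ast\O_{S'}, \sF)$$
(indexed over $q \ge 1$) then has $E_2^{p,q} = 0$ for $p < c$, so $\mathcal{E}xt^n_{\O_S}(\tau^{<0}Li^\ast\O_{S'}, \sF) = 0$ for all $n \le c$.

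Combining these in the long exact sequence gives $\mathcal{E}xt^l_{\O_S}(Li^\ast\O_{S'}, \sF) = 0$ for $l < c$ and an isomorphism $\mathcal{E}xt^c_{\O_S}(Li^\ast\O_{S'}, \sF) \cong j_\ast(j^\ast\sF \otimes \det N_{S\cap S'/S})$; pushing forward by $i_\ast$ yields the formula for $\H^l(\O_{S'}^\vee \otimes i_\ast\sF)$ claimed in the statement. The main technical obstacle is the codimension-vanishing $\mathcal{E}xt^p(\sG, \sF) = 0$ for $p < c$ whenever $\sG$ is coherent and supported in codimension $\ge c$; this is where the Cohen--Macaulay hypothesis on $S$ (coming from l.c.i.) is essential. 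The $\C^\times$-equivariant version is immediate, since Koszul resolutions, perfectness of $\O_{S'}$, the adjunction $(Li^\ast, i_\ast)$, and the truncation triangle all upgrade canonically to the $\C^\times$-equivariant derived category.
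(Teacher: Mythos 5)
Your argument is correct, and it is essentially the proof the paper delegates to Lemma 4.7 of \cite{kh1}: reduce via perfectness of $\O_{S'}$ and the projection-formula adjunction to a computation of $\mathcal{E}xt^{\bullet}_{\O_S}(Li^*\O_{S'},\sF)$ on $S$, identify the degree-$c$ contribution of $\mathcal{H}^0 = \O_{S\cap S'}$ by the Koszul complex (note that the hypotheses force $S\cap S'\hookrightarrow S$ to be a regular immersion of codimension $c$, which both you and the statement use implicitly), and kill everything in degrees $<c$ coming from the higher Tor sheaves by the standard depth/codimension vanishing on the Cohen--Macaulay variety $S$. No gaps.
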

\begin{proof}
This is a slight generalization of Lemma 4.7 from \cite{kh1} but the proof remains the same. 
\end{proof}

\begin{Corollary}\label{cor:abstractnonsense3} Let $S,S' \subset T$ be l.c.i. subvarieties of a l.c.i. quasi-projective variety $T$. Suppose $S \cap S' \subset S$ is codimension $c$ and carved out by the section of a rank $c$ vector bundle $\mathcal{V}$ on $S$. Then $\Hom_T(\O_{S'}[-c], \O_S \otimes \det \mathcal{V}^\vee) \cong \Hom_D(\O_D,\O_D)$ so, if $T$ is projective, there exists a non-zero map $\O_{S'}[-c] \rightarrow \O_S \otimes \det \mathcal{V}^\vee$ which is unique up to a non-zero multiple. All this also holds $\C^\times$-equivariantly.
\end{Corollary}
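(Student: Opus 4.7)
The plan is to reduce the $\Ext$-computation to a sheaf-theoretic one via Lemma~\ref{lem:abstractnonsense2}. Specifically, shifting in the first argument gives
\begin{equation*}
\Hom_T(\O_{S'}[-c], \O_S \otimes \det \mathcal{V}^\vee) \;=\; \Ext^c_T(\O_{S'}, \O_S \otimes \det \mathcal{V}^\vee),
\end{equation*}
and I would compute this via the local-to-global spectral sequence
\begin{equation*}
E_2^{p,q} \;=\; H^p\bigl(T,\,\H^q(\O_{S'}^\vee \otimes \O_S \otimes \det \mathcal{V}^\vee)\bigr) \;\Longrightarrow\; \Ext^{p+q}_T(\O_{S'}, \O_S \otimes \det \mathcal{V}^\vee),
\end{equation*}
where $\O_{S'}^\vee := R\mathcal{H}om(\O_{S'}, \O_T)$.

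Next, I would apply Lemma~\ref{lem:abstractnonsense2} with $\sF = \det \mathcal{V}^\vee$, which is a locally free sheaf on $S$. Since $S \cap S' \subset S$ is cut out by a regular section of $\mathcal{V}$, the intersection is a l.c.i.\ in $T$ and its normal bundle in $S$ is $N_{S\cap S'/S} \cong \mathcal{V}|_{S\cap S'}$. The lemma then yields $\H^q = 0$ for $q<c$, together with
\begin{equation*}
\H^c(\O_{S'}^\vee \otimes \O_S \otimes \det \mathcal{V}^\vee) \;\cong\; \O_{S\cap S'} \otimes \det \mathcal{V}^\vee|_{S\cap S'} \otimes \det \mathcal{V}|_{S\cap S'} \;\cong\; \O_{S\cap S'}.
\end{equation*}

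Because the $\H^q$ term vanishes for $q<c$, the spectral sequence places the entry $E_2^{0,c} = H^0(T, \O_{S\cap S'}) = \Hom_{S\cap S'}(\O_{S\cap S'}, \O_{S\cap S'})$ at the corner, with no differentials into or out of it contributing to $E_\infty^{0,c}$. Hence $\Ext^c_T(\O_{S'}, \O_S \otimes \det \mathcal{V}^\vee) \cong \Hom_{S\cap S'}(\O_{S\cap S'}, \O_{S\cap S'})$, which is the claimed identification (reading $D = S \cap S'$). When $T$ is projective, this $\Hom$-space is one-dimensional by properness and connectedness of $S\cap S'$, so the non-zero map exists and is unique up to scalar. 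The $\C^\times$-equivariant version is immediate since every step (local duality, the spectral sequence, and Lemma~\ref{lem:abstractnonsense2} itself) is functorial with respect to the equivariant structure.

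The main obstacle is essentially bookkeeping: verifying that the conormal bundle of $S \cap S'$ in $S$ really is $\mathcal{V}^\vee|_{S\cap S'}$ (so that the twist by $\det \mathcal{V}^\vee$ exactly cancels $\det N_{S\cap S'/S}$ in the formula of Lemma~\ref{lem:abstractnonsense2}), and checking that the degree-$c$ term is the one that survives the spectral sequence. Both are routine given that $S\cap S' \subset S$ is carved out as the zero locus of a regular section of $\mathcal{V}$.
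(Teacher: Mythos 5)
Your proposal is correct and takes essentially the same route as the paper: the paper's proof is a one-line deduction from Lemma~\ref{lem:abstractnonsense2} (citing the analogous corollary in \cite{kh1}), and your argument simply spells out the expected details — the local-to-global spectral sequence, the vanishing of $\H^q$ for $q<c$, the identification $N_{S\cap S'/S}\cong\mathcal{V}|_{S\cap S'}$ cancelling the twist by $\det\mathcal{V}^\vee$, and the survival of the corner term $E_2^{0,c}$. Your remark that the final one-dimensionality (hence uniqueness up to scalar) uses connectedness of $D=S\cap S'$ is an appropriate observation, and it holds in all the applications in the paper.
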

\begin{proof}
This is a slight generalization of corollary 4.8 from \cite{kh1} and follows easily from Lemma \ref{lem:abstractnonsense2}. 
\end{proof}

We finish the section with a result, which we will need later, related to Lemma \ref{lem:abstractnonsense2}. For simplicity, consider three smooth subvarieties $S,S',S'' \subset T$ whose pairwise intersections are also smooth. othly. Denote by $c'$ and $c$ the codimensions of $S'' \cap S' \subset S'$ and $S' \cap S \subset S$ respectively. Suppose that $S'' \cap S \subset S$ is codimension $c+c'$. An elementary check shows that this implies $S \cap S'' = S \cap S' \cap S''$ since they have the same dimension and that $S \cap S'$ and $S'' \cap S'$ intersect in the expected codimension inside $S'$. Furthermore, let $\sL$ and $\sL'$ denote line bundles which, for simplicity, we assume to be globally defined on $T$. Consider the following diagram:
\begin{equation*}
\begin{CD}
\Ext^{c'}(\O_{S''}, \O_{S'}\sL' ) @. \times @. \Ext^c(\O_{S'}\sL', \O_S \sL \sL')  @>>> \Ext^{c+c'}(\O_{S''}, \O_S \sL \sL') \\
@VV{\cong}V @. @VV{\cong}V @VV{\cong}V \\
H^0(S' \cap S'', \sL' \det N_{S' \cap S''/S'}) @. \times @. H^0(\O_{S \cap S'} , \sL \det N_{S \cap S'/S}) @>>> H^0(\O_{S \cap S''}, \sL \sL' \det N_{S \cup S''/S})
\end{CD}
\end{equation*}
where the top line is the natural composition map while the bottom line is multiplication of the restriction of sections to $S \cap S' \cap S''$. The vertical isomorphisms follow from \ref{lem:abstractnonsense2} where we use that $\det(N_{S' \cap S''/S'}) \otimes \det(N_{S \cap S'/S}) \cong \det(N_{S \cap S''/S})$. The last isomorphism is a consequence of the short exact sequence $0 \rightarrow N_{S \cap S''/S \cap S'} \rightarrow N_{S \cap S''/S} \rightarrow N_{S \cap S'/S} \rightarrow 0$ coming from $S \cap S'' \subset S \cap S' \subset S$ and the isomorphism $N_{S \cap S''/S \cap S'} \cong N_{S' \cap S''/S'}$ (which follows from the general result $N_{X/Z}|_{X \cap Y} \cong N_{X \cap Y/Z \cap Y}$ if $X$ and $Y$ intersect in the expected dimension inside $Z$). The naturality of all the maps means that: 
\begin{Lemma}\label{lem:abstractnonsense4} The diagram above commutes.
\end{Lemma}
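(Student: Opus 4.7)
The strategy is to verify commutativity locally on $T$ using explicit Koszul resolutions, which is precisely how the vertical isomorphisms in Lemma \ref{lem:abstractnonsense2} are constructed in the first place.

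First, I would reduce to a local computation. Every object and map in the diagram is natural in $T$, and the Ext-groups appearing are pushforwards of sheaves supported on the pairwise intersections. Thus the commutativity can be checked on stalks at points of $S \cap S' \cap S''$, and I may pass to an affine open $U \subset T$ small enough that $S \cap S'$ is cut out in $S$ by a regular sequence $f_1,\dots,f_c$, that $S' \cap S''$ is cut out in $S'$ by a regular sequence $g_1,\dots,g_{c'}$, and that all line bundles $\sL, \sL'$ as well as the relevant normal bundles are trivialized. Under these trivializations, sections of $\det N_{S\cap S'/S}\otimes \sL$ etc. are just scalars (times the wedge of the dual basis), and multiplication of sections becomes ordinary multiplication.

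Second, I would recall the construction in Lemma \ref{lem:abstractnonsense2}: the Koszul complex on $f_1,\dots,f_c$ is a local resolution of $\O_{S'}$ on $S$, and applying $\Hom(-,\O_S\sL)$ produces a complex whose top cohomology is $\O_{S\cap S'}\otimes\det N_{S\cap S'/S}\otimes\sL$, a class $\beta \in \Ext^c(\O_{S'}\sL',\O_S\sL\sL')$ being represented as $b\cdot f_1^\vee\wedge\cdots\wedge f_c^\vee$ for a local section $b$. Similarly a class $\alpha$ is represented by $a\cdot g_1^\vee\wedge\cdots\wedge g_{c'}^\vee$. Now I would compute the Yoneda product $\beta\circ\alpha[-c']$ by choosing a lift of $\alpha$ to a map of complexes between Koszul resolutions of $\O_{S''}$ and of $\O_{S'}\sL'$, composing with the representative of $\beta$, and reading off the resulting section. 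Because $S \cap S'' \subset S$ has codimension $c+c'$, the spliced resolution is a Koszul complex on the concatenated regular sequence $(f_1,\dots,f_c, g_1,\dots,g_{c'})$ (the hypothesis on codimensions is exactly what guarantees regularity of the concatenation), and the top cohomology class is $(a|_{S\cap S''})(b|_{S\cap S''})\cdot f_1^\vee\wedge\cdots\wedge f_c^\vee \wedge g_1^\vee\wedge\cdots\wedge g_{c'}^\vee$.

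Third, I would match this with the bottom row. The splitting of $\det N_{S\cap S''/S}^\vee$ into the wedge of the $f_i^\vee$'s and (restrictions of) the $g_j^\vee$'s is exactly the one induced by the exact sequence
\[
0 \to N_{S\cap S''/S\cap S'} \to N_{S\cap S''/S} \to N_{S\cap S'/S}|_{S\cap S''} \to 0
\]
combined with the canonical identification $N_{S\cap S''/S\cap S'} \cong N_{S'\cap S''/S'}|_{S\cap S''}$ used in the statement. Under this splitting the section $ab|_{S\cap S''}$ is precisely the image under the bottom horizontal map of the pair $(a,b)$. Hence the two ways around the diagram agree.

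The main obstacle is purely bookkeeping: the shifts by $[-c]$ and $[-c'$] in the Koszul dualization, the sign conventions in the wedge product when concatenating Koszul complexes, and the compatibility of the normal bundle exact sequence with the chosen regular sequences. None of these are conceptually hard, but one must set conventions once and stick with them. Once that is done, each square in the expanded diagram (naturality of the Koszul identification, and compatibility of splicing with wedge) commutes by construction, and Lemma \ref{lem:abstractnonsense4} follows.
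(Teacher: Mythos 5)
The paper does not actually prove this lemma: it simply asserts that the diagram commutes ``by the naturality of all the maps,'' so your explicit local Koszul computation is necessarily a different (and more informative) route. Your overall strategy is the standard and correct one: the Yoneda pairing on global Ext is induced by the composition pairing on local Ext sheaves, all three of which are line bundles on the respective intersections by Lemma \ref{lem:abstractnonsense2}, so the comparison reduces to a stalk computation at points of $S \cap S' \cap S''$, and the codimension hypothesis is exactly what makes the concatenated regular sequence regular. Carried out carefully this even pins down the normalization (the map is multiplication, not just a nonzero multiple of it), which is more than the paper's applications require --- there one only needs that the composite of the two nonzero maps is again nonzero.

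However, one step as written is off and is precisely where the real content hides. You assert that ``the Koszul complex on $f_1,\dots,f_c$ is a local resolution of $\O_{S'}$ on $S$.'' It is not: the Koszul complex on a regular sequence cutting out $S \cap S'$ in $S$ resolves $\O_{S\cap S'}$ as an $\O_S$-module, whereas the Ext groups in the diagram are computed over $T$, where $S'$ has codimension $d' \geq c$ and the intersection $S \cap S'$ is in general excess (e.g.\ $\Delta$ and $V_\beta^i$ inside $Y_\beta \times Y_\beta$). To represent a class of $\Ext^c_T(\O_{S'}\sL', \O_S\sL\sL')$ by Koszul data one must either resolve $\O_{S'}$ over $\O_T$ by a length-$d'$ Koszul complex and restrict to $S$ (where the restricted sequence is no longer regular, and the identification of the degree-$c$ cohomology with $\O_{S\cap S'}\otimes\det N_{S\cap S'/S}$ is exactly the content of Lemma \ref{lem:abstractnonsense2}), or pass through $Li^*\O_{S'}$ and its excess Tor sheaves. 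Likewise your concatenated sequence $(f_1,\dots,f_c,g_1,\dots,g_{c'})$ must live on a common ambient space --- lift the $g_j$ to functions on $T$ vanishing on $S''$ whose restrictions to $S$ cut out $S\cap S''$ inside $S\cap S'$ --- before the splicing argument makes sense. Both points are repairable, but as stated the middle step of your argument silently replaces the excess intersection in $T$ by a complete intersection, which is the one thing the lemma cannot take for granted.
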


\section{Invariance of $\Psi$} \label{se:invariance}
In the previous section we defined the functor $\Psi(T)$ by first choosing a planar diagram representation of $T$. In this section we prove that $\Psi$ does not depend on this choice. To do this we will show that $\Psi$ is invariant under the operations described in Lemma \ref{lem:relations}. 

\subsection{Invariance Under Reidemeister Move (0)}

The first relation we deal with is Reidemeister move (0) from Figure \ref{f2} which follows from the following identity.  

\begin{Proposition}\label{prop:R0} 
Assume that $ \beta $ contains a string $ (1,m-1,1) $ or $(m-1,1,m-1) $ starting the $ i$th slot. Then 
$$\sF_\beta^{i+1} * \sG_\beta^i \cong \O_\Delta \cong \sF_\beta^i * \sG_\beta^{i+1}.$$
In particular, $\F_\beta^{i+1} \circ \G_\beta^i \cong id \cong \F_\beta^i \circ \G_\beta^{i+1}$.
\end{Proposition}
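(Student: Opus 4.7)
The plan is to prove both identities by computing the convolution of Fourier--Mukai kernels directly on the triple product $Y_{\beta'} \times Y_\beta \times Y_{\beta'}$, where $\beta' := d_i(\beta) = d_{i+1}(\beta)$ (the two agree because $\beta_i = \beta_{i+2}$ in each of the two hypothesized patterns). Writing $\sF_\beta^{i+1} \ast \sG_\beta^i = \pi_{13\ast}(\pi_{12}^\ast \sG_\beta^i \otimes \pi_{23}^\ast \sF_\beta^{i+1})$, the kernels involved are structure sheaves of $X_\beta^i$ and $X_\beta^{i+1}$ (embedded by $(q_i,\iota)$ and $(\iota,q_{i+1})$) twisted by line bundles. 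Since $X_\beta^i$ and $X_\beta^{i+1}$ meet transversely inside $Y_\beta$ by Corollary~\ref{lem:transverse}, the set-theoretic support of the tensor is naturally $X_\beta^i \cap X_\beta^{i+1}$, realized in the triple product via $L_\cdot \mapsto (q_i(L_\cdot), L_\cdot, q_{i+1}(L_\cdot))$, and the tensor has no higher derived terms.

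Next I would analyze this intersection. The defining constraints are $zL_{i+1} \subset L_{i-1}$ and $zL_{i+2} \subset L_i$, and the dimension identities $\beta_i + \beta_{i+1} = m = \beta_{i+1} + \beta_{i+2}$ force $L_{i+1} = z^{-1} L_{i-1}$ and $zL_{i+2} = L_i$. Combined with the explicit description of $q_i$ and $q_{i+1}$ (the ``$\beta_i+\beta_{i+1} \ge m$'' branch applied in both cases) one reads off that $q_i(L_\cdot) = q_{i+1}(L_\cdot)$ pointwise on the intersection; hence $\pi_{13}$ of the support lies on the diagonal $\Delta \subset Y_{\beta'} \times Y_{\beta'}$. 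A uniqueness-of-lift argument --- recovering the remaining $L_j$ inductively via $L_{i+k+2} = z^{-1} M_{i+k}$ from an arbitrary $M_\cdot \in Y_{\beta'}$ --- shows that the map $X_\beta^i \cap X_\beta^{i+1} \to Y_{\beta'}$ is actually an isomorphism. Therefore $\pi_{13}$ realizes the support isomorphically onto $\Delta$.

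It remains to verify that the line-bundle and $\C^\times$-weight twists collapse. The tensor of kernels restricted to the intersection is $\O_{\mathrm{int}} \otimes (\E_i)^{\beta_{i+1}} \otimes (\E_{i+2})^{-\beta_{i+1}} \{2(m-1)\}$ (the shifts $\{-(i-1)(m-1)\}$ and $\{(i+1)(m-1)\}$ add to $\{2(m-1)\}$). The equivariant isomorphism $z\colon L_{i+2}/L_{i+1} \xrightarrow{\sim} L_i/L_{i-1}\{2\}$ of rank-$\beta_{i+2}$ bundles gives $\E_{i+2} \cong \E_i\{2\beta_{i+2}\}$ on the intersection. Since $\beta_i = \beta_{i+2}$ and $\beta_{i+1}\beta_{i+2} = m-1$ (both in the $(1,m-1,1)$ and $(m-1,1,m-1)$ case), the tensor powers of $\E_i$ cancel and the net weight shift is $-2\beta_{i+1}\beta_{i+2} + 2(m-1) = 0$, leaving $\O_{\mathrm{int}}$. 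Pushing forward under the isomorphism $\pi_{13}$ gives $\O_\Delta$ as desired. The other identity $\sF_\beta^i \ast \sG_\beta^{i+1} \cong \O_\Delta$ follows by the same mechanism --- in fact more easily, because the two kernels share the index $i+1$ in their $\E$-twists and cancel without invoking the $z$-shift.

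The main obstacle is the line-bundle bookkeeping: the geometry (transversality, $L_{i+1} = z^{-1}L_{i-1}$, isomorphism with $Y_{\beta'}$) is forced by elementary dimension counts, but verifying that every power of $\E_i$ and every weight shift cancel requires tracking the shift of $2\beta_{i+2}$ in $\E_{i+2} \cong \E_i\{2\beta_{i+2}\}$ together with the numerical coincidence $\beta_{i+1}\beta_{i+2} = m-1$, which is precisely what makes the two patterns $(1,m-1,1)$ and $(m-1,1,m-1)$ work uniformly.
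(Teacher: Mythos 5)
Your proposal is correct and follows essentially the same route as the paper's proof: convolve on the triple product $Y_{\beta'}\times Y_\beta\times Y_{\beta'}$, use the transversality of $X_\beta^i$ and $X_\beta^{i+1}$ to identify the derived tensor with the structure sheaf of the intersection $W$, observe that $\pi_{13}$ maps $W$ isomorphically onto $\Delta$, and cancel the line bundles and weight shifts via the $z$-induced isomorphism $\E_{i+2}\cong\E_i\{2\beta_i\}$ together with $\beta_i\beta_{i+1}=m-1$. The paper's computation is identical, including the observation that the second identity is easier because both kernels are twisted by powers of $\E_{i+1}$.
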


\begin{proof}
To prove the first isomorphism recall that 
$$ \sF_\beta^{i+1} = \O_{X_\beta^{i+1}} \otimes (\E_{i+2})^{-\beta_{i+1}} \{(i+1) (m-1) \} \text{ and } \sG_\beta^i = \O_{X_\beta^i} \otimes (\E'_i)^{\beta_{i+1}} \{-(i-1) (m-1) \}.$$

The intersection $ W := \pi_{12}^{-1}(X_\beta^i) \cap \pi_{23}^{-1}(X_\beta^{i+1}) $ inside $ Y_{d_i(\beta)} \times Y_\beta \times Y_{d_i(\beta)} $ is transverse.  Moreover 
\begin{equation*}
\begin{aligned}
W = \{ (L_\cdot, L'_\cdot, L''_\cdot) : &L_j = L'_j \text{ for } j \le i,\ L_j = zL'_{j+2} \text{ for } j \ge i, \\ &L''_j = L'_j \text{ for } j \le i+1,\ L''_j = zL'_{j+2} \text{ for } j \ge i+1 \} .
\end{aligned}
\end{equation*}

Examining this variety we see that $ \pi_{13} $ maps $ W $ isomorphically to the diagonal in $Y_{d_i(\beta)} \times Y_{d_i(\beta)}$.  Moreover, as $ zL'_{i+2} = L'_i\{2\} $ and $ zL'_{i+1} = L'_{i-1}\{2\}$, we see that the operator $ z $ induces an isomorphism $L'_{i+2}/L'_{i+1} \cong L'_i/L'_{i-1} \{2\}$ on $W$. Taking determinants this implies $\E'_{i+2} \cong \E'_i \{2 \beta_i\}$ on $W$.

Hence 
\begin{align*}
\sF_\beta^{i+1} * \sG_\beta^i &= {\pi_{13}}_*( \pi_{12}^*(\O_{X_\beta^i} \otimes (\E'_i)^{\beta_{i+1}} \{-(i-1) (m-1) \}) \otimes \pi_{23}^*(\O_{X_\beta^{i+1}} \otimes (\E_{i+2})^{-\beta_{i+1}} \{(i+1) (m-1) \})) \\
&\cong {\pi_{13}}_*( \O_{\pi_{12}^{-1}(X_\beta^i)} \otimes \O_{\pi_{23}^{-1}(X_\beta^{i+1})} \otimes (\E'_i \otimes {\E'_{i+2}}^\vee)^{\beta_{i+1}} \{2 (m-1)\}) \\
&\cong {\pi_{13}}_*(\O_W) = \O_{\Delta}
\end{align*}
where to obtain the last line, we use that $\E'_i \cong \E'_{i+2} \{ -2 \beta_i \}$. 

The second isomorphism is actually easier to prove. We have
\begin{eqnarray*}
\sF_\beta^i * \sG_\beta^{i+1} &=& {\pi_{13}}_*( \pi_{12}^*(\O_{X_\beta^{i+1}} \otimes (\E'_{i+1})^{\beta_{i+2}} \{-i (m-1) \}) \otimes \pi_{23}^*(\O_{X_\beta^{i}} \otimes (\E'_{i+1})^{-\beta_i} \{i (m-1) \})) \\
&\cong& {\pi_{13}}_*( \O_{\pi_{12}^{-1}(X_\beta^{i+1})} \otimes \O_{\pi_{23}^{-1}(X_\beta^{i})} ) \\
&\cong& \O_{\pi_{13}(\pi_{12}^{-1}(X_\beta^{i+1}) \cap \pi_{23}^{-1}(X_\beta^{i}))} = \O_{\Delta}.
\end{eqnarray*}
\end{proof}

\subsection{Invariance Under Reidemeister Move (I)}

To prove invariance under Reidemeister move (I) we need a few preliminary results.

\begin{Lemma} \label{lem:nontransint} Let $ Y $ be a smooth projective variety and let $ V $ be a smooth subvariety in $ Y $.  Let $ V_1, V_2 $ be smooth subvarieties of $ V $ and assume that they meet in the right codimension inside $ V $.  Then, inside $D(Y)$, we have
$$\H^k (\O_{V_1} \otimes \O_{V_2}) \cong \O_{V_1 \cap V_2} \otimes \wedge^{-k} N^\vee_{V/Y}$$
where $N_{V/Y}$ is the normal bundle of $V$ inside $Y$ and $V_1 \cap V_2$ is the scheme theoretic intersection (this also holds $\C^\times$-equivariantly). 
\end{Lemma}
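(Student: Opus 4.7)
The strategy is to reduce to a local Koszul computation of $\Tor_j^{\O_Y}(\O_{V_1}, \O_{V_2})$; after the sign convention $k = -j$, the claim becomes
$$\Tor_j^{\O_Y}(\O_{V_1}, \O_{V_2}) \cong \O_{V_1 \cap V_2} \otimes \wedge^j N_{V/Y}^\vee.$$
All the sheaves in sight are supported on $V_1 \cap V_2$ and the assertion is local on $Y$, so we may work in a small affine open set. The plan is to build a very specific locally free resolution of $\O_{V_1}$ that factors through $V$, tensor it with $\O_{V_2}$, and observe that the differentials split into two independent pieces: one piece has trivial differential (producing the wedge powers of $N_{V/Y}^\vee$), and the other piece is a Koszul resolution of $\O_{V_1 \cap V_2}$ inside $V_2$.

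Concretely, let $c = \operatorname{codim}(V,Y)$ and $a = \operatorname{codim}(V_1,V)$. Locally choose a regular sequence $f_1,\dots,f_c \in \O_Y$ generating $I_V$, and lifts $\tilde g_1,\dots,\tilde g_a \in \O_Y$ whose restrictions $g_j$ to $V$ generate $I_{V_1}^V$. Smoothness of $V$ and $V_1$ makes $(f_1,\dots,f_c,\tilde g_1,\dots,\tilde g_a)$ a regular sequence for $I_{V_1}$, so the Koszul complex $K^\bullet = K(f_\bullet)\otimes K(\tilde g_\bullet)$ is a locally free resolution of $\O_{V_1}$ on $Y$. Tensoring with $\O_{V_2}$: since $V_2 \subset V$, each $f_i$ acts as zero on $\O_{V_2}$, so the $K(f_\bullet)$-factor becomes a complex with vanishing differential,
$$K(f_\bullet) \otimes_{\O_Y} \O_{V_2} \;\cong\; \wedge^\bullet N_{V/Y}^\vee \otimes_{\O_V} \O_{V_2}$$
under the canonical identification $I_V/I_V^2 \cong N_{V/Y}^\vee$. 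The $K(\tilde g_\bullet)$-factor becomes $K(g_\bullet|_{V_2})$, the Koszul complex on $\O_{V_2}$ for the restricted sequence.

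The transversality hypothesis now does its job. By assumption $V_1 \cap V_2$ has the expected codimension $a+b$ inside $V$, hence codimension $a$ inside the smooth (Cohen-Macaulay) variety $V_2$; so $(g_1|_{V_2},\dots,g_a|_{V_2})$ is a regular sequence on $V_2$, and $K(g_\bullet|_{V_2})$ is a resolution of $\O_{V_1 \cap V_2}$. Putting the two factors together,
$$\Tor_j^{\O_Y}(\O_{V_1},\O_{V_2}) \;=\; H^{-j}\bigl(K^\bullet \otimes_{\O_Y} \O_{V_2}\bigr) \;\cong\; \wedge^j N_{V/Y}^\vee \otimes_{\O_V} \O_{V_1 \cap V_2},$$
which is the desired formula.

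The main potential obstacle is verifying that these local identifications are canonical and patch together globally (and $\C^\times$-equivariantly). This is essentially automatic because $N_{V/Y}^\vee = I_V/I_V^2$ and $\O_{V_1 \cap V_2}$ are intrinsically defined and $\Tor$ is functorial. Alternatively, one can bypass the local choices entirely via the change-of-rings spectral sequence
$$E_2^{p,q} = \Tor_p^{\O_V}\bigl(\O_{V_1}, \Tor_q^{\O_Y}(\O_V, \O_{V_2})\bigr) \;\Longrightarrow\; \Tor_{p+q}^{\O_Y}(\O_{V_1},\O_{V_2});$$
the Koszul argument applied to $V \subset Y$ alone gives $\Tor_q^{\O_Y}(\O_V,\O_{V_2}) \cong \O_{V_2} \otimes_{\O_V} \wedge^q N_{V/Y}^\vee$, and transversality of $V_1,V_2$ inside $V$ together with local freeness of $\wedge^q N_{V/Y}^\vee$ annihilates the columns $p>0$, so the sequence degenerates at $E_2$ and delivers the same answer in a manifestly global and equivariant form.
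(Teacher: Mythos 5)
Your argument is correct. Its core is the same as the paper's --- both proofs isolate the two sources of derived tensor: the normal bundle of $V$ in $Y$ (which produces the $\wedge^{-k}N^\vee_{V/Y}$ factors) and the transverse intersection of $V_1$ with $V_2$ inside $V$ (which contributes nothing derived) --- but your primary implementation is more hands-on. Where the paper writes $\O_{V_1}\otimes \O_{V_2}\cong (j\circ i)_*(j\circ i)^*\O_{V_2}$ for the inclusions $V_1\xrightarrow{i}V\xrightarrow{j}Y$, quotes the self-intersection formula $\H^{-k}(j^*j_*\O_{V_2})\cong \O_{V_2}\otimes\wedge^k N^\vee_{V/Y}$, and then uses Tor-independence in $V$ to identify $i^*\O_{V_2}$ with $\O_{V_1\cap V_2}$, you build the factored local Koszul resolution $K(f_\bullet)\otimes K(\tilde g_\bullet)$ explicitly and read off both phenomena at once (the $f$-differentials die on $\O_{V_2}$, the $g$-differentials form a regular sequence on $V_2$). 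This buys a self-contained computation that does not presuppose the $j^*j_*$ formula, at the cost of having to address gluing and equivariance of the local identifications --- which you do correctly via the canonicity of $I_V/I_V^2$, and more cleanly via your closing change-of-rings spectral sequence, which is in fact the paper's proof rewritten in $\operatorname{Tor}$ language. The only blemishes are cosmetic: the symbol $b$ in ``codimension $a+b$'' is used before being introduced as $\operatorname{codim}(V_2,V)$, and one should say explicitly that the terms $\wedge^p N^\vee_{V/Y}\otimes\O_{V_2}$ are flat over $\O_{V_2}$ so that tensoring the zero-differential factor against the resolution $K(g_\bullet|_{V_2})$ computes cohomology termwise; neither affects the validity of the argument.
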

\begin{proof}
This is a generalization of Lemma 5.2 of \cite{kh1}. Denote by $i$ and $j$ the sequence of inclusions $V_1 \xrightarrow{i} V \xrightarrow{j} Y$. Then 
$$ \H^k(\O_{V_1} \otimes \O_{V_2}) \cong \H^k((j \circ i)_* (j \circ i)^* \O_{V_2}) \cong j_* i_* \H^k (i^* j^* j_* \O_{V_2}).$$
Now $\H^k(j^* j_* \O_{V_2}) \cong \O_{V_2} \otimes \wedge^{-k} N_{V/Y}^\vee$. On the other hand, $i^* \O_{V_2} = \O_{V_1 \cap V_2}$ since $V_1$ and $V_2$ intersect in the right dimension inside $V$. Thus $i^* \O_{V_2}$ has no lower cohomology and so 
$$\H^k(i^* j^* j_* \O_{V_2}) \cong i^* \H^k (j^* j_* \O_{V_2}) \cong i^* \O_{V_2} \otimes \wedge^{-k} N_{V/Y}^\vee \cong \O_{V_1 \cap V_2} \otimes \wedge^{-k} N_{V/Y}^\vee.$$
The result now follows. 
\end{proof}
\begin{Remark} 
You can replace $\O_{X_1}$ and $\O_{X_2}$ in Lemma \ref{lem:nontransint} by vector bundles $\sW_1$ and $\sW_2$ supported on $X_1$ and $X_2$. Then $\H^k (\sW_1 \otimes \sW_2) \cong \sW_1|_{X_1 \cap X_2} \otimes \sW_2|_{X_1 \cap X_2} \otimes \wedge^{-k} N_{V/Y}^\vee|_{X_1 \cap X_2}$.
\end{Remark}

\begin{Lemma}\label{lem:updown} In the sequence of $\C^*$-equivariant maps shown below suppose $X,Y,Y'$ are smooth, projective varieties with $\pi_1(Y')=0$. Further assume $q$ is a smooth map and $i$ an inclusion such that $N_{X/Y} \cong \Omega_{X/Y'}\{s\}$. Denote by $\sP \in D(Y' \times Y)$ the kernel $\O_X \otimes \sL$ where $\sL$ is some tensor of line bundles pulled back from $Y'$ and $Y$. If a fibre $F$ of $q$ (and hence all fibres) has a cellular decomposition then
$$\H^n(\sP_R * \sP) \cong \O_{\Delta} \otimes_{\C} H^n(F,\C) \{sn/2\} \text{ and } \H^n(\sP_L * \sP) \cong \O_{\Delta} \otimes_{\C} H^{-n}(F,\C)^\vee \{sn/2\}.$$
\end{Lemma}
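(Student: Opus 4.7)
The strategy is to convert the computation of the kernel $\sP_R \ast \sP$ (respectively $\sP_L \ast \sP$) into a computation for $Rq_*$ applied to a complex on $X$, and then extract cohomology sheaves using the Koszul/self-intersection resolution of $\O_X$ inside $\O_Y$ combined with relative Hodge theory for the smooth morphism $q$. Since $\sL$ is a tensor of pullbacks, I would write $\sL = q^* \sL_1 \otimes i^* \sL_2$; then $\Phi_\sP(\sF) = i_* q^*(\sF \otimes \sL_1) \otimes \sL_2$. The right adjoint formula gives $\sP_R \cong \O_X \otimes \omega_{X/Y} \otimes \sL^\vee[-r]$, where $r$ is the common rank of $N_{X/Y}$ and $\Omega_{X/Y'}$ (their ranks agreeing is forced by the hypothesis and yields $\dim Y + \dim Y' = 2 \dim X$). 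Upon composing the two functors, the factors $i^* \sL_2$ and $i^* \sL_2^\vee$ cancel via the projection formula, and writing $\sF' = \sF \otimes \sL_1$ the problem reduces to identifying the cohomology sheaves of
\[
q_*\bigl( i^* i_* q^* \sF' \otimes \omega_{X/Y} \bigr)[-r] \otimes \sL_1^\vee,
\]
whose kernel is automatically supported along $\Delta_{Y'}$.

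For the cohomological input, I would invoke the self-intersection formula for the regular embedding $i$: $\H^{-k}(i^* i_* \O_X) \cong \wedge^k N^\vee_{X/Y}$. Tensoring with $q^* \sF' \otimes \omega_{X/Y}$ and using the identity $\wedge^k N^\vee \otimes \det N \cong \wedge^{r-k} N$, the cohomology sheaves of $\sM := i^* i_* q^* \sF' \otimes \omega_{X/Y}$ become
\[
\H^{-k}(\sM) \cong q^* \sF' \otimes \wedge^{r-k} N_{X/Y} \cong q^* \sF' \otimes \Omega^{r-k}_{X/Y'}\{s(r-k)\},
\]
where the last isomorphism converts Koszul data into relative de Rham data via the hypothesis $N_{X/Y} \cong \Omega_{X/Y'}\{s\}$.

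The final step is to apply $Rq_*$. The assumption that the fibres $F$ are cellular, combined with $\pi_1(Y') = 0$ to rule out monodromy, gives the relative Hodge decomposition: $R^p q_* \Omega^j_{X/Y'} = 0$ for $p \neq j$ and $R^j q_* \Omega^j_{X/Y'} \cong \O_{Y'} \otimes_{\C} H^{2j}(F,\C)$. Consequently the spectral sequence $E_2^{p,q} = R^p q_*(\H^q \sM) \Rightarrow \H^{p+q}(Rq_*\sM)$ has exactly one nontrivial entry on each antidiagonal (namely at $(p,q) = (r-k,-k)$ with $p+q = r-2k$) and collapses at $E_2$. After applying the shift $[-r]$ and tensoring with $\sL_1^\vee$, the $\sL_1$ factors cancel to yield $\H^n(\sP_R \ast \sP) \cong \O_\Delta \otimes H^n(F,\C)\{sn/2\}$ as claimed. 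The $\sP_L \ast \sP$ case is parallel: $\omega_{X/Y}$ is replaced by $\omega_{X/Y'}$ so that the analogous identity $\wedge^k N^\vee_{X/Y} \otimes \omega_{X/Y'} \cong \Omega^{r-k}_{X/Y'}\{-sk\}$ appears, and the overall shift flips to $[r]$; the cohomology sheaf in degree $n$ then ends up controlled by $R^{r+n/2} q_* \Omega^{r+n/2}_{X/Y'}$, and Poincar\'e duality on the $r$-dimensional fibre identifies $H^{2r+n}(F,\C)$ with $H^{-n}(F,\C)^\vee$, producing the second formula. The most delicate step is the global identification $R^p q_* \Omega^j_{X/Y'} \cong \O_{Y'} \otimes H^{2j}(F,\C)$: both the cellularity of fibres (to force $p = j$) and the simple-connectedness of $Y'$ (to trivialize the Hodge bundle globally rather than only locally) are essential here, and this is the main obstacle requiring careful justification.
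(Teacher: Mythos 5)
Your argument is correct and is essentially the proof in the paper: the same reduction of $\sP_R$ to $\det N_{X/Y}\otimes\sL^\vee[-r]$, the same conversion of the Koszul terms $\wedge^k N_{X/Y}$ into $\Omega^k_{X/Y'}\{sk\}$ via the hypothesis, the same collapse of the spectral sequence onto the line $p=q$ using cellularity of $F$ together with $\pi_1(Y')=0$, and the same deduction of the $\sP_L$ case from $\sP_L\cong\sP_R[2r]\{-sr\}$ plus Poincar\'e duality on $H^*(F,\C)$. The only cosmetic difference is that you invoke the self-intersection formula for $i^*i_*$ on $X\subset Y$ directly, whereas the paper runs the identical computation as a transverse intersection of $\pi_{12}^{-1}(X)$ and $\pi_{23}^{-1}(X)$ inside $Y'\times X\times Y'$ on the triple product (its Lemma \ref{lem:nontransint}), which is the form one needs to produce the kernel on $Y'\times Y'$ literally rather than just the composed functor.
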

\begin{proof}
We write $\sL := \pi_1^* \sL_1 \otimes \pi_2^* \sL_2$ where $\sL_1$ and $\sL_2$ are line bundles on $Y'$ and $Y$. We have
\begin{eqnarray*}
\sP_R &=& \O_X^\vee \otimes \sL^\vee \otimes \pi_2^* \omega_{Y'}[\dim(Y')] \\
&\cong& \omega_X \otimes \sL^\vee \otimes \omega_{Y \times Y'}^\vee [-\dim(X)] \otimes \pi_2^* \omega_{Y'}[\dim(Y')] \\
&\cong& \omega_X \otimes \sL^\vee \otimes \pi_1^* \omega_Y^\vee [-r] \cong \det N_{X/Y} \otimes \sL^\vee [-r] 
\end{eqnarray*}  
where $r$ is the codimension of $i: X \subset Y$ or, equivalently, the dimension of the fibre of $q$. Hence 
$$\sP_R * \sP \cong \pi_{13*}(\pi_{12}^* (\O_X) \otimes (\pi_1^* \sL_1 \otimes \pi_2^* \sL_2) \otimes \pi_{23}^*(\det N_{X/Y} [-r]) \otimes (\pi_2^* \sL_2^\vee \otimes \pi_3^* \sL_1^\vee))$$ 
using the natural projection maps on $Y' \times Y \times Y'$. Now $V_1 = \pi_{12}^{-1}(X)$ and $V_2 = \pi_{23}^{-1}(X)$ intersect transversely inside $V = Y' \times X \times Y'$. Applying Lemma \ref{lem:nontransint} we get a spectral sequence 
$$E_2^{p,q} = \H^p \left( \pi_{13*} i_* (\det N_{X/Y} \otimes \wedge^{r-q} N^\vee_{V/Y' \times Y \times Y'}|_X \otimes \pi_1^* \sL_1 \otimes \pi_3^* \sL_1^\vee) \right) \Longrightarrow \H^{p+q}(\sP_R * \sP).$$ 
On the other hand,
\begin{eqnarray*}
\H^p \left( \pi_{13*} i_* (\det N_{X/Y} \otimes \wedge^{r-q} N^\vee_{V/Y' \times Y \times Y'}|_X) \right) 
&\cong& \H^p \left( \pi_{13*} i_* \wedge^q N_{X/Y} \right) \\
&\cong& \H^p \left( \pi_{13*} i_* \Omega^{q}_{X/Y'} \{sq\} \right) \\ 
\end{eqnarray*}
where $i$ denotes the inclusion $V_1 \cap V_2 = X \rightarrow Y' \times Y \times Y'$. To obtain the first isomorphism we used that $N_{V/Y' \times Y \times Y'}|_X \cong N_{X/Y}$.

Now $\pi_{13} \circ i = \Delta \circ q$ where $\Delta: Y' \rightarrow Y' \times Y'$ is the diagonal embedding. On the other hand, since each fibre $F$ of $q$ has a cellular decomposition we have $H^{k,l}(F,\Z) = \delta_{k,l} \Z^{\oplus b_{k,l}}$ where $b_{k,l}$ is the corresponding Hodge number. Thus $\H^l(q_* \wedge^k \Omega_{X/Y'}) \cong \delta_{k,l} R^{k+l} q_* \C_X \otimes_{\C} \O_{Y'}$. Since $\pi_1(Y')=0$ we conclude $R^{k+l} q_\ast \C_X$ is constant and hence $q_* \Omega^k_{X/Y'} = \O_{Y'} \otimes_{\C} H^{2k}(F,\C)[-k]$. Consequently
\begin{eqnarray*}
\H^p \left( \pi_{13*} i_* \Omega^{q}_{X/Y'} \{sq\} \right) \otimes \pi_1^* \sL_1 \otimes \pi_2^* \sL_1^\vee 
&\cong& \H^p \left( \Delta_* q_* \Omega^{q}_{X/Y'} \{sq\} \right) \otimes \pi_1^* \sL_1 \otimes \pi_2^* \sL_1^\vee \\
&\cong& \O_{\Delta} \otimes_{\C} H^{p+q}(F,\C) \{sq\} \delta_{p,q}
\end{eqnarray*}
where we use that $\O_\Delta \otimes \pi_1^* \sL_1 \otimes \pi_2^* \sL_1^\vee \cong \O_\Delta$ to obtain the second line. 

So $E_2^{p,q}$ is supported on the line $p=q$ which means that all the higher differentials vanish. Thus 
$$\H^n(\sP_R * \sP) \cong \O_\Delta \otimes_{\C} H^{n}(F,C) \{sn/2\}.$$

Finally, 
\begin{eqnarray*}
\sP_L &\cong& \O_X^\vee \otimes \sL^\vee \otimes \pi_2^* \omega_Y [\dim(Y)] \\
&\cong& \omega_X \otimes \sL^\vee \otimes \pi_1^* \omega_{Y'}^\vee [r] \\
&\cong& \det \Omega_{X/Y'}[r] \otimes \sL^\vee \cong \det N_{X/Y} [r] \{-sr\} \otimes \sL^\vee \cong \sP_R [2r] \{-sr\}
\end{eqnarray*}
so that the desired expression for $ \sP_L * \sP $ follows from that of $ \sP_R * \sP $ by Poincar\'e duality on $H^*(F,\C)$.
\end{proof}

Before proving the invariant under the first Reidemeister move, we will establish the following Corollary whose proof actually relies on invariance results (pitchfork and Reidemister II) proved later in this section.  There is no problem however, since this Corollary is not used until section 6.

\begin{Corollary}\label{cor:circle} If $\beta_i + \beta_{i+1} = m$ then $\sF_\beta^i * \sG_\beta^i \cong \O_\Delta \otimes_\C V$ where 
$$V = H^\star(\p) = \C[-m+1]\{m-1\} \oplus \dots \oplus \C[m-1]\{-m+1\}.$$  
In particular, if $T$ is a tangle then $\Psi(T \cup O)(\cdot) \cong \Psi(T)(\cdot) \otimes_\C V$.
\end{Corollary}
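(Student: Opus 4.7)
The plan is to reduce, via adjunction, to a pushforward computation that Lemma \ref{lem:updown} handles directly, and then to deduce the ``in particular'' assertion from tangle invariance (which uses the pitchfork and Reidemeister II relations established later in Section \ref{se:invariance}).

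First I would use Lemma \ref{th:Gadj} to rewrite $\sF_\beta^i \cong {\sG_\beta^i}_R[m-1]\{-(m-1)\}$ when $\beta_i+\beta_{i+1}=m$, so that
$$\sF_\beta^i * \sG_\beta^i \cong ({\sG_\beta^i}_R * \sG_\beta^i)\,[m-1]\{-(m-1)\}.$$
Next I would verify the hypotheses of Lemma \ref{lem:updown} for $\sP=\sG_\beta^i$ with $X=X_\beta^i$, $Y=Y_\beta$, $Y'=Y_{d_i(\beta)}$, $\iota\colon X_\beta^i\hookrightarrow Y_\beta$ the inclusion and $q\colon X_\beta^i\to Y_{d_i(\beta)}$ the Grassmannian-bundle map: the space $Y_{d_i(\beta)}$ is an iterated projective-bundle over a point, hence simply connected; by Lemma \ref{lem:facts1}(i), $N_{X_\beta^i/Y_\beta}\cong\Omega_{X_\beta^i/Y_{d_i(\beta)}}\{2\}$, so $s=2$; and because $\beta_i+\beta_{i+1}=m$ with $\beta_i,\beta_{i+1}\in\{1,m-1\}$, the fibre of $q$ is $F=\p$, which has a cellular decomposition. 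Lemma \ref{lem:updown} then yields $\H^n({\sG_\beta^i}_R*\sG_\beta^i)\cong\O_\Delta\otimes_\C H^n(\p,\C)\{n\}$, i.e.\ $\O_\Delta\{2k\}$ in degree $2k$ for $0\le k\le m-1$. After the shift $[m-1]\{-(m-1)\}$ the nonzero cohomology sheaves of $\sF_\beta^i*\sG_\beta^i$ become $\O_\Delta\{n\}$ in cohomological degree $n\in\{-(m-1),-(m-3),\dots,m-3,m-1\}$, which matches the cohomology sheaves of $\O_\Delta\otimes_\C V$ term-by-term.

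To upgrade this equality of cohomology sheaves to an isomorphism in $D(Y_{d_i(\beta)}\times Y_{d_i(\beta)})$, I would argue formality: the spectral sequence in the proof of Lemma \ref{lem:updown} has $E_2$ concentrated on a single diagonal line, and each $E_\infty^{p,p}$ term is a copy of $\O_\Delta$ with a distinct $\C^\times$-weight $2p$. The obstructions to splitting the resulting Postnikov tower live in equivariant $\Ext$-groups $\Ext^{\ge 2}_{\C^\times}(\O_\Delta\{a\},\O_\Delta\{b\})$ with $a\ne b$, and these vanish in the relevant weights $b-a$ because the equivariant weights appearing in $H^\ast(\wedge^\ast T_{Y_{d_i(\beta)}})$ do not pair with the gaps $b-a$. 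This gives the desired direct-sum decomposition $\sF_\beta^i * \sG_\beta^i \cong \O_\Delta\otimes_\C V$.

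For the ``in particular'' clause, I would invoke the tangle invariance of $\Psi$ (Theorem \ref{thm:main}), which in turn depends on the pitchfork and Reidemeister II identities proved later in Section \ref{se:invariance}. Given any tangle $T$, I choose a planar diagram of $T\cup O$ in which the unknot $O$ is represented locally as a small cap-cup pair $f_\gamma^j\circ g_\gamma^j$ inserted between two horizontal slices of $T$; pitchfork and RII allow us to slide $O$ freely and replace it by this canonical local form. Then $\Psi(T\cup O)$ factors as $\Psi(T_1)\circ(\F_\gamma^j\circ\G_\gamma^j)\circ\Psi(T_2)$ with $T\cong T_1\circ T_2$, and the main isomorphism already proved identifies $\F_\gamma^j\circ\G_\gamma^j$ with $(-)\otimes_\C V$, giving $\Psi(T\cup O)(\cdot)\cong\Psi(T)(\cdot)\otimes_\C V$.

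The main obstacle is the formality step, since Lemma \ref{lem:updown} as stated only computes cohomology sheaves. Extracting an actual direct-sum decomposition in the derived category requires a careful analysis of the equivariant weights on the tangent/normal bundles of the diagonal to rule out the higher Postnikov obstructions; everything else is a bookkeeping exercise in adjunction and shifts.
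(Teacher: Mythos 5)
Your reduction via Lemma \ref{th:Gadj} and the computation of the cohomology sheaves of ${\sG_\beta^i}_R * \sG_\beta^i$ via Lemma \ref{lem:updown} is exactly what the paper does, and your bookkeeping of shifts and twists is correct. The gap is in the formality step, which you yourself identify as the main obstacle but then resolve by an unsubstantiated claim. You assert that the Postnikov obstructions, which live in $\Ext^{d+1}_{\C^\times}(\O_\Delta\{b\},\O_\Delta\{a\}) \cong \Ext^{d+1}_{\C^\times}(\O_\Delta,\O_\Delta\{-d\})$ with $d=b-a$ a positive even integer (note: $\Ext^{\ge 3}$, not $\Ext^{\ge 2}$, since $d\ge 2$), vanish "because the equivariant weights appearing in $H^\ast(\wedge^\ast T_{Y_{d_i(\beta)}})$ do not pair with the gaps $b-a$." This is precisely the kind of statement that needs proof, and it is not obviously true: the operator $z$ has $\C^\times$-weight $2$, and the tangent bundle of $Y_{d_i(\beta)}$ (whose defining equations involve $z$) genuinely carries summands of nonzero even weight, which are exactly the weights $-d$ you need to exclude. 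Without an actual computation of the relevant weight spaces of $\bigoplus_{p+q=d+1}H^p(Y_{d_i(\beta)},\wedge^q T_{Y_{d_i(\beta)}})$, the splitting does not follow.

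The paper takes a different route to formality that sidesteps this entirely: it invokes a strengthened form of Lemma \ref{lem:updown} asserting that $\sP_R*\sP$ is formal whenever $X$ admits a Zariski open neighbourhood $U\subset Y$ that retracts onto $X$. To make that hypothesis available, the circle is first slid past the rightmost strand using the pitchfork and Reidemeister II relations (proved later in Section \ref{se:invariance}, which is legitimate since the Corollary is not used until Section 6), after which one can take $U=Y_\beta$ with the explicit retraction $(L_1\subset\dots\subset L_n)\mapsto (L_1\subset\dots\subset L_{n-1}\subset z^{-1}L_{n-2})$. If you want to keep your weight-theoretic strategy, you must either carry out the vanishing computation or replace it with an argument of this topological type. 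Your treatment of the "in particular" clause via tangle invariance is fine and matches the paper's intent.
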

\begin{proof}
We would like to use Lemma \ref{lem:updown} with $Y = Y_\beta$, $X = X^i_\beta$, $Y' = Y_{d_i(\beta)}$ and $s=2$. Since $\E_i$ on $X$ is the pullback of $\E_i$ on $Y$ we have $\sF_\beta^i * \sG_\beta^i \cong (\sG_\beta^i)_R * \sG_\beta^i [m-1]\{-m+1\} \cong (\O_X)_R * \O_X [m-1]\{-m+1\}$.

Since $\beta_i$ equals $1$ or $m-1$ the map $q: X \rightarrow Y'$ is a $\p$ bundle. In particular, the fibres have a cellular decomposition and since $Y'$ is an iterated Grassmannian bundle we get $\pi_1(Y')=0$. Furthermore, $N_{X/Y} \cong \Omega_{X/Y'}\{2\}$ by Lemma \ref{lem:facts1} so the hypothesis of Lemma \ref{lem:updown} are satisfied. Unfortunately, this only tells us the cohomology of $\sF_\beta^i * \sG_\beta^i$ but not that it breaks up as a direct sum. 

To conclude that $\sF_\beta^i * \sG_\beta^i$ is formal we need the following stronger version of Lemma \ref{lem:updown}. It states that $\sP_R * \sP$ is formal under the additional hypothesis that $Y$ contains a Zariski open neighbourhood $U$ of $X$ which retracts onto $X$. To see this applies to our situation move the circle past the rightmost strand using pitchfork and Reidemeister II moves. Then take $U=Y_\beta$ with the retraction $U \rightarrow X^{n-1}_\beta$ given by 
$$(L_1 \subset \dots \subset L_{n-1} \subset L_n) \mapsto (L_1 \subset \dots \subset L_{n-1} \subset z^{-1}L_{n-2}).$$
\end{proof}

Next we show invariance under Reidemeister move (I).

\begin{Theorem}\label{th:RI}
Assume that $ \beta $ has a string $ (1, 1, m-1) $ or $ (m-1, m-1, 1) $ starting in the $i$th slot.  Then
$$ \sF_\beta^{i+1} * \sT_\beta^i(2) * \sG_\beta^{i+1} \cong \O_\Delta \cong \sF_\beta^{i+1} * \sT_\beta^i(1) * \sG_\beta^{i+1} $$
\end{Theorem}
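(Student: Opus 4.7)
The plan is to prove the two identities by direct Fourier--Mukai computation, reducing to a single case and then analysing the resulting distinguished triangle. Since $\beta_i=\beta_{i+1}$ forces $s_i(\beta)=\beta$, Lemma \ref{th:twistadjoint} gives $\sT_\beta^i(1)_L \cong \sT_\beta^i(2)$; combined with Lemma \ref{th:Gadj}, which relates $\sF_\beta^{i+1}$ and $\sG_\beta^{i+1}$ by adjunction, the two identities in the statement are related by taking left adjoints, so it suffices to prove just one. The $(m-1,m-1,1)$ case is symmetric to $(1,1,m-1)$ via the duality between standard and dual representations, so I concentrate on proving $\sF_\beta^{i+1} * \sT_\beta^i(2) * \sG_\beta^{i+1} \cong \O_\Delta$ in the $(1,1,m-1)$ configuration.

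The main input is the short exact sequence $0 \to \O_\Delta(-X_\beta^i) \to \O_{Z_\beta^i} \to \O_{V_\beta^i} \to 0$ on $Y_\beta \times Y_\beta$ used in the proof of Theorem \ref{th:kerneltwist}. Tensoring with $\E_{i+1}^\vee \otimes \E_i'$ and applying Lemma \ref{lem:facts1}(iii) to identify $\O_\Delta(-X_\beta^i) \otimes \E_{i+1}^\vee \otimes \E_i' \cong \O_\Delta\{-2\beta_i\}$ along the diagonal yields the distinguished triangle
\begin{equation*}
\O_\Delta [-m+1]\{m-1\} \to \sT_\beta^i(2) \to \O_{V_\beta^i} \otimes \E_{i+1}^\vee \otimes \E_i' [-m+1]\{m+1\}.
\end{equation*}
Convolving this triangle with $\sG_\beta^{i+1}$ on the right and $\sF_\beta^{i+1}$ on the left produces a distinguished triangle in $D(Y_{d_{i+1}(\beta)} \times Y_{d_{i+1}(\beta)})$ whose middle term is our target.

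The two outer terms are computed using Lemma \ref{lem:updown} and its variants. For the first term $\sF_\beta^{i+1} * \sG_\beta^{i+1} [-m+1]\{m-1\}$, Lemma \ref{lem:updown} applies directly with $X = X_\beta^{i+1}$, $Y = Y_\beta$, $Y' = Y_{d_{i+1}(\beta)}$ and $s=2$ (by Lemma \ref{lem:facts1}(ii)): the fibre $\mathbb{P}^{m-1}$ of $q : X \to Y'$ has a cellular decomposition and $Y'$ is simply connected as an iterated Grassmannian bundle. The conclusion is that the cohomology sheaves form $m$ shifted copies of $\O_\Delta$, namely $\H^n \cong \O_\Delta\{n\}$ for $n \in \{0,2,\dots,2(m-1)\}$. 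For the third term, one analyses the triple intersection of supports inside $Y_{d_{i+1}(\beta)} \times Y_\beta \times Y_\beta \times Y_{d_{i+1}(\beta)}$, using Corollary \ref{lem:transverse} to reduce to a transverse computation; the intersection $X_\beta^i \cap X_\beta^{i+1}$ controls the geometry and a Lemma \ref{lem:updown}-style argument produces $m-1$ shifted copies of $\O_\Delta$ sitting in the complementary odd degrees.

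The long exact sequence attached to the distinguished triangle then forces the middle term to be concentrated in degree zero with weight zero, i.e., $\O_\Delta$, provided that the connecting map realises the maximal cancellation of $m-1$ summands from each outer term. The hard part will be verifying this maximality directly; to avoid an explicit calculation of the connecting map, one invokes Corollary \ref{cor:abstractnonsense3} to show that the relevant $\Ext^1$ groups between adjacent shifted $\O_\Delta$ summands are one-dimensional, so any nonzero natural map (which exists by the unit of adjunction) is forced up to scalar and the cone is $\O_\Delta$.
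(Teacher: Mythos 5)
Your reduction to a single identity via adjoints, your distinguished triangle (which is the same one the paper uses, since the sequence $\O_\Delta(-X_\beta^i) \to \O_{Z_\beta^i} \to \O_{V_\beta^i}$ is just the twist description of $\sT_\beta^i(2)$ from Theorem \ref{th:kerneltwist} read backwards), and your computation of the cohomology sheaves of the two outer terms via Lemma \ref{lem:updown} all match the paper's argument. The degree bookkeeping is also correct: $m$ shifted copies of $\O_\Delta$ from $\sF_\beta^{i+1}*\sG_\beta^{i+1}$ and $m-1$ copies from the $V_\beta^i$-term, positioned so that maximal cancellation in the long exact sequence would leave exactly one $\O_\Delta$ in bidegree $(0,0)$.

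The gap is the final step, which is where essentially all of the work in this theorem lives. The long exact sequence only tells you that for each relevant $n$ the map $\H^n(\sF_\beta^{i+1}*\sP*\sP_R*\sG_\beta^{i+1}) \to \H^n(\sF_\beta^{i+1}*\sG_\beta^{i+1})$ is either an isomorphism or zero (both sides being a single copy of $\O_\Delta$ with matching weight), and the whole difficulty is ruling out the second possibility. Your proposed shortcut does not accomplish this for three reasons. First, the outer terms are complexes whose formality is not established (compare the extra retraction argument needed in Corollary \ref{cor:circle}), so one cannot yet speak of maps between "summands." Second, even granting formality, the nonvanishing of the total adjunction map does not imply nonvanishing of each graded component: a map of complexes can be nonzero while inducing zero on some cohomology sheaves. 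Third, Corollary \ref{cor:abstractnonsense3} computes Homs between structure sheaves of two distinct subvarieties meeting in the expected codimension, not $\Ext^1$ between shifted copies of $\O_\Delta$ on $Y_{d_{i+1}(\beta)}\times Y_{d_{i+1}(\beta)}$; and in any case knowing such an $\Ext^1$ is one-dimensional says nothing about whether the particular connecting map you have is zero or a generator. The paper closes this gap by an explicit computation occupying most of its proof: it applies both kernels to $\O_{Y_{d_{i+1}(\beta)}}$, identifies the induced map as the unique (by Corollary \ref{cor:abstractnonsense3}, used legitimately there for the two subvarieties $W'$ and $X_\beta^{i+1}$ of $Y_\beta$ meeting in a divisor) nonzero map whose cone is $\O_{W'\cup X_\beta^{i+1}}$ twisted by a line bundle, and then shows via a Koszul resolution and a pushforward along the Grassmannian bundle that $\Phi_{\sF_\beta^{i+1}}$ of this cone is a single shifted copy of $\O_{Y_{d_{i+1}(\beta)}}$. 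Some computation of this kind, pinning down the actual adjunction map rather than just the ambient Hom spaces, is unavoidable.
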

\begin{proof}
We will do the case where  $\beta = (\dots, 1, 1, m-1, \dots)$.  The other case is similar.

By Theorem \ref{th:kerneltwist} we have $\sT_\beta^i(2) \cong \Cone( \sP * \sP_R \rightarrow \O_\Delta )[-m+1]\{m-1\}$ where $\sP = \sP_\beta^i \in D(Y_{d_i(\beta)} \times Y_\beta)$. Thus
$$\sF_\beta^{i+1} * \sT_\beta^i(2) * \sG_\beta^{i+1} \cong \Cone (\sF_\beta^{i+1} * \sP * \sP_R * \sG_\beta^{i+1} \rightarrow \sF_\beta^{i+1} * \sG_\beta^{i+1})[-m+1]\{m-1\}.$$
Applying Lemma \ref{lem:updown} as in the first part of the proof of Corollary \ref{cor:circle} we see that $$\H^n(\sF_\beta^{i+1} * \sG_\beta^{i+1}) \cong \O_\Delta\{n\} $$ for $n = -m+1, \dots, m-1 $ and 0 otherwise. 

On the other hand, from Lemma \ref{lem:Padj} we know $\sP_R \cong \O_{X_\beta^i} \otimes \E_{i+1}^\vee [-1] \{2\}$. Hence $\sQ = \sP_R * \sG_\beta^{i+1}$ acts by 
$$(\cdot) \mapsto q_{2*} (i_2^* i_{1*} (q_1^* (\cdot) \otimes (\E_{i+1})^{\beta_{i+2}}\{-i (m-1)\}) \otimes \E_{i+1}^\vee [-1] \{2\})$$
where the maps are depicted in the diagram below. 
\begin{equation*}
\begin{CD}
X_\beta^i @>i_2>> Y_\beta @<i_1<< X_\beta^{i+1} \\
@Vq_2VV @. @Vq_1VV \\ 
Y_{d_i(\beta)} @. @. Y_{d_{i+1}(\beta)}
\end{CD} 
\end{equation*}
Now 
$$
W = X_\beta^i \cap X_\beta^{i+1} = \{ (L_\cdot): zL_{i+1} \subset L_{i-1} \text{ and } zL_{i+2} = L_i \}. $$
is a transverse intersection inside $Y_\beta$. In fact, $W \cong X_{\beta'}^i$ where $\beta' = d_i(\beta)$ has a $(2,m-1)$ string starting in the $i$th slot. Hence
$$\Phi_{\sQ}(\cdot) \cong i_* (q^* (\cdot) \otimes \sL) [-1] \{-i (m-1) +2\}$$
where the maps are depicted in the diagram below and $\sL$ is some tensor of line bundles pulled back from $Y_{\beta'}$ and $Y_{d_i(\beta')}$ which we will subsequently ignore.
\begin{equation*}\begin{CD}X_{\beta'}^i @>i>> Y_{\beta'} \\@VqVV \\ Y_{d_i(\beta')} \end{CD} \end{equation*}

Notice $q$ is a $\mathbb{P}^{m-2}$ bundle while $i$ is an inclusion of codimension $m-2$. By Lemma \ref{lem:facts1} we know $N_{X_{\beta'}^i/Y_{\beta'}} \cong \Omega_{X_{\beta'}^i/d_i(\beta')} \{2\}$. As before, $Y_{d_i(\beta')}$ has trivial fundamental group so applying Lemma \ref{lem:updown} we conclude that
$\H^n(\sQ_L * \sQ) \cong \O_\Delta \{n\},$
for $n = -2m+4,-2m+6 \dots,2,0 $ and 0 otherwise.

Since ${\sG_\beta^{i+1}}_L \cong \sF_\beta^{i+1}[m-1]\{-m+1\}$ we find that
$$\H^n(\sF_\beta^{i+1} * \sP * \sP_R * \sG_\beta^{i+1}) \cong \O_\Delta\{n\},$$
for $ n = -m+3, \dots, m-1 $ and 0 otherwise.

Let $ \A = \sF_\beta^{i+1} * \sT_\beta^i(2) * \sG_\beta^{i+1}[m-1]\{1-m\} $ and consider the long exact sequence in $ \H $ coming from the distinguished triangle 
\begin{equation*}
\sF_\beta^{i+1} * \sP * \sP_R * \sG_\beta^{i+1} \rightarrow \sF_\beta^{i+1} * \sG_\beta^{i+1} \rightarrow \A
\end{equation*}

From our calculations of $ \H^n(\sF_\beta^{i+1} * \sP * \sP_R * \sG_\beta^{i+1}) $ and $\H^n(\sF_\beta^{i+1} * \sG_\beta^{i+1}) $, we deduce that for each $ n = -m+3, \dots, m-1 $ either we have an isomorphism $  \H^n(\sF_\beta^{i+1} * \sP * \sP_R * \sG_\beta^{i+1}) \rightarrow \H^n(\sF_\beta^{i+1} * \sG_\beta^{i+1})$ in which case $ \H^n(\A) = 0 = \H^{n-1}(\A) $ or this map is zero in which case $ \H^n(\A) = \O_\Delta = \H^{n-1}(\A) $ (here we are using the fact that $\Hom(\O_\Delta, \O_\Delta) = 0 $).  To complete the proof of the theorem, it suffices to check that this second possibility never occurs (i.e. that the map above is an isomorphism for all $n=-m+3, \dots, m-1$).

To do this it suffices to show that 
\begin{equation*} 
\Cone \left( \Phi_{\sF_\beta^{i+1}} \circ \Phi_{\sP} \circ \Phi_{\sP_R} \circ \Phi_{\sG_\beta^{i+1}} (\O_{Y_{d_{i+1}(\beta)}}) \rightarrow \Phi_{\sF_\beta^{i+1}} \circ  \Phi_{\sG_\beta^{i+1}} (\O_{Y_{d_{i+1}(\beta)}}) \right) \cong \O_{Y_{d_{i+1}(\beta)}} [m-1] \{-m+1\}.
\end{equation*}

Now $\Phi_{\sG_\beta^{i+1}} (\O_{Y_{d_{i+1}(\beta)}}) \cong \O_{X_\beta^{i+1}} \otimes (\E_{i+1})^{\beta_{i+2}}\{-i (m-1)\} \in D(Y_\beta)$ while 
\begin{align*}
\Phi_{\sP} \circ \Phi_{\sP_R} \circ \Phi_{\sG_\beta^{i+1}} (\O_{Y_{d_{i+1}(\beta)}}) 
&\cong i_{2*} \Big( q_2^* q_{2*}(i_2^*(\O_{X_\beta^{i+1}} \otimes (\E_{i+1})^{\beta_{i+2}} \{-i (m-1)\}) \E_i \E_{i+1}^\vee [-1] \{2\}) \Big) \\
&\cong \O_{q_2^{-1}(q_2(W))} \otimes (\E_{i+1})^{\beta_{i+2}-1} \E_i [-1] \{-i (m-1) + 2\}
\end{align*}
So the map $ \Phi_{\sP} \circ \Phi_{\sP_R} \circ \Phi_{\sG_\beta^{i+1}} (\O_{Y_{d_{i+1}(\beta)}}) \rightarrow \Phi_{\sG_\beta^{i+1}} (\O_{Y_{d_{i+1}(\beta)}})$ is the same as a map 
$$\alpha: \O_{W'} \otimes (\E_{i+1})^{\beta_{i+2}} \E_i  \E_{i+1}^\vee [-1] \{-i (m-1) + 2\} \rightarrow \O_{X_\beta^{i+1}} \otimes (\E_{i+1})^{\beta_{i+2}} \{-i (m-1)\}$$
 in $D(Y_\beta)$ where 
$$W' = q_2^{-1}(q_2(W)) = 
\{ (L_\cdot): zL_{i+1} \subset L_{i-1} \text{ and } L_{i-1} \subset zL_{i+2} \}.
$$
We claim $\alpha$ is (up to a non-zero multiple) the unique such non-zero map. To see $\alpha$ is non-zero notice that the composition 
$${\Phi_{\sG_\beta^{i+1}}}_L \circ \Phi_{\sP} \circ \Phi_{\sP_R} \circ \Phi_{\sG_\beta^{i+1}} (\O_{Y_{d_{i+1}(\beta)}}) \rightarrow {\Phi_{\sG_\beta^{i+1}}}_L \circ  \Phi_{\sG_\beta^{i+1}} (\O_{Y_{d_{i+1}(\beta)}}) \rightarrow \O_{Y_{d_{i+1}(\beta)}}$$
is the adjunction map for $ \Phi_{\sP_R} \circ \Phi_{\sG_\beta^{i+1}} $ and hence is non-zero. On the other hand,  $W'$ and $X_\beta^{i+1}$ in a divisor $D = X_\beta^{i+1} \cap X_\beta^i$ in each of them. By Corollary \ref{cor:abstractnonsense3} $\Hom(\O_{W'}(D), \O_{X_\beta^{i+1}}[1]) \cong \C$ while the cone of the corresponding non-zero map $\O_{W'}[-1] \rightarrow \O_{X_\beta^{i+1}}(-D)$ is isomorphic to $\O_{W' \cup X_\beta^{i+1}}$. But, by Lemma \ref{lem:facts1} 
$$\O_{X_\beta^{i+1}}(-D) \cong \O_{X_\beta^{i+1}} \otimes \O_{Y_\beta}(-X_\beta^i) \cong \O_{X_\beta^{i+1}} \otimes \E_i^\vee \E_{i+1} \{-2\}$$ 
so we find that 
\begin{eqnarray*}
\Cone(\alpha) &\cong& \O_{W' \cup X_\beta^{i+1}} \otimes (\E_{i+1})^{\beta_{i+2}} \E_i \E_{i+1}^\vee \{-i (m-1) + 2\} \\
&\cong& \O_{W' \cup X_\beta^{i+1}} \otimes \E_i (\E_{i+1})^{\beta_{i+2}-1} \{-i (m-1) + 2\}.
\end{eqnarray*}

It remains to show 
\begin{equation} \label{eq:PhiF}
\Phi_{\sF_\beta^{i+1}}(\O_{W' \cup X_\beta^{i+1}} \otimes \E_i \otimes (\E_{i+1})^{\beta_{i+2}-1}) \cong \O_{Y_{d_{i+1}(\beta)}} [m-1] \{(i-1)(m-1) + 2\}.
\end{equation}
To do this we give an explicit locally free resolution of $\O_{W' \cup X_\beta^{i+1}}$. First notice that 
$$W' \cup X_\beta^{i+1} = 
\{ (L_\cdot): L_{i-1} \subset zL_{i+2} \}. $$ 
This is because if $L_{i-1} \subset zL_{i+2}$ then either $zL_{i+2} = L_i$ (belongs to $X_\beta^{i+1}$) or $\mbox{span}(zL_{i+2},L_i) = L_{i+1}$ which means $zL_{i+1} \subset L_{i-1}$ (belongs to $W'$). The map
$$z: z^{-1}L_{i-1}/L_i \rightarrow L_{i+1}/zL_{i+2} \{2\}$$
on $Y_\beta$ vanishes precisely when $L_{i-1} \subset zL_{i+2}$. Denoting
$$\sV =  (z^{-1}L_{i-1}/L_i)^\vee \otimes L_{i+1}/zL_{i+2} \{2\}. $$ 
this map gives a section of the $m-1$ dimensional vector bundle $\sV$ which vanishes precisely along the $m-1$ codimensional subscheme $W' \cup X_\beta^{i+1}$. This gives the Koszul resolution 
$$0 \leftarrow \O_{W' \cup X_\beta^{i+1}} \leftarrow \O_{Y_\beta} \leftarrow \sV^\vee \leftarrow \wedge^2 \sV^\vee \leftarrow \dots \leftarrow \wedge^{m-1} \sV^\vee \leftarrow 0.$$

Consider now $\sR_k := \Phi_{\sF_\beta^{i+1}}(\wedge^k \sV^\vee \otimes \E_i \otimes (\E_{i+1})^{\beta_{i+2}-1})$. We have 
\begin{align*}
\sR_k &\cong q_{1*} ( i_1^*(\wedge^k \sV^\vee  \otimes \E_i (\E_{i+1})^{m-2}) \E_{i+2}^\vee \{(i+1) (m-1)\} ) \\
&\cong q_{1*} ( (i_1^* \wedge^k z^{-1}L_{i-1}/L_i)  \otimes (L_{i+1}/zL_{i+2})^{-k} \{-2k\} \E_i(\E_{i+1})^{m-2} \E_{i+2}^\vee \{(i+1) (m-1)\} ) \\
&\cong q_{1*} ( (q_1^* \wedge^k z^{-1}L_{i-1}/L_i) \otimes (\E_{i+1})^{-k} q_1^* \E_i  (\E_{i+1})^{m-2} \E_{i+1} ) \{-2b_\beta^i-2m + (i+1) (m-1) - 2k\}  \\
&\cong \wedge^k z^{-1}L_{i-1}/L_i \otimes \E_i \otimes q_{1*} (\E_{i+1})^{m-1-k} \{(i+1) (m-1) - 2k - 2b_\beta^i - 2m\}
\end{align*}
where to get the third line we use that 
$$\E_{i+1} \otimes \E_{i+2} \cong \det L_{i+2}/L_i \cong \det z^{-1}L_i/L_i \cong \O_{X_\beta^{i+1}} \{2 b_\beta^i + 2m\}$$ 
by Lemma \ref{lem:facts0}. Notice the restriction of $(\E_{i+1})^{m-1-k}$ to a fibre of $q_1$ is $\O_\p(-m+1+k)$ whose cohomology vanishes for $k=0,1, \dots, m-2$.

Hence, $\sR_k = 0$ unless $k=m-1$ in which case we get 
$$\sR_{m-1} \cong 
\wedge^{m-1} (z^{-1}L_{i-1}/L_i) \otimes \E_i \otimes q_{1*} \O_{X_\beta^{i+1}} \{(i+1)(m-1) - 2b_\beta^i - 4m+2\}.$$
Now $q_{1*} \O_{X_\beta^{i+1}} \cong \O_{Y_{d_{i+1}(\beta)}}$ and
$$\wedge^{m-1} (z^{-1}L_{i-1}/L_i) \cong \det (z^{-1}L_{i-1}/L_i) \cong \E_i^\vee \{2b_\beta^{i-1}+2m\}$$
by Lemma \ref{lem:facts0}.  Hence
\begin{equation*}
\sR_{m-1} \cong \O_{Y_{d_{i+1}(\beta)}} \{(i-1)(m-1)+2\}
\end{equation*}
which establishes \ref{eq:PhiF} and so we are done.

To show $\sF_\beta^{i+1} * \sT_\beta^i(1) * \sG_\beta^{i+1} \cong \O_\Delta$ we just take the left adjoints of both sides of $\sF_\beta^{i+1} * \sT_\beta^i(2) * \sG_\beta^{i+1} \cong \O_\Delta$ and use Lemma \ref{th:Gadj} together with the fact ${\sT_\beta^i(2)}_L \cong \sT_\beta^i(1)$. 
\end{proof}

\subsection{Invariance Under Reidemeister Moves (II)}

In this section we will prove the following result.

\begin{Theorem}\label{th:R2}
The map $ \Psi $ is invariant under Reidemeister II moves, i.e.
$$\T_\beta^i(1) \circ \T_{s_i(\beta)}^i(2) \cong Id \cong \T_\beta^i(2) \circ \T_{s_i(\beta)}^i(1)$$
\end{Theorem}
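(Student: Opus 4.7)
The natural strategy is to split along the dichotomy $\beta_i = \beta_{i+1}$ vs.\ $\beta_i \ne \beta_{i+1}$, which geometrically reflects whether the relevant component of the tensor product $V_{\omega_{\beta_i}}\otimes V_{\omega_{\beta_{i+1}}}$ decomposes via a ``diagonal plus vanishing cycle'' (the divisor case $X_\beta^i\subset Y_\beta$) or through the birational ``flop'' component $W_\beta^i$ (cf.\ Remarks \ref{re:sat1}, \ref{re:sat2}). In either case I would first reduce to a single identity: by Lemma \ref{th:twistadjoint} the kernels $\sT_\beta^i(1)$ and $\sT_{s_i(\beta)}^i(2)$ are left/right adjoints of one another, so it suffices to prove that the unit $\O_\Delta \to \sT_\beta^i(1) * \sT_{s_i(\beta)}^i(2)$ is an isomorphism; essential surjectivity of $\T_{s_i(\beta)}^i(2)$ (checked on a spanning class, e.g.\ structure sheaves of orbit closures) then upgrades fully faithfulness to an equivalence and gives the companion isomorphism automatically.

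In the case $\beta_i = \beta_{i+1}$ (so $s_i(\beta) = \beta$) I would invoke Theorem \ref{th:kerneltwist}, which identifies $\sT_\beta^i(2)$ with the shifted twist $\sT_{\sP_\beta^i}[-m+1]\{m-1\}$ in the FM transform $\P_\beta^i$. The plan is to verify that $\P_\beta^i$ is a \emph{spherical functor} in the sense of Anno--Horja--Logvinenko, whose key computational input is an identification of $\sP_{\beta,R}^i * \sP_\beta^i$ as a two-term complex built from $\O_\Delta$'s on $Y_{d_i(\beta)}$; this calculation is essentially Lemma \ref{lem:updown} applied to the $\mathbb{P}^{m-1}$-bundle $q:X_\beta^i\to Y_{d_i(\beta)}$, using the normal-bundle formula $N_{X_\beta^i/Y_\beta}\cong \Omega_{X_\beta^i/Y_{d_i(\beta)}}\{2\}$ of Lemma \ref{lem:facts1}. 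Once sphericality is in hand, the general theory of spherical twists (to be recalled in section \ref{se:twists}) supplies an explicit inverse to $T_{\P_\beta^i}$, namely the ``dual twist'' associated to the left adjoint of $\P_\beta^i$; reading off the line-bundle and shift data, this dual twist is precisely $\sT_\beta^i(1)$, up to the very shifts $[-m+1]\{m-1\}$ and $[m-1]\{-m+1\}$ which cancel between $\sT_\beta^i(1)$ and $\sT_\beta^i(2)$.

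In the case $\beta_i \ne \beta_{i+1}$ one cannot use spherical twists, and I would proceed by direct calculation following the template of Bridgeland--King--Reid, postponed to section \ref{se:RIIb}. Recall that $Z_\beta^i = W_\beta^i \cup V_\beta^i$, where $W_\beta^i$ is the diagonal-like component (realizing the Mukai flop $Y_\beta \dashrightarrow Y_{s_i(\beta)}$) and $V_\beta^i$ is the ``vanishing'' fibre product. The plan is to compute the convolution $\sT_\beta^i(1) * \sT_{s_i(\beta)}^i(2)$ on $Y_\beta \times Y_{s_i(\beta)} \times Y_\beta$: the intersection $\pi_{12}^{-1}(Z_\beta^i)\cap \pi_{23}^{-1}(Z_{s_i(\beta)}^i)$ stratifies into four pieces indexed by the components, each of which can be resolved by a Koszul complex built from the obvious section of the bundle $\sV = (L_{i+1}/L_i)^\vee\otimes (L_{i+1}/L_i')\{2\}$ or its analogue, exactly as in the proof of Lemma \ref{lem:facts3}. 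The line bundle twists $(\E'_i \otimes \E_{i+1}^\vee)^{\beta_i - 1}$ and $(\E'_i \otimes \E_{i+1}^\vee)^{\beta_i}$ in the kernels are precisely calibrated so that, after pushing down by $\pi_{13}$ and using the projection formula plus the vanishing $H^*(\mathbb{P}^{m-2},\Omega^j(j))=0$ for $j>0$, the contributions of the three non-diagonal strata vanish and the diagonal stratum contributes exactly $\O_\Delta$.

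The main obstacle will be the $\beta_i \ne \beta_{i+1}$ calculation: the four strata of the fibre intersection each carry nontrivial excess normal bundle data, and the bookkeeping of how the Koszul shifts combine with the line bundle twists $(\E'_i \otimes \E_{i+1}^\vee)^{\beta_i}$ and the dualizing-sheaf contribution $\omega_{Z_\beta^i}\cong (\E_i'\otimes \E_{i+1}^\vee)^{m-1}\otimes \pi_1^*\omega_{Y_\beta}\{2(m-1)\}$ from Lemma \ref{lem:facts3} must cancel exactly, with no residual cohomological shifts. This is essentially the concrete manifestation of Namikawa's derived equivalence for Mukai flops, performed here in the family parametrized by $Y_{d_i(\beta)}$, and the cleanest way to organize it is to compute first over the open stratum where only $W_\beta^i$ meets $W_{s_i(\beta)}^i$ (producing $\O_\Delta$ via a birational isomorphism) and then show that the boundary contributions from $V_\beta^i$ cancel by an explicit matching of Koszul resolutions.
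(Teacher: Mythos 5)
Your reduction via Lemma \ref{th:twistadjoint} and your treatment of the case $\beta_i = \beta_{i+1}$ match the paper: there one shows $\P_\beta^i$ is a spherical functor using Lemma \ref{lem:Padj} and Lemma \ref{lem:updown}, and invokes Theorem \ref{th:sphericaltwist}. One slip: for $\beta_i = \beta_{i+1} \in \{1,m-1\}$ the map $q: X_\beta^i \to Y_{d_i(\beta)}$ is a $\mathbb{P}^1$-bundle (the $\p$-bundle occurs only when $\beta_i + \beta_{i+1} = m$, i.e.\ for cups and caps), and this is essential --- sphericality needs $\sP_R * \sP$ to be a two-step extension of $\O_\Delta$ by $\O_\Delta[-2]\{2\}$, which is exactly $H^\star(\mathbb{P}^1)$; a $\p$-fibre would destroy the spherical condition.

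The genuine gap is in the case $\beta_i \ne \beta_{i+1}$. You invoke Bridgeland--King--Reid by name but then describe a strategy that is not theirs: a direct stratification of $\pi_{12}^{-1}(Z_\beta^i)\cap\pi_{23}^{-1}(Z_{s_i(\beta)}^i)$ and a Koszul-resolution computation of the convolution, with the assertion that the non-diagonal strata ``vanish.'' This intersection is far from the expected dimension ($V_\beta^i$ is a fibre product over $Y_{d_i(\beta)}$, so convolving two copies produces excess components with positive-dimensional $\pi_{13}$-fibres and nontrivial higher Tor sheaves between the components), and nothing in your sketch controls these contributions; the claimed cancellation is precisely the hard content and is left unverified. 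The point of the BKR method --- and what the paper actually does in section \ref{se:RIIb} --- is to \emph{avoid} this computation: one bounds the codimension of the locus $N_k$ where $\Ext^k(\Phi_\sP(\O_{p_1}),\Phi_\sP(\O_{p_2}))\ne 0$ off the diagonal (Lemma \ref{th:N_kineq}), proves the Ext-vanishing $\Ext^n = 0$ for $n<0$ or $n>\dim(Y)$ (Proposition \ref{prop:vanext}, which itself requires identifying $\H^{-k}(\Phi_\sP(\O_p))\cong f_*\Omega^k_{\p}(k)\{-2k\}$, the normal bundle $N_{f(\p)/Y'}\cong\O_\p^{\oplus N}\oplus\Omega^1_\p\{2\}$, and the Bott-type computations of Corollaries \ref{cor:vanexts1} and \ref{cor:vanexts2}), and then applies the Intersection Theorem to force $\sP_L * \sP$ to be supported on $\Delta$. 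After that, $\pi_{2*}(\sP_L*\sP)\cong\O_Y$ from Lemma \ref{lem:facts3} together with the adjunction map pins the kernel down as $\O_\Delta$, and the equivalence follows from the Serre-functor compatibility $\sP\otimes\pi_1^*\omega_Y\cong\sP\otimes\pi_2^*\omega_{Y'}$ (Lemma \ref{lem:facts2}) --- not from checking essential surjectivity on a spanning class, which is the vaguer route you propose. To repair your argument you would either need to carry out the excess-intersection computation in full (substantially harder than the Reidemeister III computation of section \ref{section:RIII}, which is only done for $\beta_i=\beta_{i+1}=1$), or switch to the support-theoretic argument.
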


By Lemma \ref{th:twistadjoint} it is enough to show that $\T_\beta^i(2)$ is an equivalence. There are two cases to consider. 

\subsubsection{Crossing like strands} \label{se:twists}
Assume $ \beta_i = \beta_{i+1} $.

In Theorem \ref{th:kerneltwist}, we prove that $\T_\beta^i(2)$ is isomorphic to the twist (up to a shift) in the functor $\Phi_{\sP_\beta^i}$.  This allows us to make us of the machinery of spherical functors.

Let $k = \dim(X) - \dim(Y)$ and suppose $\sP_R \cong \sP_L[k]\{l\}$ for some $l$. Suppose that the sequence of adjoint maps 
$$\O_\Delta \rightarrow \sP_R \ast \sP \rightarrow \O_\Delta[k]\{l\}$$
inside $D(X \times X)$ forms a distinguished triangle. Finally, assume that 
\begin{eqnarray*}
\Hom(\sP,\sP[i]\{j\}) \cong \left\{
\begin{array}{ll}
\C & \mbox{if } i=0 \text{ and } j=0\\
0  & \mbox{if } i=k,k+1 \text{ and } j=l
\end{array}
\right.\end{eqnarray*} 
If $\sP \in D(X \times Y)$ satisfies all these conditions, then $ \Phi_{\sP} $ is called a \textbf{spherical functor} and $\Phi_{\sT_\sP}: D(Y) \rightarrow D(Y)$ is a \textbf{spherical twist}. We care about spherical functors because of the following result which is originally due to Horja \cite{Ho} and Rouquier \cite{Rou}.

\begin{Theorem} \label{th:sphericaltwist} 
A spherical twist induces an equivalence of derived categories. 
\end{Theorem}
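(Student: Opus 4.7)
The plan is the standard one, due to Horja and Rouquier: exhibit an explicit inverse to $\Phi_{\sT_\sP}$ built from the left adjoint $\sP_L$, then verify the inverse property by convolving distinguished triangles and invoking the spherical hypotheses at the crucial steps. Define the candidate inverse kernel
\begin{equation*}
\sT'_\sP := \Cone(\O_\Delta \to \sP * \sP_L)[-1] \in D(Y \times Y),
\end{equation*}
where the map is the unit of the left adjunction $\Phi_\sP \dashv \Phi_{\sP_L}$ (equivalently, using $\sP_L \cong \sP_R[-k]\{-l\}$, this is a shifted cone involving $\sP_R$). The claim is then that $\sT_\sP * \sT'_\sP \cong \O_\Delta \cong \sT'_\sP * \sT_\sP$.

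The key preliminary step is to show that the twist acts as a shift on the image of $\Phi_\sP$, namely $\sT_\sP * \sP \cong \sP[k]\{l\}$. For this I would convolve the defining triangle $\sP * \sP_R \to \O_\Delta \to \sT_\sP$ with $\sP$ on the right, yielding the triangle
\begin{equation*}
\sP * \sP_R * \sP \xrightarrow{\epsilon * \sP} \sP \to \sT_\sP * \sP,
\end{equation*}
where $\epsilon$ is the counit. By the spherical hypothesis convolved with $\sP$ on the left, there is a second triangle $\sP \xrightarrow{\sP * \eta} \sP * \sP_R * \sP \to \sP[k]\{l\}$ coming from the unit $\eta$. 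The adjunction ``snake'' identity $(\epsilon * \sP) \circ (\sP * \eta) = \mathrm{id}_\sP$ splits this second triangle, so $\sP * \sP_R * \sP \cong \sP \oplus \sP[k]\{l\}$ compatibly with the maps to $\sP$, and hence $\sT_\sP * \sP \cong \sP[k]\{l\}$. A symmetric argument with $\sT'_\sP$ and $\sP_L$ gives $\sP * \sP_L * \sT'_\sP$-type identities, and in particular $\sT'_\sP * \sT_\sP * \sP \cong \sP$ canonically.

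Next I would compute $\sT_\sP * \sT'_\sP$ directly. Expanding both defining cones produces a $2\times 2$ array of morphisms whose total complex has as intermediate terms $\sP * \sP_R$, $\sP * \sP_L$, and $\sP * \sP_R * \sP * \sP_L$. Using the previous step, the mixed term $\sP * (\sP_R * \sP) * \sP_L$ splits via the spherical triangle into $\sP * \sP_L \oplus \sP * \sP_L[k]\{l\}$, and the adjunction maps identify the relevant summands so that two pairs of terms cancel. What survives is a single copy of $\O_\Delta$, yielding the desired isomorphism $\sT_\sP * \sT'_\sP \cong \O_\Delta$; the dual computation gives $\sT'_\sP * \sT_\sP \cong \O_\Delta$.

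The main obstacle is the bookkeeping in the last paragraph: one must check that the several maps appearing after expanding the convolutions are in fact the natural adjunction units and counits, so that they cancel as claimed rather than produce a nontrivial extension. This is exactly where both vanishing hypotheses are used: $\Hom(\sP,\sP[k]\{l\}) = 0$ forces uniqueness of the morphism $\sP_R * \sP \to \O_\Delta[k]\{l\}$ appearing in the spherical triangle (so it must agree, up to scalar, with the one obtained by adjunction), while $\Hom(\sP,\sP[k+1]\{l\})=0$ eliminates the potentially nonzero connecting morphism in the octahedra that assemble the $2\times 2$ diagram into a single triangle. Together these ensure the formal inverse property, hence $\Phi_{\sT_\sP}$ is an equivalence with inverse $\Phi_{\sT'_\sP}$.
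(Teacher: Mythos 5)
Your proposal is the standard Horja--Rouquier argument, which is exactly what the paper relies on here (its proof simply defers to Theorem 3.4 of \cite{kh1}): build the candidate inverse kernel $\Cone(\O_\Delta \to \sP * \sP_L)[-1]$, split $\sP * \sP_R * \sP \cong \sP \oplus \sP[k]\{l\}$ using the triangle identity for the adjunction, and use the two Hom-vanishing hypotheses to identify the maps in the final cancellation, so the approach matches the paper's. One bookkeeping slip: since $\epsilon * \sP$ is a split surjection onto the summand $\sP$, its cone is $\sP[k]\{l\}[1]$, so the correct identity is $\sT_\sP * \sP \cong \sP[k+1]\{l\}$ rather than $\sP[k]\{l\}$ (consistent with $T_E(E) \cong E[1-n]$ for a spherical object on an $n$-fold); this shift propagates into the terms of your $2\times 2$ array but does not affect the cancellation or the conclusion.
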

\begin{proof}
See Theorem 3.4 of \cite{kh1}.
\end{proof}

Hence in order to prove that $\T_\beta^i(2)$ is an equivalence, we just need to prove the following statement.

\begin{Lemma} \label{th:Pissphere}
$\P_\beta^i : D(Y_{d_i(\beta)}) \rightarrow D(Y_\beta)$ is a spherical functor.
\end{Lemma}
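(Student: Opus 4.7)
We must check the three conditions listed just before Theorem \ref{th:sphericaltwist} for $\sP := \sP_\beta^i$ viewed as a kernel in $D(Y_{d_i(\beta)} \times Y_\beta)$. Since $\beta_i = \beta_{i+1} \in \{1,m-1\}$, the subvariety $X_\beta^i$ is a divisor in $Y_\beta$ while $q: X_\beta^i \to Y_{d_i(\beta)}$ is a $\mathbb{P}^1$-bundle, so $k := \dim Y_{d_i(\beta)} - \dim Y_\beta = -2$. The adjoint comparison $\sP_R \cong \sP_L[k]\{l\}$ is exactly Lemma \ref{lem:Padj} with $l=2$, so nothing is required there.

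The heart of the argument is to show that $\sP_R * \sP$ fits in a distinguished triangle $\O_\Delta \to \sP_R * \sP \to \O_\Delta[-2]\{2\}$ whose outer arrows are the unit and a shift of the counit. I would compute $\Phi_{\sP_R}\Phi_\sP$ directly on objects: for any $\sF \in D(Y_{d_i(\beta)})$,
\[
\Phi_{\sP_R}\Phi_\sP(\sF) \cong q_*\bigl(i^* i_*(q^* \sF \otimes \E_i) \otimes \E_{i+1}^\vee\bigr)[-1]\{2\beta_i\}.
\]
The standard divisor triangle $\sG \otimes N^\vee_{X_\beta^i/Y_\beta}[1] \to i^* i_* \sG \to \sG$, together with the identification $N^\vee_{X_\beta^i/Y_\beta} \cong \E_i^\vee \otimes \E_{i+1}\{-2\beta_i\}$ from Lemma \ref{lem:facts1}(iii), the projection formula, and $q_* q^* = \mathrm{id}$, yield after the shift $[-1]\{2\beta_i\}$ a triangle
\[
\sF \longrightarrow \Phi_{\sP_R}\Phi_\sP(\sF) \longrightarrow \sF \otimes Rq_*(\E_i \otimes \E_{i+1}^\vee)[-1]\{2\beta_i\}.
\]
Now $\E_i \otimes \E_{i+1}^\vee$ restricts to $\O(-2)$ on each fiber of $q$, so $Rq_*(\E_i \otimes \E_{i+1}^\vee)$ is concentrated in degree $1$; moreover Lemma \ref{lem:facts1}(ii) identifies $\E_i \otimes \E_{i+1}^\vee$ (up to an explicit equivariant shift) with $\omega_{X_\beta^i/Y_{d_i(\beta)}}$, so relative Serre duality for the $\mathbb{P}^1$-bundle $q$ pins down the equivariant weight, producing a third term of $\sF[-2]\{2\}$. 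Lifting this natural triangle to FM kernels gives the required triangle on $Y_{d_i(\beta)} \times Y_{d_i(\beta)}$; that the two arrows coincide with the unit and a shift of the counit follows because nonzero maps between these sheaves are unique up to scalar, as in Corollary \ref{cor:abstractnonsense3}.

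The Hom conditions are the easy part: since $\E'_i$ is a line bundle, $\Hom(\sP, \sP[a]\{b\}) \cong \Ext^a_{Y_{d_i(\beta)} \times Y_\beta}(\O_{X_\beta^i}, \O_{X_\beta^i})\{b\}$. This group equals $\C$ for $a=0$ because $X_\beta^i$ is smooth and connected, and vanishes for $a=-1,-2$ because $\O_{X_\beta^i}$ is a sheaf concentrated in cohomological degree zero. The main obstacle is careful bookkeeping of the equivariant $\C^\times$-shifts in the Serre-duality step, together with uniformly handling the two subcases $\beta_i=1$ and $\beta_i = m-1$ in which the normal and relative dualizing bundles are expressed via $\E$'s versus $\bE$'s.
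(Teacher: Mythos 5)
Your proposal checks the same three conditions as the paper and handles the adjoint comparison (Lemma \ref{lem:Padj}) and the Hom conditions identically; the difference lies in how you produce the distinguished triangle $\O_\Delta \to \sP_R * \sP \to \O_\Delta[-2]\{2\}$. The paper computes the cohomology sheaves of the kernel $\sP_R * \sP$ directly on the triple product via the excess-intersection Lemma \ref{lem:updown} (with fibre $F = \mathbb{P}^1$), finds $\O_\Delta \oplus \O_\Delta[-2]\{2\}$, and then identifies the two arrows with the adjunction maps using one-dimensionality of the relevant Hom spaces. You instead apply the divisor triangle for $i^*i_*$ together with the projection formula and relative Serre duality for the $\mathbb{P}^1$-bundle $q$. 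Your bookkeeping is right: $\E_i \otimes \E_{i+1}^\vee \cong \omega_{X_\beta^i/Y_{d_i(\beta)}}\{2-2\beta_i\}$ in both subcases, so $Rq_*(\E_i\otimes\E_{i+1}^\vee)[-1]\{2\beta_i\} \cong \O[-2]\{2\}$ and the third term is $\sF[-2]\{2\}$ as required. This is arguably more transparent than the spectral-sequence argument of Lemma \ref{lem:updown}, and the two computations are ultimately the same excess-intersection phenomenon (the non-transversality of $\pi_{12}^{-1}(X_\beta^i)$ and $\pi_{23}^{-1}(X_\beta^i)$ in one direction is exactly the divisor direction $N_{X_\beta^i/Y_\beta}$).

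The one genuine weak point is the sentence ``Lifting this natural triangle to FM kernels gives the required triangle.'' The definition of spherical functor used here is stated at the level of kernels in $D(Y_{d_i(\beta)} \times Y_{d_i(\beta)})$, and a distinguished triangle of functors (even a natural one, produced objectwise) does not formally lift to a distinguished triangle of kernels. To close this you should either run your divisor argument directly on $Y_{d_i(\beta)} \times Y_\beta \times Y_{d_i(\beta)}$ --- i.e.\ resolve $\pi_{12}^*\O_{X_\beta^i} \otimes \pi_{23}^*\O_{X_\beta^i}$ using Lemma \ref{lem:nontransint}, which is essentially what Lemma \ref{lem:updown} does --- or at least compute the cohomology sheaves of $\sP_R * \sP$ and then argue, as the paper does, that a complex with exactly two cohomology sheaves $\O_\Delta$ and $\O_\Delta[-2]\{2\}$ sits in such a triangle, with the arrows pinned down by $\Hom(\O_\Delta, \sP_R * \sP) \cong \Hom(\sP,\sP) \cong \C$ and its counterpart. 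With that repair the argument is complete.
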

\begin{proof}
By Lemma \ref{lem:Padj} we know ${\sP_\beta^i}_R \cong {\sP_\beta^i}_L [k]\{l\}$ where $k = \dim(Y_{d_i(\beta)}) - \dim(Y_\beta) = -2$ and $l = 2$. It is also easy to check that $\Hom(\sP_\beta^i, \sP_\beta^i) = \Hom(\O_{X_\beta^i}, \O_{X_\beta^i}) \cong \C$ and $\Hom(\sP_\beta^i, \sP_\beta^i [k]\{l\} = \Hom(\sP_\beta^i, \sP_\beta^i [k+1]\{l\}) = 0$ since $k+1 < 0$. 

It remains to show that the standard sequence of adjunctions $\O_\Delta \rightarrow {\sP_\beta^i}_R * \sP_\beta^i \rightarrow \O_\Delta [-2]\{2\}$ is a distinguished triangle. To see this note that 
\begin{eqnarray*}
\bigoplus_n \H^n({\sP_\beta^i}_R * \sP_\beta^i)
&\cong& \bigoplus_n \H^n( \pi_{13*}(\pi_{12}^* (\O_{X_\beta^i} \otimes \E_i') \otimes \pi_{23}^*({\O_{X_\beta^i}}_R \otimes \E_i^\vee)) ) \\
&\cong& \bigoplus_n \H^n( \pi_{13*}(\pi_{12}^* (\O_{X_\beta^i}) \otimes \pi_{23}^*({\O_{X_\beta^i}}_R) \otimes \E_i' \otimes (\E_i')^\vee) ) \\
&\cong& \bigoplus_n \H^n( {\O_{X_\beta^i}}_R * \O_{X_\beta^i} ) \\
&\cong& \O_{\Delta} \oplus \O_{\Delta}[-2]\{2\}
\end{eqnarray*}
where the last equality follows from Lemma \ref{lem:updown} where the fibre $F$ is a $\mathbb{P}^1$. Thus there exists a distinguished triangle $\O_\Delta \rightarrow {\sP_\beta^i}_R * \sP_\beta^i \rightarrow \O_\Delta[-2]\{2\}$. To see the maps in this triangle are the same as those in the sequence above it suffices to notice that all these maps are non-zero (since in the original sequence they were adjunction maps) and 
$$\Hom(\O_\Delta, {\sP_\beta^i}_R * \sP_\beta^i) \cong \Hom(\sP_\beta^i, \sP_\beta^i) \cong \C$$
$$\Hom({\sP_\beta^i}_R * \sP_\beta^i, \O_\Delta [-2]\{2\}) \cong \Hom({\sP_\beta^i}_L * \sP_\beta^i, \O_\Delta) \cong \Hom(\sP_\beta^i, \sP_\beta^i) \cong \C$$ 
because ${\sP_\beta^i}_R = {\sP_\beta^i}_L [-2]\{2\}$ (Lemma \ref{lem:Padj}). 
\end{proof}

\subsubsection{Crossing unlike strands} \label{se:RIIb}
The second case is $\beta_i \ne \beta_{i+1} $. 

It is enough to show that the kernel $\O_{Z_\beta^i}$ induces an equivalence. For simplicity, let $Y = Y_\beta$, $Y' = Y_{s_i(\beta)}$ and $\sP := \O_{Z_\beta^i} \in D(Y \times Y')$. We use a technique introduced in \cite{BKR}. Consider the loci
$$N_k = \overline{\{ (p_1,p_2) \in Y \times Y \smallsetminus \Delta: \Ext^k_{Y'}(\Phi_{\sP}(\O_{p_1}), \Phi_{\sP}(\O_{p_2})) \ne 0 \}} \subset Y \times Y.$$
These loci are of interest because 
\begin{eqnarray*}
\Ext^k_{Y'}(\Phi_{\sP}(\O_{p_1}), \Phi_{\sP}(\O_{p_2})) 
&\cong& \Ext^k_{Y}(\Phi_{\sP_L * \sP} (\O_{p_1}), \O_{p_2}) \\
&\cong& \Ext^k_{Y}(i_{p_1}^*(\sP_L * \sP), \O_{p_2}) \\
&\cong& \Ext^k_{Y \times Y}(\sP_L * \sP, i_{p_1*} \O_{p_2}) \\
&\cong& \Ext^k_{Y \times Y}(\sP_L * \sP, \O_{(p_1,p_2)}) \\
&\cong& \Ext^k_{(p_1,p_2)}(i_{(p_1,p_2)}^* \sP_L * \sP, \O_{(p_1,p_2)}) \\
&\cong& \H^k(i_{(p_1,p_2)}^* \sP_L * \sP)^\vee
\end{eqnarray*}
where $i_{p_1}: Y \rightarrow Y \times Y$ is the inclusion $p \mapsto (p_1,p)$ and $i_{(p_1,p_2)}$ is the inclusion of a point $p \mapsto (p_1,p_2) \in Y \times Y$. So these $N_k$ measure the support of $\sP_L * \sP$ off the diagonal. 

\begin{Theorem}
$\sP_L * \sP$ is supported on the diagonal $\Delta \subset Y \times Y$.
\end{Theorem}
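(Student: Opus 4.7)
The strategy follows the BKR-type criterion just set up: it suffices to prove $\Ext^k_{Y'}(\Phi_\sP(\O_{p_1}), \Phi_\sP(\O_{p_2})) = 0$ for all $k$ and all pairs of distinct $p_1, p_2 \in Y$. My first step is a fibre-by-fibre analysis of $\pi_1 : Z_\beta^i \to Y$. Taking $\beta_i = 1, \beta_{i+1} = m-1$ (the other case is symmetric), the conditions defining $Y'$ translate to: $L_i'/L_{i-1}$ is a hyperplane in $L_{i+1}/L_{i-1}$ containing the image of $zL_{i+1}$ and sitting inside $z^{-1}(L_{i-1}) \cap L_{i+1}$. For $p \notin X_\beta^i$ these constraints cut out a single $L_i'$, so $\Phi_\sP(\O_p)$ is a skyscraper sheaf on $Y'$; for $p \in X_\beta^i$ both become vacuous and the $V_\beta^i$-fibre is the full $\Gr(m-1, L_{i+1}/L_{i-1}) \cong \mathbb{P}^{m-1}$, inside which the $W_\beta^i$-fibre is the sub-$\mathbb{P}^{m-2}$ of hyperplanes containing $L_i/L_{i-1}$.

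Several cases of the vanishing are then immediate. If $p_1, p_2 \notin X_\beta^i$ are distinct, the birationality of $\pi_1|_{W_\beta^i}$ on this open locus yields skyscrapers at different points of $Y'$. If $p_1 \in X_\beta^i$ and $p_2 \notin X_\beta^i$ (or the reverse), any overlap of supports would force $L_{i-1}^{(1)} = L_{i-1}^{(2)}$ and $L_{i+1}^{(1)} = L_{i+1}^{(2)}$; combined with $zL_{i+1}^{(1)} \subset L_{i-1}^{(1)}$ this would imply $p_2 \in X_\beta^i$, a contradiction. And if $p_1, p_2 \in X_\beta^i$ have distinct images in $Y_{d_i(\beta)}$, their $\mathbb{P}^{m-1}$-fibres lie over distinct base points in $Y_{d_i(\beta)}$ and are disjoint in $Y'$. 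In all three subcases the supports of the $\Phi_\sP(\O_{p_j})$ are disjoint and the Ext vanishes trivially.

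The main obstacle is the remaining case: $p_1 \ne p_2$ both in $X_\beta^i$ with $q(p_1) = q(p_2)$. Here the two $\mathbb{P}^{m-1}$-fibres in $Y'$ coincide, but the sheaves $\Phi_\sP(\O_{p_j})$ on this common $\mathbb{P}^{m-1}$ are genuinely different because the $W_\beta^i$-contribution is the sub-$\mathbb{P}^{m-2}$ of hyperplanes through the distinct lines $L_i^{(j)}/L_{i-1}$. I would use the Mayer-Vietoris triangle
\begin{equation*}
\O_{Z_\beta^i} \to \O_{W_\beta^i} \oplus \O_{V_\beta^i} \to \O_{W_\beta^i \cap V_\beta^i}
\end{equation*}
and restrict to the common fibre to realise each $\Phi_\sP(\O_{p_j})$ as an explicit two-term complex on $\mathbb{P}^{m-1}$ built from $\O_{\mathbb{P}^{m-1}}$ and the structure sheaf of a hyperplane $\mathbb{P}^{m-2}_{(j)}$. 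The derived tensor product formulas of Lemma \ref{lem:nontransint} together with the line-bundle twists contributed by $\sP_L$ (computed from Lemma \ref{lem:facts3}) then reduce $\Ext^k_{Y'}(\Phi_\sP(\O_{p_1}), \Phi_\sP(\O_{p_2}))$ to a controlled sum of cohomology groups of the form $H^*(\mathbb{P}^{m-1}, \Omega^k(j))$; Bott's formula kills each of them. The delicate part is the bookkeeping: one must simultaneously track the restricted tautological bundles $L_k, \E_k$, the two distinct hyperplanes inside the common $\mathbb{P}^{m-1}$, and the normal-bundle factors from Lemma \ref{lem:nontransint}, all while matching the shifts introduced by $\sP_L$ against the Bott-vanishing ranges in each bidegree.
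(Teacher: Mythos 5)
Your reduction to pairs $p_1\ne p_2$ with $p_1,p_2\in X_\beta^i$ and $q(p_1)=q(p_2)$ is correct and matches the paper's support analysis (Lemma \ref{th:N_kineq}), but the way you propose to dispose of that remaining case has a genuine gap. The derived fibre $\Phi_\sP(\O_p)$ for $p\in X_\beta^i$ is not a two-term complex built from $\O_{\p}$ and a hyperplane: because $V_\beta^i\to Y$ has jumping fibres, the derived restriction picks up the full exterior algebra of the conormal bundle, and one finds $\H^{-k}(\Phi_\sP(\O_p))\cong f_*\Omega^k_{\p}(k)\{-2k\}$ for $k=0,\dots,m-1$. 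The crucial point is that the line $L_i/L_{i-1}$ singled out by $p$ enters this computation only through a \emph{trivial} line bundle on the fibre, so the cohomology sheaves of $\Phi_\sP(\O_{p_1})$ and $\Phi_\sP(\O_{p_2})$ are \emph{isomorphic} whenever $q(p_1)=q(p_2)$ — the dependence on the point sits entirely in the extension data of the complex. Consequently the $E_2$ page of the hyperext spectral sequence for $\Ext^\bullet_{Y'}(\Phi_\sP(\O_{p_1}),\Phi_\sP(\O_{p_2}))$ is literally the same as in the case $p_1=p_2$, where the abutment is certainly nonzero (it contains the identity in $\Ext^0$); in particular terms such as $\Hom_{Y'}(f_*\O_{\p},f_*\O_{\p})\cong\C$ survive on the $E_2$ page and are not killed by Bott's formula. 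So "Bott vanishing term by term" cannot give the total vanishing you want: that vanishing can only come from differentials in the spectral sequence, which depend on the extension classes distinguishing $p_1$ from $p_2$, and your proposal gives no handle on these.

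This is exactly the difficulty the paper's proof is designed to sidestep. Rather than proving $\Ext^n_{Y'}(\Phi_\sP(\O_{p_1}),\Phi_\sP(\O_{p_2}))=0$ for all $n$, the paper proves only the partial vanishing $n<0$ or $n>\dim(Y)$ (Proposition \ref{prop:vanext}, where Bott's formula genuinely suffices), combines it with the codimension estimate $\mathrm{codim}(N_k)\ge\dim(Y)+1$ coming from the dimension count $\dim(N)\le\dim(X_\beta^i)+m-1=\dim(Y)-1$, and invokes the Intersection Theorem of \cite{BKR} to force the off-diagonal support to be empty. The full Ext-vanishing you are after is a \emph{consequence} of the theorem, not an input to it; if you wish to keep a direct approach you would have to identify the complexes $\Phi_\sP(\O_{p_j})$ up to quasi-isomorphism (including their gluing data on the formal neighbourhood of $f(\p)$), which is a substantially harder computation than the one you sketch.
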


\begin{proof}
It suffices to show that $ N_k $ is empty for all $ k$.

Suppose $N_k$ is non-empty and let $C$ be a component of $\mbox{supp}(\sP_L * \sP)$. The above calculation implies $C$ is a component of $N_k$ for some $k$. Consider a small open neighbourhood $W$ of a general point of $C$. Given $E \in D(W)$ it follows from the Intersection Theorem (see \cite{BKR} section 5) that $\mbox{codim}(\mbox{supp}(E)) \le n$ where $n$ (called the torsion amplitude $\mbox{Tor-amp}(E)$ in \cite{Lo}) is the length of the smallest interval in $\Z$ containing the set
$$\{j \in \Z: \exists w \in W \text{ such that } H^j i_w^*(E) \ne 0 \}.$$
But by Lemma \ref{th:N_kineq} below $\mbox{codim}_W(N_k) \ge \dim(Y) + 1$ so we get $\mbox{Tor-amp}_W(N_k) \ge \dim(Y)+1$. On the other hand, by Proposition \ref{prop:vanext} below we know $H^n(i_{(p_1,p_2)}^* \sP_L * \sP) = \Ext^n_{Y'}(\Phi_{\sP}(\O_{p_1}), \Phi_{\sP}(\O_{p_2})) = 0$ if $n < 0$ or $n > \dim(Y)$. This implies $\mbox{Tor-amp}_W(N_k) \le \dim(Y)$ (contradiction). 
\end{proof}

\begin{Lemma}\label{th:N_kineq} $\mbox{codim}(N_k) \ge \dim(Y) + 1$.
\end{Lemma}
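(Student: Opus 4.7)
The plan is to stratify $Y \times Y$ using the decomposition $Y = U \sqcup X_\beta^i$ with $U := Y \setminus X_\beta^i$, and to bound the dimension of $N_k$ on each stratum. The key geometric input is that $\pi_1$ restricts to an isomorphism $\pi_1^{-1}(U) \xrightarrow{\sim} U$ realized entirely by the component $W_\beta^i$ (since $V_\beta^i$ lies over $X_\beta^i$), so that $\phi := \pi_2 \circ \pi_1^{-1}: U \to Y'$ is an isomorphism onto $Y' \setminus X_{s_i(\beta)}^i$. Over $p \in X_\beta^i$, the fibre $\pi_1^{-1}(p)$ is the union of a $\mathbb{P}^{m-2}$ coming from $W_\beta^i$ and a $\mathbb{P}^{m-1}$ coming from $V_\beta^i$, and its image under $\pi_2$ in $Y'$ is precisely the $\mathbb{P}^{m-1}$-fibre of $X_{s_i(\beta)}^i \to Y_{d_i(\beta)}$ over $q(p)$, which is contained in $X_{s_i(\beta)}^i$.

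With this in hand, I would first dispose of the ``easy'' strata. On $(U \times U) \setminus \Delta$, both $\Phi_\sP(\O_{p_i}) \cong \O_{\phi(p_i)}$ are skyscrapers at distinct points of $Y'$, so all $\Ext$s vanish. On the mixed strata $U \times X_\beta^i$ and $X_\beta^i \times U$, the support of one $\Phi_\sP(\O_{p_i})$ lies in $Y' \setminus X_{s_i(\beta)}^i$ while the other lies in $X_{s_i(\beta)}^i$; being disjoint, all $\Ext$s vanish and these strata contribute nothing to $N_k$. The remaining stratum $X_\beta^i \times X_\beta^i$ can contribute only where the two supports meet, which forces $q(p_1) = q(p_2)$, so $(p_1, p_2) \in X_\beta^i \times_{Y_{d_i(\beta)}} X_\beta^i$, a variety of dimension exactly $\dim Y$.

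The main obstacle is that this naive bound is off by one: it gives $\mathrm{codim}\,N_k \ge \dim Y$, whereas I need $\dim Y + 1$. To close the gap I would show that on a dense open subset of $(X_\beta^i \times_{Y_{d_i(\beta)}} X_\beta^i) \setminus \Delta$ every $\Ext^k(\Phi_\sP(\O_{p_1}), \Phi_\sP(\O_{p_2}))$ actually vanishes, dropping the dimension by the required amount. Localizing over $Y_{d_i(\beta)}$, this reduces to a fibrewise statement: over a generic point of $Y_{d_i(\beta)}$, the fibres of $X_\beta^i$ and $X_{s_i(\beta)}^i$ are the dual projective spaces $\mathbb{P}^{m-1}$ and $\check{\mathbb{P}}^{m-1}$, and $Z_\beta^i$ restricts to the standard Mukai flop correspondence on their fibre product union the ``diagonal'' hyperplane. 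The classical derived equivalence for the Mukai flop of $T^*\mathbb{P}^{m-1}$ (Namikawa, Kawamata) then implies that skyscrapers at two distinct points of such a fibre have vanishing $\Ext$s in the flopped direction, yielding the extra dimension drop and completing the bound.
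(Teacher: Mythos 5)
Your first two paragraphs coincide with the paper's own proof of Lemma \ref{th:N_kineq}: off $X_\beta^i$ the functor takes skyscrapers to skyscrapers at distinct points of $Y'$; a point of $X_\beta^i$ and a point off it have disjoint supports (one inside $X^i_{s_i(\beta)}$, one outside); and two points of $X_\beta^i$ have intersecting supports only if they lie in the same fibre of $q: X_\beta^i \to Y_{d_i(\beta)}$, so the only possible contribution is $(X_\beta^i \times_{Y_{d_i(\beta)}} X_\beta^i)\smallsetminus\Delta$, which has dimension $\dim X_\beta^i + (m-1) = \dim Y$. You are right that this naive count only yields $\mbox{codim}(N_k)\ge \dim Y$. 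The paper asserts at this point that $\dim X_\beta^i + (m-1) = \dim Y - 1$, which is an off-by-one (since $\mbox{codim}_Y X_\beta^i = \beta_i\beta_{i+1} = m-1$); so the ``obstacle'' you identify is genuinely present in the published argument, and your instinct that one needs generic vanishing of the $\Ext$'s on this fibre product to gain the extra codimension is the correct diagnosis.

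Your proposed repair, however, is a sketch whose missing step is precisely the hard point. The objects $\Phi_{\sP}(\O_{p_1})$ and $\Phi_{\sP}(\O_{p_2})$ are not skyscrapers on the fibre: by the computation in Proposition \ref{prop:vanext} each has $m$ cohomology sheaves $f_*\Omega^k_{\p}(k)\{-2k\}$ supported on the entire $\p$, and these are the same for $p_1$ and $p_2$, so the vanishing of all $\Ext^k_{Y'}$ between them is essentially the orthogonality statement the whole subsection is trying to establish. ``Localizing over $Y_{d_i(\beta)}$'' is therefore not automatic: the $\Ext$'s live in $D(Y')$, and to reduce to the standard Mukai flop of $T^*\p$ you must control the formal neighbourhood of the fibre $f(\p)$ in $Y'$ (its normal bundle is $\Omega^1_{\p}\{2\}\oplus\O_{\p}^{\oplus N}$; you need the neighbourhood to split off the trivial factor and the restriction of $\O_{Z_\beta^i}$ to be identified with the local flop kernel) before Kawamata--Namikawa can be invoked. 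Even granting that, quoting Namikawa here largely defeats the purpose of the subsection, which (as its closing Remark explains) is to give a BKR-style proof that does not pass through Namikawa's paper --- a paper that already treats the family of Mukai flops arising here. So: you have correctly spotted a real gap that the paper itself glosses over, but the proposed fix is incomplete as written.
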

\begin{proof}
Let $N = \overline{\{ (p_1,p_2) \in Y \times Y \smallsetminus \Delta: \mbox{supp} (\Phi_{\sP})(\O_{p_1}) \cap \mbox{supp} (\Phi_{\sP})(\O_{p_2}) \ne \emptyset \}} \subset Y \times Y.$ Since $N_k \subset N$ it is enough to show $\mbox{codim}(N) \ge \dim(Y) + 1$. 

If $\beta_i \ne \beta_{i+1} \in \{1,m-1\}$ the map $\pi_1: Z_\beta^i \rightarrow Y_\beta$ is one-to-one away from $X_\beta^i \subset Y_\beta = Y$ and $\p:1$ over $X_\beta^i$. If $p = (L_1, \dots, L_n) \not\in X_\beta^i$ then $\mbox{supp} (\Phi_{\sP})(\O_p)$ is a point while if $p \in X_\beta$ then $\Phi_{\sP}(\O_p)$ is supported on a $\p$. In both cases the support parametrizes all possible $L_{i-1} \subset L_i' \subset L_{i+1}$ such that $zL_{i+1} \subset L_i'$ and $zL_i' \subset L_{i-1}$ where $\dim(L_i'/L_{i-1}) = \beta_{i+1}$. 

Suppose $p_1 = (L_1, \dots, L_n)$ and $p_2 = (M_1, \dots, M_n)$ where $p_1 \ne p_2$. From the description above it is clear $\mbox{supp} (\Phi_{\sP})(\O_{p_1}) \cap \mbox{supp} (\Phi_{\sP})(\O_{p_2}) = \emptyset$ unless $L_j = M_j$ for $j \ne i$ and $p_1, p_2 \in X_\beta^i$. The natural projection map $\pi_1: N \rightarrow Y$ maps onto $X_\beta^i$ with fibre $\p$ corresponding to the $Gr(\beta_i, L_{i+1}/L_{i-1})$ locus of possible $L_{i-1} \subset M_i \subset L_{i+1}$. Thus $\dim(N) \le \dim(X_\beta^i) + m-1 = \dim(Y) - 1$ and so $\mbox{codim}(N) \ge \dim(Y) + 1$. 
\end{proof}

\begin{Proposition}\label{prop:vanext} $\Ext^n_{Y'}(\Phi_{\sP}(\O_{p_1}), \Phi_{\sP}(\O_{p_2})) = 0$ if $n<0$ or $n>\dim(Y)$.
\end{Proposition}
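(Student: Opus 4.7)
The plan is to reduce the statement to showing that $\Phi_\sP(\O_p)$ is a coherent sheaf on $Y'$ (that is, a complex concentrated in cohomological degree $0$) for every $p \in Y$.  Once this is granted, the conclusion is immediate: since $Y'$ is smooth projective of dimension $\dim Y' = \dim Y$, for any two coherent sheaves $\sF, \sG$ on $Y'$ one has $\Ext^n_{Y'}(\sF,\sG) = 0$ for $n < 0$ and for $n > \dim Y'$, and the same bounds continue to hold $\C^\times$-equivariantly.

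To establish that $\Phi_\sP(\O_p)$ is a sheaf, I would use the decomposition $Z_\beta^i = W_\beta^i \cup V_\beta^i$ from the preceding subsection, together with the short exact sequence
\begin{equation*}
0 \to \O_{W_\beta^i}(-D) \to \O_{Z_\beta^i} \to \O_{V_\beta^i} \to 0,
\end{equation*}
where $D = W_\beta^i \cap V_\beta^i$, and handle the two kernels $\O_{W_\beta^i}(-D)$ and $\O_{V_\beta^i}$ separately, reassembling the answer via the associated distinguished triangle
\begin{equation*}
\Phi_{\O_{W_\beta^i}(-D)}(\O_p) \to \Phi_\sP(\O_p) \to \Phi_{\O_{V_\beta^i}}(\O_p).
\end{equation*}

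For $p \notin X_\beta^i$ the contribution from $V_\beta^i$ vanishes (since $\pi_1(V_\beta^i) \subseteq X_\beta^i$), while $\pi_1 : W_\beta^i \to Y$ is a local isomorphism at $p$ (it is birational, an isomorphism away from $X_\beta^i$); hence $\pi_1^{-1}(p) \cap Z_\beta^i$ is a single reduced point lying in the smooth locus of $Z_\beta^i$, and its image in $Y'$ is a skyscraper sheaf.  For $p \in X_\beta^i$ both components contribute non-trivially and $\{p\} \times Y'$ meets $Z_\beta^i$ non-transversally.  Here the plan is to exploit the $\p$-bundle structures $\pi_1: V_\beta^i \to X_\beta^i$ and $\pi_2: V_\beta^i \to X_{s_i(\beta)}^i$ coming from the fibre-product description $V_\beta^i = X_\beta^i \times_{Y_{d_i(\beta)}} X_{s_i(\beta)}^i$, together with a Koszul resolution of $\O_p$ locally on $Y$ and the projection formula, and invoke the standard Grassmannian push-forward computations to describe each term of the triangle explicitly as a sheaf on $Y'$ supported on an appropriate $\p$.

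The main obstacle is the case $p \in X_\beta^i$: the non-transverse intersection of $\{p\} \times Y'$ with each of $W_\beta^i$ and $V_\beta^i$ produces higher $\mathrm{Tor}$ sheaves individually, and the delicate point is to verify that the boundary map in the long exact cohomology sequence attached to the distinguished triangle above cancels those higher $\mathrm{Tor}$ contributions, so that $\Phi_\sP(\O_p)$ really does collapse to a single coherent sheaf in degree $0$ rather than spreading across multiple cohomological degrees.
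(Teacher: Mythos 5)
Your reduction rests on the claim that $\Phi_\sP(\O_p)$ is a coherent sheaf concentrated in degree $0$ for every $p$, and this claim is false precisely in the case you flag as the main obstacle. For $p \in X_\beta^i$ one can compute (as the paper does, by embedding $Z_\beta^i$ into the Grassmannian bundle $Gr(\beta_{i+1}, L_{i+1}/L_{i-1})$ and using the Koszul resolution coming from the defining section of $\sV = (L_i'/L_{i-1})^\vee \otimes L_i/L_{i-1}\{2\}$) that
$$\H^{-k}(\Phi_{\sP}(\O_p)) \cong f_* \Omega_{\p}^k(k)\{-2k\}, \qquad k = 0, 1, \dots, m-1,$$
where $f : \p \to Y'$ is the inclusion of the relevant fibre. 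These higher cohomology sheaves are genuinely nonzero: no boundary map in the long exact sequence of your distinguished triangle cancels them, because the derived restriction of $\O_{Z_\beta^i}$ to $\{p\}\times Y'$ really does have surviving higher $\mathrm{Tor}$'s (the kernel is the structure sheaf of a reducible, hence singular, variety, and $\{p\}\times Y'$ meets it in excess dimension). So the object you would need to be a skyscraper-like sheaf is in fact a complex spread over cohomological degrees $-(m-1), \dots, 0$, and your appeal to the elementary bound $0 \le n \le \dim Y'$ for $\Ext$'s between sheaves does not apply.

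Consequently the actual content of the proposition is not addressed by your plan. With both $\Phi_\sP(\O_{p_1})$ and $\Phi_\sP(\O_{p_2})$ having cohomology in the window $[-(m-1),0]$, the hypercohomology spectral sequence only gives a priori bounds $-(m-1) \le n \le \dim Y + (m-1)$, and closing the gap requires showing the vanishing of $\Ext^k_{Y'}(f_*\Omega^j_\p(j), f_*\Omega^i_\p(i))$ in the appropriate range. This in turn needs the identification $N_{f(\p)/Y'} \cong \O_\p^{\oplus N} \oplus \Omega^1_\p\{2\}$ and a sequence of Bott-type vanishing statements for $\Ext^p_\p(\Omega^j_\p(j), \Omega^i_\p(i) \otimes \Omega^q_\p)$ (Corollary \ref{cor:vanexts2}), together with a Serre-duality argument on the formal neighbourhood of $f(\p)$ (whose canonical bundle is trivial since the total space of $\Omega^1_\p$ has trivial canonical bundle) to get the upper bound $n \le \dim Y$. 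None of this machinery is replaceable by the observation that sheaves on a smooth projective variety have no $\Ext$'s outside $[0,\dim]$.
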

\begin{proof}
Suppose $\beta_i=1$ and $\beta_{i+1}=m-1$. Let $p_1 = (L_1, \dots, L_n)$ and $p_2 = (M_1, \dots, M_n)$. As noted in the proof of Lemma \ref{th:N_kineq} the supports of $\Phi_{\sP}(\O_{p_1})$ and $\Phi_{\sP}(\O_{p_2})$ are disjoint unless $L_j = M_j$ for $j \ne i$ so from now on assume this is the case. If $p_1 \not\in X_\beta^i$ then $p_1=p_2=p$ and $\Phi_{\sP}(\O_p)$ is the structure sheaf a point so there are no negatives exts (and similarly if $p_2 \not\in X_\beta^i$). We need to study what happens if $p_1,p_2 \in X_\beta^i$. 

So suppose $p = (L_1, \dots, L_n) \in X_\beta^i$. Denote by $\pi_1$ and $\pi_2$ the standard projections from $Z_\beta^i$ to $Y=Y_\beta$ and $Y'=Y_{s_i(\beta)}$. We need to compute $\Phi_{\sP}(\O_{p}) = \pi_{2*} \pi_1^* \O_{p}$. In Lemma \ref{lem:facts3} we saw that $Z_\beta^i$ embeds in $\pi: Gr_\beta^i := Gr(\beta_{i+1}, L_{i+1}/L_{i-1}) \rightarrow Y$. If we denote this embedding by $j$ then $\pi_1^* \O_{p} \cong j^* \pi^* \O_{p}$. Notice that $\pi^* \O_{p}$ is the structure sheaf of the $\pi$-fibre over $p$ which is contained in $j(Z_\beta^i)$ because $p \in X_\beta^i$. This means that 
$\H^{-k}(j^* \pi^* \O_{p}) \cong \wedge^k (N_{Z_\beta^i/Gr_\beta^i}|_{\pi^{-1}(p)})^\vee.$

On the other hand, as we saw in Lemma \ref{lem:facts3}, $Z_\beta^i \subset W = Gr(\beta^i, L_{i+1}/L_{i-1})$ is carved out by a section of the vector bundle $V := (L_i'/L_{i-1})^\vee \otimes L_i/L_{i-1} \{2\}$. Thus the normal bundle $N_{Z_\beta^i/Gr_\beta^i}|_{\pi^{-1}(p)}$ is isomorphic to $V|_{\pi^{-1}(p)}$. Now, on $\pi^{-1}(p) \cong \p$, $L_i/L_{i-1}$ restricts to the trivial line bundle while $L_i'/L_{i-1}$ restricts to $\Omega_{\p}^1(1)$. Hence,
$$\H^{-k}(j^* \pi^* \O_p) \cong \wedge^k (\Omega_{\p}^1(1)^\vee \{2\})^\vee \cong \Omega_{\p}^k(k) \{-2k\}.$$
Now $j^* \pi^* \O_p$ is supported on a $\p$ which is mapped one-to-one by $\pi_2$ onto the fibre of $q: X_{s_i(\beta)}^i \rightarrow Y_{d_i(\beta)}$ over the point $(L_1, \dots, L_{i-1}, zL_{i+2}, \dots, zL_n)$. If we denote by $f: \p \rightarrow Y' = Y_{s_i(\beta)}$ the inclusion of this fibre into $Y'$ then 
$$\H^{-k}(\Phi_{\sP}(\O_p)) \cong \H^{-k}(\pi_{2*} \pi_1^* \O_p) \cong \H^{-k}(\pi_{2*} j^* \pi^* \O_p) \cong f_* \Omega_{\p}^k(k) \{-2k\}.$$

If we have $p_1,p_2 \in X_\beta^i$ as above then $\H^{-k}(\Phi_{\sP}(\O_{p_1})) \cong \H^{-k}(\Phi_{\sP}(\O_{p_2})) \cong f_* \Omega_{\p}^k(k) \{-2k\}$. There exists a spectral sequence 
$$E_2^{p,q} = \bigoplus_{a \in \Z} \Ext^p_{Y'}(\H^a(\Phi_{\sP}(\O_{p_1})), \H^{a+q}(\Phi_{\sP}(\O_{p_2}))) \Longrightarrow \Ext^{p+q}_{Y'}(\Phi_{\sP}(\O_{p_1}), \Phi_{\sP}(\O_{p_2})).$$
So to prove $\Ext^n_{Y'}(\Phi_{\sP}(\O_{p_1}), \Phi_{\sP}(\O_{p_2})) = 0$ for $n < 0$ it suffices to show that 
$$\Ext^k_{Y'}(\H^{-j}(\Phi_{\sP}(\O_{p_1})), \H^{-i}(\Phi_{\sP}(\O_{p_2}))) \cong \Ext^k_{Y'}(f_* \Omega_{\p}^j(j) \{-2j\}, f_* \Omega_{\p}^i(i) \{-2i\}) = 0$$
for $-i+j+k < 0$. 

Now $\Ext^k_{Y'}(f_* \Omega_{\p}^j(j) \{-2j\}, f_* \Omega_{\p}^i(i)) \cong \Ext^k_{\p}(f^* f_* \Omega_{\p}^j(j), \Omega_{\p}^i(i))$.  In order to show the vanishing of this Ext, note that $\H^{-l}(f^* f_* \Omega_{\p}^j(j)) \cong \Omega_{\p}^j(j) \otimes \wedge^l N_{f(\p)/Y'}^\vee$ and hence from the spectral sequence 
$$E_2^{p,q} = \Ext^p_{\p}(\H^{-q}(f^* f_* \Omega_{\p}^j(j)), \Omega_{\p}^i(i)) \Longrightarrow \Ext^{p+q}(f^* f_* \Omega_{\p}^j(j), \Omega_{\p}^i(i))$$
it suffices to show that $\Ext^p_{\p}(\Omega_{\p}^j(j) \otimes \wedge^q N_{f(\p)/Y'}^\vee, \Omega_{\p}^i(i)) = 0$ for $p+q < i-j$.  

To identify $N_{f(\p)/Y'}$ consider the exact sequence 
$$0 \rightarrow N_{f(\p)/X_{s_i(beta)}^i} \rightarrow N_{f(\p)/Y'} \rightarrow N_{X_{s_i(\beta)}^i/Y'}|_{f(\p)} \rightarrow 0.$$
Since $f(\p)$ is a fibre of $q: X_{s_i(\beta)}^i \rightarrow Y_{d_i(\beta)}$ we have $N_{f(\p)/X_{s_i(\beta)}^i} \cong \O_{\p}^{\oplus N}$ for some $N \in \Z$. Meanwhile, from Lemma \ref{lem:facts1},
\begin{align*}
N_{X_{s_i(\beta)}^i/Y'}|_{f(\p)} &\cong ((L_{i+1}/L_i)^\vee \otimes L_i/L_{i-1} \{2\})|_{f(\p)} \\
&\cong f_*(\O_{\p}(1)^\vee \otimes \Omega_{\p}^1(1) \{2\}) \cong f_*(\Omega_{\p}^1) \{2\}).
\end{align*}
Thus we get the exact sequence $0 \rightarrow \O_{\p}^{\oplus N} \rightarrow N_{f(\p)/Y'} \rightarrow \Omega_{\p}^1 \{2\} \rightarrow 0$. \\ Since $\Ext^1_{\p}(\Omega_{\p}^1, \O_{\p}) = 0$ this sequence splits and we obtain
$N_{f(\p)/Y'} \cong \O_{\p}^{\oplus N} \oplus \Omega_{\p}^1 \{2\}$. 

Subsequently we are left with showing $\Ext^p_{\p}(\Omega_{\p}^j(j) \otimes \wedge^q (\O_{\p}^{\oplus N} \oplus \Omega_{\p}^1)^\vee, \Omega_{\p}^i(i)) = 0$ whenever $p+q < i-j$. It is easy to see this is the same as showing 
$$\Ext^p_{\p}(\Omega_{\p}^j(j), \Omega_{\p}^i(i) \otimes \Omega_{\p}^q) = 0 \text{ for } i-j > p+q$$
which is proved in corollary \ref{cor:vanexts2}. 

Vanishing for $n > \dim(Y)$ follows by Serre duality. More precisely, from above we know the normal bundle $N_{f(\p)/Y'}$ is $\Omega_{\p}^1 \oplus \O_{\p}^{\oplus N} \{2\}$. Since $\Phi_{\sP}(\O_{p_1})$ and $\Phi_{\sP}(\O_{p_2})$ are supported on $f(\p)$ it is enough to consider the formal neighbourhood of $f(\p)$ in order to compute ext's between them. Since the total space $\Omega_{\p}^1$ has trivial canonical bundle so does the formal neighbourhood of $f(\p)$. Thus 
$$\Ext^n_{Y'}(\Phi_{\sP}(\O_{p_1}), \Phi_{\sP}(\O_{p_2})) \cong \Ext^{\dim(Y')-n}_{Y'}(\Phi_{\sP}(\O_{p_2}), \Phi_{\sP}(\O_{p_1})) = 0$$
if $\dim(Y')-n<0$. 
\end{proof}
It remains to establish Corollary \ref{cor:vanexts2} which was used in the above proof.  This will be a somewhat length computation.

\begin{Lemma} \label{lem:vanhom} We have:
\begin{itemize}
\item $H^0(\Omega_{\p}^i(l)) = 0$ if $l \le i \ne 0$
\item $H^k(\Omega_{\p}^i(l)) = 0$ for $1 \le k \le m-2$ unless $(i,l) = (k,0)$
\item $H^{m-1}(\Omega_{\p}^i(l)) = 0$ if $i \le m-1+l$
\end{itemize}
\end{Lemma}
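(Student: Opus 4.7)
The plan is to establish all three vanishing statements simultaneously by induction on $i$, using exterior powers of the twisted Euler sequence. Taking $\wedge^i$ of the Euler sequence $0 \to \Omega^1_{\p} \to \O_{\p}(-1)^{\oplus m} \to \O_{\p} \to 0$ and tensoring with $\O_{\p}(l)$ yields the short exact sequence
$$0 \to \Omega^i_{\p}(l) \to \O_{\p}(l-i)^{\oplus \binom{m}{i}} \to \Omega^{i-1}_{\p}(l) \to 0$$
for each $i \ge 1$ (here one uses that $\O_{\p}$ is a line bundle, so $\wedge^j \O_{\p} = 0$ for $j \ge 2$ and the filtration on $\wedge^i$ collapses to a two-step filtration). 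The base case $i=0$ reduces each of the three bullets to the standard computation of $H^*(\p,\O_{\p}(l))$.

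For the inductive step I would pass to the long exact sequence associated to the displayed short exact sequence. Since $H^k(\O_{\p}(l-i)) = 0$ for $1 \le k \le m-2$, in this range the connecting map gives an isomorphism $H^k(\Omega^i_{\p}(l)) \cong H^{k-1}(\Omega^{i-1}_{\p}(l))$, and the induction hypothesis for bullet 2 (applied with $(i-1,l,k-1)$) yields vanishing unless $(i-1,l)=(k-1,0)$, i.e.\ $(i,l)=(k,0)$. The $H^0$ statement in bullet 1 for $l<i$ is then immediate from the inclusion $H^0(\Omega^i_{\p}(l)) \hookrightarrow H^0(\O_{\p}(l-i))^{\binom{m}{i}}=0$, while bullet 3 for $i \le m-1+l$ (equivalently $l-i \ge -m+1$, so $H^{m-1}(\O_{\p}(l-i))=0$) realizes $H^{m-1}(\Omega^i_{\p}(l))$ as a quotient of $H^{m-2}(\Omega^{i-1}_{\p}(l))$, which vanishes by the inductive hypothesis for bullet 2.

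The main obstacle will be the remaining endpoint cases: the boundary $l=i$ of bullet 1, and the edges $k=1$ of bullet 2 and $k=m-1$ of bullet 3 where the middle-degree isomorphism argument just used does not directly apply. For bullet 1 at $l=i$, one must show that the induced map $H^0(\O_{\p}) ^{\binom{m}{i}} \to H^0(\Omega^{i-1}_{\p}(i))$ is injective rather than merely nonzero; I would identify this map with the natural contraction coming from the description of the sequence in terms of $i$-fold wedges of the tautological generators $e_1,\dots,e_m$ of $\O_{\p}(-1)^{\oplus m}$, verifying injectivity by a direct computation in homogeneous coordinates. Alternatively, one can circumvent these endpoint checks entirely by invoking Serre duality, which translates $H^{m-1}(\Omega^i_{\p}(l))$ into $H^0(\Omega^{m-1-i}_{\p}(-l))^\vee$ (up to a line-bundle twist coming from $\omega_{\p} \cong \Omega^{m-1}_{\p}$) and thereby reduces bullet 3 to bullet 1 after a shift, leaving only the $k=1$ boundary of bullet 2 to handle by the long exact sequence directly.
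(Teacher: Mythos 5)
The paper does not actually prove this lemma: its ``proof'' is a one-line citation of Bott's formula. Your Euler-sequence induction is the standard way to establish Bott vanishing from scratch, so it is a legitimate, self-contained alternative. The outline is sound, and the endpoint cases you flag are real but fillable: at $l=i$ the injectivity of $H^0(\O_{\p})^{\oplus\binom{m}{i}}\to H^0(\Omega^{i-1}_{\p}(i))$ amounts to the injectivity of $\omega\mapsto(\iota_{e_1^*}\omega,\dots,\iota_{e_m^*}\omega)$ on $\wedge^i\C^m$, and the $k=1$ edge of the second bullet (for $l\ge i$, where the target $H^0(\Omega^{i-1}_{\p}(l))$ no longer vanishes) needs surjectivity of $H^0(\O_{\p}(l-i))^{\oplus\binom{m}{i}}\to H^0(\Omega^{i-1}_{\p}(l))$, which is the exactness on global sections of the twisted Koszul complex of the regular sequence $x_1,\dots,x_m$ --- worth writing out, since deducing it from the yet-unproved $H^1(\Omega^i_{\p}(l))=0$ would be circular.

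There is, however, one point where your argument cannot be completed, because the statement itself fails there. For the third bullet your long exact sequence exhibits $H^{m-1}(\Omega^i_{\p}(l))$ as a quotient of $H^{m-2}(\Omega^{i-1}_{\p}(l))$, and you assert this ``vanishes by the inductive hypothesis for bullet 2'' --- but that hypothesis carries the exception $(i-1,l)=(m-2,0)$, i.e.\ $(i,l)=(m-1,0)$. Your Serre-duality fallback has the same blind spot: it reduces the third bullet to the first applied to $\Omega^{m-1-i}_{\p}(-l)$, whose hypothesis requires $m-1-i\ne 0$. And indeed $H^{m-1}(\p,\Omega^{m-1}_{\p})=H^{m-1}(\p,\omega_{\p})\cong\C$, even though $(i,l)=(m-1,0)$ satisfies $i\le m-1+l$; this is the diagonal case $t=0$, $p=q$ of Bott's formula, which the lemma as printed overlooks. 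So what your induction actually proves is the third bullet with the additional exclusion $(i,l)\ne(m-1,0)$; you should state that exception explicitly rather than let the inductive step silently absorb it.
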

\begin{proof}
As pointed out by the referee, this follows from Bott's formula (e.g. \cite{OSS}, page 8). 
\end{proof}

\begin{Corollary} \label{cor:vanexts1} 
For $1 \le k \le m-2$ we have $\Ext^k(\Omega_{\p}^j, \Omega_{\p}^i(l)) = 0$ if any of the following three conditions hold
\begin{itemize}
\item $l+k+j-i \ne 0,-1$
\item $l+k+j-i=0$ and either $k > i$ or $l > 0$
\item $l+k+j-i=-1$ and either $m-1 > k+j$ or $l < -m$.
\end{itemize}
Also
\begin{itemize}
\item $\Ext^0(\Omega_{\p}^j, \Omega_{\p}^i(l)) = 0$ if $l+j < i \ne 0$
\item $\Ext^{m-1}(\Omega_{\p}^j, \Omega_{\p}^i(l)) = 0$ if $-l-m+i < j \ne 0$ 
\end{itemize}
\end{Corollary}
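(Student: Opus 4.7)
The plan is to interpret $\Ext^k(\Omega_{\p}^j, \Omega_{\p}^i(l)) \cong H^k(T^j \otimes \Omega^i(l))$, where $T^j := \wedge^j T_{\p}$, and to analyse this via two hypercohomology spectral sequences arising from Koszul-type resolutions built from the Euler sequence $0 \to \O \to V^\vee \otimes \O(1) \to T_{\p} \to 0$ (with $V = \C^m$), combined with Bott's formula (Lemma \ref{lem:vanhom}) and Serre duality. Iterating $0 \to T^{s-1} \to \wedge^s V^\vee \otimes \O(s) \to T^s \to 0$ produces the first resolution
\[
0 \to \O \to V^\vee \otimes \O(1) \to \wedge^2 V^\vee \otimes \O(2) \to \cdots \to \wedge^j V^\vee \otimes \O(j) \to T^j \to 0,
\]
while iterating the dual sequence $0 \to \Omega^{s+1} \to \binom{m}{s+1}\O(-s-1) \to \Omega^s \to 0$ produces
\[
0 \to \O(-m) \to \binom{m}{m-1}\O(-m+1) \to \cdots \to \binom{m}{j+1}\O(-j-1) \to \Omega^j \to 0.
\]

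Tensoring the first resolution with $\Omega^i(l)$ yields a spectral sequence $E_1^{c,q} = \binom{m}{c+j} H^q(\Omega^i(l+c+j))$ for $c \in [-j,0]$, converging to $\Ext^{c+q}(\Omega^j, \Omega^i(l))$. Writing $\epsilon := l+k+j-i$, Bott's formula shows that for $1 \le k \le m-2$ the only possibly nonzero terms on the diagonal $c+q = k$ are at $(k-i, i)$ (requiring $\epsilon = 0$, $1 \le i \le m-2$, $i-j \le k \le i$) and at $(k-m+1, m-1)$ (requiring $\epsilon \le -1$ and $k+j \ge m-1$); the $q = 0$ row contributes nothing, since $c = k \ge 1$ is out of range. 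Applying $\Hom(-, \Omega^i(l))$ term-by-term to the second resolution gives instead $E_1^{p,q} = \binom{m}{j+1+p} H^q(\Omega^i(l+j+1+p))$ for $p \in [0, m-1-j]$, whose only permitted nonzero terms on $p+q = k$ are at $(k, 0)$ (requiring $\epsilon \ge 0$ if $i \ne 0$, or $\epsilon \ge -1$ if $i = 0$, together with $k \le m-1-j$) and at $(-l-j-1, i)$ (requiring $\epsilon = -1$ and $-m \le l \le -j-1$); the $q = m-1$ row is out of range here.

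Each vanishing claim now reduces to a straightforward range check. For Condition 1 with $\epsilon \ge 1$, both permitted terms in the first spectral sequence are excluded, so $\Ext^k = 0$. For $\epsilon \le -2$ I invoke Serre duality $\Ext^k(\Omega^j, \Omega^i(l)) \cong \Ext^{m-1-k}(\Omega^i, \Omega^j(-l-m))^\vee$, under which $\epsilon$ becomes $-\epsilon - 1 \ge 1$, reducing to the previous case. Condition 2 follows from the first spectral sequence: the candidate term $(k-i, i)$ requires $-j \le l \le 0$ and $k \le i$, which the hypotheses $l > 0$ and $k > i$ respectively violate. For Condition 3, the subcase $k + j < m - 1$ directly violates the range condition of $(k-m+1, m-1)$ in the first spectral sequence, while for the subcase $l < -m$ I switch to the second spectral sequence: the term $(-l-j-1, i)$ requires $-m \le l$ (which fails), and the term $(k, 0)$ requires $k \le m-1-j$, which also fails because $\epsilon = -1$ combined with $l < -m$ forces $k + j > m-1$.

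Finally, the $\Ext^0$ claim comes from applying $\Hom(-, \Omega^i(l))$ to the short exact sequence $0 \to \Omega^{j+1} \to \binom{m}{j+1}\O(-j-1) \to \Omega^j \to 0$, which embeds $\Hom(\Omega^j, \Omega^i(l))$ into $\binom{m}{j+1} H^0(\Omega^i(l+j+1))$; the hypothesis $l+j < i \ne 0$ gives $l+j+1 \le i$, so Bott forces the target to vanish. The $\Ext^{m-1}$ claim then reduces to the $\Ext^0$ claim by Serre duality. The main subtlety is that no single Koszul resolution suffices for Condition 3, so its two subcases must be handled separately via the two dual resolutions.
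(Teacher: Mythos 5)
Your proof is correct and follows essentially the same route as the paper: both reduce everything to Bott's formula (Lemma \ref{lem:vanhom}) applied to the groups $H^q(\Omega^i(l+s))$ arising from the standard exact sequences relating $\Omega^s$ to sums of line bundles, together with Serre duality, and your first hypercohomology spectral sequence is just the assembled form of the paper's iterated long exact sequences $\Ext^k(\Omega^j,\Omega^i(l)) \hookrightarrow \Ext^{k+1}(\Omega^{j-1},\Omega^i(l))$. The only organizational difference is that you dispatch the $l<-m$ subcase of the third condition using the second (left) resolution of $\Omega^j$, whereas the paper obtains all of the $l+k+j-i=-1$ cases from the $l+k+j-i=0$ cases by a single application of Serre duality.
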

\begin{proof}
Applying $Hom(\cdot, \Omega^i(l))$ to $0 \rightarrow \Omega^{j} \rightarrow \O(-j)^{\oplus {\binom{m}{j}}} \rightarrow \Omega^{j-1} \rightarrow 0$ we get the long exact sequence$$\dots \rightarrow \Ext^k(\Omega^i(l+j))^{\oplus \binom{m}{j}} \rightarrow \Ext^k(\Omega^{j}, \Omega^i(l)) \rightarrow \Ext^{k+1}(\Omega^{j-1}, \Omega^i(l)) \rightarrow \Ext^{k+1}(\Omega^i(l+j))^{\oplus \binom{m}{j}} \rightarrow \dots$$

We deal with the case $1 \le k \le m-2$ first. From the sequence above we get an injection $\Ext^k(\Omega^j, \Omega^i(l)) \rightarrow \Ext^{k+1}(\Omega^{j-1}, \Omega^i(l))$ if $\Ext^k(\Omega^i(l+j))=0$. By Lemma \ref{lem:vanhom}, $\Ext^k(\Omega^i(l+j))=0$ unless $(k,l+j)=(i,0)$. If $l+k+j-i < 0$ (or $l+k+j-i=0$ and either $k > i$ or $l > 0$) these conditions are satisfied. We can repeat and it is easy to check that at each step the condition for vanishing from \ref{lem:vanhom} is satisfied so we get a series of injections whose composition is the injective map 
$$\Ext^k(\Omega^j, \Omega^i(l)) \rightarrow \Ext^{m-1}(\Omega^{j+k-m+1}, \Omega^i(l)).$$ 
Finally there is a surjection from a direct sum of $H^{m-1}(\Omega^i(l+k+j-m+1))$ onto $\Ext^{m-1}(\Omega^{k+j-m+1}, \Omega^i(l))$. By Lemma \ref{lem:vanhom}, $H^{m-1}(\Omega^i(l+k+j-m+1))$ vanishes if $l+k+j-i \ge 0$ and we are done. The case $l+k+j-i < 0$ follows from the Serre duality isomorphism $\Ext^k(\Omega^j,\Omega^i(l)) \cong \Ext^{m-1-k}(\Omega^i, \Omega^j(-l-m))$.

Next we deal with $k=0$. As above we have an injection $\Ext^0(\Omega^j, \Omega^i(l)) \rightarrow \Ext^{1}(\Omega^{j-1}, \Omega^i(l))$ if $H^0(\Omega^i(l+j))=0$. If $l+j < i$ we have $H^0(\Omega^i(l+j))=0$ by \ref{lem:vanhom}. From above $\Ext^{1}(\Omega^{j-1}, \Omega^i(l)) = 0$ if $l+j-i<-1$ or $l+j-i=-1$ and $m-1>j$. The condition $m-1>j$ is vacuous unless $j=m-1$ in which case $\Ext^0(\Omega^j,\Omega^i(l)) \cong H^0(\Omega^i(l+m)) \cong H^0(\Omega^i(i))=0$ if $i \ne 0$. 

The result for $k=m-1$ follows by Serre duality.
\end{proof}

\begin{Corollary} \label{cor:vanexts2} 
$\Ext^q(\Omega_{\p}^j, \Omega_{\p}^i \otimes \Omega_{\p}^p(i-j)) = 0$ if $i-j > p+q$.
\end{Corollary}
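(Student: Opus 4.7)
The plan is to induct on $p$, using at each step the Koszul-type short exact sequence
$$0 \to \Omega^p_{\p} \to \wedge^p V \otimes \O(-p) \to \Omega^{p-1}_{\p} \to 0$$
on $\p = \mathbb{P}(V)$ with $V = \C^m$. Tensoring with $\Omega^i_{\p}(i-j)$ and applying $\Hom(\Omega^j_{\p}, -)$ produces a long exact sequence
$$\cdots \to \Ext^{q-1}(\Omega^j, \Omega^i \otimes \Omega^{p-1}(i-j)) \to \Ext^q(\Omega^j, \Omega^i \otimes \Omega^p(i-j)) \to \Ext^q(\Omega^j, \Omega^i(i-j-p))^{\oplus\binom{m}{p}} \to \cdots$$
The left-hand Ext vanishes by the inductive hypothesis at $(p-1,q-1)$, since $i-j>p+q$ is strictly stronger than $i-j>(p-1)+(q-1)$. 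The real work is in the right-hand Ext: it is the Ext group controlled by Corollary~\ref{cor:vanexts1} with $l = i-j-p$, and I would show it vanishes by a short case analysis on the value of $l+q+j-i = q-p$.

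For the base case $p=0$ I would split on $q$. When $q\ge 1$, note that $i-j\le m-1$ forces $q<i-j\le m-1$, so $1\le q\le m-2$, and $l+q+j-i = q>0$ falls in the first bullet of Corollary~\ref{cor:vanexts1}. When $q=0$, we must prove $\Hom(\Omega^j,\Omega^i(i-j))=0$ for $i>j$; I would apply $\Hom(-,\Omega^i(i-j))$ to the Koszul sequence with $p$ replaced by $j$ and use Lemma~\ref{lem:vanhom} to kill $H^0(\Omega^i(i))$ and $H^1(\Omega^i(i))$ (both are zero once $i\ne 0$), reducing to $\Ext^1(\Omega^{j-1},\Omega^i(i-j))$, which falls into the second bullet of Corollary~\ref{cor:vanexts1} (since $l=i-j>0$).

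For the inductive step the right-hand term $\Ext^q(\Omega^j,\Omega^i(i-j-p))$ is handled by Corollary~\ref{cor:vanexts1} as follows. If $q>p$ then $l+q+j-i = q-p>0$ and $q\le m-2$, so the first bullet applies. If $q=p$ then $l+q+j-i=0$ and the strict inequality $i-j>2p$ forces $l=i-j-p>0$, giving the second bullet. If $q=p-1$ then $l+q+j-i=-1$; the bound $i\le m-1$ together with $i>p+q+j$ gives $q+j\le m-2<m-1$, which is the hypothesis of the third bullet. Finally, for $q\le p-2$ we have $l+q+j-i=q-p\le -2\ne 0,-1$ and, as in the base case, $q<i-j-p\le m-1$ gives $q\le m-2$, so the first bullet applies; the degenerate subcase $q=0$ is instead covered by the $k=0$ bullet since $l+j=i-p<i$ and $i>p\ge 0$.

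The main obstacle is the boundary cases $q=p$ and $q=p-1$, where $l+q+j-i\in\{0,-1\}$ and Corollary~\ref{cor:vanexts1} requires auxiliary inequalities. The only way to meet these is by exploiting that the hypothesis $i-j>p+q$ is \emph{strict}; I expect to spend the most care verifying that the strictness propagates correctly through the case analysis (in particular in the $q=p-1$ case, where the combination of $i\le m-1$ and $i>2p-1+j$ is what yields $p+j<m$ and hence $q+j<m-1$).
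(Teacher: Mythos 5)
Your proof is correct, and it is essentially the mirror image of the one in the paper: both reduce the two-factor group $\Ext^q(\Omega^j,\Omega^i\otimes\Omega^p(i-j))$ to the single-factor groups controlled by Corollary \ref{cor:vanexts1}, using the Koszul resolution $0\to\Omega^a\to\O(-a)^{\oplus\binom{m}{a}}\to\Omega^{a-1}\to 0$. The difference is which factor gets resolved. You resolve $\Omega^p$ and run a descending induction on $p$, so the term to be killed at each step is $\Ext^q(\Omega^j,\Omega^i(i-j-p))$ and the base case $p=0$ needs the extra argument you supply for $q=0$. The paper instead resolves $\Omega^i$ and iterates upward, producing injections $\Ext^{q+a}(\Omega^j,\Omega^{i+a}\otimes\Omega^p(i-j))\to\Ext^{q+a+1}(\Omega^j,\Omega^{i+a+1}\otimes\Omega^p(i-j))$ whose obstruction terms $\Ext^{q+a}(\Omega^j,\Omega^p(-j-a-1))$ are killed by Corollary \ref{cor:vanexts1}; the chain terminates because $\Omega^{i+m-1-q}=0$ once $i>q$, which follows from $i-j>p+q$. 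The boundary cases $q-p\in\{0,-1\}$ require the strictness of the hypothesis in both proofs, exactly as you anticipate. One small bookkeeping point: your case $q=p-1$ invokes the third bullet of Corollary \ref{cor:vanexts1}, which is only stated for $1\le k\le m-2$; when $p=1$ and $q=0$ you must instead fall back on the $k=0$ bullet, precisely as in the remark you attach to the case $q\le p-2$ (there $l+j=i-p<i$ and $i\ne 0$, so it goes through).
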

\begin{proof}
Applying $\Hom(\Omega^j, \cdot)$ to $0 \rightarrow \Omega^i \otimes \Omega^p(l) \rightarrow \O(-i)^{\oplus \binom{m}{i}} \otimes \Omega^p(l) \rightarrow \Omega^{i-1} \otimes \Omega^p(l) \rightarrow 0$ we get the long exact sequence
\begin{eqnarray*}
\dots \rightarrow \Ext^{q}(\Omega^j, \Omega^p(l-i-1))^{\oplus \binom{m}{i+1}} \rightarrow \Ext^{q}(\Omega^j, \Omega^{i} \otimes \Omega^p(l)) \rightarrow \\
\rightarrow \Ext^{q+1}(\Omega^j, \Omega^{i+1} \otimes \Omega^p(l)) \rightarrow \Ext^{q+1}(\Omega^j, \Omega^p(l-i-1))^{\oplus \binom{m}{i+1}} \rightarrow \dots
\end{eqnarray*}

For $0 \le q \le m-2$ then we get an injection $\Ext^q(\Omega^j, \Omega^{i} \otimes \Omega^p(i-j)) \rightarrow \Ext^{q+1}(\Omega^{j}, \Omega^{i+1} \otimes \Omega^p(i-j))$ if $\Ext^q(\Omega^j, \Omega^p(-j-1))=0$. Notice $(-j-1)+j+q-p = q-p-1$. If $q > p$ then by corollary \ref{cor:vanexts1} we have $\Ext^q(\Omega^j, \Omega^p(-j-1)) = 0$. Now we can repeat to get a series of injections whose composition is an injective map 
$$\Ext^q(\Omega^j, \Omega^i \otimes \Omega^p(i-j)) \rightarrow \Ext^{m-1}(\Omega^j, \Omega^{i+m-1-q} \otimes \Omega^p(i-j)).$$ 
Since $i-j > p+q$ we find $i+m-1-q > m-1$ so $\Omega^{i+m-1-q}=0$ and we are done. The same argument works if $q < p$.

If $q=p>0$ then $(-j-1)+j+q-p = -1$ so by corollary \ref{cor:vanexts1}, $\Ext^q(\Omega^j, \Omega^p(-j-1))=0$ because $m-1 \ge i > q+j$. Repeating this $a$ times we get the map 
$$\Ext^{q+a}(\Omega^j, \Omega^{i+a} \otimes \Omega^p(i-j)) \rightarrow \Ext^{q+a+1}(\Omega^j, \Omega^{i+a+1} \otimes \Omega^p(i-j))$$ 
which is injective if $\Ext^{q+a}(\Omega^j, \Omega^p(-j-a-1))=0$. By \ref{cor:vanexts1} this vanishes if $m-1 > (q+a)+j$. In particular, we get injections for $0 \le a \le m-i-1$ since $q+(m-i-1)+j < m-1$. Thus $\Ext^q(\Omega^j, \Omega^p(-j-1))$ injects into $\Ext^{q+m-i}(\Omega^j, \Omega^m \otimes \Omega^p(i-j))$ which is zero since $\Omega^m=0$. Finally, the case $p=q=0$ follows directly from \ref{cor:vanexts1}.
\end{proof}

Returning to our goal of showing that $ \Phi_{\sP} $ is an equivalence, we will now prove the following.

\begin{Proposition}
$\sP_L * \sP \cong \O_\Delta $
\end{Proposition}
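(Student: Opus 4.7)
By the preceding theorem, $\sP_L * \sP$ is supported on the diagonal $\Delta \subset Y \times Y$, so we may write $\sP_L * \sP \cong \Delta_* \K$ for some $\K \in D(Y)$. The counit of the adjunction $\Phi^L_{\sP} \dashv \Phi_{\sP}$ supplies a canonical morphism $\alpha : \sP_L * \sP \to \O_\Delta$, equivalently $\K \to \O_Y$, and the goal is to show $\alpha$ is a quasi-isomorphism — equivalently, that $\Phi_{\sP}$ is fully faithful.

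The plan is to apply a Bondal--Orlov / Bridgeland-style pointwise criterion. Two of the three required inputs are already at hand: the support statement just proved gives $\Ext^k_{Y'}(\Phi_{\sP}(\O_{p_1}), \Phi_{\sP}(\O_{p_2})) = 0$ for every $k$ whenever $p_1 \ne p_2$, while Proposition \ref{prop:vanext} supplies the bound $\Ext^n = 0$ for $n < 0$ or $n > \dim Y$ even in the case $p_1 = p_2$. It then remains to verify, for each closed point $p \in Y$, that $\Hom_{Y'}(\Phi_{\sP}(\O_p), \Phi_{\sP}(\O_p)) \cong \C$ with the identity as generator, and that $\alpha$ restricts at $p$ to the natural trace.

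For $p \notin X_\beta^i$ the projection $\pi_1 : Z_\beta^i \to Y$ is an isomorphism near $p$, so $\Phi_{\sP}(\O_p)$ is the skyscraper sheaf at a single point of $Y'$ and the verification is immediate. For $p \in X_\beta^i$ we will use the explicit description already established in the proof of Proposition \ref{prop:vanext}: $\H^{-k}(\Phi_{\sP}(\O_p)) \cong f_* \Omega_\p^k(k)\{-2k\}$ for $k = 0, \dots, m-1$, where $f : \p \hookrightarrow Y'$ embeds a fibre of $q : X^i_{s_i(\beta)} \to Y_{d_i(\beta)}$. We then run the Ext spectral sequence for $\Hom(\Phi_{\sP}(\O_p), \Phi_{\sP}(\O_p))$, expand $f^* f_* \Omega_\p^k(k) \cong \Omega_\p^k(k) \otimes \bigwedge^\bullet N^\vee_{f(\p)/Y'}$ using $N_{f(\p)/Y'} \cong \O_\p^{\oplus N} \oplus \Omega_\p^1\{2\}$, and invoke the Bott-formula vanishings of Corollaries \ref{cor:vanexts1} and \ref{cor:vanexts2}; up to routine bookkeeping all contributions vanish except a single $\C$ on the diagonal of $E_2$, coming from the identity class.

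The main obstacle is the last identification rather than the vanishings themselves: the Bott-formula machinery is already in place from the proof of Proposition \ref{prop:vanext}, but one must verify that exactly one class survives at $E_\infty^{0,0}$ and that this class corresponds to the counit $\alpha$ under our identifications — that is, it is the identity rather than some other endomorphism. Once the pointwise isomorphism is confirmed, the global statement $\K \cong \O_Y$ follows by a standard derived Nakayama / stalk argument on $Y$.
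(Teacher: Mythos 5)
Your overall strategy --- the pointwise Bondal--Orlov/Bridgeland criterion applied to skyscrapers --- is a legitimate route in principle (it is essentially Namikawa's spanning-class approach, which the paper explicitly contrasts with its own), and your first two inputs are indeed in place: orthogonality for $p_1 \ne p_2$ follows from the support theorem via $\Ext^k_{Y'}(\Phi_{\sP}(\O_{p_1}),\Phi_{\sP}(\O_{p_2})) \cong \H^k(i_{(p_1,p_2)}^*\,\sP_L * \sP)^\vee$, and the degree bounds are Proposition \ref{prop:vanext}. The gap is the third input, which you defer to ``routine bookkeeping.'' For $p \in X_\beta^i$ the complex $E := \Phi_{\sP}(\O_p)$ has $m$ non-vanishing cohomology sheaves $\H^{-k}(E) \cong f_*\Omega^k_{\p}(k)\{-2k\}$, $k=0,\dots,m-1$, and each $\Hom_{Y'}\bigl(f_*\Omega^k_{\p}(k), f_*\Omega^k_{\p}(k)\bigr)$ contains the identity, so the $E_2^{0,0}$ term of your spectral sequence has dimension at least $m$, not $1$. (Equivalently: if $E$ were formal then $\Hom(E,E)$ would contain $\C^m$, contradicting the desired answer, so $E$ is necessarily non-formal.) To conclude $\Hom_{Y'}(E,E)\cong\C$ you must show that the differentials $d_r : E_r^{0,0} \to E_r^{r,1-r}$ are jointly injective on an $(m-1)$-dimensional complement of the identity. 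These differentials are controlled by the extension (Postnikov) data gluing the $\H^{-k}(E)$ together, about which the Bott-formula vanishings of Corollaries \ref{cor:vanexts1} and \ref{cor:vanexts2} say nothing --- they compute the $E_2$ page only. Identifying those extension classes and proving they are non-degenerate is precisely the hard part of the spanning-class method, and your sketch does not address it.

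The paper avoids this computation entirely. It sets $K = \Cone(\alpha)$ for the adjunction map $\alpha : \sP_L * \sP \to \O_\Delta$, observes that $K$ is supported on the diagonal by the preceding theorem, and shows $\pi_{2*}K = 0$ using only $\pi_{2*}\O_{Z_\beta^i} \cong \O_{Y'}$ and $\pi_{2*}\omega_{Z_\beta^i} \cong \omega_{Y'}$ from Lemma \ref{lem:facts3}: these give $\pi_{2*}(\sP_L * \sP) \cong \O_Y$, and a non-zero adjunction map between two copies of $\O_Y$ must be an isomorphism since $\Hom_Y(\O_Y,\O_Y) \cong \C$. Flat base change along the fibres of $\pi_2$ then forces $i_{p\times Y}^*K$, which is supported at the single point $p$, to vanish for every $p$, whence $K = 0$. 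To repair your argument you would either need to carry out the extension-class analysis above or replace your step (c) by a global argument of this kind.
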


\begin{proof}
Next we consider the adjoint map $\alpha: \sP_L * \sP \rightarrow \O_\Delta \in D(Y \times Y)$ and let $K = \Cone(\alpha)$. We will show $K = 0$. To do this let's play a little with the fibre product diagram:
\begin{equation*}
\begin{CD}
Y @>i_{p \times Y}>> Y \times Y \\
@V{\pi}VV @V{\pi_2}VV \\
p @>i_p>> Y
\end{CD}
\end{equation*}
where $i_p: p \rightarrow Y$ is the inclusion of an arbitrary point. We know 
\begin{equation*}
\pi_{2*}(\sP_L * \sP) 
\cong \pi_{2*}((\sP_L * \sP) \otimes \pi_1^* \O_{Y}) 
\cong \Phi_{\sP_L * \sP}(\O_{Y}) \cong {\Phi_{\sP}}_L \circ \Phi_{\sP} (\O_{Y}) 
\cong {\Phi_{\sP}}_L (\O_{Y'}) \cong \O_{Y}.
\end{equation*}
Here we used that 
$\Phi_{\sP}(\O_Y) \cong \pi_{2*} \O_{Z_\beta^i} \cong \O_{Y_{s_i(\beta)}}$
from Lemma \ref{lem:facts3}. Similarly 
\begin{equation*}
{\Phi_{\sP}}_L (\O_{Y'}) 
\cong \pi_{2*}(\sP_L) 
\cong \pi_{2*}(\O_{Z_\beta^i}^\vee \otimes \pi_1^* \omega_{Y'} [\dim(Y')])
\cong \pi_{2*}(\omega_{Z_\beta^i} \otimes \pi_2^* \omega_{Y}^\vee) 
\cong \O_{Y}
\end{equation*}
where the last isomorphism also follows from Lemma \ref{lem:facts3}. 

Since 
$$\pi_{2*}(\sP_L * \sP) \cong {\Phi_{\sP}}_L \circ \Phi_{\sP} (\O_Y) \rightarrow \Phi_{\O_\Delta}(\O_Y) \cong \pi_{2*}(\O_\Delta)$$
is an adjunction map it follows that it is non-zero. But $\Hom_Y(\O_Y, \O_Y) \cong \C$ so $\pi_{2*}(\alpha)$ is an isomorphism and hence $\pi_{2*}(K) = 0$. 

Since $\pi_2$ is smooth we have $\pi_* i_{p \times Y}^* (K) \cong i_p^* \pi_{2*} (K) = 0.$ On the other hand, $K$ is supported on the diagonal so $i_{p \times Y}^*(K)$ is supported on the point $p$. Since $\pi_* i_{p \times Y}^*(K) = 0$ this means $i_{p \times Y}^*(K)=0$. This is true for any $p$ so $K$ has empty support. Consequently $K=0$ and we have shown $\sP_L * \sP \cong \O_\Delta$. 
\end{proof}

Finally, we are ready for the main result of this section.
\begin{Theorem} $\Phi_{\O_{Z_\beta^i}}: D(Y_\beta) \rightarrow D(Y_{s_i(\beta)})$ is an equivalence when $\beta_i \ne \beta_{i+1} $. 
\end{Theorem}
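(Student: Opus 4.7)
The plan is to extract the equivalence from the previous proposition. We have established $\sP_L * \sP \cong \O_{\Delta_{Y_\beta}}$ in $D(Y_\beta \times Y_\beta)$, which is precisely the statement that the counit $\Phi_{\sP_L} \circ \Phi_\sP \to \mathrm{id}_{D(Y_\beta)}$ is an isomorphism, i.e.\ $\Phi_\sP$ is fully faithful. To upgrade fully faithful to an equivalence we invoke Bondal--Orlov/Bridgeland's criterion: a fully faithful Fourier--Mukai transform $\Phi_\sP \colon D(X) \to D(Y)$ between derived categories of smooth projective varieties of the same dimension is an equivalence precisely when $\sP \otimes \pi_X^* \omega_X \cong \sP \otimes \pi_Y^* \omega_Y$ in $D(X \times Y)$. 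The dimensions of $Y_\beta$ and $Y_{s_i(\beta)}$ agree since they differ only by reordering the sequence $\beta$ (explicitly, both are diffeomorphic to $\prod_j \mathrm{Gr}(\beta_j,m)$ by Theorem \ref{th:isoman}), so the criterion applies.

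It therefore remains to check that $\pi_1^* \omega_{Y_\beta}|_{Z_\beta^i} \cong \pi_2^* \omega_{Y_{s_i(\beta)}}|_{Z_\beta^i}$. Lemma \ref{lem:facts3} directly gives
$$\omega_{Z_\beta^i} \cong (\E_i' \otimes \E_{i+1}^\vee)^{m-1} \otimes \pi_1^* \omega_{Y_\beta}\{2(m-1)\}.$$
Applying the same lemma to $Z_{s_i(\beta)}^i \subset Y_{s_i(\beta)} \times Y_\beta$ (which is identified with $Z_\beta^i$ under the swap of factors, so that the roles of primed and unprimed bundles are interchanged) and then pulling this identification back yields the symmetric expression
$$\omega_{Z_\beta^i} \cong (\E_i \otimes (\E_{i+1}')^\vee)^{m-1} \otimes \pi_2^* \omega_{Y_{s_i(\beta)}}\{2(m-1)\}.$$
Comparing these two formulas gives $\pi_1^* \omega_{Y_\beta} \otimes \pi_2^* \omega_{Y_{s_i(\beta)}}^\vee \cong (\E_i \otimes \E_{i+1} \otimes (\E_i' \otimes \E_{i+1}')^\vee)^{m-1}$ as $\C^\times$-equivariant line bundles on $Z_\beta^i$. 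Since $L_{i-1} = L_{i-1}'$ and $L_{i+1} = L_{i+1}'$ on $Z_\beta^i$, we have
$$\E_i \otimes \E_{i+1} \cong \det(L_{i+1}/L_{i-1}) \cong \det(L_{i+1}'/L_{i-1}') \cong \E_i' \otimes \E_{i+1}',$$
so the right-hand side is trivial and the required isomorphism follows.

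The main obstacle is essentially just the careful bookkeeping of equivariant line bundles and shifts needed to apply Lemma \ref{lem:facts3} symmetrically; everything else is formal from Bondal--Orlov plus the previous proposition. An alternative route, avoiding the Bondal--Orlov criterion, would be to re-run the proof of the previous proposition with the roles of $Y_\beta$ and $Y_{s_i(\beta)}$ interchanged---the support analysis via the loci $N_k$, the Intersection Theorem bound $\mathrm{codim}(N_k) \ge \dim(Y_{s_i(\beta)})+1$, and the Ext-vanishing for skyscrapers all carry over verbatim under the swap---yielding $\sP * \sP_L \cong \O_\Delta$ directly and hence the equivalence.
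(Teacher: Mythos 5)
Your proof is correct and follows the same overall strategy as the paper: quote the previous proposition for $\sP_L * \sP \cong \O_\Delta$ (hence fully faithfulness), then apply the Bondal--Orlov/Bridgeland criterion, which reduces the claim to the single line-bundle identity $\sP \otimes \pi_1^* \omega_{Y_\beta} \cong \sP \otimes \pi_2^* \omega_{Y_{s_i(\beta)}}$ on $Z_\beta^i$. Where you diverge is in how you verify that identity. The paper invokes Lemma \ref{lem:facts2}, writing $\omega_{Y_\beta} \cong \det(L_n/L_0)^m\{\cdot\}$; since $L_n = L'_n$ on $Z_\beta^i$ the two pullbacks visibly agree up to an equivariant shift, which is then matched by the explicit identity $b_\beta^{i-1}\beta_i + b_\beta^i\beta_{i+1} = b_{\beta'}^{i-1}\beta'_i + b_{\beta'}^i\beta'_{i+1}$. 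You instead exploit the symmetry of $Z_\beta^i$ under swapping the two factors, applying Lemma \ref{lem:facts3} once in each direction and cancelling $\omega_{Z_\beta^i}$; the residual factor $(\E_i\E_{i+1}\otimes(\E_i'\E_{i+1}')^\vee)^{m-1}$ is trivialized by the diamond identity $\det(L_{i+1}/L_{i-1}) \cong \det(L'_{i+1}/L'_{i-1})$. Both are correct; your route is arguably cleaner in that it avoids the explicit shift bookkeeping, at the cost of leaning on the l.c.i./dualizing-sheaf computation of Lemma \ref{lem:facts3} (which the paper also needs elsewhere, so nothing extra is really being assumed). Your closing remark that one could instead rerun the support argument with the factors interchanged to get $\sP * \sP_L \cong \O_\Delta$ directly is also sound, though unnecessary given the criterion.
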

\begin{proof}
Since $\sP_L * \sP \cong \O_\Delta$ it follows $\Phi_{\sP}$ is fully faithful. It remains to show that $\sP \otimes \pi_1^* \omega_{Y} \cong \sP \otimes \pi_2^* \omega_{Y'}$. By Lemma \ref{lem:facts2} we have
\begin{eqnarray*}
\sP \otimes \pi_1^* \omega_{Y} 
&\cong& \O_{Z_\beta^i} \otimes \pi_1^* \det(L_n/L_0)^m \{- 2m b_\beta^n - 2 \sum_i b_\beta^{i-1} \beta_i \} \\
&\cong& \O_{Z_\beta^i} \otimes \pi_2^* \det(L_n/L_0)^m \{- 2m b_{\beta'}^n - 2 \sum_i b_{\beta'}^{i-1} \beta'_i \} \\
&\cong& \sP \otimes \pi_2^* \omega_{Y'}
\end{eqnarray*}
where to obtain the shift in the second equality we used that $b_\beta^{i-1} \beta_i + b_\beta^i \beta_{i+1} = b_\beta^{i-1}(\beta_i + \beta_{i+1}) + \beta_i \beta_{i+1} = b_{\beta'}^{i-1} \beta'_i + b_{\beta'}^i \beta'_{i+1}$ where $\beta' = s_i(\beta)$. 
\end{proof}

\begin{Remark}
In the simplest situation, the two projections from $Y_{1,m-2,1}$ to $Y_{1,m-1}$ and $Y_{m-1,1}$ describe a Mukai flop. The fact that $\O_{Z_{1,m-1}^1}$ induces an equivalence $D(Y_{1,m-1}) \rightarrow D(Y_{m-1,1})$ was proven by Kawamata in \cite{kaw} and Namikawa in \cite{Na}. More generally, if $\beta$ contains a string $(1,m-1)$ starting in the $i$th position then projections from $W_\beta^i$ (the diagonal-like first component of $Z_\beta^i$) to $Y_\beta$ and $Y_{s_i(\beta)}$ describe a birational surgery given as a family of Mukai flops. Though stated a little differently, Namikawa in \cite{Na} considers such a family of Mukai flops and proves that $\O_{Z_\beta^i}$ induces an equivalence of derived categories. 

In \cite{Na} Namikawa first proves the simplest situation using spanning classes and then builds up from there to the more general case. In this paper we do not consider spanning classes and prove the general case in one step (our method does not differentiate between a Mukai flop and a family of Mukai flops). Our approach is more in the spirit of \cite{BO} and \cite{BKR}. 

\end{Remark}

\subsection{Invariance Under Reidemeister Moves (III)}\label{section:RIII}

\begin{Theorem}\label{prop:R3} 
The map $ \Psi $ is invariant under Reidemeister III moves, i.e.
\begin{equation*}
\T_{s_{i+1}(s_i(\beta))}^i(l_1) \circ \T_{s_i(\beta)}^{i+1}(l_2) \circ \T_\beta^i(l_3) = \T_{s_i(s_{i+1}(\beta))}^{i+1}(l_3) \circ \T_{s_{i+1}(\beta)}^i(l_2) \circ \T_\beta^{i+1}(l_1)
\end{equation*}
\end{Theorem}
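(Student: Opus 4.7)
The plan is to first reduce the eight cases parameterized by $(l_1, l_2, l_3) \in \{1,2\}^3$ to the single ``all positive'' case $l_1 = l_2 = l_3 = 1$. Reidemeister II (Theorem \ref{th:R2}) gives $\T^i(1) \circ \T^i(2) \cong \mathrm{id} \cong \T^i(2) \circ \T^i(1)$, so from the $(1,1,1)$ identity one derives each of the other seven cases by composing both sides with appropriate $\T(\cdot)$'s and cancelling using R2; this is the standard braid-group argument that the braid relation for positive generators plus R2 implies the braid relation for arbitrary signs. Moreover, the $(2,2,2)$ case can also be obtained directly from $(1,1,1)$ by taking left adjoints of both sides and applying Lemma \ref{th:twistadjoint}.

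For the remaining case $l_1 = l_2 = l_3 = 1$, I would compute both triple compositions directly as Fourier--Mukai kernels on $Y_\beta \times Y_{\beta^{(3)}}$, where $\beta^{(3)} := s_i s_{i+1} s_i(\beta) = s_{i+1} s_i s_{i+1}(\beta)$. Each composition is naturally a pushforward from a three-step convolution variety
$$W \subset Y_\beta \times Y_{\beta^{(1)}} \times Y_{\beta^{(2)}} \times Y_{\beta^{(3)}}$$
parameterizing chains of flags whose successive pairs lie in the relevant $Z^j_{\beta^{(\cdot)}}$. The geometric heart of the argument is that the two $W$'s produced by the paths $(s_i, s_{i+1}, s_i)$ and $(s_{i+1}, s_i, s_{i+1})$ can each be identified with the same ambient subvariety of $Y_\beta \times Y_{\beta^{(3)}}$ --- namely the locus where $L_j = L'_j$ for $j \notin \{i, i+1, i+2\}$, together with an explicit compatibility condition on the middle three steps coming from the operator $z$. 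Following Khovanov--Thomas \cite{KT}, the braid relation $s_i s_{i+1} s_i = s_{i+1} s_i s_{i+1}$ in the symmetric group then guarantees that the two descriptions yield naturally isomorphic varieties.

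With the underlying subvarieties identified, what remains is to check that the line bundles and $\C^\times$-equivariant shifts on the resulting pushforwards also match. Because $\sT^i_\beta(1)$ has two different formulas depending on whether $\beta_i = \beta_{i+1}$, this must be verified in each of the sub-cases indexed by the triple $(\beta_i, \beta_{i+1}, \beta_{i+2}) \in \{1, m-1\}^3$. For each sub-case I would repeatedly apply Lemma \ref{lem:facts0} and Lemma \ref{lem:facts1} to track how the various $\E_j$ and $\bE_j$ restrict across the successive convolutions, with the expectation that the accumulated twists and shifts on the two sides collapse to the same symmetric expression. I expect the main obstacle to be the sub-cases where two adjacent weights are unequal, so that some intermediate $Z^j_{\beta^{(\cdot)}}$ is reducible with a diagonal-like component and a transverse component meeting along a divisor. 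In those cases the triple composition is not a bare structure sheaf but a nontrivial extension, which I would analyze by a Koszul resolution as in Theorem \ref{th:kerneltwist}, using Corollary \ref{cor:abstractnonsense3} for uniqueness of the extension up to scalar and Lemma \ref{lem:abstractnonsense4} for naturality of the adjunction/composition maps, so as to match the two extension classes.
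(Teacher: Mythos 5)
Your reduction of the eight sign patterns $(l_1,l_2,l_3)$ to the all-positive case via Reidemeister II and adjunction is fine and is essentially what the paper does; the paper additionally reduces the labels to $\beta_i=\beta_{i+1}=\beta_{i+2}=1$ using R0, RI and RII, so the brute-force sub-case analysis over $(\beta_i,\beta_{i+1},\beta_{i+2})\in\{1,m-1\}^3$ that you propose is unnecessary.

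The gap is in the central computation. Identifying the set-theoretic convolution variety $Z^{i\,i+1\,i}=\{(L_\bullet,L'_\bullet): L_j=L'_j \text{ for } j\ne i,i+1\}$ (note: not $j\notin\{i,i+1,i+2\}$) on both sides and then ``matching line bundles and equivariant shifts'' does not compute the derived convolution $\sT^i*\sT^{i+1}*\sT^i$. Each $Z^j$ is already reducible when the adjacent weights are \emph{equal} (it is $\Delta\cup V^j$), so your expectation that the nontrivial-extension phenomena are confined to the unequal-weight sub-cases is misplaced; in the case actually treated, $Z^{i\,i+1\,i}$ has six components, the triple fibre product upstairs is not a transverse intersection, and $\pi_{13}$ is not finite on it --- in the paper's notation it contracts a $\mathbb{P}^1$-bundle $E$ inside the ``peace variety'' $A\cup B$. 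Consequently the kernel is a priori a complex, not a sheaf on a known support twisted by a line bundle, and showing that it equals $\O_{Z^{i\,i+1\,i}}$ is the entire content of the proof. The paper's argument (following Khovanov--Thomas) writes $\sT^j\cong\Cone(\O_\Delta[-1]\to\sU^j)$ and expands the triple product cone by cone: it computes $\sU^i*\sU^{i+1}*\sU^i\cong\sU^{i\,i+1\,i}\oplus\sU^i\{-2\}$ by exhibiting $A$ and $B$ as blowups and using that the relevant line bundle restricts to $\O(-1)$ on the fibres over $E$ so its pushforward vanishes; it then removes summands by Gaussian elimination (Lemma \ref{lem:BN}) after verifying that specific matrix entries --- compositions of adjunction maps --- are nonzero or invertible, and it finishes with the octahedron axiom together with the uniqueness statements of Corollary \ref{cor:abstractnonsense3}. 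None of these steps is implied by an identification of supports, and without them the assertion that the two triple convolutions agree is unsupported.
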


We will prove this invariance in the case that all crossing are of type 1 and $ \beta_i = \beta_{i+1} = \beta_{i+2} = 1 $:
\begin{equation} \label{eq:casetoprove}
\T_\beta^i(1) \circ \T_\beta^{i+1}(1) \circ \T_\beta^i(1) \cong \T_\beta^{i+1}(1) \circ \T_\beta^i(1) \circ \T_\beta^{i+1}(1)
\end{equation}
The rest of the RIII cases follows from this case by use of RO, RI and RII.

In order to prove (\ref{eq:casetoprove}), we will compute both sides explicitly..  

In this section we will suppress all subscripts $ \beta $ since $ \beta $ will not change at all.  In particular, let
\begin{equation*}
\sU^i = \O_{V_\beta^i} \otimes \E'_i \otimes \E_{i+1}^\vee, \quad \sT^i = \O_{Z^i}.
\end{equation*}
This $ \sT^i $ is related to the real $ \sT^i_\beta(1) $ by $ \sT^i = \sT^i_\beta[-m+1]\{m-1\} $. 

Since $ Z^i $ has two components $ \Delta $ and $ V^i $ which meet at a divisor in each of them, we have exact sequence
\begin{equation*}
\O_{V_\beta^i}(-\Delta) \rightarrow \O_{Z^i} \rightarrow \O_\Delta
\end{equation*}
which leads to
\begin{equation} \label{eq:Tiascone}
\sT^i \cong \Cone( \O_\Delta[-1] \rightarrow \sU^i).
\end{equation}
(The attentive reader will note that this is the dual of the isomorphism obtained in the proof of Theorem \ref{th:kerneltwist}).

Consider the set-theoretic convolution
\begin{equation*}
Z^{i i+1 i} := Z^i * Z^{i+1} * Z^i = \{ (L_\bullet, L'_\bullet) : L_j = L'_j \text{ for } j \ne i, i+1 \}.
\end{equation*}

We will prove the following result which easily implies (\ref{eq:casetoprove}).
\begin{Theorem} \label{th:impliesR3}
$\sT^i * \sT^{i+1} * \sT^i = \O_{Z^{i i+1 i}} = \sT^{i+1} * \sT^i * \sT^{i+1} $
\end{Theorem}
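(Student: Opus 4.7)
My plan is to prove both equalities by direct computation, exploiting the triangle
\begin{equation*}
\O_\Delta[-1] \longrightarrow \sU^i \longrightarrow \sT^i
\end{equation*}
from (\ref{eq:Tiascone}) so that $\sT^i * \sT^{i+1} * \sT^i$ becomes an iterated cone with eight ``corners,'' indexed by the subsets of $\{i,i+1,i\}$ telling whether we take the $\sU$ or the $\O_\Delta$ factor in each slot. Four of those corners are trivially identified with smaller convolutions by $\O_\Delta * \sA \cong \sA$; the other four involve genuine $\sU$ pieces. By symmetry in $i \leftrightarrow i+1$, the analogous expansion of $\sT^{i+1} * \sT^i * \sT^{i+1}$ has the ``same'' list of corners (with $i$ and $i+1$ swapped), so once both sides are identified with $\O_{Z^{i\, i+1\, i}}$ the theorem will follow.

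The first step is to compute the basic convolutions $\sU^i * \sU^i$, $\sU^i * \sU^{i+1}$ and $\sU^i * \sU^{i+1} * \sU^i$ (and the versions with $i,i+1$ interchanged). Each $\sU^j$ is the structure sheaf of the fibre product $V^j = X^j \times_{Y_{d_j(\beta)}} X^j$ tensored with an explicit line bundle. Working on the appropriate triple product $Y_\beta \times Y_\beta \times Y_\beta$ (and then on $Y_\beta^{\times 4}$ for the triple convolution), Corollary \ref{lem:transverse} tells us that the relevant preimages $\pi_{12}^{-1}(V^j)$, $\pi_{23}^{-1}(V^k)$, $\pi_{34}^{-1}(V^j)$ meet transversely, so each convolution is again the pushforward of a structure sheaf of the honest fibre product of these incidence varieties, twisted by a line bundle that one reads off by keeping track of $\E_j,\E_{j+1}$. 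The upshot is that $\sU^i * \sU^{i+1} * \sU^i$ is (up to an explicit line-bundle twist) the structure sheaf of the set-theoretic locus
\begin{equation*}
W := \{(L_\bullet,L'_\bullet) : L_j = L'_j \text{ for } j\ne i,i+1,\ zL_{i+2}\subset L_{i-1}\} \subset Y_\beta \times Y_\beta,
\end{equation*}
which is the ``deepest'' component of $Z^{i\, i+1\, i}$, and, crucially, this description is symmetric in $(i,i+1)$. The two mixed corners $\sU^i * \sU^{i+1}$ and $\sU^{i+1} * \sU^i$ produce the two other components of $Z^{i\, i+1\, i}$, while the $\sU^i*\sU^i$ corners produce yet another piece supported on $V^i \cup V^{i+1}$.

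The second step is to assemble these pieces. After the first two cones in the iterated expansion, one gets a kernel whose underlying scheme is exactly the union of $\Delta$, $V^i$, $V^{i+1}$, and $W$, i.e.\ the decomposition of $Z^{i\, i+1\, i}$ into its irreducible components. To finish I need to check that the connecting maps between these corners are the natural inclusion/gluing maps, so that the successive cones realize the correct scheme-theoretic structure on $Z^{i\, i+1\, i}$ rather than some non-reduced variant or nontrivial extension of components. Using Lemma \ref{lem:abstractnonsense2}, Corollary \ref{cor:abstractnonsense3} and Lemma \ref{lem:abstractnonsense4}, the relevant $\Hom$ and $\Ext^1$ groups between pairs of these component sheaves are one-dimensional, and the maps arising from our iterated cones are nonzero (being adjunction units), so they are forced up to scalar to be the canonical gluing maps; this pins down the extension class and shows the total kernel is $\O_{Z^{i\, i+1\, i}}$.

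Finally, since the whole analysis above is invariant under swapping $i$ and $i+1$ (the incidence condition defining $W$, and the union of components $\Delta \cup V^i \cup V^{i+1} \cup W$, are both symmetric), the exact same computation gives $\sT^{i+1} * \sT^i * \sT^{i+1} \cong \O_{Z^{i\, i+1\, i}}$, proving the theorem. The main obstacle I anticipate is the last step: verifying that the connecting morphisms in the iterated cone are the ``geometric'' ones, so that no spurious extension or shift appears. This is exactly the sort of uniqueness-of-extension argument made in the proof of Theorem \ref{th:kerneltwist}, and it should go through using Corollary \ref{cor:abstractnonsense3} component-by-component, but keeping track of the line-bundle twists through all eight corners simultaneously is where the bookkeeping is heaviest.
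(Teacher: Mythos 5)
There is a genuine gap, and it sits at the heart of your first step. You assert that $\sU^i * \sU^{i+1} * \sU^i$ is, up to a line-bundle twist, the structure sheaf of the single ``shield'' locus $\{zL_{i+2}\subset L_{i-1}\}$. This is false. The transverse intersection on the triple product is the ``peace variety'' $A\cup B$, which has \emph{two} components: $A$ maps onto the shield $V^{i\,i+1\,i}$ and $B$ maps onto the lower diamond $V^i$, and the projection $\pi_{13}$ is not one-to-one on either --- it contracts a $\mathbb{P}^1$-bundle over $E=A\cap B$, and each of $A$, $B$ is a blowup of its image along $\pi_{13}(E)$. The derived pushforward therefore cannot be read off as ``the structure sheaf of the image,'' and the correct answer (Proposition \ref{th:firststage}) is
\begin{equation*}
\sU^i * \sU^{i+1} * \sU^i \cong \sU^{i\,i+1\,i} \oplus \sU^i\{-2\},
\end{equation*}
with an extra summand supported on the lower diamond. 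This extra summand is not a nuisance to be ignored: the entire content of the braid relation is that it cancels against a matching summand of $\sU^i * \sU^i[-1] \cong \sU^i[-2]\oplus \sU^i\{-2\}$ via Gaussian elimination (Lemma \ref{lem:BN}), which in turn requires proving that the relevant diagonal entry of the connecting map is an isomorphism --- a delicate adjunction argument (the compositions $p\circ\beta$ and $\alpha\circ i$ being identities). None of this is visible from your setup, because you have computed the corner incorrectly and so never see the terms that must cancel.

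A secondary but also concrete problem: your claimed component decomposition of $Z^{i\,i+1\,i}$ as $\Delta\cup V^i\cup V^{i+1}\cup W$ omits the two ladder components $V^{i\,i+1}$ and $V^{i+1\,i}$ (the variety has six components, not four), and $\sU^i*\sU^i$ is supported only on $V^i$, not on $V^i\cup V^{i+1}$. Finally, your plan to pin down all connecting maps purely by one-dimensionality of $\Ext$ groups is too optimistic: the paper needs, beyond Corollary \ref{cor:abstractnonsense3} and Lemma \ref{lem:abstractnonsense4}, the non-vanishing statements of Lemma \ref{lem:p1p2}, the vanishing $\Hom^{0,1}(\sU^i,\sU^{i\,i+1\,i})=0$ of Lemma \ref{th:nohom}, and an application of the octahedron axiom to reassemble the cones. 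The overall shape of your strategy (expand both crossings as cones and reassemble components of $Z^{i\,i+1\,i}$) is the right one, but as written the computation at the $\sU^i*\sU^{i+1}*\sU^i$ corner would fail, and with it the rest of the argument.
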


Khovanov-Thomas \cite{KT} have given a braid group action on the cotangent bundle of the flag variety.  The overall geometry of our situation is different, however, the geometry which arises when trying to prove the braid relation (Theorem \ref{th:impliesR3}) is quite similar.  This allows us to make use of the method of proof from Theorem 4.5 of \cite{KT}.  Following their method we will compute $\sT^i * \sT^{i+1} * \sT^i$  explicitly, repeatedly using equation (\ref{eq:Tiascone}). 

The variety $ Z^{i i+1 i} $ has 6 components which are shown in Figure \ref{fi:YYvarieties}.

\begin{figure}
\begin{center}
\psfrag{Li-1}{$L_{i-1}$} 
\psfrag{Li}{$L_i$}
\psfrag{Li+1}{$L_{i+1}$} 
\psfrag{Li+2}{$L_{i+2}$} 
\psfrag{Lip}{$L_i'$} 
\psfrag{Li+1p}{$L_{i+1}'$} 
\psfrag{Vii+1i}{$V^{i i+1 i}$} 
\psfrag{Vii+1}{$V^{i i+1}$} 
\psfrag{Vi+1i}{$V^{i+1 i}$} 
\psfrag{Vi}{$V^i$} 
\psfrag{Vi+1}{$V^{i+1}$} 
\psfrag{Delta}{$\Delta$} 
\puteps[0.35]{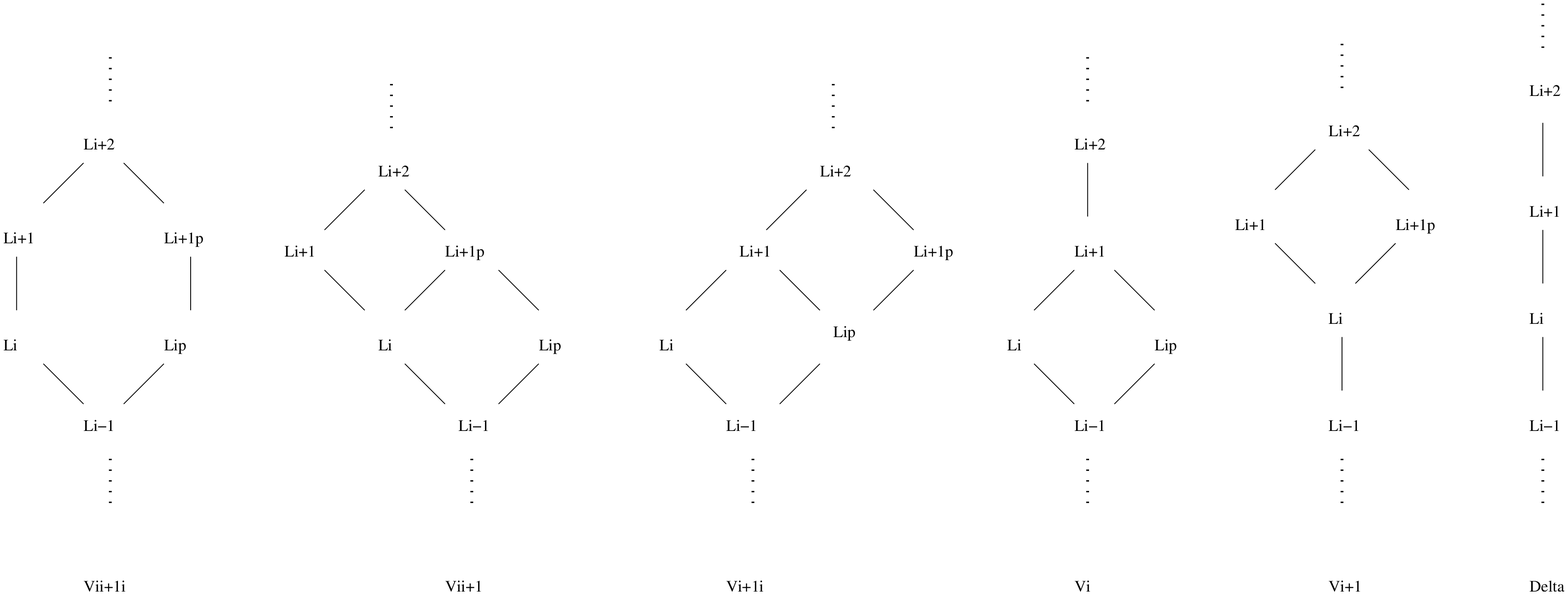}
\end{center}
\caption{The components of $ Z^{i i+1 i}$.}
\label{fi:YYvarieties}
\end{figure}

The first component is the ``shield''
\begin{equation*} 
V^{i i+1 i} := \{ (L_\bullet, L'_\bullet) : z L_{i+2} \subset L_{i-1} \}.
\end{equation*}

There is a corresponding sheaf
\begin{equation*}
\sU^{i i+1 i} = \O_{V_\beta^{i i+1 i}} \otimes (\E_i \E'_i) \otimes (\E_{i+2} \E'_{i+2})^\vee. 
\end{equation*}

The next two components are called the ``left-pointing ladder'' and the ``right-pointing ladder'' respectively.
\begin{equation*}
\begin{gathered}
V^{i i+1} = \{ (L_\bullet, L'_\bullet) : L_i \subset L_{i+1}', z L_{i+1}' \subset L_{i-1}, z L_{i+2} \subset L_i \} \\
V^{i+1 i} = \{ (L_\bullet, L'_\bullet) : L'_i \subset L_{i+1}, z L_{i+1} \subset L_{i-1}, z L_{i+2} \subset L'_i \}
\end{gathered}
\end{equation*}

The other three components are $ V^i $, the lower diamond, $ V^{i+1} $ the upper diamond, and $ \Delta $ the vertical line.

It is convenient to combine the last 4 components together as
\begin{equation*}
Z^{i+1 i} = V^{i+1 i} \cup V^i \cup V^{i+1} \cup \Delta
\end{equation*}

\begin{Proposition} \label{th:firststage}
$ \sU^i * \sU^{i+1} * \sU^i \cong \sU^{i i+1 i} \oplus \sU^i \{-2\}$
\end{Proposition}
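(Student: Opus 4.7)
The plan is to compute the triple convolution $\sU^i * \sU^{i+1} * \sU^i$ as a single pushforward from a fibre product inside $Y_\beta \times Y_\beta \times Y_\beta \times Y_\beta$, identify its set-theoretic support, and then split off the two summands. I will label the four copies of $Y_\beta$ by $L^{(0)}, L^{(1)}, L^{(2)}, L^{(3)}$ and write $\pi_{ab}$ for the projection to the $(a,b)$ factors.

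First, I would compute $\sU^i * \sU^{i+1}$. The subvarieties $\pi_{01}^{-1}(V^i)$ and $\pi_{12}^{-1}(V^{i+1})$ meet transversely in $Y_\beta^3$ by a mild extension of Corollary \ref{lem:transverse}: their defining conditions involve disjoint pieces of the flag data. The scheme-theoretic intersection is smooth and projects birationally (in fact isomorphically, since here $\beta_i=\beta_{i+1}=\beta_{i+2}=1$ forces $L^{(1)}$ to be uniquely determined by $L^{(0)}$ and $L^{(2)}$ once both ladder conditions $zL^{(1)}_{i+1}\subset L_{i-1}$ and $zL_{i+2}\subset L^{(1)}_i$ hold generically) onto the ladder subvariety $V^{i\,i+1}\subset Y_\beta\times Y_\beta$ defined in the excerpt. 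By the projection formula combined with Lemma \ref{lem:facts0}/Lemma \ref{lem:facts1} applied to track the determinant bundles $\E'_i\E_{i+1}^\vee$ and $\E''_{i+1}\E'_{i+2}{}^\vee$ pulled back along the convolution, I expect $\sU^i*\sU^{i+1}\cong \O_{V^{i\,i+1}}\otimes (\E_i\E'_i)\otimes (\E'_{i+2}\E_{i+2}''){}^\vee$, up to a possibly trivial shift.

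Second, I convolve this with $\sU^i$ again. Now the support analysis divides into two strata of $V^{i\,i+1}\times_{Y_\beta} Y_\beta$ according to whether $zL^{(2)}_{i+2}\subset L_{i-1}$ already (giving the shield $V^{i\,i+1\,i}$) or not (in which case the condition from the rightmost $\sU^i$ forces an extra constraint that collapses the configuration onto $V^i$). The first stratum contributes transversely and the projection formula plus another application of Lemma \ref{lem:facts1} identifies the contribution as $\sU^{i\,i+1\,i}$. The second, diagonal-like stratum is where the intersection fails to be transverse: along $V^i\subset Z^{i\,i+1\,i}$ the fibre of the map from the triple fibre product to $V^i$ is a $\mathbb{P}^1$ (coming from the fibre of $q:X^i\to Y_{d_i(\beta)}$), and Lemma \ref{lem:nontransint} together with Lemma \ref{lem:updown} converts this into a contribution $\sU^i\otimes H^*(\mathbb{P}^1)$; with the normal-bundle weight computed in Lemma \ref{lem:facts1} (giving $s=2$), Poincar\'e duality produces the exact shift $\sU^i\{-2\}$.

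Finally, I need to show that the extension between the two pieces splits. For this I would argue
\begin{equation*}
\Ext^1_{Y_\beta\times Y_\beta}\bigl(\sU^{i\,i+1\,i},\;\sU^i\{-2\}\bigr)=0,
\end{equation*}
by reducing via Lemma \ref{lem:abstractnonsense2} and Corollary \ref{cor:abstractnonsense3} to a cohomology vanishing on the intersection $V^{i\,i+1\,i}\cap V^i$, which is smooth and embedded as a divisor in each component with known normal bundle from Lemma \ref{lem:facts1}. The hard part will be bookkeeping: juggling the four $\C^\times$-equivariant line bundles $\E_i,\E'_i,\E_{i+1},\ldots$ and the normal-bundle weight shifts so that the final isomorphism matches the stated one on the nose (with the $\{-2\}$ on the $\sU^i$ summand arising precisely from the $\mathbb{P}^1$-fibre). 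A further subtle point is verifying that no higher-degree terms appear in the $H^*(\mathbb{P}^1)$-type contribution, i.e.\ that the spectral sequence of Lemma \ref{lem:updown} degenerates here, so that the answer is concentrated in the two advertised summands.
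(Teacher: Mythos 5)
Your overall shape is right (compute $\sU^i*\sU^{i+1}$ first, then identify the support of the triple convolution as the shield plus the lower diamond and split), but the mechanism you propose for the second summand does not work, and it is exactly the crux of the proposition. The total space of the triple convolution is a union $A\cup B$ of two components inside $Y_\beta^{\times 3}$ (after the first convolution collapses one factor), where $A$ is the blowup of the shield $V^{i\,i+1\,i}$ and $B$ is the blowup of the lower diamond $V^i$, both along the \emph{same} centre, glued along the common exceptional divisor $E=A\cap B$. The map $\pi_{13}$ is birational on each of $A$ and $B$; it is \emph{not} a $\mathbb{P}^1$-fibration over $V^i$, so Lemma \ref{lem:updown} does not apply, and the contribution over $V^i$ is not "$\sU^i\otimes H^\star(\mathbb{P}^1)$". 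Moreover no appeal to Poincar\'e duality can turn the two-dimensional graded space $H^\star(\mathbb{P}^1)$ into the single summand $\sU^i\{-2\}$: a rank-two contribution cannot collapse to rank one. The correct argument uses the exact sequence $0\to\O_A(-E)\oplus\O_B(-E)\to\O_{A\cup B}\to\O_E\to 0$; the twisting line bundle $\E'_i\E''_i\E_{i+2}^\vee\E_{i+1}^\vee$ restricts to $\O(-1)$ on the $\mathbb{P}^1$-fibres of $\pi_{13}|_E$, so the $\O_E$ term pushes forward to zero. This simultaneously produces the direct-sum splitting for free (no $\Ext^1$ vanishing is needed) and yields exactly one copy of $\sU^i$ with the shift $\{-2\}$, which comes from $\O_B(-E)\cong\E_{i+2}\otimes{\E'_i}^\vee\{-2\}$ (the equivariant weight of the equation cutting $E$ out of $B$), not from any cohomology of a fibre.

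Two smaller points. Your formula for $\sU^i*\sU^{i+1}$ has four line-bundle factors; the correct answer is $\O_{V^{i\,i+1}}\otimes\E'_i\otimes\E_{i+2}^\vee$ (two factors), since several of the determinant bundles identify on the intersection locus. And your proposed splitting step via $\Ext^1(\sU^{i\,i+1\,i},\sU^i\{-2\})=0$ presupposes that the object is already known to be an extension of one named piece by the other; without the blowup/exceptional-divisor analysis you have no such triangle to split, only a set-theoretic stratification of the support, which does not by itself decompose a pushforward.
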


\begin{proof}
Since $\pi_{12}^{-1}(V^{i+1})$ and $\pi_{23}^{-1}(V^i)$ intersect transversally in the section 
$$W = \{ (L_\bullet, L'_\bullet, L''_\bullet) : L_i = L_i', L_{i+1}' = L_{i+1}'', zL_{i+2} \subset L_i, zL_{i+1}' \subset L_{i-1} \}$$ 
over $V^{i i+1}$ we have 
\begin{equation*}
\sU^i * \sU^{i+1} \cong \pi_{13*}(\O_W \otimes \E_{i+2}^\vee \otimes \E_i'') \cong \O_{V^{i i+1}} \otimes \E'_i \otimes \E_{i+2}^\vee .
\end{equation*}

Next we see that
\begin{equation*}
\sU^i * \sU^{i+1} * \sU^i \cong {\pi_{13}}_* (\O_{A \cup B} \E'_i \E''_i \E_{i+2}^\vee \E_{i+1}^\vee)
\end{equation*}
where the union $ A \cup B $ is the ``peace variety''. Here
\begin{equation*}
A = \{ (L_\bullet, L'_\bullet, L''_\bullet) : z L_{i+2} \subset L_{i-1}, L_{i+1} = L'_{i+1} \}
\end{equation*}
which maps to the shield $ V^{i i+1 i}  $  and 
\begin{equation*}
B = \{ (L_\bullet, L'_\bullet, L''_\bullet) : z L_{i+2} \subset L'_i, L_{i+1} = L'_{i+1} = L''_{i+1}, zL_{i+1} \subset L_{i-1} \}
\end{equation*} 
which maps to the lower diamond $V^i$.  Let $ E = A \cap B $.  The map $ \pi_{13} $ fails to be 1-1 exactly over $ E $, where it is $ \mathbb{P}^1 $ to 1.  In particular
\begin{equation*}
E = \{ (L_\bullet, L'_\bullet, L''_\bullet) : z L_{i+2} \subset L_{i-1}, L_{i+1} = L'_{i+1} = L''_{i+1} \}.
\end{equation*}
See Figure \ref{fi:YYYvarieties} for incidence diagrams which describe the varieties $A, B, E$.

\begin{figure}
\begin{center}
\psfrag{Li-1}{$L_{i-1}$} 
\psfrag{Li}{$L_i$}
\psfrag{Li+1}{$L_{i+1}$} 
\psfrag{Li+2}{$L_{i+2}$} 
\psfrag{Lip}{$L_i'$} 
\psfrag{Li+1p}{$L_{i+1}'$} 
\psfrag{Lipp}{$L_i''$} 
\psfrag{Li+1pp}{$L_{i+1}''$}
\psfrag{A}{$A$} 
\psfrag{B}{$B$} 
\psfrag{E}{$E$} 
\psfrag{z}{$z$} 
\puteps[0.35]{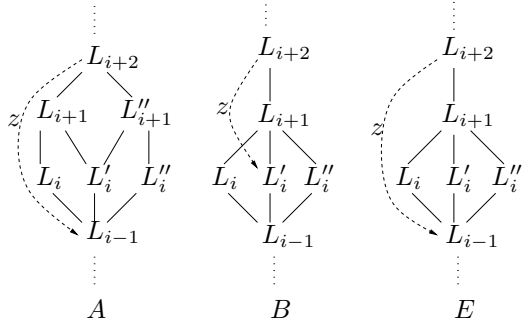}
\end{center}
\caption{The varieties $ A, B, E$.}
\label{fi:YYYvarieties}
\end{figure}

We claim that $ A $ is a blowup with exceptional divisor $E$.  To see this note that $ \pi_{13}(E) $ is cut out of $ \pi_{13}(A) = V^{i i+i i} $ by a section $s$ of the rank 2 vector bundle $ \mathcal{V} = \Hom(L_{i+1}/L_{i-1}, L_{i+2}/L'_{i+1}) $ (since it is given by the condition $ L_{i+1} = L'_{i+1}$).  Hence we see that the blowup of $ V^{i i+1 i} $ along $ \pi_{13}(E) $ is the locus
\begin{equation*}
Bl_{\pi_{13}(E)}(V^{i i+1 i}) = \{ (L_\bullet, L''_\bullet, M) \in V^{i i+1 i} \times \mathbb{P}(\mathcal{V}) : s(L_\bullet, L''_\bullet) \in M \}.
\end{equation*}
Given a point $(L_\bullet, L''_\bullet, M) $, we may define $L_i' = \{ v \in L_{i+1}/L_{i-1} : m(v) = 0 \text{ for } m \in M\}$.  This gives us a map $ Bl_{\pi_{13}(E)}(V^{i i+1 i}) \rightarrow A $ which is easily seen to be an isomorphism.  Under this map, the exceptional divisor of the blowup is carried to $ E$ and the blowup map is the restriction of $ \pi_{13} $ to $ A $.

In a similar fashion, we also see that $ B $ is the blowup of $ \pi_{13}(B) = V^i $ along $ \pi_{13}(E)$.

Returning to our computation, we have the short exact sequence
\begin{equation*}
0 \rightarrow \O_{A}(-E) \oplus \O_B(-E) \rightarrow \O_{A \cup B} \rightarrow \O_E \rightarrow 0
\end{equation*}
which leads to the exact triangle
\begin{equation*}
\begin{gathered}
{\pi_{13}}_* \O_{A}(-E) \E'_i \E''_i \E_{i+2}^\vee \E_{i+1}^\vee  \oplus {\pi_{13}}_*  \O_B(-E) \E'_i \E''_i \E_{i+2}^\vee \E_{i+1}^\vee \\ 
\rightarrow {\pi_{13}}_*  \O_{A \cup B} \E'_i \E''_i \E_{i+2}^\vee \E_{i+1}^\vee  \rightarrow {\pi_{13}}_*  \O_E \E'_i \E''_i \E_{i+2}^\vee \E_{i+1}^\vee 
\end{gathered}
\end{equation*}

We now claim that ${\pi_{13}}_*  \O_E \E'_i \E''_i \E_{i+2}^\vee \E_{i+1}^\vee = 0 $.  To see this note that this line bundle is $ \O(-1) $ on the fibres of $ \pi_{13}|_E$.  Since this pushforward is 0, we conclude that
\begin{equation*}
\sU^i * \sU^{i+1} * \sU^i \cong {\pi_{13}}_* \O_{A}(-E) \E'_i \E''_i \E_{i+2}^\vee \E_{i+1}^\vee \oplus {\pi_{13}}_*  \O_B(-E) \E'_i \E''_i \E_{i+2}^\vee \E_{i+1}^\vee 
\end{equation*}

On the other hand, $ E $ is cut out of $ A $ by a section of the line bundle $ \Hom(L''_{i+1}/L'_i, L_{i+2}/L_{i+1}) $.  Hence $ \O_A(-E) \cong (L''_{i+1}/L'_i) \E_{i+2}^\vee $. Thus
\begin{equation*}
\O_{A}(-E) \E'_i \E''_i \E_{i+2}^\vee \E_{i+1}^\vee \cong  \O_A \E'_i (L''_{i+1}/L'_i) \E''_i (\E_{i+2}^\vee)^2 \E_{i+1}^\vee \cong \O_A {\E''_i}^2 \E''_{i+1} \E_{i+1}^\vee (\E_{i+2}^\vee)^2,
\end{equation*}
where in the last step we used the ``diamond identity'' on $ L_{i-1}, L'_i, L''_i, L''_{i+1} $.  This line bundle is pulled back from the base, so we can apply push-pull.  Since $ \pi_{13}|_A $ is a blowup map, we see that $ {\pi_{13}}_* \O_A = \O_{V^{i i+1 i}} $ and so we conclude that
\begin{equation*}
{\pi_{13}}_* \O_{A}(-E) \E'_i \E''_i \E_{i+2}^\vee \E_{i+1}^\vee \cong \O_{V^{i i+1 i}} {\E'_i}^2 \E'_{i+1} \E_{i+1}^\vee (\E_{i+2}^\vee)^2 \cong \sU^{i i+1 i}
\end{equation*}
where the second isomorphism follows from the fact that $\E_i \E_{i+1} \E_{i+2} \cong \det(L_{i+2}/L_{i-1}) \cong \E'_i \E'_{i+1} \E'_{i+2}$ on $V^{i i+1 i}$.

Now, we compute the other summand.  First, note that $ E $ is cut out of $ B $ by a section of the line bundle $ \Hom(L_{i+2}/L_{i+1}, L'_i/L_{i-1})\{2\}$.  Hence $ \O_B(-E) \cong \E_{i+2} {\E'_i}^\vee\{-2\}$.  Thus,
\begin{equation*}
{\pi_{13}}_* \O_B(-E) \E'_i \E''_i \E_{i+2}^\vee \E_{i+1}^\vee \cong {\pi_{13}}_* \O_B \E''_i \E_{i+1}^\vee\{-2\}  \cong \O_{V^i} \E'_i \E_{i+1}^\vee \{-2\},
\end{equation*}
where again we have used that $ B $ is a blowup to conclude that $ {\pi_{13}}_* \O_B = \O_{V^i} $.

Combining everything, we conclude that
\begin{equation*}
\sU^i * \sU^{i+1} * \sU^i \cong \sU^{i i+1 i} \oplus \sU^i\{-2\}
\end{equation*}
\end{proof}

Before calculating $ \sU^i * \sT^{i+1} * \sU^i $, we will need some preliminary lemmas.

First, consider the diagram
\begin{equation*}
\begin{CD}
\sU^i * \sU^{i+1} [-1] @>f>> \sU^i * \sU^{i+1} * \sU^i @>p_2>> \sU^i \{-2\} \\
@. @VVp_1V @. \\
 @. \sU^{ii+1i} @.
\end{CD}
\end{equation*}
where $p_1$ and $p_2$ are the two projections (see Proposition \ref{th:firststage}) and where $ f $ comes from the map $ \O_\Delta[-1] \rightarrow \sU^i $. 

\begin{Lemma}\label{lem:p1p2} Using the notation above we have $p_1 \circ f \ne 0$ and $p_2 \circ f \ne 0$. 
\end{Lemma}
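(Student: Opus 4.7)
The plan is to leverage the distinguished triangle obtained by convolving $\sU^i * \sU^{i+1}$ on the left with the triangle $\O_\Delta[-1] \to \sU^i \to \sT^i$ from (\ref{eq:Tiascone}), namely
\[
\sU^i * \sU^{i+1}[-1] \xrightarrow{f} \sU^i * \sU^{i+1} * \sU^i \to \sU^i * \sU^{i+1} * \sT^i.
\]
Under the splitting $\sU^i * \sU^{i+1} * \sU^i \cong \sU^{i\,i+1\,i} \oplus \sU^i\{-2\}$ of Proposition \ref{th:firststage}, an elementary triangulated-category argument shows that $p_j \circ f = 0$ is equivalent to the corresponding summand splitting off as a direct summand of $\sU^i * \sU^{i+1} * \sT^i$. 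So both claims of the lemma reduce to showing that neither $\sU^{i\,i+1\,i}$ nor $\sU^i\{-2\}$ can be peeled off $\sU^i * \sU^{i+1} * \sT^i$ as a direct summand.

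First I would compute the two Hom groups $\Hom(\sU^i * \sU^{i+1}[-1], \sU^{i\,i+1\,i})$ and $\Hom(\sU^i * \sU^{i+1}[-1], \sU^i\{-2\})$ by applying Lemma \ref{lem:nontransint} to the (non-transverse) intersections of $V^{i\,i+1}$ with $V^{i\,i+1\,i}$ and with $V^i$ inside $Y_\beta \times Y_\beta$. I expect each group to be one-dimensional, so that $p_j \circ f$ is either zero or, up to a scalar, the unique natural map singled out by Corollary \ref{cor:abstractnonsense3}. This both pins down what the potentially-zero maps are and gives me a clean Ext-theoretic handle on non-splitness.

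Next, I would establish non-vanishing by contradiction. Assuming $p_j \circ f = 0$, the alleged splitting would express $\sU^i * \sU^{i+1} * \sT^i$ as a direct sum involving the $j$-th summand. To contradict this, I would compute $\sU^i * \sU^{i+1} * \sT^i$ explicitly by running the convolution through the triangle $\O_\Delta[-1] \to \sU^i \to \sT^i$, just as in the proof of Proposition \ref{th:firststage}, and identify the result as a non-trivial extension whose non-splitness is forced by the one-dimensionality of the relevant $\Ext^1$ group (again via Corollary \ref{cor:abstractnonsense3}), ruling out the splitting.

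The main obstacle will be carrying out the final convolution calculation cleanly: it involves fibre products of the peace-variety type whose components meet along divisors, and requires tracking the various line-bundle twists through the corresponding Koszul resolutions. As a hedge, one could instead argue by restricting $f$ to convenient open subsets of $Y_\beta \times Y_\beta$ where some components of the relevant correspondence varieties disappear (for instance, the open subset where $zL_{i+1} \not\subset L_{i-1}$, which kills the $V^i$ component of $Z^i$, or an open subset where $V^{i\,i+1\,i}$ is not supported), and verify non-vanishing on a single explicit stalk, thereby bypassing the global extension-class bookkeeping.
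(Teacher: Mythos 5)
Your main route has a gap at its crucial step. The reduction ``$p_j \circ f = 0$ implies the $j$-th summand splits off $\sU^i * \sU^{i+1} * \sT^i$'' is fine (only this direction is needed), and computing the two Hom groups does correctly show that each $p_j\circ f$ is either zero or the unique non-zero map. But neither of these facts decides which alternative holds, and the argument you offer for the decisive step does not work as stated: one-dimensionality of $\Ext^1$ never \emph{forces} an extension to be non-split -- it only says there is a unique non-split one -- so you still owe an independent identification of $\sU^i * \sU^{i+1} * \sT^i$ that exhibits it as that non-trivial extension (and, moreover, as one in which \emph{both} components of the two-dimensional extension class are non-zero, since the relevant $\Ext^1$ here is $\Hom(\sU^i*\sU^{i+1},\sU^{i\,i+1\,i}[1])\oplus\Hom(\sU^i*\sU^{i+1},\sU^i\{-2\}[1])$). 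A support argument will not do it either: the cone is supported on $V^{i\,i+1\,i}\cup V^i\cup V^{i\,i+1}$, which contains the supports of both candidate summands. The only practical way to pin the cone down is to compute the convolution on $Y_\beta\times Y_\beta\times Y_\beta$, and once you have done that the splitting detour is superfluous.

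Your ``hedge'' is in fact the paper's proof, and it is the argument you should promote to the main line. On the triple product, $\pi_{12}^{-1}(Z^i)\cap\pi_{23}^{-1}(V^{i\,i+1})$ is the complete intersection $A\cup B\cup W$ (in the notation of Proposition \ref{th:firststage}), so $\sU^i*\sU^{i+1}*\sT^i$ is $\pi_{13*}$ of a twist of $\O_{A\cup B\cup W}$, and $f$ is $\pi_{13*}$ of a map $\O_W(\cdots)[-1]\to\O_{A\cup B}(\cdots)$ whose cone is $\O_{A\cup B\cup W}(\cdots)$. This forces that map to be non-zero on $A\smallsetminus E$ and on $B\smallsetminus E$; since $\pi_{13}$ is one-to-one there, the pushforwards -- which are exactly $p_1\circ f$ and $p_2\circ f$ restricted to dense open subsets of $V^{i\,i+1\,i}$ and $V^i$ -- are non-zero, as required. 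No stalk computation or Koszul bookkeeping beyond what Proposition \ref{th:firststage} already provides is needed.
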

\begin{proof}
Consider 
$$\sU^i * \sU^{i+1} * \sT^i \cong \Cone(\sU^i * \sU^{i+1} [-1] \rightarrow \sU^i * \sU^{i+1} * \sU^i).$$ 
Now $\pi_{12}^{-1}(Z^i)$ and $\pi_{23}^{-1}(V^{ii+1})$ meet in the complete intersection $A \cup B \cup W$ which is in the right codimension (see the notation used in the proof of \ref{th:firststage}). Hence the left hand side is $\pi_{13*}$ of $\O_{A \cup B \cup W}\E_i'' \E_{i+2}^\vee$. The right hand map is $\pi_{13*}$ of $\O_W \E_i'' \E_{i+2}^\vee [-1] \rightarrow \O_{A \cup B} \E_i'' \E_{i+2}^\vee \E_i \E_{i+1}^\vee$. Since the cone of this map is $\O_{A \cup B \cup W} \E_i'' \E_{i+2}^\vee$ the map is non-zero when restricted to $A \smallsetminus E$ and $B \smallsetminus E$. Since $\pi_{13}$ maps $A \smallsetminus E$ and $B \smallsetminus E$ one-to-one the pushforwards of these restricted maps are also non-zero. But these pushforwards are precisely $p_1 \circ f$ and $p_2 \circ f$ restricted to $V^{ii+1i} \smallsetminus \pi_{13}(E)$ and $V^i \smallsetminus \pi_{13}(E)$ respectively. 
\end{proof}

\begin{Lemma} \label{th:nohom}
$ \Hom^k(\sU^i, \sU^{i i+1 i}) = 
\begin{cases} 
0 \text{ if } k = 0,1 \\ 
\mathbb{C} \text{ if } k = 2 
\end{cases}
$
\end{Lemma}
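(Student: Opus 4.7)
The plan is to translate $\Hom^k(\sU^i, \sU^{ii+1i})$ into a concrete sheaf cohomology computation on the intersection $V^i \cap V^{ii+1i}$ inside $Y_\beta \times Y_\beta$. Writing $\sU^i = \O_{V^i} \otimes \E'_i \otimes \E_{i+1}^\vee$ and $\sU^{ii+1i} = \O_{V^{ii+1i}} \otimes (\E_i \E'_i) \otimes (\E_{i+2} \E'_{i+2})^\vee$, the Hom simplifies to $\Ext^k(\O_{V^i}, \O_{V^{ii+1i}} \otimes \sM)$ for a single equivariant line bundle $\sM$ assembled from the $\E_j, \E_j'$.

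First I would match the defining conditions of the two subvarieties to describe
\[
V^i \cap V^{ii+1i} = \{ (L_\cdot, L'_\cdot) : L_j = L'_j \text{ for } j \ne i,\ zL_{i+2} \subset L_{i-1} \},
\]
which is a $\p^1$-bundle (parametrized by the choice of $L'_i$) over the subvariety $\{zL_{i+2} \subset L_{i-1}\} \subset Y_\beta$ embedded diagonally. I would verify smoothness, the l.c.i.\ property, and that this intersection has the expected codimension inside both $V^i$ and $V^{ii+1i}$, so that the $\C^\times$-equivariant version of Lemma \ref{lem:abstractnonsense2} applies.

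With the geometry in hand, that lemma identifies the local cohomology sheaves of $\O_{V^i}^\vee \otimes \O_{V^{ii+1i}} \otimes \sM$ as $\O_{V^i \cap V^{ii+1i}}$ twisted by exterior powers of the relevant conormal bundle and by $\sM|_{V^i \cap V^{ii+1i}}$. The global $\Ext$ is then assembled from the local-to-global spectral sequence, which reduces to a Leray computation for the $\p^1$-bundle structure of $V^i \cap V^{ii+1i}$. Bott vanishing on the $\p^1$-fibres, combined with the explicit powers of $\E_j, \E_j'$ appearing in $\sM$ and in the conormal factors, should kill every contribution except a single one, yielding $0$ in degrees $0, 1$ and a single copy of $\C$ concentrated in degree $2$.

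The main obstacle will be the line bundle bookkeeping: one must check that the combined twist $\sM \otimes \wedge^\bullet N^\vee$ on each $\p^1$-fibre has exactly one surviving cohomology group, and that the resulting push-down to the base $\{zL_{i+2} \subset L_{i-1}\}$ is the trivial line bundle (so $H^0$ produces $\C$ and not something larger). This mirrors the line-bundle arithmetic performed in the proof of Proposition \ref{th:firststage} (use of the ``diamond identity'' and the isomorphism $\E_i \E_{i+1} \E_{i+2} \cong \E'_i \E'_{i+1} \E'_{i+2}$), and is the only place where the precise powers of the $\E_j, \E_j'$ enter substantively.
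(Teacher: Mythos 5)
Your proposal follows the same route as the paper: cancel line bundles to reduce to $\Hom^k(\O_{V^i}, \O_{V^{i i+1 i}}\otimes\sM)$ with $\sM=\E_i'\E_{i+1}'(\E_{i+2}^\vee)^2$, identify the intersection $V^i\cap V^{i i+1 i}$, and feed the result into Lemma \ref{lem:abstractnonsense2}. However, the Leray/Bott-vanishing layer you append is unnecessary and somewhat misplaces where the answer comes from. The intersection is cut out of $V^{i i+1 i}$ by the condition $L_{i+1}=L'_{i+1}$, i.e.\ by a section of the rank-two bundle $\mathcal{V}=\Hom(L'_{i+1}/L_{i-1},L_{i+2}/L_{i+1})$, so it has codimension $2$ there; Lemma \ref{lem:abstractnonsense2} then says the cohomology sheaves $\H^l(\O_{V^i}^\vee\otimes\O_{V^{i i+1 i}}\otimes\sM)$ vanish for $l<2$, which already gives $\Hom^0=\Hom^1=0$ with no fibrewise cohomology input whatsoever, and identifies the $l=2$ sheaf as $\O_{V^i\cap V^{i i+1 i}}\otimes\sM\otimes\det N_{V^i\cap V^{i i+1 i}/V^{i i+1 i}}$. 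The entire substance of the lemma is therefore the single identity $\det\mathcal{V}^\vee=\E_i'\E_{i+1}'(\E_{i+2}^\vee)^2=\sM$, which makes that sheaf the trivial line bundle, whence $\Hom^2=H^0(\O_{V^i\cap V^{i i+1 i}})=\C$ by irreducibility of the intersection --- this two-step packaging is exactly Corollary \ref{cor:abstractnonsense3}. You flag this determinant match as ``the main obstacle'' and defer it; it is in fact the only step that needs doing (and it does check out), while the $\mathbb{P}^1$-bundle structure of the intersection and Bott vanishing on its fibres play no role.
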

\begin{proof}
Note that $ \Hom^k(\sU^i, \sU^{i i+1 i}) = \Hom^k(\O_{V^i}, \O_{V^{i i+1 i}} \E_i' \E_{i+1}' (\E_{i+2}^\vee)^2) $ since $\sU^{i i+1 i}$ is also isomorphic to $\O_{V^{i i+1 i}} {\E'_i}^2 \E'_{i+1} \E_{i+1}^\vee (\E_{i+2}^\vee)^2$.

Now, $ V^i $ and $ V^{i i+1 i} $ intersect in a smooth subvariety of codimension 2 in $ V^{i i+1 i} $.  This subvariety is defined by the vanishing of a section of the vector bundle $ \mathcal{V} = \Hom(L'_{i+1}/L_{i-1}, L_{i+2}/L_{i+1}) $ . Hence $ \det(\mathcal{V}^\vee) = \E_i' \E_{i+1}' (\E_{i+2}^\vee)^2 $ and the result follows by Corollary \ref{cor:abstractnonsense3}.
\end{proof}

\begin{Lemma} \label{th:UiUi}
$\sU^i * \sU^i \cong \sU^i[1]\{-2\} \oplus \sU^i[-1]$
\end{Lemma}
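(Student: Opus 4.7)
The plan is to deduce the decomposition from the sphericity of $\P_\beta^i$ established in Lemma \ref{th:Pissphere}, using that $\sU^i$ is essentially the "counit convolution" $\sP_\beta^i \ast (\sP_\beta^i)_R$ up to a shift. First I will recall, from the calculation in the proof of Theorem \ref{th:kerneltwist}, that
$$\sP_\beta^i \ast (\sP_\beta^i)_R \;\cong\; \O_{V_\beta^i} \otimes \E_{i+1}^\vee \otimes \E_i' \,[-1]\{2\} \;=\; \sU^i[-1]\{2\},$$
so that $\sU^i \cong \sP_\beta^i \ast (\sP_\beta^i)_R\,[1]\{-2\}$. (The paper writes the same line bundle as $\E_i \otimes (\E_{i+1}')^\vee$, which agrees with $\E_{i+1}^\vee \otimes \E_i'$ on $V_\beta^i$ via the identity $\E_i \E_{i+1} \cong \E_i' \E_{i+1}'$ valid there.)

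Next I would use associativity of $\ast$ to write
$$\sU^i \ast \sU^i \;\cong\; \sP_\beta^i \ast \bigl((\sP_\beta^i)_R \ast \sP_\beta^i\bigr) \ast (\sP_\beta^i)_R \,[2]\{-4\},$$
and apply the exact functor $\sP_\beta^i \ast (-) \ast (\sP_\beta^i)_R\,[2]\{-4\}$ to the distinguished triangle
$$\O_\Delta \longrightarrow (\sP_\beta^i)_R \ast \sP_\beta^i \longrightarrow \O_\Delta[-2]\{2\}$$
in $D(Y_{d_i(\beta)}\times Y_{d_i(\beta)})$ produced in the proof of Lemma \ref{th:Pissphere}. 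Reading off the outer terms via the identification $\sP_\beta^i\ast(\sP_\beta^i)_R\cong \sU^i[-1]\{2\}$ yields a distinguished triangle in $D(Y_\beta\times Y_\beta)$,
$$\sU^i[1]\{-2\} \longrightarrow \sU^i \ast \sU^i \longrightarrow \sU^i[-1].$$

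The remaining step, which is the main obstacle, is to show that this triangle splits as a direct sum. This amounts to the vanishing of the connecting morphism, a class in $\Hom_{D(Y_\beta \times Y_\beta)}(\sU^i, \sU^i[3]\{-2\})$. I would verify this by Grothendieck--Verdier duality along the smooth closed inclusion $V_\beta^i \hookrightarrow Y_\beta \times Y_\beta$, reducing the Ext group to a finite direct sum of the $\C^\times$-weight-$(-2)$ pieces of the groups $H^p(V_\beta^i,\, \wedge^q N^\vee_{V_\beta^i / Y_\beta^2})$ with $p+q = 3$. Because $V_\beta^i$ is an iterated $\mathbb{P}^1$-bundle over the iterated Grassmannian bundle $Y_{d_i(\beta)}$, and because the $\C^\times$-weights of $N_{V_\beta^i/Y_\beta^2}$ are explicitly controlled by Lemma \ref{lem:facts1}, each such graded piece vanishes by a direct Bott/Borel--Weil-type calculation on the $\mathbb{P}^1$-fibres. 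Once the splitting is in place, the isomorphism $\sU^i \ast \sU^i \cong \sU^i[1]\{-2\} \oplus \sU^i[-1]$ follows immediately.
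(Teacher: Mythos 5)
Your first three steps coincide with the paper's argument: both write $\sU^i \cong \sP_\beta^i * (\sP_\beta^i)_R[1]\{-2\}$ using the computation from the proof of Theorem \ref{th:kerneltwist}, and both reduce $\sU^i * \sU^i$ to $\sP_\beta^i * \bigl((\sP_\beta^i)_R * \sP_\beta^i\bigr) * (\sP_\beta^i)_R[2]\{-4\}$. The divergence is at the splitting step, and this is where your proposal has a genuine gap. The paper never has to split a triangle on $Y_\beta \times Y_\beta$: it invokes the stronger input from the proof of Lemma \ref{th:Pissphere} that $(\sP_\beta^i)_R * \sP_\beta^i$ is itself isomorphic to the \emph{direct sum} $\O_\Delta \oplus \O_\Delta[-2]\{2\}$ (this is what Lemma \ref{lem:updown} delivers for the $\mathbb{P}^1$-bundle $q\colon X_\beta^i \to Y_{d_i(\beta)}$), so that conjugating by $\sP_\beta^i * (-) * (\sP_\beta^i)_R$ distributes over the direct sum and no further Ext vanishing is required.

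Your substitute --- showing the connecting class in $\Hom(\sU^i, \sU^i[3]\{-2\})$ vanishes by killing $H^p(V_\beta^i, \wedge^q N^\vee_{V_\beta^i/Y_\beta^2})$ for $p+q=3$ --- does not go through as written. First, the relevant sheaf Ext's of $\O_{V_\beta^i}$ into itself are the exterior powers $\wedge^q N_{V_\beta^i/Y_\beta^2}$ of the \emph{normal} bundle, not of the conormal bundle, and the tool is the local-to-global Ext spectral sequence rather than Grothendieck--Verdier duality. More seriously, the asserted vanishing of these $E_2$ terms is dubious: $N_{V_\beta^i/Y_\beta^2}$ has rank $\dim Y_\beta$ and, since $V_\beta^i = X_\beta^i \times_{Y_{d_i(\beta)}} X_\beta^i$, it contains the pullback of $T_{Y_{d_i(\beta)}}$ as a subbundle; hence $H^0(V_\beta^i, \wedge^3 N_{V_\beta^i/Y_\beta^2})$ receives contributions from $H^0(Y_{d_i(\beta)}, \wedge^3 T_{Y_{d_i(\beta)}})$, which is large and has sections in small $\C^\times$-weights. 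This is not a computation localized to the $\mathbb{P}^1$-fibres, and you have not actually carried it out. To repair the argument, either import the direct-sum form of $(\sP_\beta^i)_R * \sP_\beta^i$ as the paper does, or split your triangle by producing an explicit section of $\sU^i * \sU^i \to \sU^i[-1]$ from the unit/counit identities for the adjoint pair $(\sP_\beta^i, (\sP_\beta^i)_R)$, in the manner of the splitting arguments in Proposition \ref{prop:secondstage}.
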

\begin{proof}
From the calculation in the proof of Theorem \ref{th:kerneltwist}, we see that $ \sP_\beta^i * {\sP_\beta^i}_R \cong \sU^i[-1]\{2\}$.  Hence 
\begin{equation*} \label{eq:UiUi1}
 \sU^i * \sU^i \cong \sP_\beta^i * {\sP_\beta^i}_R * \sP_\beta^i * {\sP_\beta^i}_R [2]\{-4\}.
 \end{equation*}

From the proof of Lemma \ref{th:Pissphere}, we have that $  {\sP_\beta^i}_R * \sP^i \cong \O_\Delta \oplus \O_\Delta [-2]\{2\} $ and the result follows.
\end{proof}

We also need the following lemma which Bar-Natan \cite{BN} calls ``Gaussian elimination''.
\begin{Lemma}\label{lem:BN} 
Let $ X, Y, Z, W$ be four objects in a triangulated category.  Let $f = \left( \begin{matrix} A & B \\ C & D \end{matrix} \right) : X \oplus Y \rightarrow Z \oplus W $ be a morphism.  If $D$ is an isomorphism, then $\Cone(f) \cong \Cone(A - BD^{-1}C: X \rightarrow Z)$. 
\end{Lemma}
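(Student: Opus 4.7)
My plan is to prove the lemma by using invertible ``row and column operations'' on the matrix representation of $f$, exploiting the fact that pre- or post-composition of a morphism with an isomorphism does not change its cone (up to isomorphism).

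First I would introduce two specific automorphisms built out of $D^{-1}$. On the target side, consider the endomorphism of $Z \oplus W$ given by the matrix $R := \bigl(\begin{smallmatrix} 1 & -BD^{-1} \\ 0 & 1 \end{smallmatrix}\bigr)$, and on the source side consider the endomorphism of $X \oplus Y$ given by $L := \bigl(\begin{smallmatrix} 1 & 0 \\ -D^{-1}C & 1 \end{smallmatrix}\bigr)$. Each of these is upper/lower triangular with identity maps on the diagonal, so each has an obvious two-sided inverse (replace the off-diagonal entry by its negative); in particular both are isomorphisms.

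Next I would carry out the direct calculation that
\[
R \circ f \circ L = \begin{pmatrix} 1 & -BD^{-1} \\ 0 & 1 \end{pmatrix}\begin{pmatrix} A & B \\ C & D \end{pmatrix}\begin{pmatrix} 1 & 0 \\ -D^{-1}C & 1 \end{pmatrix} = \begin{pmatrix} A - BD^{-1}C & 0 \\ 0 & D \end{pmatrix},
\]
so the composite becomes block diagonal. Since $L$ and $R$ are isomorphisms, the standard triangulated-category fact that the cone is invariant under pre- and post-composition by isomorphisms gives $\Cone(f) \cong \Cone(R \circ f \circ L)$.

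Finally, since the right hand side is block diagonal, its cone splits as $\Cone(A - BD^{-1}C : X \to Z) \oplus \Cone(D : Y \to W)$. The hypothesis that $D$ is an isomorphism makes $\Cone(D) \cong 0$, yielding $\Cone(f) \cong \Cone(A - BD^{-1}C)$ as claimed. The only subtlety worth noting is that ``invariance of cone under isomorphism on either side'' and ``splitting of cone of a block-diagonal morphism'' are standard consequences of the axioms of a triangulated category (TR3 and the fact that $\Cone(\mathrm{id}) = 0$), so there is no serious obstacle; the proof is essentially a formal matrix manipulation.
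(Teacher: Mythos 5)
Your proof is correct and follows essentially the same route as the paper: both arguments conjugate $f$ by the triangular automorphisms built from $D^{-1}$ to reduce to the block-diagonal morphism $\bigl(\begin{smallmatrix} A-BD^{-1}C & 0 \\ 0 & D\end{smallmatrix}\bigr)$, then use invariance of the cone under isomorphisms and the vanishing of $\Cone(D)$. The only cosmetic difference is that the paper packages the conjugation as a commutative square with vertical isomorphisms rather than writing $R\circ f\circ L$ explicitly.
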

\begin{proof}
This is essentially Lemma 4.2 from \cite{BN}, but we include a proof for completeness. They key is the following commutative diagram. The two vertical maps are isomorphisms and consequently the two horizontal cones must be isomorphic. 
\begin{equation*}
\begin{CD}
X \oplus Y @>{{\left( \begin{matrix} A & B \\ C & D \end{matrix} \right)}}>> Z \oplus W \\
@V{{\left( \begin{matrix} 1 & 0 \\ D^{-1}C & 1 \end{matrix} \right)}}VV @VV{{\left( \begin{matrix} 1 & -BD^{-1} \\ 0 & 1 \end{matrix} \right)}}V \\
X \oplus Y @>>{{\left( \begin{matrix} A-BD^{-1}C & 0 \\ 0 & D \end{matrix} \right)}}> Z \oplus W  
\end{CD}
\end{equation*}
\end{proof}

Now we compute the next stage. 

\begin{Proposition}\label{prop:secondstage}
$ \sU^i * \sT^{i+1} * \sU^i \cong \Cone(\sU^i[-2] \rightarrow \sU^{i i+1 i}) $ where the map is non-zero.
\end{Proposition}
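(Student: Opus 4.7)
The plan is to mimic the braid-relation calculation of \cite{KT}. Using the triangle $\O_\Delta[-1] \to \sU^{i+1} \to \sT^{i+1}$ from (\ref{eq:Tiascone}), convolution with $\sU^i$ on both sides gives a distinguished triangle
\begin{equation*}
\sU^i * \sU^i [-1] \xrightarrow{f} \sU^i * \sU^{i+1} * \sU^i \longrightarrow \sU^i * \sT^{i+1} * \sU^i.
\end{equation*}
By Lemma \ref{th:UiUi} the source splits as $\sU^i\{-2\} \oplus \sU^i[-2]$ and by Proposition \ref{th:firststage} the target splits as $\sU^{i i+1 i} \oplus \sU^i\{-2\}$, so $f$ becomes a matrix $\bigl(\begin{smallmatrix} A & B \\ C & D \end{smallmatrix}\bigr)$ with $A \colon \sU^i\{-2\} \to \sU^{i i+1 i}$, $B \colon \sU^i[-2] \to \sU^{i i+1 i}$, $C \colon \sU^i\{-2\} \to \sU^i\{-2\}$, and $D \colon \sU^i[-2] \to \sU^i\{-2\}$.

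The heart of the argument is to show that the diagonal entry $C$ is an isomorphism. Tracing $f$ through the two splittings realizes $C$ as the composition
\begin{equation*}
\sU^i\{-2\} \hookrightarrow \sU^i * \sU^i [-1] \xrightarrow{f} \sU^i * \sU^{i+1} * \sU^i \twoheadrightarrow \sU^i\{-2\}.
\end{equation*}
Since $\End(\sU^i) \cong \C$, it is enough to show this composition is nonzero, which is the direct analogue of the $p_2 \circ f \ne 0$ assertion of Lemma \ref{lem:p1p2}: the method of proof there (unravel the pullback on $Y_\beta \times Y_\beta \times Y_\beta$ using the blow-up/peace-variety description of $A \cup B$ from the proof of Proposition \ref{th:firststage}, and observe that after restriction to an appropriate Zariski open subset the map is an isomorphism) carries over verbatim, provided one substitutes the splitting $\sU^i * \sU^i \cong \sU^i[1]\{-2\} \oplus \sU^i[-1]$ of Lemma \ref{th:UiUi} in place of the one used there. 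Once $C$ is invertible, Gaussian elimination (Lemma \ref{lem:BN}) collapses both copies of $\sU^i\{-2\}$, yielding
\begin{equation*}
\sU^i * \sT^{i+1} * \sU^i \cong \Cone\bigl( \sU^i[-2] \xrightarrow{B - AC^{-1}D} \sU^{i i+1 i} \bigr).
\end{equation*}

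To finish, I would verify that $B - AC^{-1}D$ is nonzero. By Lemma \ref{th:nohom} the space $\Hom^0(\sU^i[-2], \sU^{i i+1 i})$ is one-dimensional, so nonvanishing on any test object will suffice. The component $B$ is the analogue of $p_1 \circ f$ from Lemma \ref{lem:p1p2} and is nonzero by the same argument. If $A$ or $D$ vanishes (which can be checked by a $\C^\times$-weight count on $\Hom^0(\sU^i, \sU^{i i+1 i}\{2\})$ and $\Hom^2(\sU^i, \sU^i\{-2\})$, using that $V^i \subset Y_\beta \times Y_\beta$ has an explicit Koszul resolution) then $B - AC^{-1}D = B \ne 0$; otherwise one separates $B$ from the correction $AC^{-1}D$ by evaluating both on a single object such as the structure sheaf of a generic point.

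The main obstacle is the bookkeeping step of identifying $C$ and $B$ explicitly. In particular one must sort out which of the two $\sU^i$ summands in $\sU^i * \sU^i$ (corresponding to the $\O_\Delta$ and $\O_\Delta[-2]\{2\}$ pieces of $\sP^i_R * \sP^i$ in the proof of Lemma \ref{th:UiUi}) maps nontrivially to which summand of $\sU^i * \sU^{i+1} * \sU^i$ (the shield $V^{i i+1 i}$ versus the lower diamond $V^i$ of Proposition \ref{th:firststage}), and the matching requires a careful diagram chase through the blow-up descriptions, Koszul resolutions, and determinantal line bundle identities already established. None of this is conceptually new beyond Lemma \ref{lem:p1p2}; the task is simply to carry out the parallel analysis.
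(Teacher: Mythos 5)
Your skeleton is exactly the paper's: the cone triangle from (\ref{eq:Tiascone}), the splittings of source and target via Lemma \ref{th:UiUi} and Proposition \ref{th:firststage}, the $2\times 2$ matrix, and Gaussian elimination via Lemma \ref{lem:BN}. However, the two steps you defer as bookkeeping are where all the content lies, and neither goes through as routinely as you claim. A preliminary remark: your entry $A\colon \sU^i\{-2\} \to \sU^{ii+1i}$ vanishes outright, since $V^i$ and $V^{ii+1i}$ meet in codimension $2$ inside $V^{ii+1i}$, so by Corollary \ref{cor:abstractnonsense3} the groups $\Hom^k(\sU^i\{j\}, \sU^{ii+1i})$ vanish for $k<2$ and any equivariant shift $j$ (this is the content of Lemma \ref{th:nohom}). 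Thus the corrected map is just your $B$ and the ``generic point'' fallback is unnecessary --- which is fortunate, because evaluating kernels on a single test object cannot in general distinguish two maps of FM kernels.

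The genuine gaps are in the two non-vanishing claims. (1) Your $B\colon \sU^i[-2]\to\sU^{ii+1i}$ is not ``the analogue of $p_1\circ f$'': it is the composition $h\circ g$ where $g\colon \sU^i[-2]\to \sU^i*\sU^{i+1}[-1]$ is an adjunction map and $h=p_1\circ f$ is the map of Lemma \ref{lem:p1p2}. Each of $g$ and $h$ lies in a one-dimensional Hom space and is nonzero, but a composition of nonzero maps in a triangulated category can vanish; the paper must identify both Ext classes as sections of line bundles on intersections (Corollary \ref{cor:abstractnonsense3}) and invoke the compatibility of Yoneda composition with multiplication of sections (Lemma \ref{lem:abstractnonsense4}, applied to $S''=V^i$, $S'=V^{ii+1}$, $S=V^{ii+1i}$) to conclude $h\circ g\ne 0$. (2) Your claim that the invertibility of $C$ follows ``verbatim'' from the geometric unraveling in Lemma \ref{lem:p1p2} is doubtful: that lemma analyzes the map induced by the counit $\O_\Delta[-1]\to\sU^i$ in the \emph{rightmost} slot of the triple convolution, whereas $C$ is induced by the counit $\O_\Delta[-1]\to\sU^{i+1}$ in the \emph{middle} slot, and the relevant loci in $Y_\beta\times Y_\beta\times Y_\beta$ are not the same. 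The paper instead argues formally: using the triangle identity for the adjoint pair built from $\sP_\beta^i$ and $\sP^i_{\beta R}$ it shows that the composition $\sU^i[-1]\{-2\}\to\sU^i*\sU^i[-2]\to\sU^i[-1]\{-2\}$ of the inclusion with the adjunction projection is the identity, and then uses $\Hom(\sU^i[-1]\{-2\},\sU^{ii+1i}[-1])=0$ to force the resulting nonzero map into the $\sU^i\{-2\}$ summand. You would need to supply an argument of this kind, or genuinely redo the intersection theory for the middle-slot counit, before Gaussian elimination can be applied.
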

\begin{proof}
By equation (\ref{eq:Tiascone}), we have
\begin{equation*}
\begin{split}
\sU^i * \sT^{i+1} * \sU^i &\cong \Cone( \sU^i * \sU^i[-1] \rightarrow \sU^i * \sU^{i+1} * \sU^i) 
\end{split}
\end{equation*}

However, by Lemma \ref{th:UiUi}, we have that
\begin{equation*}
 \sU^i * \sU^i[-1] \cong \sU^i[-2] \oplus \sU^i \{-2\}.
\end{equation*}
and by Proposition \ref{th:firststage}, we have
\begin{equation*}
\sU^i * \sU^{i+1}* \sU^i \cong \sU^{i i+1 i} \oplus \sU^i\{-2\}.
\end{equation*}

Denote by $\left( \begin{matrix} A & B \\ C & D \end{matrix} \right)$ the map $\sU^i[-2] \oplus \sU^i \{-2\} \rightarrow \sU^{i i+1 i} \oplus \sU^i \{-2\}$ above. Since $\Hom(\sU^i\{-2\}, \sU^{i i+1 i}) = 0$ we get $B=0$ so by Lemma \ref{lem:BN} it suffices to show that $D$ is invertible and $A \ne 0$.

We first show that $A \ne 0$. Consider the following commutative diagram equipped with the natural adjunction morphisms: 
\begin{equation*}
\begin{CD}
\sU^i[-2] @>g>> \sU^i * \sU^{i+1}[-1] @>h>> \sU^{i i+1 i} \\
@Vf'VV @V{f}VV @| \\
\sU^i * \sU^i [-1] @>{g'}>> \sU^i * \sU^{i+1} * \sU^i @>p_1>> \sU^{i i+1 i}
\end{CD}
\end{equation*}
By Lemma \ref{lem:p1p2} the map $h := p_1 \circ f$ is non-zero. Now $\Cone(g) \cong \sU^i * \sT^{i+1}[-1]$ which is isomorphic to $\O_{V^i \cup V^{i i+1}}$ up to twisting by a line bundle. This means $g$ is non-zero. 

Now, as we saw in the proof of \ref{th:firststage}, $\sU^i * \sU^{i+1} \cong \O_{V^{i i+1}} \otimes \E'_i \E_{i+2}^\vee$. Meanwhile, $ V^{i i+1 i} \cap V^{i i+1} $ is cut out of $ V^{i i+1 i} $ by a section of the line bundle $ \Hom(L_i/L_{i-1}, L'_{i+2}/L'_{i+1})$, since we are imposing the condition that $ L_i \subset L'_{i+1}$.  Hence using Corollary \ref{cor:abstractnonsense3} we get
\begin{equation*}
\Hom(\sU^i * \sU^{i+1}[-1], \sU^{i i+1 i}) \cong \Hom(\O_{V^{i i+1}}[-1], \O_{V^{i i+1 i}}  \E_i  {\E'_{i+2}}^\vee) \cong \C.
\end{equation*}
Thus $h$ is the unique non-zero map. 

Similarly, $V^i \cap V^{i i+1}$ inside $V^{i i+1}$ is cut out by a section of $\Hom(L_{i+1}/L_{i}, L'_{i+2}/L'_{i+1})$ and so
\begin{equation*}
\Hom(\sU^i[-1], \sU^i * \sU^{i+1}) \cong \Hom(\O_{V^i}[-1], \O_{V^{i i+1}} \E_{i+1} {\E'_{i+2}}^\vee) \cong \C
\end{equation*}
and thus $g$ is the unique non-zero map. 

Taking $S'' = V^i$, $S' = V^{i i+1}$ and $S = V^{i i+1 i}$ we have the situation from Lemma \ref{lem:abstractnonsense4}. The commutativity of the diagram there implies that the composition $h \circ g$ is non-zero. This means $A = p_1 \circ g' \circ f' = p_1 \circ f \circ g = h \circ g$ is non-zero.


Next we show that $D$ is invertible. This time consider the following commutative diagram equipped with the natural adjunction morphisms and the inclusion $i$: 
\begin{equation*}
\begin{CD}
\sU^i[-1]\{-2\} @>i>> \sU^i * \sU^i[-2] @>{g'}>> \sU^i * \sU^{i+1} * \sU^i [-1] \\
@. @VV{\alpha}V @VVV \\
@. \sU^i[-1]\{-2\} @>{\alpha'}>> \sU^{i+1} * \sU^i \{-2\}
\end{CD}
\end{equation*}
First we claim that $\alpha' \ne 0$. As above, this is because $\Cone(\alpha') \cong \sT^{i+1} * \sU^i \{-2\}$ is isomorphic to $\O_{V^i \cup V^{i+1 i}}$ up to twisting by some line bundle. 

Next we claim that $\alpha \circ i $ is a non-zero multiple of the identity. This is because we have the diagram
\begin{equation*}
\begin{CD}
\sP^i * \sP^i_R \{-4\} @>>> \sP^i * (\sP^i_R * \sP^i) * \sP^i_R \{-4\} \cong \sP^i * \sP^i_R * (\sP^i * \sP^i_R) \{-4\} @>>> \sP^i * \sP^i_R \{-4\} \\
@VV{\cong}V @VV{\cong}V @VV{\cong}V \\
\sU^i[-1]\{-2\} @>i>> \sU^i * \sU^i[-2] @>{\alpha}>> \sU^i[-1]\{-2\}
\end{CD}
\end{equation*}
where, as in the proof of \ref{th:UiUi}, $\sP^i = \O_{X_\beta^i} \otimes \E_i'$. The top row is a composition of the adjoint maps indicated by the parentheses. The left square commutes (up to non-zero scalar) because the only non-zero map from $\sU^i[-1]\{-2\}$ to $\sU^i * \sU^i[-2] \cong \sU^i[-1]\{-2\} \oplus \sU^i[-3]$ is the inclusion map $i$ into the first factor. The right square commutes by definition. 

The composition along the top row is the identity because the composition of adjoint maps
$$\sP^i_R \rightarrow (\sP^i_R * \sP^i) * \sP^i_R \cong \sP^i_R * (\sP^i * \sP^i_R) \rightarrow \sP^i_R$$
is the identity. This can be seen as in the proof of Theorem 3.4 from \cite{kh1} to follow from the general theory of adjoint functors (one can view $\sP^i *$ and $\sP^i_R *$ as adjoint functors). Consequently, $\alpha \circ i \ne 0$ and since $\Hom(\sU^i, \sU^i) \cong \C$ it must be the identity (up to non-zero scalar). 

All this means that $\alpha' \circ \alpha \circ i \cong \alpha' \ne 0$ and hence $g' \circ i \ne 0$. Finally, since $\Hom(\sU^i[-1]\{-2\}, \sU^{i i+1 i}[-1]) = 0$, we find that $D = p_2 \circ g' \circ i \ne 0$ and hence $D$ must be invertible (since the only non-zero endomorphism of $\sU^i$ is the identity).

\end{proof}

\begin{Proposition}\label{prop:thirdstage}
$ \sU^i * \sT^{i+1} * \sT^i \cong \O_{V^{i i+1 i} \cup V^{i i+1}} \E'_i \E_{i+2}^\vee $
\end{Proposition}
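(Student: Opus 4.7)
My plan is to follow the strategy used in Proposition \ref{prop:secondstage}: unfold each $\sT^j$ factor via the triangle $\O_\Delta[-1] \to \sU^j \to \sT^j$, reducing the computation to convolutions of $\sU$-kernels already handled in Propositions \ref{th:firststage} and \ref{prop:secondstage}. The cleanest route is to unfold the middle crossing $\sT^{i+1}$, giving the distinguished triangle
$$\sU^i * \sT^i [-1] \longrightarrow \sU^i * \sU^{i+1} * \sT^i \longrightarrow \sU^i * \sT^{i+1} * \sT^i.$$
I would then compute each of the first two terms separately and assemble.

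First, I would compute $\sU^i * \sT^i$. Expanding $\sT^i$ gives a triangle $\sU^i[-1] \to \sU^i * \sU^i \to \sU^i * \sT^i$, and by Lemma \ref{th:UiUi} we have $\sU^i * \sU^i \cong \sU^i[1]\{-2\} \oplus \sU^i[-1]$. The connecting map $\sU^i[-1] \to \sU^i[1]\{-2\} \oplus \sU^i[-1]$ has vanishing first component (there are no Hom's of the relevant degree, as in Lemma \ref{th:nohom}) and an isomorphism onto the second summand (because the composition of adjunction maps for $\sP^i$ is the identity on $\sP^i_R$, as in the proof of Proposition \ref{prop:secondstage}, and $\End(\sU^i) \cong \C$). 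Thus $\sU^i * \sT^i \cong \sU^i[1]\{-2\}$, so $\sU^i * \sT^i[-1] \cong \sU^i\{-2\}$.

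Next, I would compute $\sU^i * \sU^{i+1} * \sT^i$ by expanding $\sT^i$ on the right to obtain
$$\sU^i * \sU^{i+1}[-1] \cong \O_{V^{i i+1}}\E'_i\E_{i+2}^\vee[-1] \longrightarrow \sU^i * \sU^{i+1} * \sU^i \longrightarrow \sU^i * \sU^{i+1} * \sT^i,$$
where the middle term is $\sU^{i i+1 i} \oplus \sU^i\{-2\}$ by Proposition \ref{th:firststage}. The component of the connecting map into $\sU^i\{-2\}$ vanishes for codimension reasons (by Lemma \ref{lem:abstractnonsense2} the relevant Hom group is zero), while the component into $\sU^{i i+1 i}$ is the unique (up to scalar) nonzero morphism, given via Corollary \ref{cor:abstractnonsense3} by the fact that $V^{i i+1} \cap V^{i i+1 i}$ is a divisor in $V^{i i+1}$ cut out by the appropriate section. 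The cone of this unique morphism is the structure sheaf of the union $V^{i i+1 i} \cup V^{i i+1}$ (tensored by $\E'_i\E_{i+2}^\vee$), exactly by the same extension argument used at the end of Proposition \ref{prop:secondstage}. Hence
$$\sU^i * \sU^{i+1} * \sT^i \cong \sU^i\{-2\} \oplus \O_{V^{i i+1 i} \cup V^{i i+1}}\E'_i\E_{i+2}^\vee.$$

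Finally, I would return to the resulting triangle $\sU^i\{-2\} \to \sU^i\{-2\} \oplus \O_{V^{i i+1 i} \cup V^{i i+1}}\E'_i\E_{i+2}^\vee \to \sU^i * \sT^{i+1} * \sT^i$. Tracing back the adjunctions that define this map, its component landing in the $\sU^i\{-2\}$ summand is nonzero, hence an isomorphism (as $\End(\sU^i) \cong \C$), while its component into $\O_{V^{i i+1 i} \cup V^{i i+1}}\E'_i\E_{i+2}^\vee$ must vanish for codimension reasons. Gaussian elimination (Lemma \ref{lem:BN}) then cancels the $\sU^i\{-2\}$ summands and yields the claimed isomorphism $\sU^i * \sT^{i+1} * \sT^i \cong \O_{V^{i i+1 i} \cup V^{i i+1}}\E'_i\E_{i+2}^\vee$. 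The main obstacle is verifying at each stage that the connecting morphisms are the unique (up to scale) nonzero maps and that the relevant components are genuine isomorphisms; both are handled by combining Hom-space dimension computations via Lemma \ref{lem:abstractnonsense2} and Corollary \ref{cor:abstractnonsense3} with careful bookkeeping of adjunction-induced maps, as in Proposition \ref{prop:secondstage}.
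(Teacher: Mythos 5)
Your overall strategy (unfolding the middle crossing $\sT^{i+1}$ rather than the rightmost $\sT^i$) is legitimate in principle, and your Step 1 is essentially correct: $\sU^i * \sT^i \cong \sU^i[1]\{-2\}$ does hold, and the cone computation goes through once the component into the $\sU^i[-1]$ summand is shown to be an isomorphism via the adjunction-composition argument. (Your cited justification for the vanishing of the other component is off --- Lemma \ref{th:nohom} concerns $\Hom^k(\sU^i, \sU^{i\,i+1\,i})$, not $\Hom(\sU^i,\sU^i[2]\{-2\})$ --- but that vanishing is not actually needed, since Gaussian elimination on the invertible component already gives the answer.)

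The genuine gap is in Step 2. The connecting map there is exactly the map $f \colon \sU^i * \sU^{i+1}[-1] \to \sU^i * \sU^{i+1} * \sU^i \cong \sU^{i\,i+1\,i} \oplus \sU^i\{-2\}$ appearing in Lemma \ref{lem:p1p2}, and that lemma proves $p_2 \circ f \neq 0$: the component of $f$ into the $\sU^i\{-2\}$ summand is \emph{nonzero}, not zero. Nor do "codimension reasons" help: $V^{i\,i+1} \cap V^i$ is a divisor in each of $V^{i\,i+1}$ and $V^i$, so Lemma \ref{lem:abstractnonsense2} and Corollary \ref{cor:abstractnonsense3} give $\Ext^1(\O_{V^{i\,i+1}}\sL_1, \O_{V^i}\sL_2) \cong \C$ rather than $0$. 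Consequently $\sU^i * \sU^{i+1} * \sT^i$ is a sheaf in which $\O_{V^{i\,i+1}}$ is glued nontrivially to \emph{both} $\O_{V^{i\,i+1\,i}}$ and $\O_{V^i}$; it does not split off the summand $\sU^i\{-2\}$ as you claim, and Step 3 (whose key nonvanishing assertion is also left unverified) collapses with it. The paper avoids this by unfolding the rightmost $\sT^i$ instead, so that the middle term of its triangle is $\sU^i * \sT^{i+1} * \sU^i \cong \Cone(\sU^i[-2] \xrightarrow{A} \sU^{i\,i+1\,i})$, already computed in Proposition \ref{prop:secondstage}; an octahedron argument using $p \circ \beta = \mathrm{id}$ then identifies $\sU^i * \sT^{i+1} * \sT^i$ with $\Cone(p_1 \circ f)$, which is recognized as $\O_{V^{i\,i+1\,i} \cup V^{i\,i+1}}\E'_i\E_{i+2}^\vee$ via the divisor short exact sequence. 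If you want to keep your decomposition, you must confront the nonsplit extension honestly rather than assert that the offending component vanishes.
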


\begin{proof}
Let's study the following commutative diagram where every row is a distinguished triangle:
\begin{equation*}
\begin{CD}
\sU^i[-2] @>>> \sU^i * \sU^{i+1} [-1] @>>> \sU^i * \sT^{i+1} [-1] \\
@VV{\beta}V @VV{f}V @VV{m}V \\
\sU^i * \sU^i [-1] @>>> \sU^i * \sU^{i+1} * \sU^i @>>> \sU^i * \sT^{i+1} * \sU^i \\
@VV{p}V @VV{p_1}V @VV{\phi}V \\
\sU^i[-2] @>A>> \sU^{i i+1 i} @>>> \sU^i * \sT^{i+1} * \sU^i
\end{CD}
\end{equation*}
The top row of vertical maps are adjunction maps. The maps $p$ and $p_1$ are projection maps while $\phi$ is an isomorphism. We know from the proof of Proposition \ref{prop:secondstage} that the two lower rectangles commute. We wish to compute $\sU^i * \sT^{i+1} * \sT^i \cong \Cone(m) \cong \Cone(\phi \circ m)$. 

The composition $p_1 \circ f$ is non-zero by Lemma \ref{lem:p1p2}. The composition $p \circ \beta$ can be seen to be the identity from the following commutative diagram
\begin{equation*}
\begin{CD}
\sP^i * \sP^i_L [-3] @>>> \sP^i * \sP^i_L * (\sP^i * \sP^i_L) [-3] \cong \sP^i * (\sP^i_L * \sP^i) * \sP^i_L) [-3] @>>> \sP^i * \sP^i_L [-3] \\
@VV{\cong}V @VV{\cong}V @VV{\cong}V \\
\sU^i[-2] @>{\beta}>> \sU^i * \sU^i[-1] @>p>> \sU^i[-2]
\end{CD}
\end{equation*}
where we use that $\sU^i \cong \sP * \sP_L [-1]$. The reason the right rectangle commutes is that there is a unique non-zero map from $\sU^i * \sU^i [-1] \cong \sU^i[-2] \oplus \sU^i \{-2\}$ to $\sU^i[-2]$, namely the projection $p$. The composition in the upper row is the identity because the following natural composition of adjoint maps is the identity
$$\sP^i_L \rightarrow \sP^i_L * (\sP^i * \sP^i_L) \cong (\sP^i_L * \sP^i) * \sP^i_L \rightarrow \sP^i_L.$$
Thus $p \circ \beta \ne 0$ which means it must be the identity. Subsequently we end up with the following commutative diagram
\begin{equation*}
\begin{CD}
\sU^i[-2] @>>> \sU^i * \sU^{i+1} [-1] @>>> \sU^i * \sT^{i+1} [-1] \\
@VV{p \circ \beta}V @VV{p_1 \circ f}V @VV{\phi \circ m}V \\
\sU^i[-2] @>A>> \sU^{i i+1 i} @>>> \sU^i * \sT^{i+1} * \sU^i
\end{CD}
\end{equation*}

Since $p \circ \beta = id$, direct appeal to the octahedron axiom of triangulated categories implies that $\Cone(\phi \circ m) \cong \Cone(p_1 \circ f).$ Thus 
\begin{equation} \label{eq:tri2}
\sU^i * \sT^{i+1} * \sT^i \cong \Cone(p_1 \circ f: \sU^i * \sU^{i+1} [-1] \rightarrow \sU^{i i+1 i}).
\end{equation}

As we saw in the proof of in the proof of Proposition \ref{prop:secondstage} $h := p_1 \circ f$ is the unique non-zero map. On the other hand, there is a distinguished triangle
\begin{equation} \label{eq:tri1}
\O_{V^{i i+1}}[-1] \rightarrow \O_{V^{i i+1 i}}(-V^{i i+1}) \rightarrow \O_{V^{i i+1} \cup V^{i i+1 i}}. 
\end{equation}
Since $V^{i i+1} \cap V^{i i+1 i}$ inside $V^{i i+1 i}$ is cut out by a section of the line bundle $ Hom(L_i/L_{i-1}, L'_{i+2}/L'_{i+1})$ we get 
$$\O_{V^{i i+1 i}}(-V^{i i+1}) \otimes \E'_i \E_{i+2}^\vee \cong \O_{V^{i i+1 i}} \otimes \E_i (\E'_{i+2})^\vee \E'_i \E_{i+2}^\vee \cong \sU^{i i+1 i}.$$ 
Hence tensoring the distinguished triangle (\ref{eq:tri1}) with the $ \E'_i \E_{i+2}^\vee $ gives 
\begin{equation*}
\O_{V^{i i+1}} \E'_i \E_{i+2}^\vee  [-1] \cong \sU^i * \sU^{i+1} [-1] \rightarrow \sU^{i i+1 i} \rightarrow \O_{V^{i i+1} \cup V^{i i+1 i}} \E'_i \E_{i+2}^\vee.
\end{equation*}
Comparing this triangle with (\ref{eq:tri2}) gives $\sU^i * \sT^{i+1} * \sT^i \cong \O_{V^{i i+1} \cup V^{i i+1 i}} \E'_i \E_{i+2}^\vee$
\end{proof}

We are now ready for the final step.

\begin{Proposition}\label{prop:fourthstage}
$\sT^i * \sT^{i+1} * \sT^i = \O_{Z^{i i+1 i}} $
\end{Proposition}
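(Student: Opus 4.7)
The plan is to complete the cascade of Propositions \ref{th:firststage}--\ref{prop:thirdstage} by applying the fundamental triangle $\O_\Delta[-1] \to \sU^i \to \sT^i$ from (\ref{eq:Tiascone}) one last time, now to the leftmost factor of $\sT^i * \sT^{i+1} * \sT^i$. This yields a distinguished triangle
$$\sT^{i+1} * \sT^i [-1] \to \sU^i * \sT^{i+1} * \sT^i \to \sT^i * \sT^{i+1} * \sT^i,$$
so that $\sT^i * \sT^{i+1} * \sT^i$ is the cone of some morphism $\alpha \colon \sT^{i+1} * \sT^i [-1] \to \sU^i * \sT^{i+1} * \sT^i$. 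By Proposition \ref{prop:thirdstage}, the middle term is the sheaf $\O_{V^{i i+1 i} \cup V^{i i+1}} \otimes \E'_i \otimes \E_{i+2}^\vee$.

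The first task is to identify $\sT^{i+1} * \sT^i$. I would apply (\ref{eq:Tiascone}) once more to obtain the triangle
$$\sT^i[-1] \to \sU^{i+1} * \sT^i \to \sT^{i+1} * \sT^i,$$
and then use the same fibre-product methods as in Proposition \ref{prop:thirdstage} (with indices shifted and only one crossing present) to show $\sU^{i+1} * \sT^i \cong \O_{V^{i+1} \cup V^{i+1 i}} \otimes \E'_{i+1} \otimes \E_{i+2}^\vee$. A unique-extension argument via Corollary \ref{cor:abstractnonsense3}, together with scheme-theoretic identification of $\Delta \cup V^i$ with the closure of the diagonal part and $V^{i+1} \cup V^{i+1 i}$ with its complementary piece, then gives $\sT^{i+1} * \sT^i \cong \O_{Z^{i+1 i}}$, where $Z^{i+1 i} = \Delta \cup V^i \cup V^{i+1} \cup V^{i+1 i}$.

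With both ends of the main triangle identified, the remaining task is to verify that the cone of $\alpha$ is $\O_{Z^{i i+1 i}}$. Note that the six components of $Z^{i i+1 i}$ split naturally as the four components of $Z^{i+1 i}$ together with the shield $V^{i i+1 i}$ and the ladder $V^{i i+1}$. Correspondingly there is a Mayer--Vietoris short exact sequence
$$0 \to \O_{Z^{i i+1 i}} \to \O_{Z^{i+1 i}} \oplus \bigl(\O_{V^{i i+1 i} \cup V^{i i+1}} \otimes \E'_i \otimes \E_{i+2}^\vee\bigr) \to \O_{J} \otimes \sL \to 0,$$
where $J = (Z^{i+1 i}) \cap (V^{i i+1 i} \cup V^{i i+1})$ and $\sL$ is the line bundle determined by the diamond identity $\E_i \E_{i+1} \E_{i+2} \cong \E'_i \E'_{i+1} \E'_{i+2}$ on $V^{i i+1 i}$. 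To match this with our triangle it suffices to check that $\Hom^1(\O_{Z^{i+1 i}}, \O_{V^{i i+1 i} \cup V^{i i+1}} \otimes \E'_i \otimes \E_{i+2}^\vee)$ is one-dimensional and that $\alpha$ represents the nonzero class. The Hom computation reduces componentwise to Corollary \ref{cor:abstractnonsense3}, while nonvanishing of $\alpha$ follows by restricting to the open complement of $V^{i i+1 i} \cup V^{i i+1}$, where the triangle detects the nondiagonal part of $\sT^{i+1} * \sT^i$.

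The main obstacle will be the equivariant line-bundle bookkeeping on each of the six components and on the pairwise intersections, especially the matching of twists on $V^{i i+1 i}$ and on $\Delta$ via the diamond identity, and showing that the relevant $\Ext^1$ is exactly one-dimensional so that the extension class is forced. Once these checks are complete, the same argument with the roles of $i$ and $i{+}1$ interchanged computes $\sT^{i+1} * \sT^i * \sT^{i+1}$ and yields the same kernel $\O_{Z^{i i+1 i}}$, which establishes (\ref{eq:casetoprove}) and hence Theorem \ref{th:impliesR3}.
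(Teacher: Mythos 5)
Your overall architecture is the same as the paper's: write $\sT^i * \sT^{i+1} * \sT^i$ as the cone of a map $\O_{Z^{i+1\, i}}[-1] \to \O_{V^{i\, i+1\, i} \cup V^{i\, i+1}} \otimes \E'_i \otimes \E_{i+2}^\vee$ using Proposition \ref{prop:thirdstage} and $\sT^{i+1}*\sT^i \cong \O_{Z^{i+1\,i}}$, exhibit $\O_{Z^{i\, i+1\, i}}$ as a cone of the same shape, and conclude by a one-dimensional $\Hom$ computation via Corollary \ref{cor:abstractnonsense3}. However, the step where you produce the second cone is wrong as written. A Mayer--Vietoris sequence $0 \to \O_{A \cup B} \to \O_A \oplus \O_B \to \O_{A \cap B} \to 0$ has restriction maps as its arrows; you cannot insert the twist $\E'_i \otimes \E_{i+2}^\vee$ into the summand $\O_{V^{i\,i+1\,i}\cup V^{i\,i+1}}$ and compensate by twisting the intersection term --- there is no natural map from $\O_{Z^{i\,i+1\,i}}$ into a nontrivially twisted structure sheaf of a component, so the displayed sequence is not an exact sequence of sheaves, and even if it were, it would express $\O_{Z^{i\,i+1\,i}}$ as a kernel rather than as the cone of a degree-one map from $\O_{Z^{i+1\,i}}[-1]$. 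The correct tool is the decomposition (ideal-sheaf) sequence
$$0 \to \O_{A}(-A\cap B) \to \O_{A \cup B} \to \O_{B} \to 0, \qquad A = V^{i\,i+1\,i}\cup V^{i\,i+1},\ B = Z^{i+1\,i},$$
in which the twist appears automatically: $A \cap B$ is cut out of $A$ by the condition $L'_i \subset L_{i+1}$, i.e.\ by a section of $\Hom(L'_i/L_{i-1}, L_{i+2}/L_{i+1})$, so $\O_A(-A\cap B) \cong \O_A \otimes \E'_i \otimes \E_{i+2}^\vee$. This gives exactly the triangle you need, and then Corollary \ref{cor:abstractnonsense3} forces the two cones to agree.

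Two smaller points. First, your argument that $\alpha \ne 0$ by ``restricting to the open complement of $V^{i\,i+1\,i}\cup V^{i\,i+1}$'' does not work: on that open set the target of $\alpha$ vanishes, so the restriction of $\alpha$ is automatically zero and detects nothing. The clean argument is that $\alpha$ is obtained by applying the invertible operation $(-)*\sT^{i+1}*\sT^i$ to the nonzero map $\O_\Delta[-1]\to\sU^i$, hence is nonzero. Second, your route to $\sT^{i+1}*\sT^i \cong \O_{Z^{i+1\,i}}$ via $\sU^{i+1}*\sT^i$ and a uniqueness-of-extension argument is workable but unnecessarily indirect; since all the relevant fibre products are transverse this convolution can be computed directly.
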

\begin{proof}
Using the fact that the first $ \sT^i $ is a twist, we have that
\begin{equation*}
\sT^i * \sT^{i+1} * \sT^i = \Cone( \sT^{i+1} * \sT^i [-1] \rightarrow \sU^i * \sT^{i+1} * \sT^i )
\end{equation*}
Note that this map is non-zero since it is the result of applying the invertible functor $ * \sT^{i+1} * \sT^i $ to the nonzero map $ \O_\Delta[-1] \rightarrow \sU^i $.

It is easy to see that $ \sT^{i+1} * \sT^i = \O_{Z^{i+1 i}} $.  Combining this with the last proposition we see that
\begin{equation} \label{eq:firstcone}
\sT^i * \sT^{i+1} * \sT^i = \Cone( \O_{Z^{i+1 i}}[-1] \rightarrow \O_{V^{i i+1 i} \cup V^{i i+1}} \E'_i \E_{i+2}^\vee )
\end{equation}

Now, note that $ Z^{i+1 i} \cap (V^{i i+1 i} \cup V^{i i+1}) $ is cut out of $(V^{i i+1 i} \cup V^{i i+1})$ by the condition that $ L'_i \subset L_{i+1} $ and hence by a section of the line bundle $ \Hom(L'_i/L_{i-1}, L_{i+2}/L_{i+1}) $.  Thus we see that $ \O_{V^{i i+1 i} \cup V^{i i+1}}(-Z^{i+1 i}) = \O_{V^{i i+1 i} \cup V^{i i+1}} \E'_i \E_{i+2}^\vee $.  From this we get the exact triangle
\begin{equation} \label{eq:secondcone}
\O_{V^{i i+1 i}} \E'_i \E_{i+2}^\vee \rightarrow \O_{Z^{i i+1 i}} \rightarrow  \O_{Z^{i+1 i}}
\end{equation}
By Corollary \ref{cor:abstractnonsense3}, $ \Hom(\O_{Z^{i+1}}[-1], \O_{V^{i i+1 i} \cup V^{i i+1}} \E_i' \E_{i+2}^\vee) $ is one dimensional and hence the result follows by comparing (\ref{eq:firstcone}) and (\ref{eq:secondcone}).
\end{proof}

Replacing $i+1$ by $i-1$ in all the calculations above yields $\sT^i * \sT^{i-1} * \sT^i \cong \O_{Z^{i i-1 i}}$ where
$$Z^{i i-1 i} := \{ (L_\bullet, L'_\bullet) : L_j = L'_j \text{ for } j \ne i, i-1 \}.$$
Thus 
$$\sT^i * \sT^{i+1} * \sT^i \cong \O_{Z^{i i+1 i}} \cong \O_{Z^{i+1 i i+1}} \cong \sT^{i+1} * \sT^i * \sT^{i+1}$$
proving Theorem \ref{th:impliesR3}.

\subsection{Invariance Under Vertical Isotopies From Figure \ref{f3}}\label{se:invdistant}
The invariance under vertical isotopies follows very similarly as in \cite{kh1}.  Hence we only include one proof in this paper.

\begin{Proposition}\label{prop:capscups}   We have invariance under cap-cap isotopies, i.e.
\begin{equation}
\G_{\beta}^{i+k} \circ \G_{d_{i+k}(\beta)}^i = \G_{\beta}^i \circ \G_{d_i(\beta)}^{i+k-2} 
\end{equation}
\end{Proposition}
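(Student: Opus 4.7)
The plan is to follow the strategy of Proposition 6.1 of \cite{kh1} and prove the identity directly at the level of Fourier--Mukai kernels on $Y_{d_id_{i+k}(\beta)} \times Y_\beta$. First I would set up the two convolutions: on the left-hand side, work on the triple product $Y_{d_id_{i+k}(\beta)} \times Y_{d_{i+k}(\beta)} \times Y_\beta$ and write
\begin{equation*}
\sG_\beta^{i+k} * \sG_{d_{i+k}(\beta)}^i \;=\; \pi_{13*}\!\bigl(\pi_{12}^*\sG_{d_{i+k}(\beta)}^i \otimes \pi_{23}^* \sG_\beta^{i+k}\bigr),
\end{equation*}
and analogously for the right-hand side using $Y_{d_id_{i+k}(\beta)} \times Y_{d_i(\beta)} \times Y_\beta$.

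The key geometric observation is that the hypothesis $k \geq 2$ makes the two cap positions $\{i,i+1\}$ and $\{i+k,i+k+1\}$ disjoint, so their defining conditions $zL_{i+1}\subset L_{i-1}$ and $zL_{i+k+1}\subset L_{i+k-1}$ involve independent sets of coordinates. A direct local verification, analogous to Corollary \ref{lem:transverse}, shows that the two subvarieties $\pi_{12}^{-1}(X^{\cdot}_{\cdot})$ and $\pi_{23}^{-1}(X^{\cdot}_{\cdot})$ meet transversely in each triple product. Hence the tensor product of pullbacks of structure sheaves is the structure sheaf of the scheme-theoretic intersection $W$, and $\pi_{13}$ restricts to an isomorphism from $W$ onto the common subvariety $\tilde X \subset Y_{d_id_{i+k}(\beta)} \times Y_\beta$ cut out by both cap conditions simultaneously.

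It then remains to compare the line-bundle twists and grading shifts on both sides. On each side the pushforward yields $\O_{\tilde X}$ tensored with a product of powers of the bundles $\E'_i$ and $\E'_{i+k}$ pulled back from $Y_\beta$, which are intrinsic to $\tilde X$ and hence agree. For the shifts, the naive total contributions $\{-(i-1)(m-1)-(i+k-1)(m-1)\}$ on the left and $\{-(i+k-3)(m-1)-(i-1)(m-1)\}$ on the right differ by $\{2(m-1)\}$; this discrepancy is absorbed by the additional $\{2\}$-shifts generated when line bundles are pulled back through the $q$-projections in the cap case $\beta_j+\beta_{j+1}=m$ (as in Lemma \ref{lem:facts0}), since the $q$-maps used on the two sides apply $z$ a different number of times when identifying the $L_j$'s on the intermediate factor with those on $Y_\beta$.

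The main obstacle is precisely this grading-shift bookkeeping: one has to track carefully how the identifications $L_j^{d_{?}(\beta)} \leftrightarrow z^a L_{j+b}^\beta$ propagate through the convolutions, and verify that the cumulative $\{2\}$-shifts from the $z$-action exactly cancel the $\{2(m-1)\}$ discrepancy between the two orderings. Once that is done, both sides are visibly isomorphic to $\O_{\tilde X}$ with the same line-bundle twist and the same grading shift, completing the proof.
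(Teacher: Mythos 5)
Your proposal follows essentially the same route as the paper's proof: both convolve the kernels on the triple product, use transversality of the two pulled-back cap correspondences to identify the tensor product with the structure sheaf of their intersection $W$, observe that $\pi_{13}$ maps $W$ isomorphically onto the common subvariety, and then reconcile the $\{2(m-1)\}$ discrepancy in the naive equivariant shifts via the $\{2\}$-shifted isomorphisms of line bundles induced by $z$ (in the paper, $\E'_{i+k-2}\{2\beta_{i+k}\}\cong\E''_{i+k}$ combined with $\beta_{i+k}\beta_{i+k+1}=m-1$). The approach and the identified mechanism for the shift cancellation are both correct.
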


\begin{proof}
Let $ \beta' = d_i(d_{i+k}(\beta)) = d_{i+k-2} (d_i(\beta))$.  So we are considering the equality of two functors from $ D(Y_\beta') $ to $ D(Y_\beta) $.

The kernel of $\G_\beta^{i+k} \circ \G_{d_k(\beta)}^{i}$ is given by 
$$\sG_\beta^{i+k} \ast \sG_{d_{i+k}(\beta)}^{i} = \pi_{13\ast} \left( \pi_{12}^\ast(\sG_{d_{i+k}(\beta)}^{i}) \otimes \pi_{23}^\ast(\sG_\beta^{i+k}) \right) $$

We have 
\begin{equation*}
 \sG_{d_{i+k}(\beta)}^i = \O_{X_{d_{i+k}(\beta)}^i} \otimes (\E'_i)^{\beta_{i+1}} \{-(m-1)(i-1)\}  \text{ and } \sG_\beta^{i+k} = \O_{X_\beta^{i+k}} \otimes (\E'_i)^{\beta_{i+k+1}} \{-(m-1)(i+k-1)\}. 
\end{equation*}
Let $ W = \pi_{12}^{-1}(X_{d_k(\beta)}^i) \cap \pi_{23}^{-1}(X_{\beta}^{i+k}) $.  This intersection is transverse and so we have 
\begin{equation*}
\pi_{12}^*(\O_{X_{d_k(\beta)}^i}) \otimes \pi_{23}^*(\O_{X_\beta^{i+k}}) \cong \O_W.
\end{equation*}

A routine computation shows that
\begin{equation*}
\begin{aligned}
W = \{ (L_\cdot, L'_\cdot, L''_\cdot) \in Y_{\beta'} \times Y_{d_k(\beta)} \times Y_\beta :\ &L_j = L'_j \text{ for } j \le i-1,\ L_j = zL'_{j+2} \text{ for } j \ge i-1, \\
&L'_j = L''_j \text{ for } j \le i+k-1,\ L'_j = zL''_{j+2} \text{ for } j \ge i+k-1 \}.
\end{aligned}
\end{equation*}
Hence on $ W $ there is an isomorphism of line bundles 
\begin{equation} \label{eq:lineiso2}
 \E'_i \cong \E''_i 
 \end{equation}

The fibres of $ \pi_{13} $ restricted to $ W $ are points and 
\begin{equation*}
\begin{aligned}
\pi_{13}(W) = \{ (L_\cdot, L'_\cdot) \in Y_{\beta'} \times Y_\beta :\ &L_j = L'_j \text{ for } j \le i-1,\ L_j = zL'_{j+2} \text{ for } i-1 \le j \le i + k -1,\\
&L_j = z^2L'_{j+4} \text{ for } j \ge i+k -1 \}.
\end{aligned}
\end{equation*}

Hence
\begin{equation} \label{eq:lhs2}
\begin{aligned}
{\pi_{13}}_\ast (\pi_{12}^\ast(\sG_{d_{i+k}(\beta)}^i) \otimes \pi_{23}^\ast(\sG_\beta^{i+k})) 
= \O_{\pi_{13}(W)} \otimes  (\E'_i)^{\beta_{i+1}} \otimes (\E'_{i+k})^{\beta_{i+k+1}} \{-(m-1)(2i+k-2)\}
\end{aligned}
\end{equation}
where in the second last step we used the above isomorphism (\ref{eq:lineiso2}).

Now, we want to compute $ \sG_\beta^i \ast \sG_{d_i(\beta)}^{i+k-2} $. Again the intersection $ W' := \pi_{12}^{-1}(X_{d_i(\beta)}^{i+k-2}) \cap \pi_{23}^{-1}(X_\beta^i) $ is transverse.  We have
\begin{equation*}
\begin{aligned}
W' = \{ (L_\cdot, L'_\cdot, L''_\cdot) :\ &L_j = L'_j \text{ for } j \le i+k-3,\ L_j = zL'_{j+2} \text{ for } j \ge i+k-3, \\
&L'_j = L''_j \text{ for } j \le i-1,\ L'_j = zL''_{j+2} \text{ for } j \ge i+1 \}.
\end{aligned}
\end{equation*}
Hence on $ W' $ the map $ z $ induces an isomorphism of line bundles
\begin{equation} \label{eq:lineiso3}
\E'_{i+k-2}\{2\beta_{i+k}\} \cong \E''_{i+k}.
\end{equation}

Also we see that the fibres of $ \pi_{13} $ restricted to $ W' $ are points and $ \pi_{13}(W') = \pi_{13}(W) $. Hence
\begin{equation} \label{eq:rhs2}
\begin{aligned}
{\pi_{13}}_\ast &(\pi_{12}^\ast(\sG_{d_i(\beta)}^{i+k-2}) \otimes \pi_{23}^\ast(\sG_\beta^i))\\ &= {\pi_{13}}_\ast( \O_{\pi_{12}^{-1}(X_{d_i(\beta)}^{i+k-2})} \otimes \O_{\pi_{23}^{-1}(X_\beta^i)} (\E'_{i+k-2})^{\beta_{i+k+1}}  (\E''_i)^{\beta_{i+1}} ) \{-(m-1)(2i +k -4)\} \\
&\cong {\pi_{13}}_\ast( \O_{W'}  (\E''_{i+k})^{\beta_{i+k+1}} \{-2\beta_{i+k}\beta_{i+k+1} \} (\E''_i)^{\beta_{i+1}}) \{ -(m-1)(2i +k -4)\}\\
&= \O_{\pi_{13}(W)} (\E'_i)^{\beta_{i+1}}  (\E'_{i+k})^{\beta_{i+k+1}} \{-(m-1)(2i + k-2)\}
\end{aligned}
\end{equation}
where in the second last step we have used the isomorphism (\ref{eq:lineiso3}) and in the last step we have used that $ \beta_{i+k} \beta_{i+k+1} = m-1 $.

Since (\ref{eq:lhs2}) and (\ref{eq:rhs2}) agree, we conclude that $ \sG_{\beta}^{i+k} * \sG_{d_{i+k}(\beta)}^i = \sG_{\beta}^i * \sG_{d_i(\beta)}^{i+k-2} $ which implies the desired result.
\end{proof}

\subsection{Invariance Under Pitchfork Move}\label{se:pitchfork}

\begin{Theorem}
The following pitchfork moves hold.
\begin{gather}
\T_{s_i(\beta)}^i(1) \circ \G_{s_i(\beta)}^{i+1} = \T_{s_{i+1}(\beta)}^{i+1}(2) \circ \G_{s_{i+1}(\beta)}^i \label{eq:pitch1} \\  
\T_{s_i(\beta)}^i(2) \circ \G_{s_i(\beta)}^{i+1} = \T_{s_{i+1}(\beta)}^{i+1}(1) \circ \G_{s_{i+1}(\beta)}^i \label{eq:pitch2}
\end{gather}
\end{Theorem}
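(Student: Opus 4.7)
I plan to establish \eqref{eq:pitch1} by direct computation of both sides as Fourier-Mukai kernels on $Y_{\beta_0} \times Y_\beta$, where $\beta_0 := d_{i+1}(s_i(\beta)) = d_i(s_{i+1}(\beta))$; these two sequences coincide because both caps force $\beta_i + \beta_{i+2} = m$, so that the strands at positions $i$ and $i+2$ are absorbed in the cap and only $\beta_{i+1}$ survives. The identity \eqref{eq:pitch2} will then follow from \eqref{eq:pitch1} by taking left adjoints and using Lemmas \ref{th:Gadj} and \ref{th:twistadjoint} (together with the fact that left adjoint of a composition is the reverse composition of left adjoints).

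For the left hand side I work on the triple product $Y_{\beta_0} \times Y_{s_i(\beta)} \times Y_\beta$ and form the convolution $\pi_{13\ast}(\pi_{12}^\ast \sG_{s_i(\beta)}^{i+1} \otimes \pi_{23}^\ast \sT_{s_i(\beta)}^i(1))$. The supports of the two pulled-back kernels are $\pi_{12}^{-1}(X_{s_i(\beta)}^{i+1})$ and $\pi_{23}^{-1}(Z_{s_i(\beta)}^i)$, and these meet in the expected codimension in the middle factor by an argument analogous to Corollary \ref{lem:transverse}, so the tensor product of their structure sheaves is the structure sheaf of the scheme-theoretic intersection. A direct description of this intersection shows that $\pi_{13}$ maps it onto a subvariety $X \subset Y_{\beta_0} \times Y_\beta$ cut out by a single "twisted diagonal" condition analogous to the one defining $X_\beta^i$. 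Because $\sT_{s_i(\beta)}^i(1)$ is given by two distinct explicit formulas depending on whether the crossed strands $(s_i(\beta))_i = \beta_{i+1}$ and $(s_i(\beta))_{i+1} = \beta_i$ have equal or unequal labels, the computation must be carried out separately in these two subcases; in each the image is described by the same $X$ and $\pi_{13}$ is an isomorphism onto $X$ away from a lower-dimensional locus (whose contribution pushes forward to zero for cohomological-degree reasons, much as in Proposition \ref{prop:capscups}). The right-hand side is computed in an entirely analogous manner on $Y_{\beta_0} \times Y_{s_{i+1}(\beta)} \times Y_\beta$, and its $\pi_{13}$-pushforward turns out to be supported on exactly the same subvariety $X$.

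It remains to identify the line-bundle twists and the cohomological and grading shifts on the two sides. Both resulting kernels are of the form $\O_X \otimes \sL$, and I will check that the two $\sL$'s agree using Lemmas \ref{lem:facts0} and \ref{lem:facts1} together with relations such as $\E_j \otimes \E_{j+1} \cong \E'_j \otimes \E'_{j+1}$, which hold on $X$ because the ambient determinant bundles $\det(L_{j+1}/L_{j-1})$ and $\det(L'_{j+1}/L'_{j-1})$ coincide there. Matching the homological $[\cdot]$ and grading $\{\cdot\}$ shifts — which is precisely what the peculiar normalizations of $\sG, \sF$, and $\sT$ in section \ref{sse:functor} were engineered to accomplish — is the main obstacle: it is a tedious but mechanical bookkeeping exercise, with no conceptual difficulty once the geometric equality of supports is in hand.
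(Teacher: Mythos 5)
Your strategy for \eqref{eq:pitch1} --- convolving the two kernels on the triple product, checking that the pulled-back supports meet in the expected codimension, that $\pi_{13}$ carries the intersection isomorphically onto a common subvariety of $Y_{d_{i+1}(s_i(\beta))}\times Y_\beta$, and then matching the line bundles and the $[\cdot]$, $\{\cdot\}$ shifts via Lemmas \ref{lem:facts0} and \ref{lem:facts1} --- is exactly the paper's approach, including the need to split into cases according to the values of $\beta_i,\beta_{i+1},\beta_{i+2}$. (One small point: in the case the paper writes out, $\pi_{13}$ is injective on the \emph{entire} support, since the middle flag is determined by the outer two; the lower-dimensional locus you hedge against does not arise.)

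The genuine gap is your claim that \eqref{eq:pitch2} follows from \eqref{eq:pitch1} by taking left adjoints. Passing to left adjoints reverses the order of composition and, by Lemmas \ref{th:Gadj} and \ref{th:twistadjoint}, converts the cap $\G$ into the cup $\F$ (up to a shift) and $\T^i(1)$ into $\T^i(2)$. Concretely,
\begin{equation*}
\bigl(\T_{s_i(\beta)}^i(1)\circ\G_{s_i(\beta)}^{i+1}\bigr)_L \;\cong\; \bigl(\G_{s_i(\beta)}^{i+1}\bigr)_L\circ\bigl(\T_{s_i(\beta)}^i(1)\bigr)_L \;\cong\; \F_{s_i(\beta)}^{i+1}[m-1]\{-(m-1)\}\circ\T_{\beta}^i(2),
\end{equation*}
which is a functor $D(Y_\beta)\rightarrow D(Y_{d_{i+1}(s_i(\beta))})$ in the \emph{opposite} direction. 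Thus adjunction turns \eqref{eq:pitch1} into the cup version of the pitchfork move with a type-$2$ crossing --- which is precisely how the paper obtains the cup identities --- and not into \eqref{eq:pitch2}, which is still a cap identity. Nor can \eqref{eq:pitch2} be extracted formally from \eqref{eq:pitch1} using Reidemeister II: composing both identities with the inverse crossings reduces the implication to the assertion that $\T_\beta^i(2)\circ\T_{s_{i+1}(\beta)}^{i+1}(2)$ and $\T_\beta^i(1)\circ\T_{s_{i+1}(\beta)}^{i+1}(1)$ agree after precomposition with the cap, which is new geometric content. So \eqref{eq:pitch2} requires its own direct kernel computation, structurally parallel to the one you outline for \eqref{eq:pitch1} but with $\sT(2)$ in place of $\sT(1)$ and the normalizing shifts rebalanced; as written, your proposal omits this entirely.
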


\begin{proof}
The computations are all ``direct'', i.e. all intersections are transverse everywhere, all pushforwards are 1-1.  So it is just a matter of checking that the varieties and line bundles line up.  Unfortunately, we need to break into cases depending on $ \beta_i, \beta_{i+1}, \beta_{i+2} $.  Here we will present the proof of one case of (\ref{eq:pitch1})

Assume that $ \beta $ is a string with substring $ (1,1,m-1) $ starting in the $i$th position.  Let $ \beta' = s_{i+1}(\beta) $.  We have that
\begin{equation*}
\sT_\beta^i(2) = \O_{Z_\beta^i} \E_i' \E_{i+1}^\vee [-m+1]\{m+1\} \quad \sG_\beta^{i+1} = \O_{X_\beta^{i+1}} {\E'_{i+1}}^{m-1} \{-i (m-1) \}
\end{equation*}
and
\begin{equation*}
\sT_{\beta'}^{i+1}(1) = \O_{Z_{\beta'}^{i+1}} {\E'_{i+1}}^{m-2} {\E_{i+2}}^{2-m} [-m+1]\{2m-2\}  \quad \sG_{\beta'}^i = \O_{X_{\beta'}^i} {\E'_i}^{m-1} \{-(i-1)(m-1)\} 
\end{equation*}

Hence the left hand side of the first equation is
\begin{equation*}
\begin{aligned}
\sT_\beta^i(2) * \sG_\beta^{i+1} &= {\pi_{13}}_* ( \pi_{12}^* \sG_\beta^{i+1} \otimes \pi_{23}^* \sT_\beta^i(2) ) \\
&= {\pi_{13}}_*( \O_{\pi_{12}^{-1}(X_\beta^{i+1})} {\E'_{i+1}}^{m-1} \{-i(m-1)\} \otimes \O_{\pi_{23}^{-1}(Z_\beta^i)} \E_i'' {\E_{i+1}'}^\vee [-m+1]\{m+1\} )\\
&= {\pi_{13}}_*( \O_W \otimes \E_i'' {\E'_{i+1}}^{m-2}) [-m+1]\{m+1-i(m-1)\}
\end{aligned}
\end{equation*}
where
\begin{equation*}
W = \{ (L_\bullet, L'_\bullet, L''_\bullet) : zL'_{i+2} = L'_i = L_i, L'_j = L''_j \text{ for } j \ne i \}
\end{equation*}
On $ W $, we see that by Lemma \ref{lem:facts0}, $ \det(L'_{i+2}/L'_i) = \O_W\{2b_\beta^i +2m\}$.  Hence $ \E'_{i+1} = {\E'_{i+2}}^\vee \{2b_\beta^i + 2m\}$.  Also $ \E'_{i+2}= \E''_{i+2} $.  Hence we see that
\begin{equation} \label{eq:LHSp1}
\sT_{1,1,m-1}^i(2) * \sG_{1,1,m-1}^{i+1} = \O_{\pi_{13}(W)} \otimes \E_i' {\E'_{i+2}}^{2-m} [-m+1]\{m+1-i(m-1) + (m-2)(2b_\beta^i + 2m)\}
\end{equation}

For the right hand side, we first note that on $ Z_{\beta'}^{i+1}$, we have $ \E_{i+1} \E_{i+2} \cong \det( L_{i+2}/L_i) \cong \E'_{i+1} \E'_{i+2} $ by the diamond identity.  Hence we can rewrite the kernel as $ \sT_{\beta'}^{i+1}(1) \cong \O_{Z_{\beta'}^{i+1}} {\E_{i+1}}^{m-2} {\E'_{i+2}}^{2-m} [-m+1]\{2m-2\}$.  

So the right hand of the first equation is
\begin{equation*}
\begin{aligned}
\sT_{\beta'}^{i+1}(1) * \sG_{\beta'}^i &= {\pi_{13}}_* ( \pi_{12}^* \sG_{\beta'}^i \otimes \pi_{23}^* \sT_{\beta'}^{i+1}(1) ) \\
&= {\pi_{13}}_*( \O_{\pi_{12}^{-1}(X_{\beta'}^i)} {\E'_i}^{m-1} \{-(i-1)(m-1)\} \otimes \O_{\pi_{23}^{-1}(Z_{\beta'}^{i+1})} {\E'_{i+1}}^{m-2} {\E''_{i+2}}^{2-m} [-m+1]\{2m-2\} ) \\
&= {\pi_{13}}_*( \O_{W'} \otimes {\E'_i}^{m-1} {\E'_{i+1}}^{m-2} {\E''_{i+2}}^{2-m}) [-m+1]\{-(i-3)(m-1)\}
\end{aligned}
\end{equation*}
where
\begin{equation*}
W' = \{ (L_\bullet, L'_\bullet, L''_\bullet) : zL'_{i+1} = L'_{i-1} = L_{i-1}, L'_j = L''_j \text{ for } j \ne i+1 \}
\end{equation*}
On $ W' $, we see that by Lemma \ref{lem:facts0}, $ \det(L'_{i+1}/L'_{i-1}) = \O_{W'}\{2b_{\beta'}^i +2m\}$.  Hence $ \E'_{i+1} = {\E'_i}^\vee \{2b_{\beta'}^{i-1} + 2m\}$.  Also $ \E'_i= \E''_i $.  Hence we see that
\begin{equation} \label{eq:RHSp1}
\sT_{1,1,m-1}^i(2) * \sG_{1,1,m-1}^{i+1} = \O_{\pi_{13}(W')} \otimes \E_i' {\E'_{i+2}}^{2-m}  [-m+1]\{-(i-3)(m-1) + (m-2)(2b_{\beta'}^{i-1} + 2m) \}.
\end{equation}

Since $ b_\beta^i - b_{\beta'}^{i-1}  = 1 $ we have
\begin{equation*}
m+1-i(m-1) + (m-2)(2b_\beta^i + 2m)= -(i-3)(m-1) + (m-2)(2b_{\beta'}^{i-1} + 2m) ,
\end{equation*}
and so the equivariant shifts in (\ref{eq:LHSp1}) and (\ref{eq:RHSp1}) are the same. Since $ \pi_{13}(W) = \pi_{13}(W') $, we see that (\ref{eq:LHSp1}) and (\ref{eq:RHSp1}) agree and so the first pitchfork equation (\ref{eq:pitch1}) holds.

\end{proof}

The pitchfork identities involving cups are obtained from those above involving caps by taking the left adjoint -- thus their validity is almost immediate. 

\section{Tangle graphs and relation to Khovanov-Rozansky Homology} \label{se:graphs}

The goal of this section is to extend our invariant to tangle graphs.  In particular, we will show that on closed crossingless graphs, our invariant is determined by the MOY relations.  This allows us to prove that our knot homology theory is a catgorification of the Reshetikhin-Turaev invariant.  We also conjecture a relation to Khovanov-Rozansky homology.

\subsection{Reshetikhin-Turaev invariants of tangle graphs}

A $(\beta, \beta')$ {\bf tangle graph} is an isotopy class of planar oriented tangle diagram with edges labelled from $ \{1,2,m-2,m-1\} $ which also contains trivalent vertices as shown in Figure \ref{f:tri}.   We will also insist that each cup, cap and crossing involve only the $1, m-1$ edges (the cup, cap requirement is just for simplicity).  A tangle graph without crossings we call a {\bf crossingless tangle graph} (such diagrams are also known as webs).  We usually distinguish the edges labelled $2, m-2$ by drawing them with a thick line.  As usual, arrows pointing down correspond to $1,2$ strands and arrows pointing up correspond to $m-2, m-1$ strands.  We have basic tangle graphs, $ g_\beta^i, f_\beta^i, t_\beta^i(l) $ as before along with two extra $ p_\beta^i, q_\beta^i$ corresponding to trivalent vertices.  Specifically, $ p_\beta^i $ (which is only defined when $ \beta_i = \beta_{i+1} \in \{1, m-1 \} $) is a $(d_i(\beta), \beta) $ tangle with a trivalent vertex in the $i$th column and $ q_\beta^i $ is a $ (\beta, d_i(\beta))$ tangle with a trivalent vertex in the $i$th column.

\begin{figure}
\begin{center}
\psfrag{b_i}{$\beta_i$}
\psfrag{b_i+1}{$\beta_{i+1}$}
\psfrag{b_i+b_i+1}{$\beta_i + \beta_{i+1}$}
\puteps[0.5]{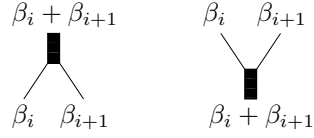}
\end{center}
\caption{The two possible trivalent vertices.}\label{f:tri}
\end{figure}

Such tangle graphs admit a Reshetikhin-Turaev \cite{RT} invariant, where we associate to the trivalent vertex a fixed map from $ V_{\omega_1} \otimes V_{\omega_1} \rightarrow V_{\omega_2} $ (and corresponding dual versions).  In the notation of \cite{RT}, the trivalent vertex is a ``coupon''.  The RT invariant of the basic graphs obeys the identities 
\begin{equation} \label{eq:resolve}
\psi(t_\beta^i(1)) = q^m(q^{-1} - \psi(p_\beta^i q_\beta^i)) \quad
\psi(t_\beta^i(2)) = q^{-m}( q - \psi(p_\beta^i q_\beta^i))
\end{equation}
in the case that $ \beta_i = \beta_{i+1} \in \{1, m-1 \} $.  Thus the Restikhin-Turaev invariant of a tangle graph with a crossing is determined by the RT invariants of the two tangle graphs made by resolving the crossing as shown in Figure \ref{f:resolution}.

\begin{figure}
\begin{center}
\psfrag{0}{$0$}
\psfrag{1}{$1$}
\psfrag{-1}{$-1$}
\puteps[0.5]{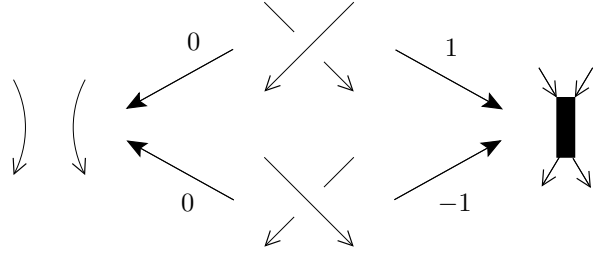}
\end{center}
\caption{The two possible resolutions of a crossing.}\label{f:resolution}
\end{figure}

In particular, the RT invariant of link $K $ equals a sum over RT invariants of (0,0) crossingless graphs (which we call \textbf{closed graphs}) made from resolutions of $ K $.  Hence it is interesting to consider the RT invariants of these closed graphs.  In the case of a closed graph $ \psi(\Gamma)$ is a $\C[q,q^{-1}] $ linear map  $ \C[q, q^{-1}] \rightarrow \C[q,q^{-1}] $ and so can just be regarded as a polynomial.  It is known that for any closed graph $ \Gamma $, $\psi(\Gamma) = P_\Gamma(q)$ is a polynomial whose coefficients are non-negative integers.  In \cite{MOY}, Murakami, Ohtsuki, and Yamada found the following relations which are satisfied by the RT invariants of closed graphs (see Figure \ref{f:MOY}). 
\begin{enumerate}
\item[(ia)] $\psi(f_\beta^i \circ g_\beta^i) =  [m]_q$ 
\item[(ib)] $\psi(f_\beta^{i + 1} \circ p_\beta^i \circ q_\beta^i \circ g_\beta^{i + 1}) =  [m-1]_q = \psi( f_{\beta'}^i \circ p_{\beta'}^{i+1} \circ q_{\beta'}^{i+1}) \circ g_{\beta'}^i)$
 where $\beta$ and $\beta'$ have strings $(1,1,m-1)$ and $(m-1,1,1)$ starting in the $i$th slot
\item[(iia)] $\psi(q_\beta^i \circ p_\beta^i) = [2]_q$
\item[(iib)] $\psi(f_\beta^i \circ u_\beta^{i-1} \circ u_\beta^{i+1} \circ g_\beta^i) = \psi(g_\beta^i \circ f_\beta^i) + [m-2]_q$
\item[(iii)] $\psi(u_\beta^{i+1} \circ u_\beta^i \circ u_\beta^{i+1}) + \psi(u_\beta^i) = \psi(u_\beta^i \circ u_\beta^{i+1} \circ u_\beta^i) + \psi(u_\beta^{i+1})$ where $\beta$ has a string $(1,1,1)$ or $(m-1,m-1,m-1)$ starting in the $i$th slot
\end{enumerate}
Here $u_\beta^i = p_\beta^i \circ q_\beta^i $ and $ [n]_q = (q^n - q^{-n}) / (q - q^{-1}) $.

The following result shows that these relations determine $ \psi(\Gamma) $.  
\begin{Lemma} \label{lem:MOY}
Suppose that $ \Gamma \mapsto \phi(\Gamma) \in \Z[q,q^{-1}] $ is a polynomial invariant of closed graphs which statisfies the (de-categorified) MOY skein relations.  Then $ \phi(\Gamma) = \psi(\Gamma) $ for any closed graph $\Gamma$.
\end{Lemma}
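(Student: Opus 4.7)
The plan is to argue by induction on a suitable measure of complexity of $\Gamma$ (say, the number of trivalent vertices), reducing every closed graph to a disjoint union of simple loops on which both $\phi$ and $\psi$ are forced to agree. The key input --- which is the heart of the Murakami--Ohtsuki--Yamada paper \cite{MOY} --- is that every non-empty closed MOY web (with edges labelled in $\{1,2,m-2,m-1\}$) contains some local region matching the left hand side of one of the relations (ia)--(iii). Granting this, a single application of the appropriate relation rewrites $\phi(\Gamma)$ as a $\Z[q,q^{-1}]$-linear combination of values $\phi(\Gamma_j)$ on strictly simpler webs, and the identical linear combination computes $\psi(\Gamma)$ from the values $\psi(\Gamma_j)$; induction finishes the argument.

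More concretely, I would proceed as follows. First I would handle the base case in which $\Gamma$ has no trivalent vertices: then $\Gamma$ is a disjoint union of $k$ simple loops, each labelled $1$ or $m-1$, and $k$ applications of relation (ia) give $\phi(\Gamma) = [m]_q^k = \psi(\Gamma)$ (noting that an $(m-1)$-loop and a $1$-loop have the same RT value because $V_{\omega_{m-1}} \cong V_{\omega_1}^\vee$, which is already built into the MOY relation (ia) as stated). Next, for the inductive step I would locate a reducible face, apply the MOY move there to write
\[
\phi(\Gamma) \;=\; \sum_j c_j(q)\,\phi(\Gamma_j), \qquad c_j(q) \in \Z[q,q^{-1}],
\]
with each $\Gamma_j$ having fewer vertices than $\Gamma$, and use the inductive hypothesis $\phi(\Gamma_j) = \psi(\Gamma_j)$ together with the fact that $\psi$ satisfies exactly the same MOY relations to conclude $\phi(\Gamma) = \psi(\Gamma)$.

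The main obstacle is the combinatorial claim that every non-trivial closed web admits a reducible face. This is not automatic, since one must show that at least one of the five specific local configurations appearing in (ia)--(iii) actually occurs. It is proved in \cite{MOY} by a Euler characteristic argument: counting faces of the planar graph underlying $\Gamma$, weighted by the numbers of their sides and by the edge labels on their boundaries, forces at least one face to have ``small'' combinatorial type, and a short case analysis identifies each such face with the left hand side of one of (ia)--(iii). I would quote this result from \cite{MOY} rather than reproduce the argument, since in our setting --- where all edges carry labels in $\{1,2,m-2,m-1\}$ --- the list of possible faces is in fact shorter than in the general MOY setup, so no new combinatorial input is required.
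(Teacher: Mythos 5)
Your inductive framework is the natural one, but the step you defer to \cite{MOY} is precisely the part that is missing, and it is not in that paper. Murakami--Ohtsuki--Yamada define their graph invariant directly by a state sum and then \emph{verify} that it satisfies the relations; they do not prove that every closed web contains a face matching the left-hand side of one of (ia)--(iii), nor any Euler-characteristic reducibility statement of the kind you describe. Indeed, the paper explicitly remarks that this lemma is ``folklore'' with no complete proof known to the authors in the literature, citing only Rasmussen (section 4.2 of \cite{Ra}) for the braid-like case. Even granting that a discharging argument produces a face with few sides, one would still have to check that every such face, with its orientations and labels, is one of the five specific configurations appearing in the relations --- and the five relations listed here are only a subset of the full MOY calculus, so this is a genuine case analysis that nobody has written down. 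As it stands, your proof assumes the conclusion's hardest ingredient.

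The paper's actual argument sidesteps the combinatorics entirely. Since $\phi$ satisfies the MOY relations, one can use the resolution formula (\ref{eq:resolve}) to extend $\phi$ to a link invariant; invariance follows from the proof of Theorem 3.1 of \cite{MOY}, and this link invariant satisfies the $\mathfrak{sl}_m$ skein relations, hence agrees with $\psi$ on links (a link invariant satisfying the skein relations is determined by its value on the unknot). To pass back from links to graphs, one replaces every thick edge of $\Gamma$ by a crossing to obtain a link $K$; expanding each crossing via (\ref{eq:resolve}) gives $\phi(K) = c\,\phi(\Gamma) + (\text{terms with fewer thick edges})$ with $c \ne 0$, and the same identity for $\psi$, so induction on the number of thick edges finishes the proof. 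If you want to keep your direct approach, you would need to either supply the face-reduction argument yourself for this restricted class of webs, or cite a source that actually contains it; otherwise I recommend adopting the link-invariant detour.
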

\begin{proof}
The result is ``folklore'' but the authors do not know a complete proof in the literature.  A proof of it in the case of ``braid-like'' graphs may be found in section 4.2 of \cite{Ra}.  The proof for general graphs can be deduced from this case with some effort.

Alternatively, one has the following argument pointed out to us by a referee. Since $ \phi $ satisfies the MOY skein relations it can be used to define a link invariant (also denoted $ \phi $) by (\ref{eq:resolve}).  The fact that this gives a link invariant follows from the proof of Theorem 3.1 in \cite{MOY}.  This link invariant $ \phi $ will satisfy the $\sl$ skein relations (see Theorem 3.2 in \cite{MOY}).  This link invariant $\phi $ is determined by the skein relations and its value on the trivial knot. Thus $\phi$ and $\psi$ agree on links. 

Now, we claim that this implies that they also agree on graphs.  To see this, for each graph $ \Gamma $, construct a link $K $ where all thick edges (those labelled $2$ or $m-2$) are replaced by crossings (arbitrarily over or under).  Then by (\ref{eq:resolve}), $ \phi(K) = c\phi(\Gamma) + \text{terms which involve graphs with fewer thick edges}$ (and simlarly for $ \psi$), where $ c \ne 0$.  By induction on the number of thick edges, the result follows.
\end{proof}

For categorified theories, one expects that (\ref{eq:resolve}) becomes categorified to a cone and that the MOY relations become categorified.  This is what happens with Khovanov-Rozansky's theory.

\subsection{Functors from tangle graphs}
Now, we will extend our invariant $ \Psi $ to tangle graphs.  Given any $(\beta, \beta') $ tangle graph $ \Gamma $, we will assign a functor $ \Psi(\Gamma) : D_\beta \rightarrow D_{\beta'} $, by assigning such a functor to basic tangle graphs.  For caps, cups, and crossings we use the same functor as before.   For the trivalent vertices recall that if $\beta_i = \beta_{i+1} \in \{1, m-1 \} $ we have the diagram
\begin{equation*}\begin{CD} X_\beta^i @>i>> Y_\beta \\ @VqVV \\ Y_{d_i(\beta)} \end{CD} \end{equation*}
where $q$ is a ${\mathbb P}^1$ bundle and $i$ is a codimension one embedding. To the trivalent vertices $p_\beta^i$ and $q_\beta^i$ (where $\beta_i = \beta_{i+1} \in \{1,m-1\}$) in Figure \ref{f:tri} we assign the FM transforms with respect to the kernels
$$\sP_\beta^i := \O_{X_\beta^i} \otimes \E_i' \in D(Y_{d_i(\beta)} \times Y_\beta)$$
and
$$\sQ_\beta^i := \O_{X_\beta^i} \otimes \E_{i+1}^\vee \{1\} \in D(Y_\beta \times Y_{d_i(\beta)})$$
respectively. Notice that $\sP_\beta^i$ agrees with the notation of Lemma \ref{lem:Padj} and consequently $(\sQ_\beta^i)_R \cong \sP_\beta^i [1]\{-1\}$ while $(\sP_\beta^i)_R \cong \sQ_\beta^i [-1]\{1\}$. Also, by Theorem \ref{th:kerneltwist} the kernel $\sT_\beta^i(2)$ for the crossing is (up to a shift) the spherical twist in $\Phi_{\sP_\beta^i}$.  This means that if $ \Gamma $ is a tangle graph containing a crossing of type 2 and if $ \Gamma_1, \Gamma_2 $ are the two tangle graphs made by the resolutions of this crossing, then there is an exact triangle
\begin{equation} \label{eq:exacttri}
\Psi(\Gamma_1)[-m+1]\{m-1\} \rightarrow \Psi(\Gamma) \rightarrow \Psi(\Gamma_2)[-m+1]\{m\}
\end{equation}
(here exact triangle of functors really means exact triangle of the underlying FM kernels).  There is a similar result for crossings of type 1.  

If $ \Gamma $ is a $(0,0) $ crossingless graph then $\Psi(\Gamma): D(pt) \rightarrow D(pt) $. Such a functor is determined by where it sends $\C$, so it can be thought of as a bigraded vector space which we denote by $H^{i,j}_{\text{alg}}(\Gamma) $.  The main result of this section is that $H^{i,j}_{\text{alg}}(\Gamma)$ carries the same information as $\psi(\Gamma)$. 

\begin{Theorem} \label{th:diag}
The bigraded vector space $ H^{i,j}_{\text{alg}}(\Gamma)$ is concentrated on the anti-diagonal, namely it is zero unless $i+j=0$. Moreover, the graded Poincar\'e polynomial of $H^{i,j}_{\text{alg}}(\Gamma)$ satisfies
\begin{equation*}
\sum_{i,j} t^i u^j \dim(H^{i,j}_{\text{alg}}(\Gamma)) = \psi(\Gamma)|_{q = tu^{-1}}
\end{equation*}
so that the graded Euler characterstic $\chi_{\text{alg}}(\Gamma) := \sum_{i,j} (-1)^i (-q)^j \dim(H^{i,j}_{\text{alg}}(\Gamma))$ equals $\psi(\Gamma)$. 
\end{Theorem}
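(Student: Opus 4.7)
The plan is to reduce the theorem to the decategorified MOY skein relations via Lemma \ref{lem:MOY}. Write $\phi(\Gamma) := \sum_{i,j} t^i u^j \dim H^{i,j}_{\text{alg}}(\Gamma)$ for the graded Poincar\'e polynomial of $H^{*,*}_{\text{alg}}$. I would prove two statements: (i) for every closed crossingless trivalent graph $\Gamma$, the bigraded space $H^{*,*}_{\text{alg}}(\Gamma)$ is concentrated on the anti-diagonal $i+j=0$; and (ii) the specialization $\phi(\Gamma)|_{t=q,\ u=q^{-1}}$ satisfies the MOY skein relations. Given (i), this specialization is a Laurent polynomial in $q$; given (ii), Lemma \ref{lem:MOY} forces it to coincide with $\psi(\Gamma)$, which yields the Poincar\'e polynomial identity (the Euler-characteristic statement then follows by the palindromic symmetry of $\mathfrak{sl}_m$-web invariants).

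Both (i) and (ii) will be extracted from categorified MOY relations, phrased as isomorphisms of Fourier-Mukai kernels. The circle-removal identity is already Corollary \ref{cor:circle}, giving $\sF_\beta^i \ast \sG_\beta^i \cong \O_\Delta \otimes_\C H^*(\mathbb{P}^{m-1})$ with graded dimension $[m]_q$. For the digon relation (iia) I would establish $\sQ_\beta^i \ast \sP_\beta^i \cong \O_\Delta[1]\{-1\} \oplus \O_\Delta[-1]\{1\}$ by a direct pushforward computation on the $\mathbb{P}^1$-bundle $q \colon X_\beta^i \to Y_{d_i(\beta)}$, following the pattern of Lemmas \ref{lem:Padj} and \ref{lem:updown}. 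The remaining relations (ib), (iib) and (iii) follow the same template: express the relevant composition as $\pi_{13\,*}$ of a structure sheaf supported on an iterated transverse intersection of the correspondences $X_\beta^j$ (transversality being supplied by Corollary \ref{lem:transverse}), and then decompose that sheaf using short exact sequences of components, with line bundles and $\C^\times$-shifts tracked through Lemmas \ref{lem:facts0}--\ref{lem:facts3}. In every case the answer is a finite direct sum of shifts of structure sheaves, which simultaneously forces the anti-diagonal concentration and gives the correct MOY multiplicities for $\phi$.

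With these local isomorphisms in hand, the theorem follows by induction on a complexity of $\Gamma$ (say, the total number of edges, vertices, and circles). Any closed crossingless graph $\Gamma$ contains a sub-configuration to which some MOY relation applies; the corresponding kernel isomorphism rewrites $\Psi(\Gamma)(\C)$ as a direct sum of bigraded shifts of $\Psi(\Gamma')(\C)$ for strictly simpler $\Gamma'$. The base case is the empty diagram, whose invariant is $\C$ in bidegree $(0,0)$. Both anti-diagonal concentration and the MOY recursion for $\phi$ propagate through the induction.

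The main obstacle will be the most geometrically intricate MOY relations, in particular (iib) (the square-removal move) and (iii) (the triangle swap on three parallel strands), where the kernel composition is supported on a reducible variety whose components meet along divisors. The strategy there mirrors that of Section \ref{section:RIII} for the braid relation: decompose the support into its irreducible components, determine the restrictions of the relevant line bundles on each using the determinant computations of Lemma \ref{lem:facts0}, and then use the octahedral axiom together with Gaussian elimination (Lemma \ref{lem:BN}) to split the resulting distinguished triangles into the desired direct summands. Once these computations are carried out, the conclusion of Theorem \ref{th:diag} is formal.
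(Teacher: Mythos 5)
Your overall strategy -- establish the categorified MOY relations and then invoke Lemma \ref{lem:MOY} -- is the same as the paper's (Proposition \ref{prop:MOYrels} is exactly your list of kernel isomorphisms, with $H^\star(\mathbb{P}^n)$ concentrated on the anti-diagonal by construction). The genuine gap is in your final assembly step: the induction on the complexity of $\Gamma$. You assert that every closed crossingless graph contains a sub-configuration to which some MOY relation applies and that applying it strictly reduces complexity. This is not justified, and it is precisely the nontrivial combinatorial content that Lemma \ref{lem:MOY} is meant to encapsulate (the paper calls its proof ``folklore'' and sketches it via a detour through links, replacing thick edges by crossings and inducting on their number). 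In particular, relation (iii) is an exchange move relating two configurations of equal complexity and does not rewrite $\Psi(\Gamma)(\C)$ as a sum of strictly simpler invariants; so your induction would need the stronger claim that the reducing relations (ia), (ib), (iia), (iib) alone suffice to simplify an arbitrary closed web, possibly after rearranging with (iii). That is a theorem about the MOY calculus that you would have to prove separately, and nothing in your write-up supplies it.

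The paper avoids this entirely with a two-variable specialization trick: set $R(\Gamma) = \sum_{i,j} y^i z^{i+j}\dim H^{i,j}_{\text{alg}}(\Gamma)$. Because every summand appearing in the categorified relations is concentrated on $i+j=0$, the specialization $R(\Gamma)|_{z=\xi}$ satisfies the decategorified MOY relations for \emph{every} $\xi\in\C$, so Lemma \ref{lem:MOY} forces $R(\Gamma)|_{z=\xi}=\psi(\Gamma)|_{q=y}$ independently of $\xi$; the $z$-independence then \emph{yields} the anti-diagonal concentration rather than presupposing a reduction algorithm. You could repair your argument by adopting this trick in place of the induction. One further caution: your plan to prove the categorified relations by ``direct pushforward computation'' understates the difficulty of (ib) and (iib); in the paper (ib) is extracted from the Reidemeister I computation plus a splitting argument, and (iib) is derived from the sideways Reidemeister II identity together with cone manipulations and Hom-vanishing arguments needed to show the relevant complexes actually split as direct sums (formality), not merely that their cohomology is correct.
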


We obtain the following corollary. 

\begin{Corollary} \label{th:Euchar}
Let $ L $ be a link.  Then the graded Euler characteristic of $ H^{i,j}_{\text{alg}}(L) $ is $ \psi(L) $.
\end{Corollary}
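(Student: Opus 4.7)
The plan is to induct on the number of crossings in a diagram of $L$, with Theorem \ref{th:diag} providing both the base case and the key input for comparing the categorified and decategorified skein relations. In the base case $L$ has no crossings, so $L$ is itself a $(0,0)$ crossingless closed graph (a disjoint union of oriented circles) and Theorem \ref{th:diag} gives $\chi_{\text{alg}}(L) = \psi(L)$ directly.

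For the inductive step, I would pick any crossing in the diagram of $L$, which we may assume to be of type 2 (the type 1 case is handled symmetrically using $\sT_\beta^i(1) \cong {\sT_\beta^i(2)}_L$ from Lemma \ref{th:twistadjoint}). Theorem \ref{th:kerneltwist} identifies $\sT_\beta^i(2)$ with $\sT_{\sP_\beta^i}[-m+1]\{m-1\}$, and the defining triangle $\sP_\beta^i * (\sP_\beta^i)_R \to \O_\Delta \to \sT_{\sP_\beta^i}$ of the spherical twist, rewritten using $(\sP_\beta^i)_R \cong \sQ_\beta^i[-1]\{1\}$, becomes (after a uniform shift) an exact triangle of Fourier--Mukai kernels of the form
\begin{equation*}
\Psi(\Gamma_2)[a_2]\{b_2\} \rightarrow \Psi(L) \rightarrow \Psi(\Gamma_1)[a_1]\{b_1\},
\end{equation*}
where $\Gamma_1$ is the closed graph obtained by resolving the chosen crossing as the identity on two vertical strands, $\Gamma_2$ is obtained by resolving it as a pair of trivalent vertices $p_\beta^i \circ q_\beta^i$, and the shifts $(a_i,b_i)$ are read off directly from the formulas in Theorem \ref{th:kerneltwist} and the definition of $\sT_\beta^i(2)$.

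Evaluating this triangle on $\C \in D(\mathrm{pt})$ gives a distinguished triangle in $D(\mathrm{pt})$, hence a long exact sequence in bigraded cohomology, from which the standard additivity of graded Euler characteristics yields
\begin{equation*}
\chi_{\text{alg}}(L) \;=\; (-1)^{a_1}(-q)^{b_1}\,\chi_{\text{alg}}(\Gamma_1) \;+\; (-1)^{a_2}(-q)^{b_2}\,\chi_{\text{alg}}(\Gamma_2).
\end{equation*}
On the other hand, the decategorified resolution formula (\ref{eq:resolve}) gives the exactly analogous identity for $\psi(L)$ in terms of $\psi(\Gamma_1)$ and $\psi(\Gamma_2)$, provided the shifts $(a_i,b_i)$ match the exponents $m,m-1,-m,-m+1$ appearing there. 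Since both $\Gamma_1$ and $\Gamma_2$ have strictly fewer crossings than $L$, applying the inductive hypothesis (which reduces via further resolutions to closed crossingless graphs covered by Theorem \ref{th:diag}) gives $\chi_{\text{alg}}(\Gamma_i) = \psi(\Gamma_i)$, and we conclude $\chi_{\text{alg}}(L)=\psi(L)$.

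The only nontrivial part of the argument is the bookkeeping verification that the homological and equivariant shifts $(a_i,b_i)$ produced by the twist exact triangle match precisely the powers of $q$ in (\ref{eq:resolve}); this is a direct computation, but it is the place where all of the care in choosing the line-bundle twists and $\{\cdot\}$-shifts in the original definition of $\sT_\beta^i(l)$ (section \ref{se:crossing}) finally pays off. Once this matching is in hand, no further geometry is needed: everything reduces to additivity of Euler characteristic on distinguished triangles plus Theorem \ref{th:diag}.
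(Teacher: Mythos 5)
Your argument is correct and is essentially the paper's own proof: both resolve crossings via the exact triangle (\ref{eq:exacttri}) coming from the twist description of $\sT_\beta^i(2)$, match the resulting shifts against the decategorified skein relation (\ref{eq:resolve}), and reduce to Theorem \ref{th:diag} for closed crossingless graphs. The paper merely phrases the induction more compactly (on the level of Grothendieck groups, saying $[\Psi(L)]$ is determined by the $[\Psi(\Gamma)]$ in the same way $\psi(L)$ is determined by the $\psi(\Gamma)$), whereas you spell out the same resolution process as an explicit induction on the number of crossings.
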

Note that here and above, we are using $-q$ as the variable in the Euler characteristic.

\begin{proof}
This follows from combining the theorem with (\ref{eq:exacttri}) and (\ref{eq:resolve}).

The easiest way to see this is to use some of the notation of section \ref{se:Kgroups}.  Using that notation, the statement of the Corollary is equivalent to  $ [\Psi(L)] = \psi(L) $ for all $ L $.  On the other hand, from the theorem we have that $ [\Psi(\Gamma)] = \psi(\Gamma) $ for all closed graphs $\Gamma$.  

We also know that $ [\T_\beta^i(2)] = q^{-m}(q - [\P_\beta^i][\Q_\beta^i]) $ from (\ref{eq:exacttri}).  Since this equality matches that in (\ref{eq:resolve}), we see that $[\Psi(L)] $ is determined by $ [\Psi(\Gamma)] $ for closed graphs $ \Gamma $ in the same way that $ \psi(L) $ is determined by the $\psi(\Gamma) $.  Since $ [\Psi(\Gamma)] = \psi(\Gamma) $ for all closed graphs, the result follows.
\end{proof}

Now we state our conjecture relating our knot homology theory to that of Khovanov-Rozansky.  The evidence for this is Theorem \ref{th:diag} and the exact triangle (\ref{eq:exacttri}).

\begin{Conjecture} \label{th:KRconj}
Let $ L $ be a link, then
\begin{equation*}
H^{i,j}_{\text{alg}}(L) = H^{i+j,j}_{\text{KR}}(L)
\end{equation*}
\end{Conjecture}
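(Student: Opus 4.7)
The plan is to reduce the conjecture to a computation on crossingless closed trivalent graphs and then match the two theories there. Both $H^{i,j}_{\text{alg}}(L)$ and $H^{i,j}_{\text{KR}}(L)$ are determined inductively from their values on closed graphs via an exact triangle that resolves each crossing into its two planar resolutions; on our side this is precisely (\ref{eq:exacttri}), and Khovanov--Rozansky prove an analogous triangle in \cite{KR}. Thus it suffices, by induction on the number of crossings, to (a) match the two theories on crossingless closed graphs $\Gamma$ with the indicated shear in indices, and (b) verify that the two exact triangles are compatible under the proposed isomorphism.

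For step (a), the natural approach is to establish a categorified version of Lemma \ref{lem:MOY}: any bigraded invariant of closed graphs satisfying a list of categorified MOY identities is uniquely determined. Each such identity should then be proved on the algebraic side by a direct geometric computation. Relation (ia) is essentially Corollary \ref{cor:circle}; the remaining relations (ib), (iia), (iib), (iii) should follow by similar arguments combining Lemma \ref{lem:updown} with the transverse-intersection and normal-bundle results of Section \ref{se:geometry}, keeping careful track of the grading shifts built into $\sP_\beta^i$ and $\sQ_\beta^i$. The parallel categorified MOY identities on the Khovanov--Rozansky side are established in \cite{KR}; matching them requires that the indexing conventions line up under the shear $(i,j) \mapsto (i+j,j)$.

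For step (b), one compares the triangle (\ref{eq:exacttri}) with its Khovanov--Rozansky counterpart. The cohomological and equivariant shifts $[-m+1]\{m-1\}$ and $[-m+1]\{m\}$ appearing in (\ref{eq:exacttri}) are arranged so that the graded Euler characteristics reproduce (\ref{eq:resolve}); the point is to promote this numerical coincidence to a degree-by-degree isomorphism of bigraded vector spaces after the shear. Once the categorified MOY uniqueness of step (a) is in place, this compatibility should be nearly automatic from the universal property provided by the MOY relations.

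The main obstacle, and the reason the statement remains a conjecture, is that Khovanov--Rozansky homology is constructed via matrix factorizations of an explicit superpotential, a framework quite distant from derived categories of equivariant coherent sheaves on the convolution varieties $Y_\beta$. Even armed with MOY-type uniqueness and matching exact triangles, one still needs an actual comparison mechanism between the two sides. One natural avenue is to identify the endomorphism algebras $\End(\Psi(\Gamma))$ for simple graphs with the algebras appearing in the Hochschild cohomology of the relevant matrix factorizations, perhaps via an Orlov-type equivalence between derived categories of certain affine open subvarieties of $Y_\beta$ and categories of singularities associated to the Khovanov--Rozansky potential; alternatively, the homological mirror symmetry picture hinted at in section \ref{se:SS} might provide the bridge through Manolescu's symplectic construction. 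Producing any such comparison precisely is the genuinely hard step and the place where substantive new input is required.
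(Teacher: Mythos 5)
The statement you are asked to prove is stated in the paper as Conjecture \ref{th:KRconj}, and the paper does not prove it: it offers only two pieces of evidence, namely Theorem \ref{th:diag} (the computation of $H^{i,j}_{\text{alg}}$ on closed crossingless graphs) and the exact triangle (\ref{eq:exacttri}) resolving a crossing. Your proposal correctly assembles exactly this evidence and is honest that it does not constitute a proof, so in that sense your assessment of the situation is accurate. But since your write-up is framed as a proof strategy, let me say precisely where it would break down.

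The essential gap is in step (b), and it is not ``nearly automatic.'' Knowing that both theories satisfy matching skein exact triangles and agree on all closed resolutions of a link diagram does not determine the homology of the link: a link invariant built from a cube of resolutions depends on the \emph{maps} between resolutions (the differentials in the cube, equivalently the connecting morphisms in the iterated cones), and two theories can agree on every object in the cube and on the shape of every triangle while producing different homology. Euler characteristics are forced to agree --- that is Corollary \ref{th:Euchar} --- but a degree-by-degree isomorphism requires identifying the edge maps, which in turn requires some functorial comparison between $D(\mathrm{Coh}^{\C^\times}(Y_\beta))$ and the matrix factorization categories. Relatedly, in step (a) a ``categorified Lemma \ref{lem:MOY}'' in the strength you need (uniqueness of the invariant together with the structure maps realizing the MOY isomorphisms) is not available; Theorem \ref{th:diag} only pins down Poincar\'e polynomials of closed graphs, which is a statement about objects, not about morphisms. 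Your final paragraph correctly identifies that the missing ingredient is an actual comparison mechanism (an equivalence or at least a bridge between the two categorical frameworks); until that is supplied, the argument is a plausibility heuristic, which is exactly why the paper leaves this as a conjecture rather than a theorem.
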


\subsection{Verification of the MOY relations}

This section is devoted to proving Theorem \ref{th:diag}. Our main tool is the (categorified) Murakami-Ohtsuki-Yamada \cite{MOY} skein relations depicted in Figure \ref{f:MOY}.  

\begin{figure}
\begin{center}
\psfrag{0}{ia.}
\psfrag{I}{ib.}
\psfrag{II}{iia.}
\psfrag{III}{iii.}
\psfrag{IV}{iib.}
\psfrag{=}{$=$}
\psfrag{+}{$\oplus$}
\psfrag{otimes m-1}{$\otimes H^\star(\mathbb{P}^{m-1})$}
\psfrag{otimes m-2}{$\otimes H^\star(\mathbb{P}^{m-2})$}
\psfrag{otimes m-3}{$\otimes H^\star(\mathbb{P}^{m-3})$}
\psfrag{otimes 1}{$\otimes H^\star(\mathbb{P}^{1})$}
\puteps[0.6]{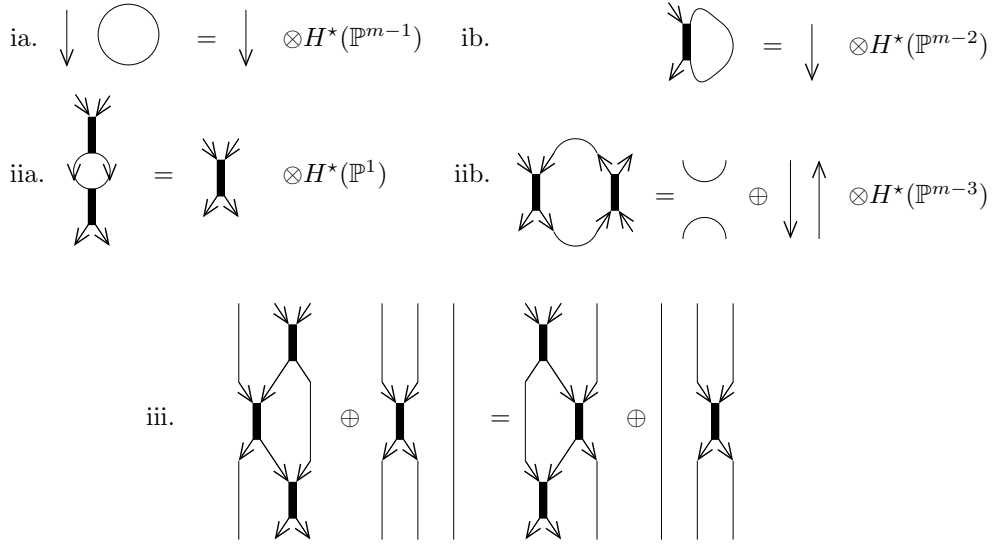}
\end{center}
\caption{The MOY relations where $H^\star(\mathbb{P}^n) := \oplus_{j \ge 0} H^j(\mathbb{P}^n, \C) [-j+n]\{j-n\}$.}\label{f:MOY}
\end{figure}

\begin{Proposition} \label{prop:MOYrels} The following categorified version of the MOY identities hold:
\begin{itemize}
\item[(ia)] $\F_\beta^i \circ \G_\beta^i \cong  (\cdot) \otimes H^{\star}(\mathbb{P}^{m-1})$ 
\item[(ib)] $\F_\beta^{i + 1} \circ (\P_\beta^i \circ \Q_\beta^i) \circ \G_\beta^{i + 1} \cong (\cdot) \otimes H^\star(\mathbb{P}^{m-2}) \cong \F_{\beta'}^i \circ (\P_{\beta'}^{i+1} \circ \Q_{\beta'}^{i+1}) \circ \G_{\beta'}^i$
 where $\beta$ and $\beta'$ have strings $(1,1,m-1)$ and $(m-1,1,1)$ starting in the $i$th slot
\item[(iia)] $\Q_\beta^i \circ \P_\beta^i \cong (\cdot) \otimes H^\star(\mathbb{P}^1)$
\item[(iib)] $\F_\beta^i \circ \U_\beta^{i-1} \circ \U_\beta^{i+1} \circ \G_\beta^i \cong \G_\beta^i \circ \F_\beta^i \oplus (\cdot) \otimes H^\star(\mathbb{P}^{m-3})$
\item[(iii)] $\U_\beta^{i+1} \circ \U_\beta^i \circ \U_\beta^{i+1} \oplus \U_\beta^i = \U_\beta^i \circ \U_\beta^{i+1} \circ \U_\beta^i \oplus \U_\beta^{i+1}$ where $\beta_i = \beta_{i+1} = \beta_{i+2}$.
\end{itemize}
where $H^\star(\mathbb{P}^n) := \oplus_{j \ge 0} H^j(\mathbb{P}^n, \C) [-j+n]\{j-n\}$ and $\U_\beta^i := \P_\beta^i \circ \Q_\beta^i$.  
\end{Proposition}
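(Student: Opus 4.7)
The plan is to verify each of (ia)--(iii) by an explicit computation of the underlying Fourier--Mukai kernels, building on the tools developed in sections~\ref{se:geometry} and~\ref{se:funtangle}. I would proceed roughly in order of increasing difficulty, treating (ia) and (iia) first, then (ib), and finally (iib) and (iii).

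For (ia), the kernel $\sF_\beta^i * \sG_\beta^i$ is essentially computed in Corollary~\ref{cor:circle}: the projection $q:X_\beta^i\to Y_{d_i(\beta)}$ is a $\mathbb{P}^{m-1}$-bundle whose fibres admit cellular decompositions, $Y_{d_i(\beta)}$ is simply-connected, and $N_{X_\beta^i/Y_\beta}\cong \Omega_{X_\beta^i/Y_{d_i(\beta)}}\{2\}$ by Lemma~\ref{lem:facts1}. Combining this with the isomorphism $\sF_\beta^i \cong (\sG_\beta^i)_R[m-1]\{-(m-1)\}$ of Lemma~\ref{th:Gadj} and invoking Lemma~\ref{lem:updown} directly yields $\O_\Delta\otimes H^\star(\mathbb{P}^{m-1})$. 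For (iia), Lemma~\ref{lem:Padj} gives $(\sP_\beta^i)_R\cong \sQ_\beta^i[-1]\{1\}$, so that $\sQ_\beta^i*\sP_\beta^i\cong ((\sP_\beta^i)_R*\sP_\beta^i)[1]\{-1\}$, and the splitting $(\sP_\beta^i)_R*\sP_\beta^i\cong \O_\Delta\oplus \O_\Delta[-2]\{2\}$ extracted in the proof of Lemma~\ref{th:Pissphere} is exactly what is needed to produce $(\cdot)\otimes H^\star(\mathbb{P}^1)$.

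Next I would attack (ib). The convolution $\sF_\beta^{i+1}*\sP_\beta^i*\sQ_\beta^i*\sG_\beta^{i+1}$ is set up so that all intermediate preimage intersections inside the quintuple product are transverse (Corollary~\ref{lem:transverse}). The resulting incidence variety projects onto the diagonal of $Y_{d_{i+1}(\beta)}\times Y_{d_{i+1}(\beta)}$ through a map whose fibre parametrizes the admissible choices of the intermediate $L_i\subset L_{i+1}$ (or its dual in the $(m-1,1,1)$ case), namely a $\mathbb{P}^{m-2}$-bundle with cellular fibres. A direct application of Lemma~\ref{lem:updown} with $s=2$ then produces $H^\star(\mathbb{P}^{m-2})$; the line-bundle twists that appear cancel because the cap/cup kernels were rigged to be mutual adjoints up to a uniform shift (Lemma~\ref{th:Gadj}).

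For (iib), one composes the four kernels and analyses the support of the resulting sheaf on $Y_{d_i(\beta)}\times Y_{d_i(\beta)}$. The relevant preimages now fail to be globally transverse, and in the spirit of Proposition~\ref{th:firststage} the convolution splits into two pieces via a short exact sequence of the same shape as equation~(\ref{eq:Tiascone}): one component maps birationally onto a ladder-type incidence and pushes forward to $\sG_\beta^i*\sF_\beta^i$, while the other is a $\mathbb{P}^{m-3}$-bundle over the diagonal contributing $(\cdot)\otimes H^\star(\mathbb{P}^{m-3})$, with divisorial bookkeeping controlled by Lemma~\ref{lem:facts0}. For (iii), the approach is the Khovanov--Thomas method of section~\ref{section:RIII}: expand both triple compositions via $\sU_\beta^j=\sP_\beta^j*\sQ_\beta^j$, repeatedly collapse inner $\sQ_\beta^j*\sP_\beta^j$ factors using (iia), and identify the remaining terms with structure sheaves on an explicit shield-plus-ladders configuration analogous to $Z^{i\,i+1\,i}$; after grouping, the two sides differ only by the announced $\sU_\beta^{i+1}$ versus $\sU_\beta^i$ summand. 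The main obstacle I expect is in (iib) and (iii), where the convolution varieties carry several components meeting in divisors and the pushforwards must be handled by the blowup and Koszul techniques of section~\ref{section:RIII} rather than by a single clean application of Lemma~\ref{lem:updown}; once the component structure and the normal-bundle contributions are pinned down by Lemmas~\ref{lem:facts0}--\ref{lem:facts3}, the equivariant line-bundle bookkeeping is forced and the remaining verification becomes mechanical.
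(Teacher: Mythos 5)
Your treatment of (ia), (iia) and (iii) is essentially sound and close to the paper's (which uses Corollary \ref{cor:circle}, Lemma \ref{lem:updown} plus an adjunction splitting, and Proposition \ref{th:firststage} respectively), though two details are off: in (iia) the proof of Lemma \ref{th:Pissphere} establishes only a distinguished triangle $\O_\Delta \to {\sP_\beta^i}_R * \sP_\beta^i \to \O_\Delta[-2]\{2\}$, not a direct-sum decomposition, so you still need the explicit splitting map coming from the unit of the $({\sP_\beta^i}_R*\sP_\beta^i)$-adjunction; and in (iii) the inner factors of $\sU^{i+1}*\sU^i*\sU^{i+1}$ are of the form $\sQ^{i+1}*\sP^i$ with \emph{different} indices, so (iia) cannot be used to collapse them --- what is actually needed is the full computation $\sU^i*\sU^{i+1}*\sU^i \cong \sU^{i\,i+1\,i}\oplus\sU^i\{-2\}$ of Proposition \ref{th:firststage}.

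The genuine gaps are in (ib) and (iib). For (ib), Lemma \ref{lem:updown} only computes the cohomology sheaves $\H^n$ of the convolution; it does not show the complex is formal, i.e.\ that it splits as $\O_\Delta\otimes_\C H^\star(\mathbb{P}^{m-2})$ rather than being a nontrivial iterated extension of shifted copies of $\O_\Delta$. The paper flags exactly this issue in the proof of Corollary \ref{cor:circle} and in the remark following (ib), and circumvents it for (ib) by a purely formal argument: the triangle from Theorem \ref{th:RI} exhibits $\sF_\beta^{i+1}*\sG_\beta^{i+1}$ as a cone on $\sF_\beta^{i+1}*\sP_\beta^i*{\sP_\beta^i}_R*\sG_\beta^{i+1}\to\O_\Delta[m-1]\{-m+1\}$, the degree supports force the connecting map to vanish, and then (ia) identifies the complementary summand. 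You need some such argument; a ``direct application'' of Lemma \ref{lem:updown} does not suffice. For (iib) the situation is worse: a direct computation of the six-fold convolution $\sF_\beta^i*\sU_\beta^{i-1}*\sU_\beta^{i+1}*\sG_\beta^i$ is essentially equivalent to reproving Reidemeister II for a $(1,m-1)$ crossing, and your claimed component structure (one piece birational onto the $\sG_\beta^i*\sF_\beta^i$ correspondence, the other a $\mathbb{P}^{m-3}$-bundle over the diagonal) is asserted without justification; even granting it, you would still owe an $\Ext$-vanishing to upgrade the resulting triangle to a direct sum. The paper instead derives (iib) from the sideways Reidemeister II identity $\sF_\beta^i*\sT_\beta^{i-1}(2)*\sT_\beta^{i+1}(1)*\sG_\beta^i\cong\sG_\beta^i*\sF_\beta^i$, expands both crossings as cones on $\sU$'s, and uses (ia), (ib), (iia) together with the vanishing of $\Hom(\sG_\beta^i*\sF_\beta^i[-1],\O_\Delta[j]\{-j\})$ for the relevant $j$ (because $\sF_\beta^i$ is a sheaf) to split the final cone. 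Without either that route or a substitute for the missing splitting arguments, your proof of (iib) does not go through.
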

\begin{proof}
(ia) This is the statement of Corollary \ref{cor:circle}.

(ib) We must prove that $\sF_\beta^{i + 1} * (\sP_\beta^i * \sQ_\beta^i) * \sG_\beta^{i + 1} \cong \O_\Delta \otimes_\C H^\star(\mathbb{P}^{m-2}) \cong \sF_{\beta'}^i * (\sP_{\beta'}^{i+1} * \sQ_{\beta'}^{i+1}) * \sG_{\beta'}^i$. We prove the first isomorphism as the second one follows similarly. 

This was essentially shown in Theorem \ref{th:RI} in order to prove Reidemeister move I. There we saw that the cone of $\sF_\beta^{i+1} * \sP_\beta^i * {\sP_\beta^i}_R * \sG_\beta^{i+1} \rightarrow \sF_\beta^{i+1} * \sG_\beta^{i+1}$ is $\O_\Delta [m-1]\{-m+1\}$. Now $\sF_\beta^{i+1} * \sP_\beta^i * {\sP_\beta^i}_R * \sG_\beta^{i+1}$ is supported in degrees $-m+3,-m+5, \dots, m-1$ so that 
$$\Hom(\O_\Delta[m-2]\{-m+1\}, \sF_\beta^{i+1} * \sP_\beta^i * {\sP_\beta^i}_R * \sG_\beta^{i+1}) = 0$$ 
and hence 
$$\sF_\beta^{i+1} * \sG_\beta^{i+1} \cong \O_\Delta [-m+1]\{m-1\} \oplus \sF_\beta^{i+1} * \sP_\beta^i * {\sP_\beta^i}_R * \sG_\beta^{i+1}.$$
But by Corollary \ref{cor:circle} $\sF_\beta^i * \sG_\beta^i \cong \O_\Delta \otimes_\C H^\star(\p, \C)$. Thus $\sF_\beta^{i+1} * \sP_\beta^i * {\sP_\beta^i}_R * \sG_\beta^{i+1}$ must be isomorphic to $\O_\Delta \otimes_\C H^\star(\mathbb{P}^{m-2})[-1]\{1\}$. The result follows since $(\sP_\beta^i)_R = \sQ_\beta^i [-1]\{1\}$. 

\begin{Remark} This isomorphism can be seen geometrically using Lemma \ref{lem:updown}. This is because ${\sP_\beta^i}_R * \sG_\beta^{i+1}$ is $i_*q^*(\cdot)$ via a diagram as in Lemma \ref{lem:updown} where the fibres of $q$ are $\mathbb{P}^{m-2}$s. Then one just needs to show that the resulting complex is formal (c.f. proof of Corollary \ref{cor:circle}). 
\end{Remark}

(iia) This relation is related to the proof of Reidemeister move II for a $(1,1)$-type crossing. Taking $F \cong \mathbb{P}^1$ and $s=2$ in Lemma \ref{lem:updown} we get an exact triangle 
$$\O_\Delta [1]\{-1\} \rightarrow \sQ_\beta^i * \sP_\beta^i \rightarrow \O_\Delta[-1]\{1\}$$
where we used again that $\sQ_\beta^i \cong (\sP_\beta^i)_R [1]\{-1\}$. Starring on the left and right gives the exact triangle
$$\sP_\beta^i * \sQ_\beta^i [1]\{-1\} \rightarrow \sP_\beta^i * \sQ_\beta^i * \sP_\beta^i * \sQ_\beta^i \rightarrow \sP_\beta^i * \sQ_\beta^i [-1]\{1\}.$$ 
Finally we use the map 
$$\sP_\beta^i * \sQ_\beta^i [-1]\{1\} \rightarrow \sP_\beta^i * {\sP_\beta^i}_R * \sP_\beta^i * \sQ_\beta^i [-1]\{1\} \cong \sP_\beta^i * \sQ_\beta^i * \sP_\beta^i * \sQ_\beta^i$$
coming from adjunction on the ${\sP_\beta^i}_R * \sP_\beta^i$ pair to split the exact sequence. 

(iib) This is the hardest relation to prove since it is basically equivalent to the Reidemeister move II for a $(1,m-1)$-type crossing. We will use the identity coming from Figure \ref{f:MOY-IV}, namely
$$\sF_\beta^i * \sT_\beta^{i-1}(2) * \sT_\beta^{i+1}(1) * \sG_\beta^i \cong \sG_\beta^i * \sF_\beta^i.$$

\begin{figure}
\begin{center}
\psfrag{=}{$=$}
\puteps[0.5]{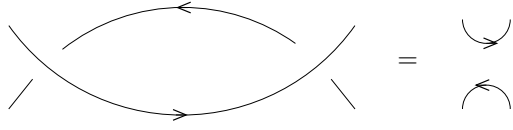}
\end{center}
\caption{Sideways Reidemeister II move.}\label{f:MOY-IV}
\end{figure}

We now replace each crossing on the left side by its corresponding cone and simplify:
\begin{align*}
LS &\cong
\sF_\beta^i * \Cone(\sP_\beta^{i-1}*{\sP_\beta^{i-1}}_R \rightarrow \O_\Delta) * \sT_\beta^{i+1}(1) * \sG_\beta^i [-m+1]\{m-1\} \\
&\cong \Cone(\sF_\beta^i * \sU_\beta^{i-1} * \sT_\beta^{i+1}(1) * \sG_\beta^i [-1]\{1\} \rightarrow \sF_\beta^i * \sT_\beta^{i+1}(1) * \sG_\beta^i) [-m+1]\{m-1\} \\
&\cong \Cone(\sF_\beta^i * \sU_\beta^{i-1} * \Cone(\O_\Delta \rightarrow \sP_\beta^{i+1} * {\sP_\beta^{i+1}}_L)[m-2]\{-m+1\} * \sG_\beta^i [-1]\{1\} \rightarrow \O_\Delta) [-m+1]\{m-1\} \\
&\cong \Cone(\Cone(\sF_\beta^i * \sU_\beta^{i-1} * \sG_\beta^i [-1]\{1\} \rightarrow \sF_\beta^i * \sU_\beta^{i-1} * \sU_\beta^{i+1} * \sG_\beta^i)[-1] \rightarrow \O_\Delta [-m+1]\{m-1\}) \\
&\cong \Cone(\Cone(\O_\Delta \otimes_\C H^\star(\mathbb{P}^{m-2}) [-1]\{1\} \rightarrow \sF_\beta^i * \sU_\beta^{i-1} * \sU_\beta^{i+1} * \sG_\beta^i)[-1] \rightarrow \O_\Delta [-m+1]\{m-1\})
\end{align*}
where we used Reidemeister I to obtain the third line and MOY relation (ib) to obtain the last line. Thus we have the following exact triangle
$$\O_\Delta [-m+1]\{m-1\} \rightarrow \sG_\beta^i * \sF_\beta^i \rightarrow \Cone(\O_\Delta \otimes_\C H^\star(\mathbb{P}^{m-2}) [-1]\{1\} \rightarrow \sX).$$
where $\sX := \sF_\beta^i * \sU_\beta^{i-1} * \sU_\beta^{i+1} * \sG_\beta^i$ is what we need to calculate. We can rewrite this as the exact triangle
$$ \Cone(\O_\Delta [-m+1]\{m-1\} \rightarrow \sG_\beta^i * \sF_\beta^i)[-1] \xrightarrow{\alpha} \O_\Delta \otimes_{\C} H^\star(\mathbb{P}^{m-2})[-1]\{1\} \sX.$$ 
Now consider the long exact sequence in cohomology. The first claim is that $\alpha$ induces a non-zero map $\O_\Delta \{m-1\} \rightarrow \O_\Delta \{m-1\}$ in degree $m-1$. If this map were zero than $\sX$ would have a copy of $\O_\Delta$ in degree $m-1$. Then if one considers $\sF_\beta^i * \sX * \sG_\beta^i$ one would find that it is non-zero in homological degree $2m-3$. But this is not the case because we can calculate
$$\sF_\beta^i * \sX * \sG_\beta^i \cong H^\star(\mathbb{P}^1) \otimes_\C H^\star(\mathbb{P}^{m-2}) \otimes_\C H^\star(\mathbb{P}^{m-1})$$
using MOY relations (ia), (ib) and (iia). Thus $\alpha$ must induce an isomorphism in degree $m-1$. Subsequently, $\sX$ is isomorphic to a cone 
$$\Cone(\sG_\beta^i * \sF_\beta^i [-1] \xrightarrow{\alpha'} \O_\Delta \otimes_\C H^\star(\mathbb{P}^{m-3})).$$
But 
\begin{align*}
\Hom(\sG_\beta^i * \sF_\beta^i [-1], \O_\Delta [j]\{-j\}) 
&\cong \Hom({\sF_\beta^i}_L [m-1]\{-m+1\} * \sF_\beta^i [-1], \O_\Delta [j]\{-j\}) \\
&\cong \Hom(\sF_\beta^i, \sF_\beta^i [-m+2+j] \{m-1-j\})
\end{align*}
which is zero for $j=-m+3, \dots, m-3$ since $\sF_\beta^i$ is a sheaf. Thus $\alpha' = 0$ and the result follows. 

(iii) Let $ \sU_\beta^i = \sP_\beta^i * \sQ_\beta^i $.  This follows from Proposition \ref{th:firststage} used in proving Reidemeister move III. The usual computation shows that, in the notation of section \ref{section:RIII}, $\sU^i_\beta \cong \sU^i \{1\}$. Consequently, we need to show,
$$\sU^i * \sU^{i+1} * \sU^i \{3\} \oplus \sU^{i+1} \{1\} \cong \sU^{i+1} * \sU^i * \sU^{i+1} \{3\} \oplus \sU^i \{1\}.$$
By \ref{th:firststage} we have $\sU^i * \sU^{i+1} * \sU^i \{3\} \cong \sU^{i i+1 i} \{3\} \oplus \sU^i \{1\}$. Similarly, $\sU^{i+1} * \sU^i * \sU^{i+1} \{3\} \cong \sU^{i+1 i i+1} \{3\} \oplus \sU^{i+1} \{1\}$. The result now follows since $\sU^{i i+1 i} \cong \sU^{i+1 i i+1}$.
\end{proof}

Now we are ready to prove the main theorem.
\begin{proof}[Proof of theorem \ref{th:diag}]
Consider the polynomial $ R(\Gamma) := \sum_{i,j} y^i z^{i+j} \dim H^{i,j}_{\text{alg}} (\Gamma) $.  If we specialize $ z = \xi \in \C $, $ R(\Gamma)|_{z= \xi} $ satisfies the MOY relations (Proposition \ref{prop:MOYrels}) and hence by Lemma \ref{lem:MOY}, we have that $ R(\Gamma)|_{z = \xi} = \psi(\Gamma)|_{q = y} $. Hence we see that $ R(\Gamma) = \psi(\Gamma)|_{q=y} $ and does not depend on $ z $. If we let $y=tu^{-1}$ and $z=u$ the desired result follows.
\end{proof}

\section{Grothendieck groups} \label{se:Kgroups}

The goal of this section is to examine what happens when we "de-categorify", i.e. consider our construction on the level of Grothendieck groups.  We denote $ V_\beta = V_{\omega_{\beta_1}} \otimes \cdots \otimes V_{\omega_{\beta_n}} $. Ideally, we would like to prove the following conjecture.

\begin{Conjecture} \label{th:Kconj}
For each $ \beta $, there exists an isomorphism $ K(Y_\beta) \rightarrow V_\beta $ such that for each tangle $ T $, the following diagram commutes
\begin{equation}
\begin{CD}
K(Y_\beta) @>>> V_\beta \\
@V[\Psi(T)]VV @VV\psi(T)V \\
K(Y_{\beta'}) @>>> V_{\beta'}
\end{CD}
\end{equation}
\end{Conjecture}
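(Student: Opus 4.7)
The plan is to build the isomorphism $K(Y_\beta) \to V_\beta$ inductively via the iterated Grassmannian bundle structure, and then verify the commuting square on the six generators $g_\beta^i, f_\beta^i, t_\beta^i(l)$. Since $Y_\beta \to Y_{(\beta_1,\ldots,\beta_{n-1})}$ is a $\Gr(\beta_n, m)$-bundle, one has a $\mathbb{C}[q,q^{-1}]$-module isomorphism $K(Y_\beta) \cong K(Y_{(\beta_1,\ldots,\beta_{n-1})}) \otimes K(\Gr(\beta_n,m))$. Choosing $K(\Gr(k,m)) \cong V_{\omega_k}$ using the standard basis given by classes of structure sheaves of Schubert cells (with the weight decomposition induced by a maximal torus), we obtain by induction a $\mathbb{C}[q,q^{-1}]$-linear isomorphism $\tau_\beta: K(Y_\beta) \xrightarrow{\sim} V_\beta$. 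The real content is that this $\tau_\beta$ intertwines $[\Psi(T)]$ with $\psi(T)$ for every tangle $T$.

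By Lemma \ref{lem:relations} it suffices to check the commuting square for caps, cups, and crossings. For the cap $g_\beta^i$ with $\beta_i+\beta_{i+1}=m$, the kernel $\sG_\beta^i = \O_{X_\beta^i} \otimes (\E_i')^{\beta_{i+1}}\{-(i-1)(m-1)\}$ is the structure sheaf of a Grassmann bundle $X_\beta^i \to Y_{d_i(\beta)}$ twisted by a line bundle; a direct K-theory computation using the Koszul resolution of $X_\beta^i$ in $Y_\beta$ and the projection formula shows that $[\G_\beta^i]$ sends $1 \in K(Y_{d_i(\beta)})$-valued classes into the unique $U_q(\mathfrak{sl}_m)$-invariant element of $V_{\omega_{\beta_i}} \otimes V_{\omega_{\beta_{i+1}}}$, up to the explicit normalization fixed by the shift by $-(i-1)(m-1)$. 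The analogous computation for $f_\beta^i$ uses Lemma \ref{th:Gadj}. For the trivalent vertices $p_\beta^i, q_\beta^i$ one proceeds similarly, and the decomposition of $V_{\omega_{\beta_i}} \otimes V_{\omega_{\beta_{i+1}}}$ into irreducibles is matched on the geometric side by the stratification of $Z_\beta^i$ into its two components; Proposition \ref{prop:MOYrels} then yields the categorified MOY relations, which at the $K$-theory level reduce to the decategorified ones of Lemma \ref{lem:MOY}.

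For the crossings, the key observation is that when $\beta_i = \beta_{i+1}$, Theorem \ref{th:kerneltwist} gives a distinguished triangle presenting $\sT_\beta^i(2)$ as $\Cone(\sP_\beta^i \ast \sQ_\beta^i \to \O_\Delta)[-m+1]\{m-1\}$, so that
\begin{equation*}
[\T_\beta^i(2)] = q^{-m}\bigl(q\cdot \mathrm{id} - [\P_\beta^i \circ \Q_\beta^i]\bigr).
\end{equation*}
This matches exactly the skein resolution (\ref{eq:resolve}) for the corresponding crossing under the Reshetikhin-Turaev functor. The analogous cone for $\sT_\beta^i(1)$ gives the other relation in (\ref{eq:resolve}). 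For crossings of unlike strands ($\beta_i \ne \beta_{i+1}$), one uses the decomposition $Z_\beta^i = W_\beta^i \cup V_\beta^i$ and the short exact sequence coming from the divisor $W_\beta^i \cap V_\beta^i$ to write $[\sT_\beta^i(l)]$ in terms of the classes of the two components, matching the R-matrix on $V_{\omega_1} \otimes V_{\omega_{m-1}}$.

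The main obstacle will be establishing that $\tau_\beta$ is canonical enough that the commuting square holds on the nose rather than merely up to an automorphism. Concretely, one must pin down the normalizations (the $q$-shifts and line bundle twists in $\sG_\beta^i, \sF_\beta^i, \sT_\beta^i(l)$) simultaneously for every pair $(\beta,\beta')$; a wrong choice of $\tau_\beta$ would force unwanted scalars into the diagram. A plausible resolution is to use the fact (already alluded to in section \ref{se:SS}) that $Y_\beta$ is an open subset of an affine Grassmannian convolution variety and to take $\tau_\beta$ to be the isomorphism induced by the Satake correspondence at the level of (equivariant) $K$-theory, which is manifestly compatible with tensor structure. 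The three pieces of evidence already in hand — the $q=1$ case (Theorem \ref{th:Kconjq1}), Corollary \ref{th:Euchar} for links, and the correctness of crossing eigenvalues (Remark \ref{re:Kconj}) — can then be combined with the cone computation above to reduce the conjecture to the equality $[\T_\beta^i(l)] = \psi(t_\beta^i(l))$ on the appropriate two-dimensional eigenspaces, which is exactly what the eigenvalue computation in Remark \ref{re:Kconj} provides.
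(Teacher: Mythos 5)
This statement is a \emph{conjecture}: the paper does not prove it, and offers only three partial pieces of evidence (the $q=1$ specialization in Theorem \ref{th:Kconjq1}, the case where $T$ is a link via Corollary \ref{th:Euchar}, and the eigenvalue computation in Remark \ref{re:Kconj}). Your proposal does not close the gap either. The entire difficulty is concentrated in the step you defer: constructing a single isomorphism $\tau_\beta : K(Y_\beta) \to V_\beta$ that simultaneously intertwines \emph{all} the generators for general $q$. Remark \ref{re:Kconj} states explicitly that the natural identification (the one that works at $q=1$) is compatible with $[\G_\beta^i]$ and $[\F_\beta^i]$ but fails for $[\P_\beta^i]$, $[\Q_\beta^i]$ and hence for $[\T_\beta^i(2)]$; so ``one proceeds similarly'' for the trivalent vertices is precisely where the argument breaks. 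Your fallback, an isomorphism ``induced by the Satake correspondence at the level of (equivariant) $K$-theory,'' is not an available tool: geometric Satake is a statement about perverse sheaves and (intersection) cohomology, and no $K$-theoretic analogue compatible with the tensor structure is established here or cited anywhere in the paper.

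The final reduction you propose is also flawed. The eigenvalue computation of Remark \ref{re:Kconj} only shows that each $[\T_\beta^i(2)]$ satisfies a quadratic relation with eigenvalues $-1$ and $q$, so the braid group action factors through the Hecke algebra $H_n$; semisimplicity then produces an intertwiner for \emph{generic} $q$, for $\beta = (1,\dots,1)$ only, and only for the crossing generators. As the paper itself notes, this does not extend to general $q$, to general $\beta$, or to a single map that also matches the caps and cups with their normalizations. Combining the $q=1$ result, the link case, and the eigenvalue data does not determine the required isomorphism, because these constraints do not pin down $\tau_\beta$ uniquely and do not guarantee that a simultaneous solution exists. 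The correct conclusion is that the computations of Propositions \ref{th:GFmap} and \ref{th:Tmap} reduce the conjecture to a purely representation-theoretic problem — exhibiting a basis of $V_\beta$ on which the Reshetikhin--Turaev maps act by exactly those matrices — and that problem remains open in the paper.
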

Here $ K(Y_\beta) $ denotes the complexified Grothendieck group of the category $ D(Y_\beta) $ and $ [\Psi(T)] $ denotes the map that $ T $ induces on this Grothendieck group.

In this section we give evidence for this conjecture.  First we check that $K(Y_\beta)$ has the right dimension in Proposition \ref{th:Kbasis}. Next in Propositions \ref{th:GFmap} and \ref{th:Tmap} we calculate the action of basic tangles on a chosen basis of $K(Y_\beta)$. From this we construct an isomorphism $ K(Y_\beta) \rightarrow V_\beta $ and prove the commutativity of the diagram when $ q = 1$ (see Theorem \ref{th:Kconjq1} and Remark \ref{re:Kconj}).  Notice that the commutativity of the diagram when $ T $ is a link (for arbitary $ q $) follows from Corollary \ref{th:Euchar}.

\subsection{Equivariant K-theory of $ Y_\beta $}
We will now define a set of vector bundles which form a basis for $K(Y_\beta)$.  Fix $ \beta $.  For each $ i $ and each $ 0 \le k \le m-1 $, define a K-theory class $ W_{i,k} $ according to
\begin{equation*}
W_{i,k} = \begin{cases} 
q^{(i-1)k} [(L_i/L_{i-1})^k] \text{ if } \beta_i = 1 \\
q^{(i-1)k} [\Lambda^k(L_i/L_{i-1})] \text{ if } \beta_i = m-1
\end{cases}
\end{equation*}
The reader who is curious about this choice should note that we are just applying all possible Schur functors corresponding to partitions which fit in a $ \beta_i \times (m- \beta_i) $ box (the factor of $q $ has no explanation other than to make later formulas work out better).

\begin{Proposition} \label{th:Kbasis}
The collection 
\begin{equation*}
\Big\{ \prod_i W_{i, \delta(i)} \Big\}
\end{equation*}
where $ \delta $ ranges over all functions from $ \{1, \dots, n\} $ to $ \{0, \dots, m-1 \} $, gives a basis for $K(Y_\beta) $.
\end{Proposition}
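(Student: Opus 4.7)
The plan is to proceed by induction on $n$, using at each step the equivariant projective bundle theorem applied to the natural Grassmannian bundle projection $\pi_n : Y_\beta \rightarrow Y_{\beta'}$ with $\beta' = (\beta_1, \dots, \beta_{n-1})$. The base case $n=0$ is trivial since $Y_\emptyset$ is a point and the only class in the collection is the empty product $1$.

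For the inductive step, recall from Section \ref{se:varieties} that $\pi_n$ identifies $Y_\beta$ with the Grassmannian bundle $\Gr(\beta_n, z^{-1} L_{n-1}/L_{n-1}) \to Y_{\beta'}$, where $z^{-1} L_{n-1}/L_{n-1}$ is a rank $m$ vector bundle. Since we are assuming $\beta_n \in \{1, m-1\}$, the fibers are isomorphic to $\p$. By the (equivariant) projective bundle theorem, $K(Y_\beta)$ is a free module of rank $m$ over $K(Y_{\beta'})$, and any collection of $m$ classes on $Y_\beta$ whose restrictions to a fiber form a $\mathbb{C}[q,q^{-1}]$-basis of $K(\p)$ will furnish such a free basis.

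The key computation is therefore that $W_{n,0}, W_{n,1}, \dots, W_{n,m-1}$ restrict to a basis of $K(\p)$ on each fiber. The equivariant prefactor $q^{(n-1)k}$ is a unit and does not affect this. When $\beta_n = 1$, the bundle $L_n/L_{n-1}$ restricts on the fiber to $\mathcal{O}_{\p}(-1)$, so the restrictions are $1, [\mathcal{O}(-1)], \dots, [\mathcal{O}(-(m-1))]$, the standard basis. When $\beta_n = m-1$, $L_n/L_{n-1}$ restricts to the tautological rank $(m-1)$ subbundle $S$ of $\mathcal{O}^{\oplus m}$ on $\p = \Gr(m-1, m)$; the classes $[\Lambda^k S]$ for $k=0, \dots, m-1$ form the basis of Schubert classes for this Grassmannian (corresponding to partitions fitting in the $(m-1) \times 1$ box).

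Combining with the inductive hypothesis — that the products $\prod_{i < n} W_{i, \delta(i)}$ over $\delta : \{1, \dots, n-1\} \to \{0, \dots, m-1\}$ form a $\mathbb{C}[q,q^{-1}]$-basis of $K(Y_{\beta'})$ — and pulling these classes up through $\pi_n^*$, the products $\prod_i W_{i, \delta(i)}$ for $\delta$ ranging over all functions $\{1, \dots, n\} \to \{0, \dots, m-1\}$ give a basis for $K(Y_\beta)$. The only minor subtlety is confirming that the equivariant version of the projective bundle theorem applies in our setting (which it does, since the $\mathbb{C}^\times$-action on the fibers is trivial, making the equivariant parameter $q$ enter only through the ambient equivariant structure on $Y_{\beta'}$); this is the mildest of obstacles, as opposed to any serious technical difficulty.
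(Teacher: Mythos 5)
Your proof is correct and follows essentially the same route as the paper: induction on $n$ together with the equivariant projective bundle theorem (Chriss--Ginzburg) applied to $\pi_n : Y_\beta \rightarrow Y_{(\beta_1,\dots,\beta_{n-1})}$. The only difference is in the case $\beta_n = m-1$, where the paper passes from the standard module basis $\{[(z^{-1}L_{n-1}/L_n)^k]\}$ to the wedge powers $[\Lambda^k(L_n/L_{n-1})]$ via the global short exact sequence $0 \rightarrow L_n/L_{n-1} \rightarrow z^{-1}L_{n-1}/L_{n-1} \rightarrow z^{-1}L_{n-1}/L_n \rightarrow 0$, which exhibits an explicitly invertible (triangular) change of basis over the base, rather than by restricting to a single fiber --- a marginally safer formulation, since a Leray--Hirsch-type step requires the transition matrix to be invertible over $K(Y_{(\beta_1,\dots,\beta_{n-1})})$ and not merely at one point.
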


\begin{proof}
We proceed by induction on $n $. When $ n=1 $, the result follows immediately.

When $ n\ge 1$, the map $ Y_\beta \rightarrow Y_{(\beta_1, \dots, \beta_{n-1})} $ is a $ \p $ bundle which is formed as the projectivization of an equivariant vector bundle on $ Y_\beta $.  Hence the Projective Bundle Theorem of \cite{CG} (Theorem 5.2.31) applies and shows that $ K(Y_\beta) $ is a free module over $ K(Y_{(\beta_1, \dots, \beta_{n-1})}) $ with basis $ \{ (L_n/L_{n-1})^k \}_{0 \le k \le m-1} $ if $ \beta_n = 1 $ and basis $ \{ (z^{-1}L_{n-1}/L_n)^k \}_{0 \le k \le m-1 } $ if $ \beta_n = m-1 $.  So if $ \beta_n = 1 $, we are done.  If $ \beta_n = m-1 $, then using the short exact sequence
\begin{equation*}
0 \rightarrow L_n/L_{n-1} \rightarrow z^{-1}L_{n-1}/L_{n-1} \rightarrow z^{-1}L_{n-1}/L_n \rightarrow 0
\end{equation*}
shows that powers of $ z^{-1} L_{n-1}/L_n $ may be expressed as linear combinations of the classes of wedge powers of $ L_n/L_{n-1} $.
\end{proof}

\subsection{Maps induced in K-theory}
Now, we will compute the maps on K-theory induced by the functors associated to basic tangles.  We use the convention that $ [\{1\}] = -q^{-1} $.

\begin{Proposition} \label{th:GFmap}
We have
\begin{align*}
[\G_\beta^i]&(\prod_j W_{j, \delta(j)}) \\
&= (-1)^{(i-1)(m-1)} \prod_{j < i} W_{j, \delta(j)} (W_{i, m-1} W_{i+1, 0} - q^1W_{i, m-2} W_{i+1, 1} \dots \pm q^{(m-1)} W_{i, 0} W_{i+1, m-1} ) \prod_{j \ge i} W_{j+2, \delta(j)} \\
[\F_\beta^i]&(\prod_j W_{j, \delta(j)})
= \begin{cases} (-1)^{i(m-1)} (-q)^{-\delta(i)} \prod_{j < i} W_{j, \delta(j)} \prod_{j >i} W_{j+2, \delta(j+2)} \ \text{ if $\delta(i) + \delta(i+1) = m-1 $ } \\
0 \ \text{ otherwise }
\end{cases}
\end{align*}
\end{Proposition}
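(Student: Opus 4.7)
My plan is to prove both formulas by straightforward Fourier--Mukai computations, reducing everything to projection formula, a Koszul resolution (for $\G_\beta^i$), and cohomology of twisted bundles on a Grassmannian fibre (for $\F_\beta^i$).

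For $[\G_\beta^i]$, I would start from $\G_\beta^i(\sF)=i_*\bigl(q^*\sF\otimes\E_i^{\beta_{i+1}}\bigr)\{-(i-1)(m-1)\}$. Since $\E_i$ on $X_\beta^i$ restricts from $Y_\beta$, the projection formula lets me pull $[\E_i]^{\beta_{i+1}}$ outside $i_*$, yielding
\[
[\G_\beta^i](\xi) = (-q)^{(i-1)(m-1)}\,[\E_i]^{\beta_{i+1}}\cdot i_*(q^*\xi).
\]
For the basis classes $W_{j,\delta(j)}$ with $j\neq i,i+1$ the corresponding bundle on $Y_{d_i(\beta)}$ matches the corresponding one on $Y_\beta$ under the identification $L_k^{d_i(\beta)}\leftrightarrow L_{k+2}^\beta/L_2^{\beta}$; by projection formula these pull out of $i_*(q^*(\,\cdot\,))$, reducing the problem to computing $[\E_i]^{\beta_{i+1}}\cdot [\O_{X_\beta^i}]\in K(Y_\beta)$. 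By Lemma~\ref{lem:facts1}, $X_\beta^i$ (when $\beta_i+\beta_{i+1}=m$) is cut out of $Y_\beta$ by a section of the rank-$(m-1)$ bundle $\mathcal{V}=(L_{i+1}/L_i)^\vee\otimes L_i/L_{i-1}\{2\}$, so the Koszul complex gives
\[
[\O_{X_\beta^i}]=\sum_{k=0}^{m-1}(-1)^k\,q^{2k}\,[\Lambda^k(L_{i+1}/L_i)]\cdot[L_i/L_{i-1}]^{-k}.
\]
Multiplying by $[\E_i]^{\beta_{i+1}}=[\det(L_i/L_{i-1})]^{\beta_{i+1}}$ or $[\det(L_i/L_{i-1})^{-1}]^{\beta_{i+1}}$ (depending on whether $\beta_i=1$ or $m-1$) and then rewriting $[\Lambda^k(L_{i+1}/L_i)]$ and the line-bundle powers in the basis $W_{i,\cdot},W_{i+1,\cdot}$ gathers the $q$-shifts into exactly the alternating sum $\sum_l(-1)^l q^l W_{i,m-1-l}W_{i+1,l}$ of the statement, with overall sign $(-1)^{(i-1)(m-1)}$ from $(-q)^{(i-1)(m-1)}$.

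For $[\F_\beta^i]$, a parallel argument applies: $\F_\beta^i(\sF)=q_*(i^*\sF\otimes\E_{i+1}^{-\beta_i})\{i(m-1)\}$, and the factors $W_{j,\delta(j)}$ for $j\neq i,i+1$ again push out by projection formula. The core calculation is the fibrewise pushforward
\[
q_*\Bigl([(L_i/L_{i-1})^{\delta(i)}]\cdot[\Lambda^{\delta(i+1)}(L_{i+1}/L_i)]\cdot[\E_{i+1}]^{-\beta_i}\Bigr).
\]
On each $\mathbb{P}^{m-1}$-fibre of $q$, identifying $L_i/L_{i-1}=\mathcal{O}(-1)$ (the tautological sub) and $L_{i+1}/L_i=Q$ (the tautological quotient) when $\beta_i=1$ (and dually when $\beta_i=m-1$), this becomes $q_*\bigl(\mathcal{O}(-\delta(i))\otimes\Lambda^{\delta(i+1)}Q\otimes\mathcal{O}(-1)\bigr)$. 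Using either Bott's formula on $\mathbb{P}^{m-1}$ or the Euler sequence iteratively, one checks that this vanishes except when $\delta(i)+\delta(i+1)=m-1$, in which case it equals $(-1)^{\delta(i)}[\mathcal{O}_{Y_{d_i(\beta)}}]$ (the cohomology is concentrated in degree $\delta(i)$, of dimension one). Combining this with the equivariant shift $\{i(m-1)\}$ and the $q^{(i-1)\delta(i)+i\delta(i+1)}$ normalization inherent in the basis $W_{i,\cdot}W_{i+1,\cdot}$ collapses to the scalar $(-1)^{i(m-1)}(-q)^{-\delta(i)}$ asserted in the statement.

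The principal technical burden will be the consistent bookkeeping of $\C^\times$-equivariant shifts under the convention $[\{1\}]=-q^{-1}$: translating between the bundle-theoretic expressions produced by the Koszul and Bott computations and the normalized basis classes $W_{j,k}=q^{(j-1)k}[\,\cdot\,]$, absorbing the shifts from $\{-2k\}$ and $\{i(m-1)\}$, and reconciling indexing across $Y_\beta$ and $Y_{d_i(\beta)}$ via the identification arising from $L_{i+1}=z^{-1}L_{i-1}$ on $X_\beta^i$ (together with $\E_i\E_{i+1}\cong\O\{2b_\beta^{i-1}+2m\}$ from Lemma~\ref{lem:facts0}). Once this bookkeeping is performed carefully, both formulas fall out of the resolutions above.
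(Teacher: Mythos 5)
Your proposal is correct and follows essentially the same route as the paper's proof: reduce to the classes in positions $i,i+1$ via the projection formula and the identifications $L_j/L_{j-1}\cong L'_j/L'_{j-1}$ ($j<i$), $L_j/L_{j-1}\{2\}\cong L'_{j+2}/L'_{j+1}$ ($j\ge i$); expand $[\O_{X_\beta^i}]$ by the Koszul resolution of the section of $(L_{i+1}/L_i)^\vee\otimes L_i/L_{i-1}\{2\}$ for the cap; and for the cup compute the fibrewise pushforward on the $\p$-bundle $q$ via Bott's formula (the paper routes the $\beta_i=1$ case through Grothendieck--Verdier duality to land on $q_*\Omega_q^l(k+l-m+1)$, but this is the same cohomology computation as your direct $q_*(\Lambda^l Q(-k-1))$). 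The one caveat is that your displayed Koszul expansion $\sum_k(-1)^kq^{2k}[\Lambda^k(L_{i+1}/L_i)][L_i/L_{i-1}]^{-k}$ is literally valid only when $\beta_i=1$; for $\beta_i=m-1$ one instead uses $\Lambda^j(L_i/L_{i-1})^\vee\otimes\E_i\cong\Lambda^{m-1-j}(L_i/L_{i-1})$, as the paper does and as your case split implicitly acknowledges.
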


\begin{proof}
First, consider $ [\G_\beta^i] $.  By definition $ \G_\beta^i(\sF) = i_* q^* \sF \otimes \E_i^{\beta_{i+1}} \{-(i-1)(m-1)\} $.  Note that if $ j < i $, then $ L_j/L_{j-1} \cong L'_j/L'_{j-1} $ on $ X_\beta^i $, while if $ j \ge i $, then $ L_j/L_{j-1} \{2\} \cong L'_{j+2}/L'_{j+1} $ on $ X_\beta^i $.  Hence we see that 
\begin{equation} \label{eq:Gonbasis}
[\G_\beta^i](\prod_j W_{j, \delta(j)}) = \prod_{j < i} W_{j, \delta(j)} \prod_{j \ge i} W_{j+2, \delta(j)} [\O_{X_\beta^i} \otimes \E_i^{\beta_{i+1}} \{-(i-1)(m-1)\}] .
\end{equation}
Hence it will be neccesary to examine $ [\O_{X_\beta^i}] $ in terms of our basis.  $ X_\beta^i $ is defined by the vanishing of a section of $ \mathcal{V} = \Hom(L_{i+1}/L_i, L_i/L_{i-1}\{2\}) $ and hence by the Kozsul resolution of $ \O_{X_\beta^i} $ we have that 
\begin{equation*}
[\O_{X_\beta^i}] = [\O_{Y_\beta}] - [\mathcal{V}^\vee] + [\Lambda^2 \mathcal{V}^\vee] - \dots \pm [\Lambda^{m-1} \mathcal{V}^\vee]
\end{equation*}

Assume for the moment that $ \beta_i = 1 $ which implies $ \beta_{i+1} = m-1 $.  Then equation (\ref{eq:Gonbasis}) gives that
\begin{eqnarray*}
[O_{X_\beta^i} \E_i^{\beta_{i+1}}] &=& [(L_i/L_{i-1})^{m-1}] - q^2[(L_i/L_{i-1})^{m-2} L_{i+1}/L_i] \dots \pm q^{2(m-1)} [\Lambda^{m-1}(L_{i+1}/L_i)] \\
&=& q^{-(i-1)(m-1)} \left( W_{i,m-1} W_{i+1,0} - qW_{i,m-2} W_{i+1,1} \dots \pm q^{m-1}W_{i,0}W_{i+1,m-1} \right)
\end{eqnarray*}
and so the result follows.  

If $ \beta_i = m-1 $, the result follows from a similar analysis using the fact that $\Lambda^j (L_i/L_{i-1})^\vee \otimes \E_i \cong \Lambda^{m-1-j} (L_i/L_{i-1})$ since $\E_i = \det(L_i/L_{i-1})$. 

For the calculation of $ [\F_\beta^i] $, we recall that $ \F_\beta^i(\sF) = q_* (i^* \sF \otimes \E_{i+1}^{-\beta_i}) \{i(m-1)\} $.  Assume that $ \beta_i = 1$. Then, since $q$ is a $\p$ fibre bundle, $\E_i = L_i/L_{i-1} \cong \O_q(-1)$ while $(L_{i+1}/L_i)^\vee \cong \Omega^1_q(1)$. Then by Grothendieck-Verdier duality we have:
\begin{eqnarray*}
q_* ((L_i/L_{i-1})^k \otimes \Lambda^l (L_{i+1}/L_i) \otimes \E_{i+1}^\vee) &\cong&
\left(q_* (\Omega_q^l(l) \otimes \O_q(k) \otimes \E_{i+1} \otimes \omega_q [m-1]) \right)^\vee \\
&\cong& \left( q_* (\Omega_q^l(k+l-m+1)) \right)^\vee [-m+1]
\end{eqnarray*}
where we use that $\omega_q \cong \det(L_i/L_{i-1} \otimes (L_{i+1}/L_i)^\vee) \cong \E_i^{m-1} \E_{i+1}^\vee$. In the range $0 \le k,l \le m-1$, Lemma \ref{lem:vanhom} implies that this pushforward is zero unless $k+l-m+1=0$ in which case we get $\left( q_*(\Omega_q^l) \right)^\vee [-m+1] \cong \O[l-m+1] = \O[-k]$. 

Subsequently, $[\sF_\beta^i](W_{i,k} W_{i+1,l})=0$ unless $k+l=m-1$ in which case we get:
$$ [q_* (i^* (W_{i,k} W_{i+1,l}) \otimes \E_{i+1}^\vee) \{i(m-1)\}] \cong q^{(i-1)k+il} [\O] (-1)^k (-q)^{-i(m-1)} \cong (-q)^{-k} (-1)^{i(m-1)} [\O]$$
and the result follows. 

If $\beta_i = m-1$, a similar analysis holds but without the need to apply Grothendieck-Verdier duality. 
\end{proof}

Now assume $ \beta_i = \beta_{i+1}$.  In order to compute $ [\T_\beta^i(2)] $ we first compute $[{\Q_\beta^i}]$.

\begin{Lemma} \label{th:PRonK}
\begin{equation*}
{\Q_\beta^i}(\E_i^a \E_{i+1}^b) = \begin{cases}
\Sym^{b-a-1}(L_{i+1}/L_{i-1}) \det(L_{i+1}/L_{i-1})^a \{1\} \text{ if $a < b$ }\\
0 \text{ if $ a= b$ } \\
\Sym^{a-b-1}(L_{i+1}/L_{i-1}) \det(L_{i+1}/L_{i-1})^b \{1\}[-1] \text{ if $ a > b $} 
\end{cases}
\end{equation*}
\end{Lemma}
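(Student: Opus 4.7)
Plan of proof.

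By definition the functor $\Q_\beta^i$ acts as $\sF \mapsto q_\ast(i^\ast \sF \otimes \E_{i+1}^\vee)\{1\}$, where $i\colon X_\beta^i \hookrightarrow Y_\beta$ is the codimension-one inclusion and $q\colon X_\beta^i \to Y_{d_i(\beta)}$ is a $\mathbb{P}^1$-bundle (the Grassmannian fibre $\Gr(1,2) = \mathbb{P}^1$ since $\beta_i = \beta_{i+1} \in \{1,m-1\}$). So the problem reduces to computing $q_\ast(\E_i^a \E_{i+1}^{b-1})\{1\}$ on $Y_{d_i(\beta)}$.

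The plan is to carry this out via the projective bundle description. For $\beta_i = 1$ one has $X_\beta^i = \mathbb{P}(L_{i+1}/L_{i-1})$ with tautological subbundle $\E_i = \O_q(-1)$, and for $\beta_i = m-1$ the analogous statement holds with $L_{i+1}/L_{i-1}$ replaced by the rank $2$ bundle $z^{-1}L_{i-1}/zL_{i+1}$. Either way the product $\E_i \otimes \E_{i+1}$ descends to $Y_{d_i(\beta)}$, from which one reads off $\E_{i+1} \cong \O_q(1) \otimes q^\ast(\text{line})$, and hence $\E_i^a \E_{i+1}^{b-1} \cong \O_q(b-1-a) \otimes q^\ast \mathcal{M}$ for an explicit line bundle $\mathcal{M}$ on $Y_{d_i(\beta)}$ expressible in terms of $\det(L_{i+1}/L_{i-1})$.

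The three cases of the lemma then follow from the standard cohomology of $\O_q(k)$ on a $\mathbb{P}^1$-bundle together with the projection formula. For $b > a$ one uses $q_\ast \O_q(b-1-a) \cong \Sym^{b-1-a}(V^\vee)$, where $V$ is the rank-2 bundle appearing in the projective bundle, and rewrites via the identity $\Sym^k V^\vee \cong \Sym^k V \otimes (\det V)^{-k}$ valid for any rank $2$ bundle, arriving at $\Sym^{b-a-1}(L_{i+1}/L_{i-1}) \otimes \det(L_{i+1}/L_{i-1})^a$. For $b = a$ the pushforward vanishes outright since $q_\ast\O_q(-1) = R^1 q_\ast \O_q(-1) = 0$. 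For $b < a$, relative Serre duality applied to the $\mathbb{P}^1$-bundle, using $\omega_q \cong \O_q(-2) \otimes q^\ast(\det V)^{-1}$, yields $R^1 q_\ast \O_q(b-1-a) \cong \Sym^{a-b-1}V \otimes \det V$ and hence produces the third case concentrated in cohomological degree one.

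The main obstacle is tracking the equivariant shifts. Since $z$ has weight $2$, the identifications $L_{i+1} = z^{-1}L_i^{d_i}$ and similar (particularly in the $\beta_i = m-1$ case) introduce $\{k\}$-shifts throughout $\E_i, \E_{i+1}$ and $\det(L_{i+1}/L_{i-1})$. The key point is that these shifts arise in a coordinated way --- essentially governed by Lemma \ref{lem:facts0} together with $\E_i \otimes \E_{i+1} \cong \det(L_{i+1}/L_{i-1})$ --- and cancel out in the final answer, leaving only the overall $\{1\}$ coming from the definition of $\sQ_\beta^i$.
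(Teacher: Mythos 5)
Your proposal is correct and follows essentially the same route as the paper: reduce to $q_\ast(\E_i^a\E_{i+1}^{b-1})\{1\}$ along the $\mathbb{P}^1$-bundle, factor out the line bundle $\E_i\E_{i+1}\cong\det(L_{i+1}/L_{i-1})$ pulled back from the base, compute $q_\ast$ of powers of the relative $\O(1)$ via $\Sym^k$, and handle $a>b$ by relative (Grothendieck--Verdier) duality with $\omega_q\cong\E_i\E_{i+1}^\vee$. The paper likewise treats only the $\beta_i=\beta_{i+1}=1$ case explicitly, so no further work is needed.
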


\begin{proof}
Assume that $ \beta_i = \beta_{i+1} = 1 $.

We recall that $ \Q_\beta^i $ is given by $ q_* (i^*(\cdot) \otimes \E_{i+1}^\vee) \{1 \} $, where $ i $ denotes the inclusion of $ X_\beta^i $ and $ q $ the projection to $ Y_{d_i(\beta)}$ which is a $\mathbb{P}^1$ fibre bundle. So then 
$$ \Q_\beta^i(\E_i^a \E_{i+1}^b) = q_*(\E_i^a \E_{i+1}^{b-1}) \{1\} = 
\begin{cases}
q_* ((\E_i \E_{i+1})^a \otimes \E_{i+1}^{b-a-1}) \{1\} \text{ if $a \le b$} \\
q_* ((\E_i \E_{i+1})^{a-1} \otimes \E_i \E_{i+1}^{b-a}) \{1\} \text{ if $a > b$}. 
\end{cases}$$

Now, for $k \ge -1$ we have that $ q_* \E_{i+1}^k = \Sym^k L_{i+1}/L_{i-1} $ which proves the Lemma when $a \le b$. By Grothendieck-Verdier duality we also  have $ q_* (\E_i \E_{i+1}^{-k-1}) = (\Sym^k L_{i+1}/L_{i-1})^\vee [-1] $ using the fact that $\omega_q = \E_i \E_{i+1}^\vee$.  Hence, if $a > b$, $q_*(\E_i \E_{i+1}^{b-a}) \cong \Sym^{a-b-1} (L_{i+1}/L_{i-1})^\vee [-1]$ and the result follows since $\Sym^{a-b-1} (L_{i+1}/L_{i-1})^\vee \otimes \det(L_{i+1}/L_{i-1})^{a-b-1} \cong \Sym^{a-b-1}(L_{i+1}/L_{i-1})$. 
\end{proof}

Next we compute $ [\P_\beta^i] $.

\begin{Lemma} \label{th:PonK}
If $ a < b $  then
\begin{align*}
 [\P_\beta^i] ([\Sym^{b-a-1} & (L_{i+1}/L_{i-1}) (\det L_{i+1}/L_{i-1})^a \{ 1\}]) \\
&= q[\E_i^a][\E_{i+1}^b]  + (q - q^{-1}) [\E_i^{a+1}][\E_{i+1}^{b-1}] + \dots + (q - q^{-1}) [\E_i^{b-1}] [\E_{i+1}^{a+1}] - q^{-1} [\E_i^b] [\E_{i+1}^a] 
\end{align*}
while if $a > b$ then 
\begin{align*}
 [\P_\beta^i] ([\Sym^{a-b-1} & (L_{i+1}/L_{i-1}) (\det L_{i+1}/L_{i-1})^b \{ 1\}[-1]]) \\
&= -q[\E_i^b][\E_{i+1}^a]  - (q - q^{-1}) [\E_i^{b+1}][\E_{i+1}^{a-1}] - \dots - (q - q^{-1}) [\E_i^{a-1}] [\E_{i+1}^{b+1}] + q^{-1} [\E_i^a] [\E_{i+1}^b]. 
\end{align*}
\end{Lemma}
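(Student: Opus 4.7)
The plan is to compute both statements by direct K-theoretic calculation using the explicit description $\P_\beta^i(\cdot) = i_\ast(q^\ast(\cdot) \otimes \E_i)$, where $i : X_\beta^i \hookrightarrow Y_\beta$ and $q : X_\beta^i \to Y_{d_i(\beta)}$ is the $\mathbb{P}^1$-bundle. Everything in sight will eventually be written in the basis $\{\E_i^a \E_{i+1}^b\}$, so the formula is obtained by carefully tracking filtrations and applying the Koszul resolution.

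First I will treat the case $a<b$. On $X_\beta^i$ the short exact sequence $0\to L_i/L_{i-1}\to L_{i+1}/L_{i-1}\to L_{i+1}/L_i\to 0$ induces a filtration on $q^\ast\Sym^{b-a-1}(L_{i+1}/L_{i-1})$ whose associated graded is $\bigoplus_{k=0}^{b-a-1}\E_i^{b-a-1-k}\E_{i+1}^k$. Since $q^\ast\det(L_{i+1}/L_{i-1})=\E_i\E_{i+1}$ on $X_\beta^i$, in $K(X_\beta^i)$ we therefore have
\begin{equation*}
[q^\ast(\Sym^{b-a-1}(L_{i+1}/L_{i-1})\otimes\det(L_{i+1}/L_{i-1})^a)\otimes\E_i]=\sum_{k=0}^{b-a-1}[\E_i^{b-k}\E_{i+1}^{a+k}].
\end{equation*}

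Next I will push forward via $i_\ast$. Because $X_\beta^i\subset Y_\beta$ is cut out by a section of $\mathcal{V}=\Hom(L_{i+1}/L_i,\,L_i/L_{i-1}\{2\})$, Lemma \ref{lem:facts1} and the Koszul resolution give $[i_\ast\O_{X_\beta^i}]=[\O_{Y_\beta}]-[\E_i^{-1}\E_{i+1}\{-2\}]$ in $K(Y_\beta)$. Applying the projection formula to each summand and then twisting by the global $\{1\}$ (which contributes the factor $-q^{-1}$), I get
\begin{equation*}
[\P_\beta^i](\cdots)=\sum_{k=0}^{b-a-1}\bigl(-q^{-1}[\E_i^{b-k}\E_{i+1}^{a+k}]+q\,[\E_i^{b-k-1}\E_{i+1}^{a+k+1}]\bigr),
\end{equation*}
where I use $\{-2\}=q^2$. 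Reindexing the second sum by $j=b-k-1$ (so $j$ ranges over $a,\dots,b-1$) and the first by $j=b-k$ (so $j$ ranges over $a+1,\dots,b$), the two sums telescope: the extreme terms $q[\E_i^a\E_{i+1}^b]$ and $-q^{-1}[\E_i^b\E_{i+1}^a]$ survive alone, while the interior terms with $a+1\le j\le b-1$ combine into $(q-q^{-1})[\E_i^j\E_{i+1}^{a+b-j}]$. This is exactly the claimed formula.

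For the case $a>b$ the input class carries an extra $[-1]$, which contributes an overall sign in K-theory, so I will simply repeat the computation above with the roles of $a$ and $b$ interchanged; the shift in exponents and the change of sign produce the second formula verbatim. No serious obstacle is expected: the only thing to watch is the bookkeeping of equivariant shifts (the $\{1\}$ from the definition of $\sP_\beta^i$, the $\{-2\}$ from $\mathcal{V}^\vee$, and the sign convention $[\{k\}]=(-q^{-1})^k$), but once these are applied consistently the telescoping collapses cleanly to the asserted expression.
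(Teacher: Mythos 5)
Your computation is correct and follows essentially the same route as the paper: both use $[\O_{X_\beta^i}]=[\O]-q^2[\E_i^\vee\E_{i+1}]$ from the divisor relation, expand the symmetric power via the filtration of $L_{i+1}/L_{i-1}$ into $\E_i,\E_{i+1}$, and apply the projection formula; the paper just writes the result as a product of three K-theory classes where you telescope term by term. The handling of the $a>b$ case via the overall sign from $[-1]$ also matches the paper's "follows similarly".
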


\begin{proof}
Note that $ \O_{Y_\beta}(-X_\beta^i) = \E_i^\vee \E_{i+1} \{-2\}$ which gives us $ [\O_{X_\beta^i}] = [\O] - q^2[\E_i^\vee \E_{i+1}]$.  Using that $ \P_\beta^i (\cdot) = i_*( q^*(\cdot) \otimes \E_i) $, we get
\begin{align*}
 [\P_\beta^i] ([\Sym^{b-a-1} & (L_{i+1}/L_{i-1}) (\det L_{i+1}/L_{i-1})^a \{ 1\}]) = [\Sym^{b-a-1} L_{i+1}/L_{i-1} (\det L_{i+1}/L_{i-1})^a \{1\} \otimes \O_{X_\beta^i} \E_i] \\
&= ([\E_{i+1}^{b-a-1}] +  [\E_i \E_{i+1}^{b-a-2}] + \dots + [\E_i^{b-a-1}])[(\E_i \E_{i+1})^a] (q[\E_{i+1}] - q^{-1} [\E_i ])
\end{align*}
and the desired first equality follows. The second equality follows similarly. 
\end{proof}

Now we are in a position to compute $ [\T_\beta^i(2)] $ on our chosen basis.  
\begin{Proposition} \label{th:Tmap}
Let $ a = \delta(i), b = \delta(i+1) $.

\begin{equation*}
[\T_\beta^i(2)] (\prod_j W_{j, \delta(j)})= q^{-m} \begin{cases}
\begin{aligned} \prod_{j \ne i,i+1} W_{j, \delta(j)} &( 
(q^{-1} - q) \sum_{j=1}^{b-a-1} q^{b-a-j} W_{i,b-j} W_{i+1,a+j} \\
&+ q^{b-a} q^{-1} W_{i,b} W_{i+1, a}) 
\end{aligned} \ \text{ if $ a < b $ } \\
q \prod_j W_{j, \delta(j)} \quad \text{ if $ a = b $ } \\
\begin{aligned}
-\prod_{j \ne i,i+1} W_{j, \delta(j)} ( (q^{-1}-q) \sum_{j=0}^{a-b-1} q^{-j} W_{i,a-j} W_{i+1, b+j} \\
 - q^{b-a}W_{i,b} W_{{i+1},a} )
\end{aligned}\ \text{ if $ a > b $ }
\end{cases}
\end{equation*}

\end{Proposition}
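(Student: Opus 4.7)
The plan is to reduce the computation to a completely mechanical application of the two preceding Lemmas (\ref{th:PRonK} and \ref{th:PonK}) once the kernel $\sT_\beta^i(2)$ has been rewritten in a form where $[\P_\beta^i]$ and $[\Q_\beta^i]$ appear directly. The key input is Theorem \ref{th:kerneltwist}, which gives $\sT_\beta^i(2) \cong \Cone(\sP_\beta^i * (\sP_\beta^i)_R \to \O_\Delta)[-m+1]\{m-1\}$, together with Lemma \ref{lem:Padj} (via the identification $(\sP_\beta^i)_R \cong \sQ_\beta^i [-1]\{1\}$ recorded in section \ref{sse:functor}). Applying $[\,\cdot\,]$ to the defining distinguished triangle and using the convention $[\{1\}] = -q^{-1}$, one obtains an identity of the shape
\[
[\T_\beta^i(2)] \;=\; q^{1-m}\bigl([\mathrm{id}] - q^{-1}[\P_\beta^i]\circ[\Q_\beta^i]\bigr).
\]

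First, I would observe that both $\sP_\beta^i$ and $\sQ_\beta^i$ are supported on $X_\beta^i$, so their composition acts trivially on the tensor factors $W_{j,\delta(j)}$ with $j \neq i, i+1$; the problem therefore localises to computing $[\P_\beta^i \circ \Q_\beta^i]$ on the two-factor product $W_{i,a}W_{i+1,b}$ with $a = \delta(i)$, $b = \delta(i+1)$. Next, using the conversion $W_{i,c}W_{i+1,d} = q^{(i-1)c + id}\,[\E_i^c\E_{i+1}^d]$ (when $\beta_i = 1$), I would apply Lemma \ref{th:PRonK} to produce the class of $\Q_\beta^i(\E_i^a\E_{i+1}^b)$, split into the three cases $a<b$, $a=b$, $a>b$; the middle case vanishes, immediately giving the stated $q\prod_j W_{j,\delta(j)}$ answer after multiplication by $q^{1-m}$. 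For the other two cases, I would feed the output into Lemma \ref{th:PonK} to obtain a linear combination of $[\E_i^c \E_{i+1}^d]$ with $c+d = a+b$; converting back to the $W$-basis, the diagonal contribution combines with $[\mathrm{id}]$ to produce the coefficient of $W_{i,a}W_{i+1,b}$, and the off-diagonal contributions yield the remaining terms. A final reindexing $j \mapsto b-a-j$ (resp. $j \mapsto a-b-j$) puts the summation into the symmetric form of the statement.

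The main obstacle is purely bookkeeping: one must track the cohomological shifts $[\cdot]$, equivariant shifts $\{\cdot\}$, and the normalising factors $q^{(i-1)k}$ in the definition of $W_{i,k}$ simultaneously, all consistent with $[\{1\}] = -q^{-1}$. An error by a single factor of $\pm q^{\pm 1}$ anywhere in the chain will yield a formula inconsistent with Reidemeister II, which provides a useful sanity check: the eigenvalues of $[\T_\beta^i(2)]$ read off from the $a=b$ case (namely $q^{1-m}$) and the $a\neq b$ case should be consistent with those of the quantum $R$-matrix. Finally, the parallel case $\beta_i = \beta_{i+1} = m-1$ requires only reformulating Lemmas \ref{th:PRonK} and \ref{th:PonK} in terms of wedge powers via $\Lambda^j(L_i/L_{i-1})^\vee \otimes \E_i \cong \Lambda^{m-1-j}(L_i/L_{i-1})$ (as already used in the proof of Proposition \ref{th:GFmap}); the resulting computation is formally identical and yields the same combinatorial expression.
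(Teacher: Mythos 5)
Your proposal is correct and follows exactly the paper's route: Theorem \ref{th:kerneltwist} together with $(\sP_\beta^i)_R \cong \sQ_\beta^i[-1]\{1\}$ gives $[\T_\beta^i(2)] = q^{-m}(q - [\P_\beta^i][\Q_\beta^i])$ (your $q^{1-m}(1 - q^{-1}[\P_\beta^i][\Q_\beta^i])$ is the same expression), and the rest is the same patient simplification via Lemmas \ref{th:PRonK} and \ref{th:PonK} with the conversion between the $W$-basis and powers of $\E_i, \E_{i+1}$. The paper's proof is exactly this, stated in two sentences, so no further comparison is needed.
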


\begin{proof}
By Theorem \ref{th:kerneltwist}, we have that $ \sT_\beta^i(2) \cong \Cone(\sP_\beta^i * \sQ_\beta^i[-1]\{1\} \rightarrow \O_\Delta)[-m+1]\{m-1\} $ from which it follows that $ [\T_\beta^i(2)] = q^{-m}(q - [\P_\beta^i][\Q_\beta^i]) $. The result follows by patiently simplifying using Lemmas \ref{th:PRonK} and \ref{th:PonK}.
\end{proof}

\subsection{Comparison with representations}
Now, we are ready to prove Conjecture \ref{th:Kconj} in the case that $ q= 1 $.  Let $ K(Y_\beta)^\cl $ and $ V_\beta^\cl $ denote the specializations of the representation to $ q = 1 $.  On the geometric side corresponds to working non-equivariant K-theory, whereas on the representation theory side it corresponds to working with representations of $ \sl $ instead of $ \uq $.

Let $ v_0, \dots, v_{m-1} $ denote the standard basis for $ \C^m = V_{\omega_1}^\cl $ and also the standard basis for $ \Lambda^{m-1}(\C^m) = V_{\omega_{m-1}}^\cl $. Define a map $ \alpha : K(Y_\beta)^\cl \rightarrow V_\beta^\cl $ by 
$$ \prod W_{i, \delta(i)} \mapsto v_{\delta(1)} \otimes v_{\delta(2)} \otimes \cdots v_{\delta(n)} \in V_{\omega_{\beta_1}}^\cl \otimes \cdots \otimes V_{\omega_{\beta_n}}^\cl $$

\begin{Theorem} \label{th:Kconjq1}
$ \alpha $ is an isomorphism of vector spaces and for all $(\beta, \beta') $ tangles $ T $, the diagram
\begin{equation}
\begin{CD}
K(Y_\beta)^\cl @>>> V_\beta^\cl \\
@V[\Psi(T)]VV @VV\psi(T)V \\
K(Y_{\beta'})^\cl @>>> V_{\beta'}^\cl
\end{CD}
\end{equation}
commutes.
\end{Theorem}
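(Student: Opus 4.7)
The plan is to establish the two assertions separately: first that $\alpha$ is a vector space isomorphism, and then that the diagram commutes for every tangle $T$. The first is immediate. By Proposition \ref{th:Kbasis}, the classes $\prod_i W_{i,\delta(i)}$ form a basis of $K(Y_\beta)$ as $\delta$ ranges over functions $\{1,\dots,n\}\to\{0,\dots,m-1\}$; setting $q=1$ gives a basis of $K(Y_\beta)^\cl$. The map $\alpha$ sends this basis bijectively to the standard tensor basis $\{v_{\delta(1)}\otimes\cdots\otimes v_{\delta(n)}\}$ of $V_\beta^\cl$.

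For the commutativity of the diagram, by Lemma \ref{lem:relations} every tangle decomposes as a composition of cups, caps, and crossings, and both $\Psi$ (by Theorem \ref{thm:main}) and $\psi$ respect composition, so it suffices to verify the diagram on each elementary tangle. For cups and caps, I would specialize Proposition \ref{th:GFmap} to $q=1$: the map $[\G_\beta^i]$ inserts in positions $i,i+1$ the factor $(-1)^{(i-1)(m-1)}\sum_{k=0}^{m-1}(-1)^k W_{i,m-1-k}W_{i+1,k}$, and $[\F_\beta^i]$ is nonzero exactly when $\delta(i)+\delta(i+1)=m-1$, in which case it contracts with scalar $(-1)^{i(m-1)}(-1)^{\delta(i)}$. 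Under the identification $V_{\omega_{m-1}}^\cl\cong\Lambda^{m-1}(\C^m)$ sending $v_k\leftrightarrow v_0\wedge\cdots\wedge\widehat{v_k}\wedge\cdots\wedge v_{m-1}$, these match the classical Reshetikhin--Turaev coevaluation $\C\to V_{\omega_1}^\cl\otimes V_{\omega_{m-1}}^\cl$ and evaluation $V_{\omega_1}^\cl\otimes V_{\omega_{m-1}}^\cl\to\C$ for $\sl$.

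For crossings with $\beta_i=\beta_{i+1}$, Proposition \ref{th:Tmap} at $q=1$ simplifies dramatically: every summand carrying the coefficient $(q^{-1}-q)$ vanishes and the leading $q^{-m}, q^{b-a}q^{-1}, q, q^{b-a}$ factors all become $1$, so one reads off in all three cases $a<b$, $a=b$, $a>b$ that $[\T_\beta^i(2)](W_{i,a}W_{i+1,b}\cdots)=W_{i,b}W_{i+1,a}\cdots$. This is precisely the flip $\sigma$, which at $q=1$ is the Reshetikhin--Turaev braiding on $V_{\omega_k}^\cl\otimes V_{\omega_k}^\cl$; the corresponding statement for $[\T_\beta^i(1)]$ then follows from Theorem \ref{th:R2}. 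For crossings with $\beta_i\neq\beta_{i+1}$, the kernel $\sT_\beta^i(l)$ is $\O_{Z_\beta^i}$ twisted by the prescribed line bundle and cohomological/equivariant shift. Using the decomposition $Z_\beta^i=W_\beta^i\cup V_\beta^i$ from Section~\ref{se:geometry}, I would write $[\O_{Z_\beta^i}]=[\O_{W_\beta^i}]+[\O_{V_\beta^i}]-[\O_{W_\beta^i\cap V_\beta^i}]$ and compute each contribution's action on the basis by the same pull--push--tensor technique as Proposition \ref{th:GFmap}; at $q=1$ the $V_\beta^i$ contribution collapses and the result is again the classical flip $V_{\omega_1}^\cl\otimes V_{\omega_{m-1}}^\cl\to V_{\omega_{m-1}}^\cl\otimes V_{\omega_1}^\cl$, matching the rep-theoretic side.

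The main obstacle is the systematic bookkeeping of signs and equivariant shifts. In particular, one must fix a single normalization of the identification $V_{\omega_{m-1}}^\cl\cong\Lambda^{m-1}(V_{\omega_1}^\cl)$ that simultaneously reconciles the sign $(-1)^{(i-1)(m-1)}$ in $[\G_\beta^i]$, the sign $(-1)^{i(m-1)}(-1)^{\delta(i)}$ in $[\F_\beta^i]$, and the absence of sign in the crossing formulas, with the analogous choices made in the Reshetikhin--Turaev construction. Once this convention is pinned down, all the remaining checks are purely mechanical verifications of the formulas on basis elements.
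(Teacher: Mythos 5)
Your proposal is correct in outline and, for the cups, caps, and like-strand crossings, is exactly the paper's argument: reduce to elementary tangles, then read off the $q=1$ specializations of Propositions \ref{th:GFmap} and \ref{th:Tmap} (the $(q^{-1}-q)$ terms in \ref{th:Tmap} do vanish at $q=1$, leaving the flip). The one place you diverge is the unlike-strand crossing $\beta_i\neq\beta_{i+1}$. The paper never computes $[\T_\beta^i(l)]$ in that case at all: it observes that both $[\Psi(\cdot)]$ and $\psi(\cdot)$ satisfy the pitchfork identities (section \ref{se:pitchfork} on one side, the definition of the RT calculus on the other), and the pitchfork moves together with Reidemeister (0) express an unlike crossing in terms of a like crossing conjugated by cups and caps, so the unlike case is forced once the like case and the cups/caps are checked. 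You instead propose a direct Fourier--Mukai computation of $[\O_{Z_\beta^i}]$ via the decomposition $Z_\beta^i=W_\beta^i\cup V_\beta^i$; this would work in principle and would additionally produce an explicit K-theory formula for the unlike crossing, but as written it is only a sketch --- the assertion that ``the $V_\beta^i$ contribution collapses'' at $q=1$ is not justified, and the cohomological shift $[-m+1]$ in $\sT_\beta^i(1)$ contributes a sign $(-1)^{m-1}$ that you would need to track against the RT normalization. The paper's pitchfork reduction sidesteps all of this, which is why its proof is two sentences long; if you adopt that reduction your remaining checks really are just the $q=1$ specializations you describe.
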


\begin{proof}
It is enough to check this for the basic tangles $ t_\beta^i(2)$, with $ \beta_i = \beta_{i+1} = 1 $ and any $g_\beta^i, f_\beta^i $ (these suffices because both sides obey the pitchfork moves).  For these basic tangles, the result follows from Propositions \ref{th:GFmap} and \ref{th:Tmap}.
\end{proof}

\begin{Remark} \label{re:Kconj}
This seemingly natural isomophism $\alpha$ does not work in the $ q \ne 1 $ case.  It is ok for the basic tangles $ g_\beta^i, f_\beta^i $, but not for $p_\beta^i$, $q_\beta^i$ and consequently $ t_\beta^i(2) $.  Proving Conjecture \ref{th:Kconj} requires redefining $\alpha$. This amounts to finding a basis for $V_\beta$ on which the Reshetikhin-Turaev maps for $ g_\beta^i, f_\beta^i, t_\beta^i(2) $ act as in Propositions \ref{th:GFmap} and \ref{th:Tmap} (under the identification of $\{ \prod_i W_{i,\delta(i)} \}$ with this basis).  

Let us give one further piece of evidence that such a basis should exist. Consider for simplicity the case that $ \beta = (1, \dots, 1) $. Then the $ [T_\beta^i(2)] $ generate an action of the braid group $B_n$ on $ K(Y_\beta) $.  By Proposition \ref{th:Tmap}, these generators have eigenvalues only $-1$ and $ q $.  Hence the action of $ \C[q, q^{-1}][B_n]$ factors through an action of the Hecke algebra $ H_n$.  For generic $ q $ including $ q= 1 $, the Hecke algebra is semisimple and hence for generic $ q $ this Hecke algebra representation is determined by its $ q=1 $ specialization.  This gives us an isomorphism $ K(Y_\beta) \rightarrow V_\beta $, defined for generic $ q $ which intertwines the action of $ [T_\beta^i(2)] $ and $ \psi(t_\beta^i(2)) $.  Unfortunately, this only holds for generic $ q $ and it is not clear how to generalize this observation for general $ \beta $.
\end{Remark}


\begin{thebibliography}{E-G-S}



\bibitem[BN]{BN}
D. Bar-Natan, Fast Khovanov homology computations, \textit{J. Knot Theory Ramifications}, \textbf{16} (2007) no. 3, 243--255; \textsf{math.GT/0606318}.

\bibitem[BKR]{BKR} 
T. Bridgeland, A. King, and M. Reid, The McKay correspondence as an equivalence of derived categories, \textit{J. Amer. Math. Soc.} \textbf{14} (2001), 535--554; \textsf{math.AG/9908027}.

\bibitem[BO]{BO}
A. Bondal and D. Orlov, Semiorthogonal decomposition for algebraic varieties; \textsf{math.AG/9506012}. 

\bibitem[CK]{kh1}
S. Cautis and J. Kamnitzer, Knot homology via derived categories of coherent sheaves I, sl(2) case, to appear in \textit{Duke Math. J.}; \textsf{math.AG/0701194}.

\bibitem[CG]{CG}
N. Chriss and V. Ginzburg, \textit{Representation theory and complex geometry}, Birkh\"auser, 1997.

\bibitem[G]{G}
V. Ginzburg, Perverse sheaves on a loop group and Langlands' duality; \textsf{alg-geom/\-9511007}.

\bibitem[Ha1]{Haines1}
T. Haines, Structure constants for Hecke and representation rings, \textit{Int. Math. Res. Not.} \textbf{39} (2003), 2103--2119; \textsf{math.RT/0304176}.

\bibitem[Ha2]{Haines2}
T. Haines, Equidimensionality of convolution morphisms and applications to saturation problems, \textit{Adv. in Math.}, \textbf{207} , no. 1 (2006), 297--327; \textsf{math.AG/0501504}.

\bibitem[Ho]{Ho} 
R.P. Horja, Derived Category Automorphisms from Mirror Symmetry, \textit{Duke Math. J.} \textbf{127} (2005), 1--34; \textsf{math.AG/0103231}. 

\bibitem[Hu]{H}
D. Huybrechts, \textit{Fourier-Mukai Transforms in Algebraic Geometry}, Oxford University Press, 2006. 

\bibitem[Kas]{K}
C. Kassel, \textit{Quantum Groups}, Springer-Verlag, 1995. 

\bibitem[Kaw]{kaw}
Y. Kawamata, D-equivalence and K-equivalence, \textit{J. Diff. Geom.}, \textbf{61} (2002), 147-171; \textsf{math.AG/0205287}. 

\bibitem[Kh1]{Kknot}
M. Khovanov, A categorification of the Jones polynomial, \textit{Duke Math. J.}, \textbf{101} no. 3 (1999), 359--426; \textsf{math.QA/9908171}.

\bibitem[Kh2]{Ktangle} 
M. Khovanov, A functor-valued invariant of tangles, \textit{Algebr. Geom. Topol.} \textbf{2} (2002), 665--741; \textsf{math.QA/0103190}.

\bibitem[KR]{KR}
M. Khovanov and L. Rozansky, Matrix factorizations and link homology; \textsf{math.QA/0401268}.

\bibitem[KT]{KT}
M. Khovanov and R. Thomas, Braid cobordisms, triangulated categories, and flag varieties, \textit{HHA} \textbf{9}, 19--94, 2007; \textsf{math.QA/0609335}.

\bibitem[Lo]{Lo} 
T. Logvinenko, Derived McKay correspondence via pure-sheaf transforms; \textsf{math.AG/0606791}. 

\bibitem[Lu]{L}
G. Lusztig, Singularities, character formulas and a q-analog of weight multiplicities, \textit{Ast\'erisque} \textbf{101-102} (1983), 208-229.

\bibitem[Ma]{M}
C. Manolescu, Link homology theories from symplectic geometry, \textit{Adv. in Math.}, \textbf{211} (2007), 363--416.

\bibitem[MVi]{MV}
I. Mirkovi\'c and K. Vilonen, Geometric Langlands duality and representations of algebraic groups over commutative rings; \textsf{math.RT/0401222}.

\bibitem[MVy]{MVy}
I. Mirkovi\'c and M. Vybornov, On quiver varieties and affine Grassmannians of type A,
\textit{C. R. Math. Acad. Sci. Paris} \textbf{336} (2003), no. 3, 207--212; math.AG/0206084.

\bibitem[MOY]{MOY}
H. Murakami, T. Ohtsuki, and S. Yamada, HOMFLY polynomial via an invariant of colored plane graphs, \textit{Enseign. Math.} (2) \textbf{44} (1998) no 3-4, 325--360.

\bibitem[Na]{Na}
Y. Namikawa, Mukai flops and derived categories. \textit{J. Reine. Angew. Math.} \textbf{560} (2003), 65--76; \textsf{math.AG/0203287}. 

\bibitem[OSS]{OSS}
C. Okonek, M. Schneider and H. Spindler, \textit{Vector bundles on complex projective spaces}, Birkh\"auser, 1987.

\bibitem[Ra]{Ra}
J. Rasmussen, Some differentials on Khovanov-Rozansky homology; \textsf{math.GT/0607544}.

\bibitem[RT]{RT}
N. Reshetikhin and V. Turaev, Ribbon graphs and their invariants derived from quantum groups, \textit{Comm. Math. Phys.} \textbf{127} (1990), 1--26.

\bibitem[Ro]{Rou}
R. Rouquier, Categorification of the braid groups; \textsf{math.RT/0409593}.

\bibitem[SS]{SS}
P. Seidel and I. Smith, A link invariant from the symplectic geometry of nilpotent slices, \textit{Duke Math. J.} \textbf{134}, no. 3 (2006), 453–-514; \textsf{math.SG/0405089}.

\bibitem[Su]{Su}
J. Sussan, Category O and sl(k) link invariants; \textsf{math.QA/0701045}.

\end{thebibliography}
\end{document}